\theoremstyle{definition}
\newtheorem{thm}{Theorem}[section]
\newtheorem{dfn}[thm]{Definition}
\newtheorem{nota-dfn}[thm]{Notation-Definition}
\newtheorem{nota-lem}[thm]{Notation-Lemma}
\newtheorem{nota}[thm]{Notation}
\newtheorem{exam}[thm]{Example}
\newtheorem{prop}[thm]{Proposition}
\newtheorem{cor}[thm]{Corollary}
\newtheorem{lem}[thm]{Lemma}
\newtheorem{rem}[thm]{Remark}
\newtheorem{sub}[thm]{}
\newtheorem{prop-dfn}[thm]{Proposition-Definition}
\newcommand{\N}{\mathbb{N}}
\newcommand{\Z}{\mathbb{Z}}
\newcommand{\bP}{\mathbb{P}}
\newcommand{\R}{\mathbb{R}}
\newcommand{\bG}{\mathbb{G}}
\newcommand{\cC}{\mathcal{C}}
\newcommand{\cD}{\mathcal{D}}
\newcommand{\cE}{\mathcal{E}}
\newcommand{\cJ}{\mathcal{J}}
\newcommand{\cL}{\mathcal{L}}
\newcommand{\cN}{\mathcal{N}}
\newcommand{\cO}{\mathcal{O}}
\newcommand{\cP}{\mathcal{P}}
\newcommand{\cU}{\mathcal{U}}
\newcommand{\C}{\mathbb{C}}
\newcommand{\fm}{\mathfrak{m}}
\newcommand{\fp}{\mathfrak{p}}
\newcommand{\bpm}{\begin{pmatrix}}
\newcommand{\epm}{\end{pmatrix}}
\newcommand{\Gal}{\mathrm{Gal}}
\newcommand{\gp}{\mathrm{gp}}
\newcommand{\Hom}{\mathrm{Hom}}
\newcommand{\node}{\mathrm{node}}
\newcommand{\lreg}{\mathrm{lreg}}
\newcommand{\ncd}{\mathrm{ncd}}
\newcommand{\Frac}{\mathop{\mathrm{Frac}}\nolimits}
\newcommand{\Pic}{\mathrm{Pic}}
\newcommand{\Proj}{\mathop{\mathrm{Proj}}\nolimits}
\newcommand{\id}{\mathrm{id}}
\newcommand{\rank}{\mathrm{rank}\,}
\newcommand{\red}{\mathrm{red}}
\newcommand{\reg}{\mathrm{reg}}
\newcommand{\Spec}{\mathrm{Spec}\,}
\newcommand{\sep}{\mathrm{sep}}
\newcommand{\sncd}{\mathrm{sncd}}
\newcommand{\Supp}{\mathrm{Supp}\,}
\newcommand{\tor}{\mathrm{tor}}
\newcommand{\Zar}{\mathrm{Zar}}
\title{Minimal log regular models of hyperbolic curves over discrete valuation fields}
\author{Ippei Nagamachi}
\date{}
\begin{document}
\maketitle

\setcounter{section}{-1}

\begin{abstract}
In the famous paper of Deligne and Mumford, they proved that a proper hyperbolic curve over a discrete valuation field has stable reduction if and only if the Jacobian variety of the curve has stable reduction in the case where the residue field of its valuation ring is algebraically closed.
In the proof, the theory of minimal regular models played an important role.
In this paper, we establish a theory of minimal log regular models of curves.
As a key tool for this theory, we give a criterion for $2$-dimensional local schemes to be log regular in terms of their minimal desingularization.
Moreover, as an application of this theory, we prove the above equivalence without the assumption on the residue field.
\end{abstract}

\tableofcontents

\section{Introduction}
\label{introsection}
Let $K$ be a discrete valuation field, $O_{K}$ the valuation ring of $K$, and $k$ the residue field of $O_{K}$.
Let $C$ be a proper smooth curve over $K$ with geometrically connected fibers of genus $g$.
Let $J$ be the Jacobian variety of $C$, $\cJ$ the N\'eron model of $J$ over $O_{K}$, and $\cJ_{k}$ the special fiber of $\cJ$. 
Recall that $J$ is said to have stable reduction if the unipotent radical of $\cJ_{k}$ is trivial (cf.\,\cite[Definition 2.1]{DM}).
In \cite{DM}, the following theorem is proven:

\begin{thm}[{\cite[Theorem (2.4)]{DM}}]
\label{DMtheorem2.4}
Suppose that $g\geq 2$ and $k$ is algebraically closed.
Then $C$ has stable reduction (cf.\,Definition \ref{models}) if and only if $J$ has stable reduction.
\end{thm}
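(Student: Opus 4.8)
The plan is to prove the two implications separately, relating the reduction of $C$ to that of $J$ through the Picard functor of models of $C$ over $O_K$. Since ``$C$ has stable reduction'' and ``$J$ has stable reduction'' are both insensitive to completion and strict henselization of $O_K$ (the latter because the formation of N\'eron models commutes with these base changes), I would first reduce to the case where $O_K$ is strictly henselian. For the implication ``$C$ has stable reduction $\Rightarrow$ $J$ has stable reduction'', let $f\colon\cC\to\Spec O_K$ be the stable model; as $k=\bar k$, the special fibre $\cC_k$ is a connected reduced proper $k$-curve with at worst nodes. Set $P:=\Pic^{0}_{\cC/O_K}$. The ingredients are: $f$ is cohomologically flat in dimension $0$ and $H^{2}$ of a curve vanishes, so $P$ is a smooth separated finite-type $O_K$-group scheme with $P_K=J$ (separatedness of $\Pic^{0}$ being classical for stable curves); and by base change $P_k=\Pic^{0}(\cC_k)$, which for a nodal curve over an algebraically closed field is a semiabelian variety --- an extension, of the product over the components of the Jacobians of their normalizations, by a torus of rank the first Betti number of the dual graph of $\cC_k$. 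Because a smooth separated finite-type $O_K$-group scheme with abelian generic fibre and semiabelian special fibre is canonically the identity component of the N\'eron model, $P\cong\cJ^{0}$, so $\cJ_k^{0}=P_k$ has no unipotent radical, i.e.\ $J$ has stable reduction. (The hypothesis $g\geq2$ enters here only to ensure a stable model exists.)

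For the converse, ``$J$ has stable reduction $\Rightarrow$ $C$ has stable reduction'', which is the substantial half, I would work with the minimal regular model $\cC^{\min}\to\Spec O_K$ (which exists by resolution of singularities for arithmetic surfaces and is unique since $g\geq2$), with special fibre $Y=\sum_i r_i\Gamma_i$ and no exceptional $(-1)$-curve. By Raynaud's theory relating the identity component of the N\'eron model to the Picard functor of a regular model, the unipotent rank of $\cJ_k^{0}$ can be computed from $Y$. I would then analyse $\Pic^{0}(Y)$ by d\'evissage along the two natural maps: the thickening $Y_{\red}\hookrightarrow Y$, whose associated filtration of $\cO_Y^{\times}$ by powers of the nilradical $\cI$ has graded pieces coherent sheaves on $Y_{\red}$ contributing vector (hence unipotent) groups, and the normalization of $Y_{\red}$, whose kernel on $\Pic^{0}$ is unipotent unless $Y_{\red}$ has at worst nodes. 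Combining this with the negative-definiteness of the intersection pairing on the $\Gamma_i$ (which controls the conormal bundles that appear), I would show that vanishing of the unipotent part of $\cJ_k^{0}$ forces $Y$ to be reduced with at worst nodal singularities --- that is, $Y$ is a prestable curve. Since $\cC^{\min}$ has no $(-1)$-curve, the only unstable components of $Y$ are then smooth rational curves forming chains meeting the rest of $Y$ in exactly two points; contracting all such chains produces a model of $C$ over $O_K$ whose special fibre is a stable curve (its total space acquiring $A_{n}$-singularities, which is allowed) and which is unique by the uniqueness of stable models, so $C$ has stable reduction. Logarithmically, one expects the r\^ole of the minimal regular model here to be taken over by a minimal log regular model, which is what should remove the hypothesis that $k$ be algebraically closed.

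The genuine difficulty is confined to the converse, and within it to two points. The first is the precise comparison of $\cJ^{0}$ with the Picard functor of the minimal regular model --- Raynaud's theorem, together with the attendant cohomological-flatness and fibre-multiplicity subtleties. The second, and the true crux, is the cohomological-cum-combinatorial computation showing that any non-reducedness of $Y$, or any non-nodal singularity of $Y_{\red}$, forces a nonzero unipotent part in $\cJ_k^{0}$, so that the semiabelian hypothesis propagates all the way down to ``$Y$ is a nodal curve''. The existence of the minimal regular model (arithmetic resolution of singularities) is a prerequisite rather than part of the argument, but it is the reason the proof is not elementary.
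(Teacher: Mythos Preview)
The paper does not give its own proof of this statement: Theorem~\ref{DMtheorem2.4} is quoted from \cite{DM} as background and is used as a black box in the proof of the generalization (Corollary~\ref{intromain1}, Theorem~\ref{generalmain}). So there is no ``paper's proof'' to compare against in the strict sense.

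That said, your outline is essentially the classical Deligne--Mumford argument and is correct. Your forward direction (stable model $\Rightarrow$ semiabelian $\Pic^{0}$) is exactly what the paper carries out in the second half of the proof of Theorem~\ref{generalmain}, citing \cite[Proposition~4.3]{D} and \cite[Proposition~7.4.3]{BLR}. Your converse via Raynaud's comparison of $\cJ^{0}$ with $\Pic^{0}$ of the minimal regular model and the d\'evissage of $\Pic^{0}(Y)$ along $Y_{\red}\hookrightarrow Y$ and the normalization is the right strategy, and you have correctly identified where the real work lies.

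What is worth noting is the structural difference with the paper's generalization. Rather than redoing the unipotent-rank analysis over a general residue field, the paper builds the minimal log regular model $\cC_{\lreg}^{\log}$, proves it commutes with base change along unramified extensions of $O_{K}$ when $C^{\log}$ has log smooth reduction (Proposition~\ref{basechange}), and then \emph{reduces} to the algebraically closed case, invoking \cite[Theorem~(2.4)]{DM} there. Your final paragraph anticipates this correctly: the log regular minimal model is precisely the device that eliminates the hypothesis on $k$, but it does so by descent rather than by reproving the core cohomological computation.
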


Note that, by using elementary descent theory, we can generalize this theorem to the case where $k$ is perfect (and not necessarily algebraically closed).
In the proof of \cite[Theorem (2.4)]{DM}, the theory of minimal regular models of curves plays the key role.
In this paper, for (not necessarily proper) curves over $K$, we establish a theory of minimal log regular models similar to the theory of minimal regular models and give a generalization of Theorem \ref{DMtheorem2.4} to the case where $k$ is not necessarily perfect (cf.\,Corollary \ref{intromain1}).

Here, we give a brief description of pointed curves and their log regular models (for their precise definitions, see \ref{lognotationa}, \ref{curvedfn}, and Notation-Definition \ref{models}).
Let $D$ be a reduced closed subscheme of $C$ finite over $K$ and $r$ the rank of $D$ over $\Spec K$.
Then $(C,C\setminus D)$ is a toric pair, i.e., there exists a log structure on $C$ such that the obtained log scheme is log regular and the open subscheme $C\setminus D$ of $C$ is the trivial locus of the log structure (cf.\,\ref{lognotationa}).
We write $C^{\log}$ or $(C,D)$ for the resulting log scheme.
We define a log regular model $\cC^{\log}$ of $C^{\log}$ to be a log regular scheme $\cC^{\log}$ whose underlying scheme $\cC$ is a model of $C$ such that $(\cC,C\setminus D)$ is a toric pair.
In the following, for any log scheme $X^{\log}$, we write $X$ for the underlying scheme of $X^{\log}$.

Part of the theory of minimal log regular models established in this paper is described as follows:
\begin{thm}
\label{introdefinition}
Suppose that $2g+r-2>0$ and $D$ is \'etale over $\Spec K$.
\begin{enumerate}
\item
We have the ``smallest" minimal log regular model $\cC_{\lreg}^{\log}$ of $C^{\log}$ in a certain sense (cf.\,Theorem \ref{logreglem}, Notation-Definition \ref{lregnota}, and Proposition \ref{mlregstr}).
\item
If $C^{\log}$ has stable reduction (resp.\,log smooth reduction) (cf.\,Notation-Definition \ref{models}), $\cC^{\log}_{\lreg}$ is a stable model (resp.\,a log smooth model) of $C^{\log}$ (cf.\,Proposition \ref{logsmoothcase} and Proposition \ref{stablecoin}).
\item
Suppose that $C^{\log}$ has log smooth reduction.
Let $O_{K'}\supset O_{K}$ be an extension of discrete valuation rings whose ramification index is $1$ and $K'$ the field of fractions of $O_{K'}$.
Write $(C_{K'},D_{K'}):=(C\times_{\Spec K}\Spec K',D\times_{\Spec K}\Spec K')$ and $\cC'^{\log}_{\lreg}$ for the ``smallest" minimal log regular model``$\cC_{\lreg}^{\log}$" for $(C_{K'},D_{K'})$ (cf.\,Theorem \ref{introdefinition}.1).
Then $\cC_{\lreg,O_{K'}}:=\cC_{\lreg}\times_{\Spec O_{K}}\Spec O_{K'}$ is canonically isomorphic to $\cC'_{\lreg}$ (cf.\,Proposition \ref{basechange}).
\end{enumerate}
\end{thm}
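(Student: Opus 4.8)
The plan is to show that $\cC_{\lreg,O_{K}}^{\log}$ — by which I mean the base change of $\cC_{\lreg}^{\log}$ along $\Spec O_{K'}\to\Spec O_{K}$, formed in the category of fs log schemes — satisfies the properties that characterize the ``smallest" minimal log regular model of $(C_{K'},D_{K'})$ in Theorem \ref{introdefinition}.1 (together with Theorem \ref{logreglem}, Notation-Definition \ref{lregnota}, and Proposition \ref{mlregstr}), and then to read off the canonical isomorphism $\cC_{\lreg,O_{K}}^{\log}\cong\cC'^{\log}_{\lreg}$, hence $\cC_{\lreg,O_{K}}\cong\cC'_{\lreg}$, from the uniqueness built into that characterization. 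Note first that $D_{K'}$ is \'etale over $\Spec K'$ and the invariant $2g+r-2>0$ is unchanged, so $\cC'^{\log}_{\lreg}$ is defined; and that, since the ramification index of $O_{K'}$ over $O_{K}$ is $1$, a uniformizer of $O_{K}$ remains a uniformizer of $O_{K'}$, whence the morphism $\Spec O_{K'}\to\Spec O_{K}$ (with the canonical log structures) is \emph{strict} and flat. In particular this base change has underlying scheme $\cC_{\lreg,O_{K}}=\cC_{\lreg}\times_{\Spec O_{K}}\Spec O_{K'}$, with log structure pulled back from $\cC_{\lreg}^{\log}$.

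The first step is to see that $\cC_{\lreg,O_{K}}^{\log}$ is a log regular model of $(C_{K'},D_{K'})$. By Proposition \ref{logsmoothcase}, $\cC_{\lreg}^{\log}$ is log smooth over $\Spec O_{K}$; log smoothness is stable under base change, so $\cC_{\lreg,O_{K}}^{\log}$ is log smooth over $\Spec O_{K'}$, and since $\Spec O_{K'}$ with its canonical log structure is log regular, a log smooth log scheme over it is log regular (Kato's theorem on log regularity). Moreover, flatness of $\Spec O_{K'}\to\Spec O_{K}$ and the pullback description of the log structure show that the trivial locus of $\cC_{\lreg,O_{K}}^{\log}$ is the preimage of $C\setminus D$, namely $C_{K'}\setminus D_{K'}$, and that the scheme-theoretic closure of $D_{K'}$ in $\cC_{\lreg,O_{K}}$ is $\cD_{\lreg}\times_{\Spec O_{K}}\Spec O_{K'}$, scheme-theoretic closure commuting with flat base change. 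Hence $\cC_{\lreg,O_{K}}^{\log}$ is a log regular — in fact log smooth — model of $(C_{K'},D_{K'})$, so $(C_{K'},D_{K'})$ has log smooth reduction.

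The substantive step is to show that $\cC_{\lreg,O_{K}}^{\log}$ is \emph{minimal}, and hence the smallest minimal log regular model. Its special fiber over $O_{K'}$ is the base change $\cC_{\lreg,k}\times_{\Spec k}\Spec k'$, which is reduced (being the special fiber of a log smooth model) and whose geometric special fiber agrees, up to an extension of algebraically closed residue fields, with that of $\cC_{\lreg,k}$. By the log blow-down theory of Theorem \ref{logreglem}, minimality is equivalent to the absence of log exceptional divisors in the special fiber, a condition governed by numerical data — intersection numbers and arithmetic genera on the geometric special fiber, together with its Galois combinatorics — none of which is affected by the residue extension $k\to k'$. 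Concretely, a log exceptional component $E$ of $\cC_{\lreg,k}\times_{k}k'$ lies over a single irreducible component $E_{0}$ of $\cC_{\lreg,k}$, the components lying over $E_{0}$ are mutually conjugate and hence simultaneously log exceptional, and the associated log blow-down descends to a nontrivial log blow-down of $\cC_{\lreg}^{\log}$, contradicting the minimality of $\cC_{\lreg}^{\log}$ from Theorem \ref{introdefinition}.1. So $\cC_{\lreg,O_{K}}^{\log}$ is minimal; being moreover a log smooth model of $(C_{K'},D_{K'})$, which has log smooth reduction, it coincides with $\cC'^{\log}_{\lreg}$ by the characterization of the latter, and the identification is over the common generic fiber $(C_{K'},D_{K'})$ — which is the asserted canonical isomorphism.

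I expect the main obstacle to be precisely this last descent: certifying minimality — equivalently, descending a hypothetical log blow-down — across the residue extension $k\hookrightarrow k'$, in a manner robust to $k'/k$ being large or inseparable and to irreducible components of the special fiber splitting. The cleanest route is to formulate minimality entirely through the geometric special fiber equipped with its $\Gal(\overline{k}/k)$-action, so that passing from $k$ to $k'$ alters nothing on $\overline{k}$-points and only refines the invariant combinatorics, and to use that — by the log blow-down theory — a log blow-down over $O_{K}$ is determined by, and exists as soon as the numerical criterion holds on, the special fiber. A secondary point worth stressing is that the log smoothness hypothesis is exactly what keeps the base change log regular: plain regularity of $\cC_{\lreg,O_{K}}$ may fail for inseparable $k'/k$, so without log smooth reduction the statement would need to be qualified.
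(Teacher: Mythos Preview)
Your argument has a structural gap and at least one false claim. The claim that the special fiber of $\cC_{\lreg,O_{K'}}^{\log}$ is ``reduced (being the special fiber of a log smooth model)'' is simply wrong: log smooth models over $(\Spec O_{K})^{\log}$ routinely have non-reduced special fibers (any multiplicity prime to $p$ is allowed), and Proposition~\ref{prime-to-p} shows reducedness of $\cC_{\lreg,k}$ is \emph{equivalent} to stable reduction, strictly stronger than log smooth reduction. More seriously, your concluding step --- ``$\cC_{\lreg,O_{K'}}^{\log}$ is minimal, hence equals $\cC'^{\log}_{\lreg}$ by the characterization of the latter'' --- does not close, because minimal log regular models are \emph{not} unique: Proposition~\ref{absoluetness}.4 shows $\cC_{\lreg}^{\log}$ fails to be absolute minimal exactly when $\cE_{\bP,1}(\cC_{\ncd})\neq\emptyset$, and its proof exhibits a second, non-isomorphic minimal log regular model. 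The object $\cC'^{\log}_{\lreg}$ is not ``the'' minimal model; it is \emph{defined} (Notation-Definition~\ref{lregnota}) as the contraction of $\cE_{\leq-2}(\cC'_{\ncd})$ inside the absolute minimal regular n.c.d.\ model $\cC'_{\ncd}$. So identifying $\cC_{\lreg,O_{K'}}$ with $\cC'_{\lreg}$ requires first identifying $\cC_{\ncd,O_{K'}}$ with $\cC'_{\ncd}$ --- a step your argument bypasses entirely, leaving you no handle on the minimal log desingularization of $\cC_{\lreg,O_{K'}}^{\log}$.

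The paper's route (Proposition~\ref{basechange}.3) proceeds through $\cC_{\ncd}$: log smoothness guarantees, via Lemmas~\ref{F1case}.1 and~\ref{logsmoothchart}, the separability and smoothness hypotheses of Lemma~\ref{forbasechange}; that lemma then shows $\cC_{\ncd,O_{K'}}$ is again a regular n.c.d.\ model and --- the substantive content --- that membership in $\cE_{=-1}$ and $\cE_{\leq-2}$ is both preserved and \emph{reflected} under base change. This yields $\cC_{\ncd,O_{K'}}=\cC'_{\ncd}$ (no new $(-1)$-curves appear, so minimality persists) and then $\cC_{\lreg,O_{K'}}=\cC'_{\lreg}$ (the contracted loci match). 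Your anticipated obstacle, the descent across $k\hookrightarrow k'$, is exactly Lemma~\ref{forbasechange}.3--4, whose proof is lengthy and handles in detail the phenomena of components splitting or acquiring nodes (the sets $\cE_{0,1'}$, $\cE_{\node}$); your Galois-conjugacy sketch is too coarse to substitute for it, and in particular does not address the $\cE_{\bP,1}$ components that obstruct uniqueness.
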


This theory enables us to generalize Theorem \ref{DMtheorem2.4} as follows:

\begin{cor}[cf.\,{\cite[Theorem (2.4)]{DM}}, Theorem \ref{generalmain}]
\label{intromain1}
Suppose $g\geq 2$.
Then $C$ has stable reduction if and only if $J$ has stable reduction.
\end{cor}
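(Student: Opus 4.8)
The plan is to reduce the general statement to the algebraically closed (or perfect) case already available as Theorem \ref{DMtheorem2.4}, using the base-change stability of the ``smallest'' minimal log regular model. The key observation is that stable reduction of $J$ is insensitive to unramified base change of $O_K$: indeed, if $O_{K'}/O_K$ is unramified then the N\'eron model commutes with this base change, so $\cJ_k$ has no unipotent radical if and only if $\cJ_{k'}$ does. Thus, choosing $O_{K'}$ to be a strict henselization (or a completion thereof) of $O_K$ — which is unramified with algebraically closed residue field $\bar k$ — we are reduced to proving: $C_{K'}$ has stable reduction over $O_{K'}$ if and only if $C$ has stable reduction over $O_K$. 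The backward direction is immediate since stable models base-change. For the forward direction one invokes Theorem \ref{introdefinition}.3: since $C_{K'}$ has stable reduction it in particular has log smooth reduction, and (applying $D=\emptyset$, $r=0$, so the log structure is trivial and ``log regular'' is just ``regular'') the smallest minimal log regular model over $O_K$ base-changes to the smallest one over $O_{K'}$, which by Theorem \ref{introdefinition}.2 is the stable model; descent of the stable-model property along the faithfully flat $\Spec O_{K'}\to\Spec O_K$ then gives stable reduction of $C$.

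More concretely, I would proceed as follows. First, record that by \cite{DM} Theorem \ref{DMtheorem2.4} and the remark following it, the equivalence holds whenever $k$ is perfect. Second, set $O_{K'}$ equal to the completion of a strict henselization of $O_K$: this is a discrete valuation ring, faithfully flat over $O_K$, with ramification index $1$ and algebraically closed residue field, and $K'/K$ is separable. Third, note $\cJ \times_{O_K} O_{K'}$ is the N\'eron model of $J_{K'}$ (N\'eron models commute with such base change), so $J$ has stable reduction over $O_K$ iff $J_{K'}$ has stable reduction over $O_{K'}$. Fourth, apply Theorem \ref{DMtheorem2.4} over $O_{K'}$: $J_{K'}$ has stable reduction iff $C_{K'}$ has stable reduction. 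Fifth, reduce ``$C$ has stable reduction over $O_K$'' to ``$C_{K'}$ has stable reduction over $O_{K'}$''. The nontrivial implication is $(\Leftarrow)$: assuming $C_{K'}$ has stable reduction, it has log smooth reduction, hence (trivial log structure, $D=\emptyset$) $C$ itself has log smooth reduction over $O_K$ — here one must check that log smooth reduction descends along the unramified extension, which is a standard flat-descent argument for the regularity of the total space and the normal-crossings shape of the special fiber. Then Theorem \ref{introdefinition}.3 gives $\cC_{\lreg}\times_{O_K} O_{K'} \cong \cC'_{\lreg}$, and Theorem \ref{introdefinition}.2 identifies $\cC'_{\lreg}$ with the stable model of $C_{K'}$; since being a stable model (proper, flat, with stable special fiber) is a property that descends along the faithfully flat extension $O_K \to O_{K'}$, the model $\cC_{\lreg}$ is a stable model of $C$ over $O_K$, i.e.\ $C$ has stable reduction.

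The main obstacle I anticipate is the descent step in the fifth point: showing that log smooth (equivalently, in the no-marked-points case, semistable) reduction descends from $O_{K'}$ to $O_K$, and then that the resulting total space is actually the stable model and not merely a semistable one. Descent of regularity and flatness of $\cC_{\lreg}$ is routine by faithfully flat descent; the delicate part is that the combinatorial/numerical conditions defining ``stable'' (no $(-1)$- or $(-2)$-curve contractions available, arithmetic genus condition, minimality) are preserved, which is exactly what Theorem \ref{introdefinition}.2 and Proposition \ref{stablecoin} are designed to supply via the canonical base-change isomorphism of Proposition \ref{basechange}. A secondary technical point is ensuring $O_{K'}$ can be taken with $K'/K$ separable so that $C_{K'}$ is again smooth and the Jacobian base-changes correctly; taking the completed strict henselization handles this, since its fraction field extension is separable (even ind-\'etale). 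Once these are in place the corollary follows formally by chaining the equivalences: $C$ stable $\iff$ $C_{K'}$ stable $\iff$ $J_{K'}$ stable $\iff$ $J$ stable.
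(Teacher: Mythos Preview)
Your overall architecture matches the paper's: pass to a strictly henselian $O_{K'}$, use \cite[Theorem (2.4)]{DM} there, and then transport the conclusion back via the base-change compatibility of $\cC_{\lreg}^{\log}$ (Theorem~\ref{introdefinition}.2 and~\ref{introdefinition}.3). The ``only if'' direction is fine and agrees with the paper's argument in Theorem~\ref{generalmain}.

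There is, however, a genuine gap in your ``if'' direction. To invoke Theorem~\ref{introdefinition}.3 you must already know that $C^{\log}$ has log smooth reduction \emph{over $O_K$}. You propose to obtain this by descent from $O_{K'}$, calling it ``a standard flat-descent argument for the regularity of the total space and the normal-crossings shape of the special fiber.'' But this is not what is needed: a regular n.c.d.\ model is \emph{not} automatically log smooth over $(\Spec O_K)^{\log}$ --- log smoothness imposes further constraints on the multiplicities of components (prime to $p$) and on the separability of residue fields at the points of $F(\cC)^{(2)}$ (cf.\ Lemma~\ref{logsmoothchart}). So regularity and normal crossings alone do not descend the property you want. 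Moreover, there is a circularity lurking: to know that your candidate model over $O_K$ base-changes to the one over $O_{K'}$ whose log smoothness you want to exploit, you are essentially using the base-change statement you are trying to apply. The paper sidesteps this entirely: it invokes \cite[Exp.~IX, Proposition~3.9]{SGA7} (stable reduction of $J$ $\Leftrightarrow$ unipotent inertia action) together with \cite[Theorem 4.2 (3)$\Rightarrow$(1)]{Sai2} (tame monodromy $\Rightarrow$ log smooth reduction) to conclude \emph{directly} that $C^{\log}$ has log smooth reduction over $O_K$, and only then applies Theorem~\ref{introdefinition}.3. If you want to salvage your route without those external inputs, you would need the separable-residue version of the base-change statement (Proposition~\ref{basechange}.2 in the body), which holds for $O_{K'}=O_K^{sh}$ \emph{without} assuming log smooth reduction, and then the \'etale-local nature of log smoothness; this works, but it is not the ``standard flat descent'' you describe.

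One further remark: the parenthetical ``trivial log structure, $D=\emptyset$, so `log regular' is just `regular'\,'' is incorrect. Even when $D=\emptyset$, the log structure on a log regular model $\cC^{\log}$ is the one associated to the divisor $\cC_k$, not the trivial one; in particular $\cC$ may be singular (with at worst $2$-dimensional toric singularities), and ``log regular model'' is strictly weaker than ``regular model''.
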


\begin{proof}[Proof of ``if" part of Corollary \ref{intromain1}]
It suffices to show that $\cC_{\lreg}$ is a stable curve over $O_{K}$.
Let $O_{K'}$ be a discrete valuation ring such that $O_{K'}$ satisfies the assumption of Theorem \ref{introdefinition}.3 and the residue field of $O_{K'}$ is algebraically closed.
Since $O_{K'}$ is faithfully flat over $O_{K}$, it suffices to show that $\cC_{\lreg,O_{K'}}$ is a stable curve over $O_{K'}$.
By \cite[Theorem 4.2 (3)$\Rightarrow$(1)]{Sai2} and  \cite[Exp.IX, Proposition 3.9]{SGA7}, $C^{\log}$ has a log smooth model (cf.\,Notation-Definition \ref{models}).
From this and Theorem \ref{introdefinition}.3, it follows that $\cC_{\lreg,O_{K'}}\simeq\cC'_{\lreg}$.
Since $C_{K'}$ has stable reduction by \cite[Theorem (2.4)]{DM} and the assumption that $J$ has stable reduction, $\cC'_{\lreg}(=\cC_{\lreg,O_{K}})$ is a stable curve over  $O_{K'}$ by Theorem \ref{introdefinition}.2.
\end{proof}
 
We give the proof of ``only if" part and a generalization of Corollary \ref{intromain1} as part of Theorem \ref{generalmain}.

In this paper, as applications of the theory of minimal log regular models, we give generalizations of previous works on models of curves over discrete valuation fields with algebraically closed residue fields.
Here, we give comparisons of our results and other papers.
\begin{enumerate}
\item
The same statement as that of Corollary \ref{intromain1} (and Corollary \ref{monodromy}) is given in \cite[Proposition 1.14]{Sai2} without proofs.
However, the author of the present paper could not find proofs of \cite[Proposition 1.14]{Sai2} in the literature.
In the present paper, we give its proof by using only \cite[Theorem 4.2 (3)$\Rightarrow$(1)]{Sai2} (which can be proved without \cite[Proposition 1.14]{Sai2}).
\item
A log smooth version of Corollary \ref{intromain1} is given in \cite[Theorem 1]{Sai2}.
Also, in the case where $k$ is algebraically closed and $D=\emptyset$, similar theorems written in terms of monodromy Galois actions are given in  \cite[THEOREM (3.11)]{Sai1}, \cite{Stix}, and \cite[Theorem 1]{Lo}.
Since regular varieties are not smooth in general if the coefficient fields are not perfect, a naive analogue of \cite[Theorem 1]{Lo} does not hold in general.
In this paper, we give a modified version (cf.\,Corollary \ref{wildmonodromy}).
\item
Suppose that $C$ has log smooth reduction (cf.\,Notation-Definition \ref{models}) and $g\geq 2$.
By \cite[Theorem 4.2]{Sai2}, there exists a tamely ramified extension $L'/K$ such that $C\times_{\Spec K}\Spec L'$ has stable reduction.
Moreover, by \cite[Theorem 1.8]{Sai2}, there exists a positive integer $e$ such that, for any extension of discrete valuation fields $L/K$ whose ramification index is divisible by $e$, $C\times_{\Spec K}\Spec L$ has stable reduction.
(Note that, in \cite{Sai2}, the case where $D\neq\emptyset$ is also treated.)
In the case where $O_{K}$ is complete and $k$ is algebraically closed, it is proved in \cite[Section 7]{H} that there exists a positive integer $e_{\min}$ which divides all such $e$ and satisfies the condition on $e$.
Furthermore, in \cite[Section 7]{H}, a characterization of this $e_{\min}$ in terms of the multiplicities of irreducible components of the special fiber of the minimal regular n.c.d.\,model of $C$ is given.
In this paper, we give a generalization of this result (cf.\,Proposition \ref{genH}).
\end{enumerate}

Next, we explain how to construct (the ``smallest" minimal log regular model) $\cC_{\lreg}^{\log}$.
First, we prove the uniqueness of minimal regular normal crossing divisor models (, which we write $\cC_{\ncd}$ for,) of hyperbolic curves (cf.\,Lemma \ref{uniqueNCD}).
Then, by applying Lipman's contractibility criterion to a suitable divisor whose support is contained in the special fiber of $\cC_{\ncd}$, we obtain a normal model $\cC_{\lreg}$ which has at most rational singularities.
Finally, we prove that $(\cC_{\lreg},C\setminus D)$ is a toric pair.
The most difficult part in the construction is the determination of the divisors contractible to a point in a log regular model.
This issue is treated in Section \ref{logregularsecton}.

In the following, we explain the log regularity criterion given in Section \ref{logregularsecton}.
Let $(R,\fm)$ be a strictly henselian Noetherian local normal domain of dimension $2$ which is \textbf{not} regular.
Write $X$ for the scheme $\Spec R$ and $x$ for the unique closed point of $X$.
Let $D$ be a reduced closed subscheme of $X$ of dimension $1$.
We give a criterion to determine whether the pair $(X,X\setminus D)$ becomes a toric pair.
Let $\pi:\widetilde{X}\to X$ be a desingularization, i.e., a proper birational morphism from a connected regular scheme.
Write $E$ for the exceptional divisor of $\pi$ and $D'$ for the strict transform of $D$ in $\widetilde{X}$.
Note that these are effective reduced Cartier divisors on $\widetilde{X}$.

\begin{dfn}
\label{introdivisorconditions}
We consider the following two conditions:
\begin{itemize}
\item[(Exc)]
$E$ is a normal crossing divisor on $\widetilde{X}$ and $E$ can be written as $E=\sum_{i=1}^{m}E_{i}$ such that the following conditions are satisfied:
(i)  For each $1\leq i\leq m$, $E_{i}$ is isomorphic to $\bP^{1}_{k(x)}$ over $k(x)$.
(ii) For each $1\leq i\leq m$, $\cN_{E_{i}/\widetilde{X}}\simeq O_{\bP^{1}_{k(x)}}(a_{i})$ for some negative integer $a_{i}\leq -2$.
(iii) For $1\leq i< m$, there exists a $k(x)$-rational point $e_{i}$ such that $E_{i}\cap E_{i+1}=\{e_{i}\}$.
(iv) If $|i-j|\geq 2$, $E_{i}\cap E_{j}=\emptyset$.
(Note that $E$ is connected and we have $e_{i}\neq e_{i+1}$.)
\end{itemize}
Suppose condition (Exc) is satisfied.
\begin{itemize}
\item[(Str)]
$D'+E$ is a normal crossing divisor on $\widetilde{X}$ and $D'\cap E$ consists of exactly two $k(x)$-rational points.
Moreover, we have $D'\cap E_{i}=\emptyset\;(1<i<m)$ and $D'\cap E_{1}\neq\emptyset\neq D'\cap E_{m}$.
\end{itemize}
\end{dfn}

If condition (Exc) is satisfied, $\widetilde{X}$ coincides with the minimal desingularization of $X$ (cf.\,Lemma \ref{rationalsingularity}).
If condition (Str) is satisfied, $D'$ can be written as $E_{0}+E_{m+1}$, where $E_{0}$ and $E_{m+1}$ are prime divisors (cf.\,Lemma \ref{notationnormalcrossing}).
These conditions characterize log regularities.

\begin{thm}[cf.\,Theorem \ref{logsummary}]
\label{intromaincharacterization}
The following are equivalent:
\begin{enumerate}
\item
$(X,X\setminus D)$ is a toric pair.
\item
There exists a desingularization of $X$ satisfying conditions (Exc) and (Str) in Definition \ref{introdivisorconditions}.
\end{enumerate}
\end{thm}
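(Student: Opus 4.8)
The plan is to prove the two implications separately: for $(1)\Rightarrow(2)$ I would invoke Kato's structure theory of log regular schemes to identify $R$ with a concrete toric surface singularity and then read off conditions (Exc) and (Str) from its classical Hirzebruch--Jung resolution, while for $(2)\Rightarrow(1)$ I would build the desired log structure on $X$ by contracting the chain $E_{1},\dots,E_{m}$ one component at a time, applying the log blow-down criterion of this paper at each step.

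For $(1)\Rightarrow(2)$, suppose $(X,X\setminus D)$ is a toric pair and endow $X$ with the associated log structure, so that $X^{\log}$ is a $2$-dimensional log regular local log scheme with trivial locus $X\setminus D$. By Kato's structure theorem for log regular schemes, the hypotheses that $R$ is strictly henselian, normal, local of dimension $2$ and \emph{not} regular force $R$ to be, up to strict henselization, the local ring at the torus-fixed point of a non-smooth affine toric surface over $k(x)$, with $D$ its toric boundary; in particular $D$ has exactly two branches through $x$. I would then take $\widetilde{X}\to X$ to be the minimal desingularization, which for such a singularity is the Hirzebruch--Jung toric resolution, obtained by subdividing the cone of $X$ by the primitive lattice points on its boundary. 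From the standard description of this resolution one reads off that the exceptional locus is a chain $E_{1}+\dots+E_{m}$ of curves, each isomorphic to $\bP^{1}_{k(x)}$, with $\cN_{E_{i}/\widetilde{X}}\simeq O_{\bP^{1}_{k(x)}}(a_{i})$ for $a_{i}\leq-2$ (minimality: a Hirzebruch--Jung resolution of a non-smooth $2$-dimensional cone has no $(-1)$-curve), with consecutive components meeting transversally at a single torus-fixed, hence $k(x)$-rational, point and non-consecutive ones disjoint; this is exactly (Exc). Moreover the strict transform $D'$ of $D$ is the union of the two extreme toric divisors of the subdivided fan, which meet, transversally and at torus-fixed points, only $E_{1}$ and $E_{m}$ respectively, while $D'+E$ is the full toric boundary of the smooth toric surface $\widetilde{X}$, hence a normal crossing divisor and $D'\cap E$ is finite \'etale of rank $2$ over $\Spec k(x)$; this gives (Str).

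For $(2)\Rightarrow(1)$, let $\pi\colon\widetilde{X}\to X$ be a desingularization satisfying (Exc) and (Str); by Lemma \ref{rationalsingularity} it is the minimal one, and by Lemma \ref{notationnormalcrossing} we may write $D'=E_{0}+E_{m+1}$ with prime $E_{0},E_{m+1}$, each meeting the chain in a single $k(x)$-rational point (at $E_{1}$, resp.\ $E_{m}$) and otherwise disjoint from $E$. Equip $\widetilde{X}$ with the log structure associated to the normal crossing divisor $D'+E=\sum_{i=0}^{m+1}E_{i}$, which makes $\widetilde{X}^{\log}$ log regular with trivial locus $\widetilde{X}\setminus(D'\cup E)$. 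Now contract the components $E_{m},E_{m-1},\dots,E_{1}$ in this order. At the stage where $E_{m},\dots,E_{j+1}$ have already been contracted, the residual exceptional locus of the induced map to $X$ is contractible to the normal surface $X$, hence negative definite, so $E_{j}$ (which, by (Exc), meets the already-contracted part only through $E_{j+1}$) has negative self-intersection on the current normal surface; it is a copy of $\bP^{1}_{k(x)}$ and, by (Exc)(iii), (Exc)(iv) and (Str), it meets the remaining log boundary in exactly two distinct $k(x)$-rational points. Hence $E_{j}$ is in the configuration required by the log blow-down criterion, and contracting it again produces a $2$-dimensional log regular log scheme whose trivial locus is the image of the previous one. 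After all $m$ steps one has contracted precisely the exceptional locus $E=\pi^{-1}(x)$, so the resulting log scheme has underlying scheme $X$, its log structure is the one attached to the image $D$ of $D'$ (and $x\in D$, since $D$ has two branches through $x$), and its trivial locus is $X\setminus D$. Therefore $(X,X\setminus D)$ is a toric pair.

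The routine part is the toric-geometry computation in $(1)\Rightarrow(2)$, where the only care needed is that all the relevant incidence points are torus-fixed, hence defined over $k(x)$, which is automatic since $R$ is strictly henselian. The main obstacle is the inductive bookkeeping in $(2)\Rightarrow(1)$: one must check that the hypotheses of the log blow-down criterion are reproduced after each contraction --- that the self-intersection of the next component stays negative, that its two incidence points survive as distinct $k(x)$-rational points at which the log boundary remains ``toric'', and that no component ever acquires a third boundary branch --- which is precisely where conditions (iii) and (iv) of (Exc) and the disjointness assertion ``$D'\cap E_{i}=\emptyset\;(1<i<m)$'' of (Str) are used. A secondary point to pin down is that the log structure obtained at the end of the contraction process coincides with the log structure attached to $D$ on $X$, i.e.\ that log blow-down inverts log blow-up at the level of log structures; this is built into the log blow-down formalism developed earlier in the paper.
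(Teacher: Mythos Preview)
Your argument for $(1)\Rightarrow(2)$ is essentially the paper's: the paper's Proposition~\ref{2dimfan} takes the minimal log desingularization of $X^{\log}$, identifies it with the Hirzebruch--Jung subdivision via Kato's fan description, and reads off (Exc) and (Str) from the toric picture (using \cite{M} for the incidence structure and a direct computation for the self-intersections). Your sketch is less precise about why $R$ looks like a toric local ring---Kato's theorem gives a formal/\'etale-local model, not an isomorphism---but the mechanism is the same.

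Your argument for $(2)\Rightarrow(1)$, however, has a genuine circularity. You invoke ``the log blow-down criterion of this paper'' to contract $E_{m},E_{m-1},\dots,E_{1}$ one at a time while preserving log regularity. But the only log blow-down statement in the paper (Proposition~\ref{logexcpetionaldivisor}) is proved \emph{using} Theorem~\ref{logsummary}, of which the present theorem is the core case; there is no prior criterion you can appeal to. Concretely, already the first step---contracting the single curve $E_{m}$ with $(E_{m}\cdot E_{m})\le -2$ meeting the boundary in two rational points and asserting the result is log regular---is exactly the $m=1$ instance of the implication you are trying to prove. After that contraction the surface is singular at the image point, $E_{m-1}$ passes through that singularity, and you would again need to know that contracting it yields a log regular scheme; nothing established so far tells you this.

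The paper's route for $(2)\Rightarrow(1)$ (Proposition~\ref{definelogstructure}) is entirely different and does not proceed by successive contractions. It works directly on $X$: one sets $M=\Gamma(X,j_{\ast}\mathcal{O}_{X\setminus D}^{\ast}\cap\mathcal{O}_{X})$ and shows $M/R^{\ast}$ is an f.s.\ sharp monoid of rank $2$ by computing the kernel of $\Z^{\oplus(m+2)}\to\Pic(\widetilde{X})$ (using that $R$ has a rational singularity, Lemma~\ref{rationalsingularity}, and Lipman's description of $\Pic(\widetilde{X})$), and then shows that a section $P\hookrightarrow M$ is a chart by proving the image of $P\setminus\{1\}$ generates $\fm$, via the identification $\fm/\fm^{2}\simeq H^{0}(\widetilde{X},\mathcal{O}_{\widetilde{X}}(-E)|_{E})$ and the combinatorics of Lemma~\ref{propertiesoffans}. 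This direct chart construction is the substantive content you are missing.
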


Note that, in Theorem \ref{logsummary}, we give a generalized version of this theorem.
In the construction of a chart in the proof of ($2\Rightarrow1$ of) Theorem \ref{intromaincharacterization}, we need to prove the following:
(i) $M/R^{\ast}$ is an f.s.\,monoid, where $M:=\{r\in R\mid r\text{ is invertible outside }D\}$.
(ii) A section of the monoid homomorphism $M\to M/R^{\ast}$ is a desired chart (in particular, the image of $(M/R^{\ast})\setminus\{1\}$ generates $\fm$ as an ideal of $R$).
To prove (i), we first consider the short exact sequence
$$0\to O_{\widetilde{X}}^{\ast}\to (O_{\widetilde{X}}\cap j_{\ast}O_{\widetilde{X}\setminus (D'\cup E)}^{\ast})^{\gp}
\to (O_{\widetilde{X}}\cap j_{\ast}O_{\widetilde{X}\setminus (D'\cup E)}^{\ast})/O_{\widetilde{X}}^{\ast}\to0,$$
where $j$ is the open immersion $j:\widetilde{X}\setminus (D'\cup E)\to \widetilde{X}$.
By using the long exact sequence of the cohomology groups associated to this exact sequence (cf.\,(\ref{picexact}) in the proof of Proposition \ref{definelogstructure}), we obtain an exact sequence of groups
\begin{align*}
0\to(M/R^{\ast})^{\gp}\to\Z^{\oplus m+2}\to\Pic(\widetilde{X}),
\end{align*}
where we can identify $\Z^{\oplus m+2}$ with the subgroup of the Weil divisor group of $\widetilde{X}$ generated by $E_{0},\cdots,E_{m+1}$.
The structure of $\Pic(\widetilde{X})$ can be explained using Lipman's result since $R$ has a rational singularity (cf.\,Lemma \ref{rationalsingularity}).
In this way, we can prove (i).
To show (ii), we use the isomorphism
$$H^{0}(\widetilde{X},O_{\widetilde{X}}(-E)|_{E})\simeq \fm/\fm^{2}$$
which follows from the fact that $R$ has a rational singularity (cf.\,Lemma \ref{rationalsingularity}).
It follows from this isomorphism that the image of $(M/R^{\ast})\setminus\{1\}$ in the linear space $H^{0}(\widetilde{X},O_{\widetilde{X}}(-E)|_{E})$ is a system of generators and hence (ii) holds (cf.\,Proposition \ref{definelogstructure}).

The contents of each section and each subsection are as follows:
In section \ref{logregularsecton},  we study singularities and desingularizations of log regular schemes of dimension $2$.
In subsection \ref{convexsubsection}, we study $2$-dimensional strongly convex rational polyhedral cones.
In subsection \ref{exceptionsdivisorsubsection}, we study the exceptional divisors of the minimal desingularizations of log regular schemes of dimension $2$.
In subsection \ref{logblowupsubsection}, we study log blow-ups of log regular schemes of dimension $2$.
In subsection \ref{summarysubsection}, we give the log regularity criterion.
In section \ref{modelsection},  we study models of log regular curves and establish a theory of minimal log regular curves.
In subsection \ref{defsubsection}, we give the definitions of various models of curves over discrete valuation rings.
In subsection \ref{minimalmodelconstructionsection}, we study fundamental properties of minimal regular n.c.d.\,models.
In subsection \ref{lregsubsection}, we study fundamental properties of log regular models and define $\cC^{\log}_{\lreg}$.
In section \ref{thirdsection}, we study properties of $\cC_{\lreg}^{\log}$.
In subsection \ref{lsmsubsection}, we discuss the fundamental structures of log smooth models of log regular curves.
In subsection \ref{basechangesubsection}, we discuss the base changes of $\cC^{\log}_{\ncd}$ and $\cC^{\log}_{\lreg}$.
In subsection \ref{stablesubsection}, we study stable models and generalize \cite[Theorem (2.4)]{DM}.
In subsection \ref{psubsection}, we discuss prime divisors whose supports are contained in the special fibers of log smooth models whose multiplicities are divisible by $p$ and give a generalization of \cite[THEOREM (3.11)]{Sai1}, \cite[Theorem 4.2]{Sai2}.

\noindent \textbf{Acknowledgement}:
The author of this paper thanks Yuichiro Hoshi, Yuya Matsumoto, Takeshi Saito, Teppei Takamatsu, Akio Tamagawa, and Naganori Yamaguchi for helpful comments.
In particular, the auther thanks Akio Tamagawa and Naganori Yamaguchi for giving him the opportunity of considering this problem.
He is supported by JSPS KAKENHI Grant Number 21J00489.

\subsection{Notations and Conventions}

In this paper, for a scheme $X$, we shall write $X_{\red}$ for the maximal reduced closed subscheme of $X$.
For morphisms of schemes $X\to Y$ and $Z\to Y$, we shall write $X_{Z}$ for the scheme $X\times_{Y}Z$ over $Z$.
For any finite set $S$, we shall write $\sharp S$ for the cardinality of $S$.

\begin{sub}[Log schemes]
\label{lognotationa}
Let $(X,M_{X},\alpha)$ be a triple such that $X$ is a scheme, $M_{X}$ is a sheaf of monoids on the \'etale site of $X$, and $\alpha:M_{X}\to O_{X}$ is a monoid homomorphism (to the multiplicative monoid sheaf $O_{X}$) satisfying $\alpha^{-1}O_{X}^{\ast}\simeq O_{X}^{\ast}$.
We shall refer to such a triple $(X,M_{X},\alpha)$ as a {\em log scheme} (cf.\,\cite{Ka1}).
We often write $X^{\log}$ or $(X,M_{X})$ for such a log scheme $(X,M_{X},\alpha)$.
In this paper, any log scheme $(X,M)$ is said to be {\em fine saturated} (or, an {\em f.s.\,log scheme}) if the following condition (S) is satisfied (cf.\,\cite[(1.5)]{Ka2}):\\
(S): $X$ is locally Noetherian and there exist an \'etale covering $\{U_{\lambda}\}_{\lambda}$ of $X$, f.s.\,monoids $\{P_{\lambda}\}_{\lambda}$, and homomorphisms $\{h_{\lambda}:P_{\lambda}\to \cO_{U_{\lambda}}\}_{\lambda}$ such that $M|_{U_{\lambda}}$ is isomorphic to the log structure associated to the pre-log structure $(P_{\lambda},h_{\lambda})$ for each $\lambda$ (, which we write $P_{\lambda}^{a}$ for).
We write $\times^{\log}$ for the fiber product in the category of f.s.\,log schemes.

Let $(X,M_{X})$ be a log scheme.
For $x\in X$ and a geomertic point $\overline{x}\to X$ over $x$, we shall write $I(\overline{x},M_{X})$ for the ideal of $O_{X,\overline{x}}$ generated by the image of $M_{X,\overline{x}}\setminus O_{X,\overline{x}}^{\ast}$ (cf.\,\cite[(2.1) Definition]{Ka2} and \cite[Definition 2.2]{Niz}).
Recall that, for an f.s.\,log scheme $X^{\log}=(X,M_{X})$ and a point $x\in X$, $X^{\log}$ is said to be {\em log regular} at $x$ if, for a geometric point $\overline{x}$ over $x$, the following two conditions hold:
\begin{itemize}
\item[(i)]
$O_{X,\overline{x}}/I(\overline{x},M)$ is a regular local ring,
\item[(ii)]
$\dim(O_{X,\overline{x}})=\dim(O_{X,\overline{x}}/I(\overline{x},M))+\rank(M_{X,\overline{x}})^{\gp}/\cO_{X,\overline{x}}^{\ast}$.
\end{itemize}
Note that \cite{Ka2} treats Zariski log regular schemes (cf.\,\cite[(2.1) Definition]{Ka2}).
In the case where there exists a chart $P\to \Gamma(X,M_{X})$ such that $P$ is a fine saturated monoid, $(X,M_{X})$ is log regular if and only if $(X,M_{X}^{\Zar})$ is Zariski log regular by \cite[Lemma II.4.6]{Ts}.
Here, $M_{X}^{\Zar}$ is the Zariski sheaf associated to $M_{X}$.
Suppose that $(X,M_{X})$ is a log regular log scheme.
We shall write $F(X):=\{x\in X\mid I(\overline{x},M_{X})=\fm_{\overline{x}}\}$ and $F(X)^{(i)}:=\{x\in F(X)\mid \dim O_{X,x}=i\}$ for $i\in\N$.
By the discussion in \cite[(10.4)]{Ka2}, $F(X)$ is a finite set if $X$ is quasi-compact.
Let $x$ be a point of $X$.
Take a geometric point $\overline{y}$ of $\Spec O_{X,\overline{x}}$ whose image corresponds to the prime ideal $I(\overline{x},M_{X})$ of $O_{X,\overline{x}}$ and write $y$ for the image of $\overline{y}$ in $X$.
We have a cospecialization homomorphism $M_{X,\overline{x}}\to M_{X,\overline{y}}$, which is an isomorphism by \cite[Lemma 2.12(4)]{Niz}.

We shall refer to a pair of schemes $(Y,U)$ as a {\em toric pair} if there exists a log structure $M_{Y}\to O_{Y}$ such that the log scheme $(Y,M_{Y})$ is log regular and $U=\{y\in Y\mid M_{Y,\overline{y}}= O_{Y,\overline{y}}^{\ast}\}$.
For a toric pair $(Y,U)$, we shall write $j_{U}$ for the open immersion $U\to Y$.
Recall that, if $(Y,M_{Y})$ is a log regular log scheme, then $Y$ is normal, $Z:=\{y\in Y\mid M_{Y,\overline{y}}\neq O_{Y,\overline{y}}^{\ast}\}$ is a closed subset of $Y$ of pure of codimension $1$, and $M_{Y}\simeq j_{Y\setminus Z,\ast}O_{Y\setminus Z}^{\ast}\cap O_{Y}$.
(See the paragraph before \cite[Definition 1.2]{M} and the paragraph before \cite[Proposition 1.1]{Sai2}.)
Hence, the log structure in the definition of a toric pair is uniquely determined from the pair.
For a given toric pair $(Y,U)$, we write $Y^{\log}$ for the log scheme.
\end{sub}

Here, we give an elementary lemma.

\begin{lem}
\label{stalkwiselogregular}
Let $X$ be a locally Noetherian scheme, $D$ a closed subscheme of $X$ of pure of codimension $1$, and $U$ the complement $X\setminus D$.
\begin{enumerate}
\item
Suppose that $(X,U)$ is a toric pair such that $X^{\log}$ is Zariski log regular.
Then $F(X)$ coincides with the subset of $X$ consisting of the generic points of the intersections of finitely many irreducible components of $D$.
\item
The following are equivalent:
(i) $(X,U)$ is a toric pair.
(ii) For any $x\in X$, $(\Spec O_{X,x}, U\times_{X}\Spec O_{X,x})$ is a toric pair.
\end{enumerate}
\end{lem}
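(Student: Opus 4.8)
\emph{Part (1).} I would argue from the description recalled in \ref{lognotationa}: since $X^{\log}$ is log regular with trivial locus $U$, one has $Z=D$ and $M_{X}\simeq O_{X}\cap j_{U,\ast}O_{U}^{\ast}$. Fix $x\in X$, write $d:=\dim O_{X,x}$ and $s:=\rank(M_{X,x}^{\gp}/O_{X,x}^{\ast})$. Using Kato's structure theory of Zariski log regular log schemes (\cite{Ka2}), choose, Zariski-locally at $x$, a regular system of parameters $t_{1},\dots,t_{d}$ of $O_{X,x}$ with $D=V(t_{1}\cdots t_{s})$ near $x$. Since $\Spec O_{X,x}$ is identified with the set of generizations of $x$ in $X$, the primes $(t_{1}),\dots,(t_{s})$ are the generic points of the $s$ pairwise distinct irreducible components $D_{1},\dots,D_{s}$ of $D$ through $x$; and normality of $O_{X,x}$ forces $M_{X,x}=O_{X,x}^{\ast}\cdot\{\,t_{1}^{a_{1}}\cdots t_{s}^{a_{s}}\mid a_{i}\ge 0\,\}$, so that $I(x,M_{X})=(t_{1},\dots,t_{s})$. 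Hence $x\in F(X)$, i.e.\ $I(x,M_{X})=\fm_{x}$, exactly when $s=d$, in which case $\overline{\{x\}}=V(t_{1},\dots,t_{d})$ locally at $x$, i.e.\ $x$ is a generic point of an irreducible component of $D_{1}\cap\dots\cap D_{d}$. Conversely, if $x$ is a generic point of an irreducible component $W$ of an intersection $D_{i_{1}}\cap\dots\cap D_{i_{k}}$ of distinct irreducible components of $D$, I replace $\{D_{i_{1}},\dots,D_{i_{k}}\}$ by the full set $\{D_{1},\dots,D_{s}\}$ of irreducible components of $D$ through $x$; then $W$ is still an irreducible component of $D_{1}\cap\dots\cap D_{s}=V(t_{1},\dots,t_{s})$, which is irreducible of codimension $s$ at $x$, so $d=s$ and $x\in F(X)$. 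This gives the asserted description (the case $D=\emptyset$ being trivial).

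\emph{Part (2).} The implication (i)$\Rightarrow$(ii) I would obtain by localization: pulling the log structure of $X^{\log}$ back along $\Spec O_{X,x}\to X$, the strictly henselian local rings and the stalks of the log structure at geometric points of $\Spec O_{X,x}$ agree with those of $X$, so conditions (i)--(ii) of log regularity and the existence of an f.s.\ chart are inherited, and the trivial locus of the pullback is $U\times_{X}\Spec O_{X,x}$. For (ii)$\Rightarrow$(i), set $M_{X}:=O_{X}\cap j_{U,\ast}O_{U}^{\ast}$, a log structure on $X$ whose trivial locus is exactly $U$. For a geometric point $\overline{x}\to X$ over $x\in X$, the \'etale neighbourhoods of $\overline{x}$ in $\Spec O_{X,x}$ are cofinal among those in $X$, so the stalk $M_{X,\overline{x}}$ is canonically the stalk at $\overline{x}$ of $O_{\Spec O_{X,x}}\cap j_{U_{x},\ast}O_{U_{x}}^{\ast}$, where $U_{x}:=U\times_{X}\Spec O_{X,x}$; by the uniqueness recalled in \ref{lognotationa} this is the log structure realizing the toric pair $(\Spec O_{X,x},U_{x})$. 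Since $O_{X,\overline{x}}$ is at the same time the strictly henselian local ring of $\Spec O_{X,x}$ at $x$, conditions (i)--(ii) of log regularity for $(X,M_{X})$ at $x$ hold because they hold for that toric pair at its closed point; thus they hold at every point of $X$. It remains to check that $(X,M_{X})$ is fine saturated: since $X$ is locally Noetherian, an f.s.\ chart $P\to\Gamma(\Spec O_{X,x},M_{X})$ (which exists, the right-hand log structure being f.s.) has its finitely many generators given by elements of $O_{X,x}$ that become invertible on $U_{x}$, both properties spreading to a common affine open neighbourhood $V$ of $x$, so that one gets a homomorphism $P\to\Gamma(V,M_{X})$ which, after shrinking $V$, is again a chart. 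Hence $(X,M_{X})$ is an f.s.\ log regular log scheme with trivial locus $U$, i.e.\ $(X,U)$ is a toric pair.

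\emph{Where the difficulty lies.} I expect the one genuinely nontrivial point to be the fine saturatedness in (ii)$\Rightarrow$(i). Conditions (i)--(ii) of log regularity concern only strictly henselian local rings, and therefore transfer from the $\Spec O_{X,x}$ to $X$ for free; but fine saturatedness of the global candidate $O_{X}\cap j_{U,\ast}O_{U}^{\ast}$ is not a pointwise property, and must be produced by spreading an f.s.\ chart --- together with the isomorphism of log structures it induces --- out from each local ring to a genuine open neighbourhood. This is the usual kind of Noetherian limit argument, but it is the place where the locally Noetherian hypothesis on $X$ is really used, and it is what makes the two halves of statement (2) nonformal.
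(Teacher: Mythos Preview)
Your argument for Part~(1) has a genuine gap: you assume that $O_{X,x}$ admits a regular system of parameters $t_{1},\dots,t_{d}$ with $D=V(t_{1}\cdots t_{s})$ locally, i.e.\ that $X$ is regular and $D$ is a simple normal crossing divisor. But log regularity does \emph{not} imply regularity of the underlying scheme --- indeed, the whole thrust of Section~\ref{logregularsecton} of the paper is the analysis of singular $2$-dimensional log regular schemes (think of affine toric varieties such as $\Spec k[x,y,z]/(xy-z^{2})$ with its torus-invariant divisor). At a singular point $x\in F(X)^{(2)}$ there is no regular system of parameters, and your local description of $D$ and of $I(x,M_{X})$ collapses. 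The paper instead reduces to the local ring and then invokes Kato's fan/stratification theory (\cite[(7.3), (10.1), (10.2)]{Ka2} and \cite[Corollary~2.3.8]{Ogus}), which handles the singular case: the irreducible components of $D$ through $x$ correspond to the height-$1$ primes of $P_{\overline{x}}$, and $x$ is a generic point of their intersection precisely when these primes generate the maximal ideal, i.e.\ when $x\in F(X)$. Your argument is correct in the special case where $X$ happens to be regular, but that case is not the one that matters here.

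Your treatment of Part~(2) is in the right spirit and close to the paper's, but the last step --- ``after shrinking $V$, is again a chart'' --- is exactly the point that requires justification, as you yourself note. The paper does not leave this to a pure limit argument: having spread $P_{\overline{x}}\to M_{X}$ to an \'etale neighbourhood, it shrinks so that every $z\in F'(X)$ specializes to $x$ (using finiteness of $F'(X)$ and the result of Part~(1) on $\Spec O_{X,x}$), and then checks by hand that $P_{\overline{x}}^{a}\to M_{X}$ is surjective at every geometric point $\overline{y}$ by cospecializing to a point of $F(\Spec O_{X,x})$ and applying \cite[(10.2)]{Ka2}. So the paper's proof of (ii)$\Rightarrow$(i) genuinely uses Part~(1), which your approach does not; you should either supply the missing limit argument for why the spread-out map is a chart, or follow the paper's stalk-by-stalk verification.
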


\begin{proof}
First, we show assertion 1.
Let $x$ be a point of $X$.
Write $F'(X)$ (resp.\,$F'(\Spec O_{X,x})$) for the subset of $X$ consisting of the generic points of the intersections of finitely many irreducible components of $D$ (resp.\,$D\times_{X}\Spec O_{X,x}$).
Then $x\in F'(X)$ if and only if $x\in F'(\Spec O_{X,x})$.
Hence, we may assume that $X=\Spec O_{X,x}$.
It suffices to show $x\in F(X)\Leftrightarrow x\in F'(X)$.
Then assertion 1 follows from \cite[Corollary 2.3.8]{Ogus}, \cite[(7.3) COROLLARY]{Ka2}, \cite[(10.1) PROPOSITION]{Ka2} and its proof, and \cite[(10.2)]{Ka2}.

Next, we show assertion 2.
(i) $\Rightarrow$(ii) follows from the definition of the log regularity.
Suppose that (ii) holds.
Then $X$ is normal and $M_{X}:=j_{U,\ast}O_{U}^{\ast}\cap O_{X}\to O_{X}$ is a log structure on $X$.
We need to show that the log scheme $X^{\log}$ is log regular.
Let $x$ be a point of $X$ and $\overline{x}$ a geometric point of $X$ over $x$.
By (ii), $P_{\overline{x}}:=M_{X,\overline{x}}/O_{X,\overline{x}}^{\ast}$ is an f.s.\,sharp monoid.
We will show that, there exist an \'etale neighborhood $V\to X$ of $\overline{x}$ and a homomorphism $P_{\overline{x}}\to\Gamma(V,M_{X})$ which is a chart of $(V,M_{X}|_{V})$.
By the discussion after \cite[Definition 1.3]{M}, we have a chart $P_{\overline{x}}\to O_{X,\overline{x}}$ of $(\Spec O_{X,\overline{x}})^{\log}$ which induces the identity of $P_{\overline{x}}$.
Since $P_{\overline{x}}$ is finitely generated, we may assume that $X(=\Spec A)$ is affine and we have a monoid homomorphism $P_{\overline{x}}\to\Gamma(X,M_{X})$ inducing the identity of $P_{\overline{x}}$ by replacing $X$ with a suitable \'etale neighborhood of $\overline{x}$.
Furthermore, since $F'(X)$ is a finite set, we may assume that $x\in\overline{\{z\}}$ for any point $z\in F'(X)$.
Hence, we have $F'(X)=F'(\Spec O_{X,x})=F(\Spec O_{X,x})$ by assertion 1.

It suffices to show that $P^{a}_{\overline{x}}\to M_{X}$ is an isomorphism.
The injectivity follows from \cite[Proposition 1.2.2]{Ogus} and the fact that homomorphism $P_{\overline{x}}\to\Gamma(X,M_{X})$ is injective and the stalks of $M_{X}$ are integral.
To see the surjectivity of $P^{a}_{\overline{x}}\to M_{X}$, it suffices to show that, for any geometric point $\overline{y}$ of $X$, $P_{\overline{x}}\to M_{X,\overline{y}}/O_{X,\overline{y}}^{\ast}$ is surjective.
Take a geometric point $\overline{z}$ of $\Spec O_{X,\overline{y}}$ whose image corresponds to the prime ideal $I(\overline{y},M_{X})$ of $O_{X,\overline{y}}$ and write $z$ for the image of $\overline{z}$ in $X$.
Then the cosepcialization homomorphism $M_{X,\overline{y}}/O_{X,\overline{y}}^{\ast}\to M_{X,\overline{z}}/O_{X,\overline{z}}^{\ast}$ is an isomorphism by (ii).
Since $P_{\overline{x}}\to O_{X,x}$ is a chart of the log regular scheme $(\Spec O_{X,x})^{\log}$, the desired surjectivity follows from \cite[(10.2)]{Ka2}.
\end{proof}

\begin{sub}[A ``node"-like local ring]
\label{node-like}
We discuss $1$-dimensional local rings with ``node"-like structures.
Let $D$ be a $1$-dimensional Noetherian reduced scheme, $d$ a closed point of $D$, $\fm_{d}$ for the maximal ideal of $O_{D,d}$, and $\pi:\widetilde{D}\to D$ the normalization of $D$.
Write $\Spec R$ for the scheme $\widetilde{D}\times_{D}\Spec O_{D,d}$.
Suppose that $\mathrm{length}_{O_{D,d}}R/O_{D,d}=1$ and $\widetilde{D}\times_{D}\Spec k(d)\to\Spec k(d)$ is finite \'etale of rank $2$.
Then $O_{D,d}$ coincides with the inverse image of the image of $k(d)$ in $R/\fm_{d}R$ via the natural surjection $R\to R/\fm_{d}R$.
Moreover, if $\widetilde{D}\to D$ is isomorphic over $D\setminus\{d\}$, the underlying topological space of $D$ is a quotient topological space of that of $\widetilde{D}$.
These show that $D$ is determined uniquely by $\widetilde{D}$ and the \'etale morphism $\pi^{-1}(d)\to \Spec k(d)$ of rank $2$.
\end{sub}

\begin{sub}[Curves (cf.\,{\cite{DM}}, {\cite{Kn}})]
\label{curvedfn}
Let $T$ be a scheme, $C$ a scheme over $T$, and $D$ a closed subscheme of $C$.
For natural numbers $g$ and $r$, we shall say that the pair of schemes $(C,D)$ is a {\em stable curve of type $(g,r)$} over $T$ if the following hold:
\begin{itemize}
\item
$C$ is a proper flat scheme over $T$ such that any geometric fiber of $C\to T$ is a connected curves of arithmetic genus $g$ which has only ordinary double points.
(We write $C^{\mathrm{sm}}$ for the smooth locus of $C$ over $T$.)
\item
The structure morphism $D\to T$ is finite \'etale of rank $r$ and $D$ is contained in $C^{\mathrm{sm}}$.
\item
For any geometric point $t$ of $T$ and any nonsingular rational component $E$ of $C_{t}$, we have $\sharp(E\cap D_{t})+\sharp(E\setminus C^{\mathrm{sm}}_{t})\geq3$.
\item
$2g+r-2>0$.
\end{itemize}
We shall say that a stable curve $(C,D)$ over $T$ is a {\em hyperbolic curve} if $C=C^{\mathrm{sm}}$.
In the case where we consider a stable curve $(C,D)$ of type $(g,0)$, we often simply write $C$ for the stable curve.

Suppose that $T=\Spec K$ for a field $K$.
We shall say that $(C,D)$ is a {\em log regular curve} over $K$ if $C$ is smooth proper over $K$ of relative dimension $1$ with geometrically connected fibers and $D$ is a reduced effective divisor of $C$.
Note that a hyperbolic curve over $K$ is a log regular curve.
Since $(C,C\setminus D)$ is a toric pair, we write $C^{\log}$ for the log scheme.

In this paper, for a $1$-dimensional Noetherian integral scheme $F$, we shall write $k(F)$ for the ring $H^{0}(F,O_{F})$ in the case where $k(F)$ is a field and $F$ is proper over $k(F)$.
We shall say that such a scheme $F$ is {\em of type $(g,0)$ over $k(F)$} if the arithmetic genus of $F$ over $k(F)$ is $g$.
\end{sub}

\begin{sub}[Normal crossing divisors]
\label{ncddef}
Let $D$ be an effective Cartier divisor on a scheme $X$.
In this paper, we also treat $D$ as a closed subscheme of $X$.
We shall refer to an integral effective Cartier divisor as a {\em prime divisor}.
We shall say that $D$ is a {\em simple normal crossing divisor} (or $D$ has {\em simple normal crossings}) if $X$ is regular around $D$ and, for any point $d\in D$, there exists a subset of regular system of parameters $f_{1},\cdots,f_{r}$ such that we have $D|_{\Spec O_{X,d}}\simeq V(f_{1}\cdots f_{r})(\subset \Spec O_{X,d})$.
We shall say that $D$ is a {\em normal crossing divisor} (or $D$ has {\em normal crossings}) if, for any point $d\in D$, there exist an \'etale neighborhood $U\to X$ of $d$ such that $D|_{U}$ is a simple normal crossing divisor on $U$.
If $D$ is a normal crossing divisor on $X$, we can define a natural log structure on $X$ (cf.\,\cite[(1.5)]{Ka1}).
If, moreover, $X$ is regular, the associated log scheme $X^{\log}$ is log regular and the associated toric pair is $(X,X\setminus D)$.
If the local equation of $D$ at $d$ is described as above \'etale locally, we have $d\in F(X)^{(r)}$.

Suppose that $X$ is a $2$-dimensional local scheme and $D$ has normal crossings.
Write $d$ for the unique closed point of $X$.
Suppose $d\in F(X)^{(2)}$.
One can see that $D$ satisfies the assumptions of ``$D$" in \ref{node-like}.
Indeed, we may assume that $D$ has simple normal crossings.
By taking a regular system of parameters $f_{1},f_{2}$ satisfying $V(f_{1}f_{2})\simeq D$ and considering the normalization $V(f_{1})\coprod V(f_{2})\to D$, we can show that the desired assumptions are satisfied.
We give a generalization of this fact in Theorem \ref{logsummary}.4.
\end{sub}

\begin{sub}[Rational curves which is irreducible and not geometrically irreducible]
\label{badcurves}
Let $k'/k$ be a separable extension of fields of degree $2$.
Let $C$ be a $1$-dimensional integral scheme proper over $k$ whose normalization $C_{n}(\xrightarrow{\pi} C)$ is isomorphic to $\bP^{1}_{k'}$ over $k$.
Suppose that $C$ is normal except at an exactly one ordinary double point $c$ which is $k$-rational.
(Note that, by \ref{ncddef}, $C$ is uniquely determined by $C_{n}$ and the point of $C_{n}$ over $c$.)
By considering the long exact sequence of the cohomology groups arising from the exact sequence of the coherent sheaves $0\to O_{C}\to\pi_{\ast}O_{C_{n}}\to k\to0$, we obtain $\dim_{k}H^{1}(C,O_{C})=0$.
We treat the notion of the degree of line bundles on $C$ or $C_{n}$ explained in \cite[Section 9.1]{BLR}.
For example, for any line bundle $\cL$ on $C$, we have $\deg_{C/k}\cL=\deg_{C_{n}/k}\cL|_{C_{n}}=2\deg_{C_{n}/k'}\cL|_{C_{n}}$.
We also have $\deg_{C_{n}/k}O_{\bP^{1}_{k'}}(a)=2a$ for any integer $a$.
\end{sub}

\begin{sub}[Exceptional curves of the first kind]
\label{firstkind}
Let $Y$ be a scheme projective over a Noetherian ring $A$.
A reduced effective Cartier divisor $E$ on $Y$ satisfying the following two conditions are referred to as an {\em exceptional curve of the first kind} (cf.\,\cite[Tag:0C2I]{Stacks}):
\begin{itemize}
\item[(Fir$_{1}$)]
There exist a field $k(E)$ and an isomorphism of schemes $\bP^{1}_{k(E)}\to E$.
(Note that $k(E)$ is the coefficient field of $E$.)
\item[(Fir$_{2}$)]
The restriction of the normal sheaf $\cN_{E/Y}$ of $E$ in $Y$ to $E$ is isomorphic to $O_{\bP^{1}_{k(E)}}(-1)$.
\end{itemize} 
For any Cartier divisor $E$ on $Y$, $E$ is an exceptional curve of the first kind if and only if there exist a scheme $Y'$ projective over $A$ and a regular closed point $y'\in Y'$ such that $Y$ is isomorphic to the blow-up of $Y'$ at $y'$ and $E$ can be regarded as the exceptional divisor of the blow-up via the isomorphism by \cite[Tag:0AGQ]{Stacks} and \cite[Tag:0C2M]{Stacks}.
\end{sub}

\begin{sub}[Blow-ups at points of normal crossing divisors]
\label{ncd}
Here, we give a generalization of \cite[Lemma 9.2.31]{Liu}.
Note that the excellence of the divisors in the assumption of \cite[Lemma 9.2.31]{Liu} is not used in the proof.

Let $Y'$ be a $2$-dimensional regular local scheme, $y'$ the unique point of $Y'$, $Y$ the blow-up of $Y'$ at $y'$, $E$ the exceptional divisor on $Y$, $F'$ a prime divisor on $Y'$ of dimension $1$, and $F$ the strict transform of $F'$.
$F'$ is regular if and only if $F+E$ has (simple) normal crossings and $F\cap E\simeq\Spec k(y')$.
(Indeed, we may assume $F\cap E\simeq\Spec k(y')$.
Then both conditions are equivalent to the condition that $f'\in\fm_{y'}\setminus\fm^{2}_{y'}$, where $f'$ is a local defining equation of $F'$.)
Let $F'_{i}$ be integral closed subschemes of $\Spec O_{Y',y'}$ of dimension $1$ and $F_{i}$ the strict transform of $F'_{i}$ for $i=1,2$.
Then $E+F_{1}+F_{2}$ has (simple) normal crossings if and only if $F'_{1}+F'_{2}$ does.
(Indeed, we may suppose that $F'_{i}$ is regular for $i=1,2$.
Then $E\cap F_{1}\cap F_{2}=\emptyset$ if and only if $F'_{1}+F'_{2}$ has simple normal crossings.)
Moreover, the following are equivalent:
(i) $F'$ has normal crossings and \textbf{not} regular.
(ii) $F+E$ has normal crossings and $F\cap E\to\Spec k(y')$ is finite \'etale of degree $2$.
(Indeed, by replacing $O_{Y',y'}$ with its strict henselization, we can write $F'$ as $F'_{1}+F'_{2}$, where $F'_{1}$ and $F'_{2}$ are distinct prime divisors.
Then this equivalence follows from the above discussion.)
In particular, $F'$ has normal crossings if and only if $F+E$ has (simple) normal crossings.
\end{sub}

\section{$2$-dimensional log regular log schemes}
\label{logregularsecton}
In this section, we study singularities and desingularizations of log regular schemes of dimension $2$.

\subsection{$2$-dimensional strongly convex rational polyhedral cones}
\label{convexsubsection}
In this subsection, we study $2$-dimensional strongly convex rational polyhedral cones.
We start by fixing notation which is compatible with that used in \cite{Niz}.

\begin{sub}
\label{conesetting}
Let $L$ (resp.\,$L_{\R}$; $L^{\vee}$) be a free abelian group of rank $2$ (resp.\,the $\R$-linear space  $L\otimes_{\Z}\R$;  the abelian group $\Hom_{\Z}(L,\Z)$).
As in \cite[Section 1.1]{Oda}, a subset $\sigma$ of $L_{\R}$ is said to be a {\em strongly convex rational polyhedral cone} if there exists $l_{1},\cdots,l_{s}\in L$ such that $\sigma=\sum_{1\leq i\leq s} \R_{\geq0}l_{i}$ and $\sigma\cap(-\sigma)=\{0\}$.
Note that $P:=\{a\in L^{\vee}\mid a(\sigma)\subset\R_{\geq0}\}$ is an f.s.\,sharp monoid by \cite[Proposition 1.1]{Oda}.

As in \cite[Section 1.1]{Oda}, we define a \textit{fan in $L$} to be a nonempty collection $\Delta$ of strongly convex rational polyhedral cones in $L_{\R}$ which satisfies the following two conditions:
\begin{itemize}
\item[(i)]
Each face of any element of $\Delta$ is also contained in $\Delta$.
\item[(ii)]
For any $\sigma_{1},\sigma_{2}\in\Delta$, $\sigma_{1}\cap\sigma_{2}$ is a face of $\sigma_{1}$ and $\sigma_{2}$.
\end{itemize}
We shall say that a fan $\Delta$ in $L$ is \textit{nonsingular} if, for any $\tau\in\Delta$, there exists a subset of a $\Z$-basis of $L$ which generates $\tau\cap L$ as a monoid (cf.\,\cite[Theorem 1.10]{Oda}). 
Fix two fans $\Delta$ and $ \Delta'$ in $L$.
We shall say that $\Delta'$ is a \textit{proper subdivision of $\Delta$} if the following two conditions are satisfied (cf.\,\cite[Theorem 1.15]{Oda}, \cite[(9.5), (9.6), (9.7)]{Ka2}):
\begin{itemize}
\item[I]
For each $\tau'\in\Delta'$, there exists an element $\tau\in\Delta$ such that $\tau'\subset\tau$.
\item[II]
For each $\tau\in\Delta$, the set $\Delta'_{\tau}:=\{\tau'\in\Delta'\mid \tau'\subset \tau\}$ is a finite set and $\bigcup_{\tau'\in\Delta'_{\tau}}\tau'=\tau$ holds.
\end{itemize}
\label{fundnota}
If conditions I and II are satisfied and $\Delta'$ is nonsingular, we shall say that $\Delta'$ is a \textit{nonsingular proper subdivision} of $\Delta$.

Let $\sigma$ be a strongly convex rational polyhedral cone and $f:\sigma\to\R$ a function.
We shall say that $f$ satisfies property ($\ast$) (cf.\,the discussion before \cite[I.Theorem 9]{KKMS}, the discussion before \cite[Lemma 3.2]{Niz}, the discussion after \cite[Lemma 2.3]{Sai2}) if $f$ satisfies the following:\\
($\ast$): homogeneous, continuous, piecewise-linear, convex, integral on $\sigma\cap L$ (i.e., $f(\sigma\cap L)\subset \Z$).\\
Here, $f$ is said to be convex if, for any $s,t\in\sigma$ and any $\lambda\in [0,1]$, $\lambda f(s)+(1-\lambda)f(t)\leq f(\lambda s+(1-\lambda)t)$ holds.
If $f$ satisfies property ($\ast$), the proper subdivision which consists of all maximal (strongly) convex rational polyhedral cones contained in $\sigma$ where $f$ is linear and all the faces of these convex rational polyhedral cones is referred to as the \textit{proper subdivision of $\sigma$ associated to} $f$ (cf.\,\cite[Theorem 4.7]{Niz}, \cite[Theorem 1.10]{KKMS}).

Suppose $\sigma+(-\sigma)=L_{\R}$.
As in \cite[Proposition 1.19]{Oda}, write $\Theta$ for the convex hull in $L_{\R}$ of the set $(\sigma\cap L)\setminus \{0\}$, $\partial\Theta$ for the boundary polygon of $\Theta$, and $l_{0},l_{1},\cdots,l_{m},l_{m+1}$ for the points of the intersection of $L$ and the compact edges of $\partial\Theta$.
Then the {\em coarsest nonsingular proper subdivision} of the fan $\{\tau\subset\sigma\mid\tau\text{ is a face of }\sigma\}$ is the fan consisting of $\R_{\geq0}l_{i}+\R_{\geq0}l_{i+1}$ ($0\leq i\leq m$) and their faces by \cite[Proposition 1.19]{Oda}.
Note that there exists an integer $a_{i}\leq -2$ such that $l_{i-1}+a_{i}l_{i}+l_{i+1}=0$ for each $1\leq i\leq m$ by \cite[Proposition 1.19]{Oda}.
\end{sub}

\begin{lem}
\label{propertiesoffans}
We use the notation in \ref{conesetting}.
\begin{enumerate}
\item
For any $1\leq i\leq m$, write $l_{i,i}^{\ast},l_{i,i+1}^{\ast}$ for the dual basis of $l_{i},l_{i+1}$ and $P(i)$ for the set $\{p\in P\mid p(l_{i})=1\}$.
Then we have
\begin{align}
\label{P(i)}
P(i)&=\{l_{i,i}^{\ast}+jl_{i,i+1}^{\ast}\in L^{\vee}\mid 1-\delta_{m,i}\leq j\leq -a_{i}-1+\delta_{1,i}\},\\
\label{sumP(i)}
\sharp(\bigcup_{1\leq i\leq m}P(i))&=3-\sum_{1\leq i\leq m}(a_{i}+2),
\end{align}
where $\delta_{-,-}$ denotes Kronecker's delta.
Moreover, for $1\leq i<i'\leq m$, $\sharp(P(i)\cap P(i'))\leq 1$ holds, and the equality holds if and only if $a_{k}=-2$ for every $i<k<i'$.
If this equality holds, the (unique) element of $P(i)\cap P(i')$ is $l_{i,i}^{\ast}+l_{i,i+1}^{\ast}(=l_{i',i'}^{\ast}+(-a_{i'}-1)l_{i',i'+1}^{\ast})$.
\item
Let $\Delta$ be a proper subdivision of the fan $\{\tau\subset\sigma\mid\tau\text{ is a face of }\sigma\}$.
Then there exists a function $f:\sigma\to\R$ satisfying ($\ast$) such that the proper subdivision associated to $f$ coincides with $\Delta$.
\end{enumerate}
\end{lem}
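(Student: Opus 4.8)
I use throughout the notation of \ref{conesetting}: the primitive points $l_0,\dots,l_{m+1}$ on the compact edges of $\partial\Theta$, the relations $l_{i-1}+a_il_i+l_{i+1}=0$ with $a_i\le-2$, and the fact (nonsingularity of the coarsest subdivision) that $(l_i,l_{i+1})$ is a $\Z$-basis of $L$ for $0\le i\le m$. For fixed $i$, writing $(l_{i,i}^{\ast},l_{i,i+1}^{\ast})$ for the dual basis of $(l_i,l_{i+1})$, every $p\in L^\vee$ with $p(l_i)=1$ is of the form $p=l_{i,i}^{\ast}+j\,l_{i,i+1}^{\ast}$ with $j=p(l_{i+1})\in\Z$, and $p\in P$ iff $p(\sigma)\subset\R_{\ge0}$, which — $\sigma$ being spanned by $l_0$ and $l_{m+1}$ — amounts to $p(l_0)\ge0$ and $p(l_{m+1})\ge0$. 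Set $v_k:=p(l_k)$, so $v_{k-1}+v_{k+1}=(-a_k)v_k$ for $1\le k\le m$. The engine of the argument is a monotonicity lemma: if $v_{k-1}\le v_k$ and $v_k\ge1$, then $v_{k+1}=(-a_k)v_k-v_{k-1}\ge 2v_k-v_{k-1}\ge v_k\ge1$, and symmetrically downward; so once $(v_k)$ becomes nonnegative and monotone it stays so, and the conditions $v_l\ge0$ for all $l$ reduce to finitely many conditions near $i$. Carrying this out, starting from $v_i=1$, $v_{i+1}=j$: the upward conditions hold for all $l\ge i+1$ iff $j\ge1$ when $i<m$ (if $j\le0$ then already $v_{i+2}=(-a_{i+1})j-1<0$) and iff $j\ge0$ when $i=m$, i.e. $j\ge1-\delta_{m,i}$; the downward conditions hold for all $l\le i-1$ iff $j\le-a_i-1$ when $i>1$ (since $j=-a_i$ forces $v_{i-1}=0$, hence $v_{i-2}=-1<0$) and iff $j\le-a_i$ when $i=1$, i.e. $j\le-a_i-1+\delta_{1,i}$. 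This is exactly (\ref{P(i)}).

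From (\ref{P(i)}), $\sharp P(i)=-a_i-1+\delta_{1,i}+\delta_{m,i}$. For the intersection statement, if $p\in P(i)\cap P(i')$ with $i<i'$, then $(v_k)_{i\le k\le i'}$ is a convex sequence ($v_{k-1}+v_{k+1}=(-a_k)v_k\ge2v_k$) of nonnegative integers with $v_i=v_{i'}=1$; hence $v_k=1$ for all $i\le k\le i'$ (a value $0$ in between would propagate, via $v_{k-1}+v_{k+1}=0$, to an endpoint), so $-a_k=2$ for $i<k<i'$, and $p$ is forced by $v_i=v_{i+1}=1$ to equal $l_{i,i}^{\ast}+l_{i,i+1}^{\ast}$; rewriting via $v_{i'-1}=1=(-a_{i'})v_{i'}-v_{i'+1}$ identifies it with $l_{i',i'}^{\ast}+(-a_{i'}-1)l_{i',i'+1}^{\ast}$. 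The converse implication is the same computation run forward. Then (\ref{sumP(i)}) follows from (\ref{P(i)}) and this description by inclusion--exclusion over the $P(i)$: consecutive sets meet in exactly one element, and any intersection $P(i)\cap P(i')$ with $i'>i$ is already contained in every $P(i'')$ for $i\le i''\le i'$ (since $v_{i''}=1$), so the alternating sum collapses to $\sum_{i}\sharp P(i)-(m-1)=3-\sum_{i}(a_i+2)$, using $\sum_i\delta_{1,i}=\sum_i\delta_{m,i}=1$. (Equivalently: $\bigcup_iP(i)$ is the set of lattice points on the compact edges of the boundary polygon of the dual cone $P$, totally ordered so that the $P(i)$ are successive overlapping intervals.)

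\textbf{Assertion 2.} A $2$-dimensional proper subdivision $\Delta$ of $\{\tau\subset\sigma\mid\tau\text{ is a face of }\sigma\}$ arises by inserting finitely many interior rays; let $v_0,\dots,v_{t+1}$ be the primitive generators of the rays of $\Delta$ in cyclic order ($v_0,v_{t+1}$ generating the boundary rays of $\sigma$), with maximal cones $\sigma_s=\R_{\ge0}v_s+\R_{\ge0}v_{s+1}$, $0\le s\le t$. For $1\le s\le t$ let $\psi_s\in L^\vee$ be the primitive linear form with $\psi_s(v_s)=0$ and $\psi_s(v_{s+1})>0$ (then $\psi_s(v_{s-1})<0$, since $v_{s-1}$ and $v_{s+1}$ lie strictly on opposite sides of the line $\R v_s$ inside the strongly convex cone $\sigma$). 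Put $\phi_0:=0$, $\phi_s:=\phi_{s-1}-\psi_s$ for $1\le s\le t$, and define $f\colon\sigma\to\R$ by $f|_{\sigma_s}:=\phi_s|_{\sigma_s}$. Then $f$ is well defined and continuous because $\phi_s(v_s)=\phi_{s-1}(v_s)$; it is homogeneous and piecewise-linear by construction; each $\phi_s$ lies in $L^\vee$ by induction (as $\psi_s\in L^\vee$), so $f$ is integral on $L$, a fortiori on $\sigma\cap L$; across the wall $\R_{\ge0}v_s$ the form $\phi_{s-1}$ satisfies $\phi_{s-1}\ge f=\phi_s$ on $\sigma_s$ (since $\psi_s\ge0$ there) with strict inequality on the interior, and $\phi_s\ge f=\phi_{s-1}$ on $\sigma_{s-1}$ (since $\psi_s\le0$ there), which is strict convexity in the sense of $(\ast)$ across that wall; convexity being checkable wall by wall in the $2$-dimensional setting, $f$ is convex globally, and the strictness forces $f$ to be linear on no cone strictly larger than some $\sigma_s$. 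Hence $f$ satisfies $(\ast)$ and the proper subdivision associated to $f$ is $\{\sigma_0,\dots,\sigma_t\}$ together with their faces, i.e. $\Delta$ (and one takes $f=0$ if $\Delta$ is already $\{\tau\subset\sigma\mid\tau\text{ a face of }\sigma\}$).

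\textbf{Expected main obstacle.} In Assertion 1 the real work is bookkeeping: getting the two end-corrections $\delta_{1,i},\delta_{m,i}$ right and checking that the inclusion--exclusion over the $P(i)$ collapses to the clean value (\ref{sumP(i)}) despite non-consecutive overlaps occurring whenever intermediate $a_k$ equal $-2$. In Assertion 2 the only delicate point is the integrality clause of $(\ast)$: since $\Delta$ need not be nonsingular, prescribing integer values at the ray generators and interpolating would \emph{not} in general yield a function integral on $\sigma\cap L$, so one must build $f$ out of honest elements of $L^\vee$ (as above); once $f$ is assembled this way the remaining verifications are routine.
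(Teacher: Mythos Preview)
Your proof is correct and follows essentially the same approach as the paper. For Assertion 1 you spell out the monotonicity/induction argument that the paper only gestures at, and your inclusion--exclusion for (\ref{sumP(i)}) is a more explicit version of the paper's one-line deduction from $\sharp P(i)=-a_i-1+\delta_{1,i}+\delta_{m,i}$ and the intersection description; for Assertion 2 your piecewise construction $\phi_s=\phi_{s-1}-\psi_s$ is the same as the paper's $f=\min_{0\le j\le t}f_j$ with $f_j=\sum_{k=0}^{j}g_k\circ\Psi$ (the $g_k$ being, up to sign and the harmless additive term $g_0$, your $\psi_k$), and your wall-by-wall convexity check makes explicit what the paper leaves to the reader.
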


\begin{proof}
First, we show assertion 1.
Any element of $P(i)$ can be written as $l_{i,i}^{\ast}+\beta l_{i,i+1}^{\ast}$ for some $\beta\in\N$.
Moreover, by using the relation $l_{i-1}+a_{i}l_{i}+l_{i+1}=0$ and the inequality $a_{i}\leq-2$, we obtain $0\leq\beta\leq -a_{i}$.
To show that (\ref{P(i)}) holds, we need to know the condition that $(l_{i,i}^{\ast}+\beta l_{i,i+1}^{\ast})(l_{j})\geq0$ holds for every $0\leq j\leq m+1$.
Thus, by using the relation $l_{j-1}+a_{j}l_{j}+l_{j+1}=0$, the inequality $a_{j}\leq-2$, and (usual) induction and descending induction on $j$, we can show that the desired equation holds.
The second and third assertions follow from similar arguments.
Since we have $\sharp P(i)=-a_{i}-1+\delta_{1,i}+\delta_{m,i}$ by (\ref{P(i)}), the equation (\ref{sumP(i)}) follows from this equation and the second and third assertions.
This completes the proof of assertion 1.

Next, we show assertion 2.
Fix an isomorphism $L\simeq\Z^{\oplus 2}$ and write $\Psi$ for its scalar extension $L\otimes_{\Z}\R\simeq \R^{\oplus 2}$.
Let $0\neq l'_{0},\cdots,l'_{t+1}\in L$ be elements of $L\cap\sigma$ such that $l'_{0}=l_{0}$, $l'_{t+1}=l_{m+1}$, $(\R_{\geq 0}l'_{j})\cap L=\N l'_{j}\in \Delta$, $\R_{\geq 0}l'_{j}+\R_{\geq 0}l'_{j+1}\in\Delta$, and $l'_{j}\neq l'_{j+1}$.
Write $\bpm p'_{j}\\ q'_{j}\epm$ for the image $\Psi(l'_{j})$. 
We consider functions $g_{j}:\R^{2}\to \R:\bpm x\\y\epm\mapsto -q'_{i}x+p'_{i}y$ and $f_{j}:=\sum_{k=0}^{j}g_{k}\circ\Psi|_{\sigma}$ for each $0\leq j\leq t$.
Then the function $f:=\min_{0\leq j\leq t}f_{j}$ on $\sigma$ satisfies the desired properties.
\end{proof}

\begin{lem}
\label{fordualmonoid}
Let $c_{1},\cdots,c_{m}$ be elements of $\Z_{\leq-2}$.
Write $v_{i}$ for the projection $\Z^{\oplus m+2}\to\Z: \bpm x_{i}\epm_{0\leq i\leq m+1}\mapsto x_{i}$ for $0\leq i\leq m+1$.
Then the kernel $K$ of the homomorphism $\bpm v_{i-1}+c_{i}v_{i}+v_{i+1}\epm_{1\leq i\leq m}:\Z^{\oplus m+2}\to\Z^{\oplus m}$ is a free abelian group of rank $2$.
Moreover, $L:=K^{\vee}$ and $\sigma:=\R_{\geq 0}(v_{1}|_{K}\otimes\R)+\cdots+\R_{\geq 0}(v_{1}|_{K}\otimes\R)$ satisfies the assumptions in \ref{conesetting} and, for any $i$, $v_{i}$ and $c_{i}$ work as $l_{i}$ and $a_{i}$ in \ref{conesetting}, respectively.
\end{lem}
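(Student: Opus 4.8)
The plan is to reduce the statement to the combinatorics recalled in \ref{conesetting}: first compute $\rank K$ from the matrix of $\phi:=\bpm v_{i-1}+c_{i}v_{i}+v_{i+1}\epm_{1\leq i\leq m}$, then read the cone data off the equations defining $K$, and finally match the resulting fan with the one attached to $\sigma$ in \ref{conesetting}.

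For the rank: with respect to the standard bases, the submatrix of $\phi$ on the columns indexed $1,\dots,m$ is the tridiagonal matrix $T$ with diagonal $(c_{1},\dots,c_{m})$ and all super- and sub-diagonal entries equal to $1$. Its leading principal minors satisfy $D_{k}=c_{k}D_{k-1}-D_{k-2}$ with $D_{0}=1$, $D_{1}=c_{1}$, and since $|c_{i}|\geq 2$ a short induction shows $(-1)^{k}D_{k}$ is a strictly increasing sequence of positive integers; hence $\det T\neq 0$, so $\phi\otimes\Q$ is surjective, $\rank K=(m+2)-m=2$, and $K$ is free as a subgroup of $\Z^{\oplus m+2}$.

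Next I would check that $L:=K^{\vee}$ and $\sigma$ satisfy the hypotheses of \ref{conesetting}. Since $\Z^{\oplus m+2}/K\simeq\mIm\phi$ is torsion-free, $K$ is a direct summand, so the restriction map $\Z^{\oplus m+2}=(\Z^{\oplus m+2})^{\vee}\twoheadrightarrow K^{\vee}=L$ carries the $i$-th basis vector to $l_{i}:=v_{i}|_{K}$; thus $l_{0},\dots,l_{m+1}$ generate $L$, and the equations defining $K$ become the relations $l_{i-1}+c_{i}l_{i}+l_{i+1}=0$ in $L$ for $1\leq i\leq m$. Using these to eliminate $l_{2},\dots,l_{m+1}$ gives $L=\Z l_{0}+\Z l_{1}$, so $\{l_{0},l_{1}\}$ is a $\Z$-basis of the rank-$2$ free group $L$. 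Writing $l_{j}=p_{j}l_{0}+q_{j}l_{1}$, the relations impose the recursion $\bpm p_{j+1}\\ q_{j+1}\epm=|c_{j}|\bpm p_{j}\\ q_{j}\epm-\bpm p_{j-1}\\ q_{j-1}\epm$ with $(p_{0},q_{0})=(1,0)$ and $(p_{1},q_{1})=(0,1)$; it yields $\det\bpm p_{j}&q_{j}\\ p_{j+1}&q_{j+1}\epm=1$ for all $j$ (so each $l_{j}$ is primitive and every consecutive pair $\{l_{j},l_{j+1}\}$ is a $\Z$-basis), together with $0=q_{0}<q_{1}<\dots<q_{m+1}$ and $1=p_{0}>p_{1}=0>p_{2}>\dots>p_{m+1}$. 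In particular $l_{0}$ and $l_{m+1}$ are $\R$-linearly independent, and a parallel recursion gives $\det(l_{i},l_{j})\geq 0$ for $i\leq j$, whence each $l_{i}$ lies in $\R_{\geq 0}l_{0}+\R_{\geq 0}l_{m+1}$; therefore $\sigma=\R_{\geq 0}l_{0}+\R_{\geq 0}l_{m+1}$ is a $2$-dimensional, hence strongly convex, rational polyhedral cone with $\sigma+(-\sigma)=L_{\R}$, as required.

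Finally I would identify the data. By the monotonicity above and the relations, the cones $\R_{\geq 0}l_{i}+\R_{\geq 0}l_{i+1}$ ($0\leq i\leq m$) and their faces form a nonsingular proper subdivision $\Delta_{0}$ of $\{\tau\subset\sigma\mid\tau\text{ is a face of }\sigma\}$, and no internal ray of $\Delta_{0}$ can be deleted keeping nonsingularity: deleting $\R_{\geq 0}l_{i}$ would exhibit $l_{i}=|c_{i}|^{-1}(l_{i-1}+l_{i+1})$ as a lattice point of the cone $\R_{\geq 0}l_{i-1}+\R_{\geq 0}l_{i+1}$ whose coordinates in the putative basis $\{l_{i-1},l_{i+1}\}$ are $|c_{i}|^{-1}\notin\Z$ --- impossible since $|c_{i}|\geq 2$. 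By \cite[Proposition 1.19]{Oda} (cf.\ the end of \ref{conesetting}), the coarsest nonsingular proper subdivision of $\{\tau\subset\sigma\mid\tau\text{ is a face of }\sigma\}$ is the unique nonsingular proper subdivision with this minimality property; hence $\Delta_{0}$ equals it, so $v_{i}|_{K}=l_{i}$ for every $i$ (both being primitive generators of the same ray), and subtracting $l_{i-1}+c_{i}l_{i}+l_{i+1}=0$ from the relation $l_{i-1}+a_{i}l_{i}+l_{i+1}=0$ of \ref{conesetting} gives $c_{i}=a_{i}$ since $l_{i}\neq 0$ in the torsion-free group $L$. The main obstacle is this last step: one must invoke, from \cite[Proposition 1.19]{Oda}, that the minimal nonsingular subdivision of a $2$-dimensional cone is unique and coincides with the ``canonical'' one described via $\partial\Theta$; the rank computation and the verification that $\{l_{0},l_{1}\}$ is a basis are routine.
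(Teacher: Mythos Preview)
Your argument is correct and follows essentially the same route as the paper's proof: identify $K$ with $\Z^{2}$ via the pair $(v_{0},v_{1})$, run the recursion $l_{i+1}=-c_{i}l_{i}-l_{i-1}$ to establish the monotonicity of the coordinates $(p_{i},q_{i})$, and conclude that the rays $\R_{\geq0}l_{i}$ give the coarsest nonsingular subdivision of $\sigma$. The two proofs differ only in packaging. The paper observes directly from the recursive shape of the defining equations that $(v_{0},v_{1}):K\to\Z^{\oplus 2}$ is an isomorphism, bypassing your tridiagonal determinant computation and the direct-summand argument; and for the ``coarsest'' claim it simply notes that the relations $l_{i-1}+c_{i}l_{i}+l_{i+1}=0$ with $c_{i}\leq-2$ are precisely the characterization of the coarsest nonsingular subdivision given in \cite[Proposition 1.19]{Oda} (as recalled at the end of \ref{conesetting}), rather than first proving minimality and then invoking uniqueness. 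Your extra steps are harmless but unnecessary: once you have the relations with $c_{i}\leq-2$, the identification $a_{i}=c_{i}$ is immediate from that characterization.
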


\begin{proof}
By the definition of $K$, we have an isomorphism $\bpm v_{0}\\ v_{1}\epm:K\to\Z^{\oplus 2}$.
Write $\Phi:K^{\vee}\simeq\Z^{\oplus2}$ for the dual isomorphism of this isomorphism and $\bpm p_{i}\\ q_{i}\epm$ for the image of $v_{i}|_{K}$.
Then we have $p_{0}=q_{1}=1$ and $p_{1}=q_{0}=0$.
By induction on $i$ and using the equation $v_{i-1}+c_{i}v_{i}+v_{i+1}=0$ on $K$, we obtain $p_{i+1}<p_{i}<0<q_{i}<q_{i+1}$ and $q_{i}/p_{i}<q_{i+1}/p_{i+1}$ for each $i\geq 1$.
Hence, the constructed $\sigma$ is a strongly convex rational polyhedral cone satisfying $\sigma+(-\sigma)=L_{\R}$ and the rays $\R_{\geq0}(v_{0}|_{K}\otimes\R)$\;($0\leq i\leq m+1$) give a proper subdivision of $\sigma$.
Since $\bpm v_{i}\\v_{i+1}\epm:K\to\Z^{\oplus 2}$ is an isomorphism by the definition of $K$, this subdivision is nonsingular.
By using the equation $v_{i-1}+c_{i}v_{i}+v_{i+1}=0$ on $K$, we can see that this nonsingular proper subdivision is the coarsest one.
\end{proof}

\subsection{Exceptional divisors}
\label{exceptionsdivisorsubsection}

In this subsection, we study the exceptional divisors of the minimal desingularizations of log regular schemes of dimension $2$.

Let $(R,\fm)$ be a Noetherian local normal domain of dimension $2$ which is \textbf{not} regular.
Write $X$ for the scheme $\Spec R$ and $x$ for the (unique) closed point of $X$.
Let $D$ be a reduced closed subscheme of $X$ of dimension $1$, $\pi:\widetilde{X}\to X$ a desingularization (i.e., a proper birational morphism from a connected regular scheme), $E$ the exceptional divisor of $\pi$, and $D'$ the strict transform of $D$.
(Note that, in Section \ref{introsection}, we assume that $R$ is strictly henselian.)

We explain the ``strict henselization" of this situation.
Write $R^{sh}$ for the strict henselization of $R$ and $\pi^{sh}:\widetilde{X}^{sh}\to X^{sh}$ (resp.\,$E^{sh}$; $x^{sh}$) for the base change of $\pi:\widetilde{X}\to X$ (resp.\,$E$; $x$) by $\Spec R^{sh}\to\Spec R$.
Then $\pi^{sh}$ is a desingularization and $E^{sh}$ is the exceptional divisor of $\pi^{sh}$.

\begin{dfn}[cf.\,Definition \ref{introdivisorconditions}]
We shall say that $E$ satisfies condition (Exc) if $E^{sh}$ satisfies the condition (Exc) in Definition \ref{introdivisorconditions}.
Moreover, in the case where condition (Exc) is satisfied, we shall say that $E$ and $D$ satisfy condition (Str) if $E^{sh}$ and $D^{sh}$ satisfy the condition (Str) in Definition \ref{introdivisorconditions}.
\end{dfn}

We define apparently weaker conditions than condition (Exc).

\begin{dfn}
\label{divisorconditions}
We define conditions on $E$.
\begin{itemize}
\item[(Exc$_{x}$)]
$E$ is a normal crossing divisor on $\widetilde{X}$ and $E$ can be written as $E=\sum_{i=1}^{m}E_{i}$ such that conditions (i), (ii), (iii), and (iv) in Definition \ref{introdivisorconditions} (Exc) are literally satisfied.
\item[(Exc$_{\mathrm{o}}$)]
$E$ is a normal crossing divisor on $\widetilde{X}$ and there exists a separable extension field $k''$ of $k(x)$ of degree $2$ such that $E$ can be written as $E=\sum_{i=1}^{m}E_{i}$ and the following are satisfed:
(i) For each $1\leq i< m$, $E_{i}\simeq\bP^{1}_{k''}$ over $k(x)$.
(i$_{m}$) $E_{m}\times_{\Spec k(x)}\Spec k(x^{sh})$ is isomorphic to $\bP^{1}_{k(x^{sh})}$ over $k(x^{sh})$.
(ii) For each $1\leq i< m$, $a_{i}:=\deg_{E_{i}/k''} \cN_{E_{i}/\widetilde{X}}\leq -2$.
(ii$_{m}$) $a_{m}:=\deg_{E_{m}/k(x)}\cN_{E_{m}/\widetilde{X}}\leq -2$.
(iii) For $1\leq i< m$, there exists a $k''$-rational point $e_{i}$ such that $E_{i}\cap E_{i+1}=\{e_{i}\}$.
(iv) If $|i-j|\geq 2$, $E_{i}\cap E_{j}=\emptyset$.
\item[(Exc$_{\mathrm{e}}$)]
$E$ is a normal crossing divisor on $\widetilde{X}$ and there exists a separable extension field $k''$ of $k(x)$ of degree $2$ such that $E$ can be written as $E=\sum_{i=1}^{m}E_{i}$ and the following are satisfied:
(i) For each $1\leq i< m$, $E_{i}\simeq\bP^{1}_{k''}$ over $k$.
(i$_{m}$) The normalization of $E_{m}$ is isomorphic to $\bP^{1}_{k''}$ and the geometric fiber of the curve $E_{m}$ over $k(x)$ has exactly one ordinary double point (cf.\,\ref{badcurves}).
(ii) For each $1\leq i< m$, $a_{i}:=\deg_{E_{i}/k''}\cN_{E_{i}/\widetilde{X}}\leq-2$.
(ii$_{m}$) $a_{m}:=\deg_{E_{m}/k(x)}\cN_{E_{i}/\widetilde{X}}\leq -2$ (cf.\,\ref{badcurves}).
(iii) For $1\leq i< m$, there exists a $k''$-rational point $e_{i}$ such that $E_{i}\cap E_{i+1}=\{e_{i}\}$.
(iv) If $|i-j|\geq 2$, $E_{i}\cap E_{j}=\emptyset$.
\end{itemize}
Moreover, we define condition (Exc$_{x}'$) (resp.\,(Exc$_{\mathrm{o}}'$); (Exc$_{\mathrm{e}}'$)) to be the condition obtained by replacing ``$k(x)$" in condition (Exc$_{x}$) (resp.\,(Exc$_{\mathrm{o}}$); (Exc$_{\mathrm{e}}$)) with $k':=H^{0}(E,O_{E})$.
By the definitions, each condition (Exc$_{-}'$) is satisfied if condition (Exc$_{-}$) is satisfied.
Note that, as explained in Lemma \ref{rationalsingularity}, each condition (Exc$_{-}$) is equivalent to condition (Exc$_{-}'$).

Suppose that one of conditions (Exc$_{x}'$), (Exc$_{\mathrm{o}}'$), or (Exc$_{\mathrm{e}}'$) is satisfied.
Since the matrix $\bpm \deg_{E_{j}/k'}O_{\widetilde{X}}(E_{i})|_{E_{j}}\epm_{ij}$ is negative definite, we can define the {\em fundamental curve} $E_{f}$ associated to $E$ (cf.\,\cite[Definition 9.4.13]{Liu}, the discussion before \cite[THEOREM 3]{A}), i.e.,\,the minimal effective Cartier divisor $E_{f}=\sum_{1\leq i\leq m} c_{i}E_{i}$ such that $\deg_{E_{i}/k'}O_{\widetilde{X}}(E_{f})|_{E_{j}}\leq 0$ and $E_{f}\geq E_{j}$ for all $j$.
(Note that the condition that $E_{f}\geq E_{j}$ for all $j$ follows from the other conditions and the assumption that $E$ is connected.)
\end{dfn}

\begin{lem}
\label{rationalsingularity}
We suppose that $E$ satisfies one of conditions (Exc$_{x}'$), (Exc$_{\mathrm{o}}'$), or (Exc$_{\mathrm{e}}'$) and use the notation in Definition \ref{divisorconditions}.
\begin{enumerate}
\item
$\widetilde{X}$ is projective over $R$.
\item
We have $E_{f}=\sum_{1\leq i\leq m}E_{i}(=E)$.
\item
$R$ has a rational singularity (cf.\,\cite[Definition (1.1)]{Lip}).
\item
We have $\pi^{\ast}\widetilde{\fm}=O_{\widetilde{X}}(-E)$, where $\widetilde{\fm}$ is the coherent sheaf on $X$ associated to $\fm$ (cf.\,\cite[Theorem 4]{A} and Remark \ref{unnec}).
Moreover, we have $H^{1}(\widetilde{X},O_{\widetilde{X}}(-lE))=0$ and $H^{1}(E,O_{\widetilde{X}}(-lE)|_{E})=0$ for any $l\in\N$.
\item
We have a natural isomorphism $H^{0}(\widetilde{X},O_{\widetilde{X}}(-lE)|_{E})\simeq\fm^{l}/\fm^{l+1}$ for any $l\in\N$.
In particular, we have $k'=k(x)$ and $(H:=)H^{0}(\widetilde{X},O_{\widetilde{X}}(-E)|_{E})\simeq \fm/\fm^{2}$.
\item
$\widetilde{X}\to X$ is the minimal desingularization of $R$ (cf.\,\cite[Theorem (4.1)]{Lip} and \cite[Corollary (27.3)]{Lip}).
\end{enumerate}
In particular, condition (Exc$_{x}$) (resp.\,(Exc$_{\mathrm{o}}$); (Exc$_{\mathrm{e}}$)) is satisfied if and only if (Exc$_{x}'$) (resp.\,(Exc$_{\mathrm{o}}'$); (Exc$_{\mathrm{e}}'$)) is.
Next, suppose that $E$ satisfies condition (Exc$_{x}'$).
\begin{enumerate}
\setcounter{enumi}{6}
\item
The canonical homomorphisms $\Pic(E)\to\bigoplus_{1\leq i \leq m}\Pic(E_{i}): \cL\mapsto (\cL|_{E_{i}})$ and $\bigoplus_{1\leq i \leq m}\Pic(E_{i})\to\Z^{\oplus m}:(\cL_{i})\mapsto (\deg_{E_{i}/k(x)}\cL_{i})$ are isomorphisms.
\item
The natural homomorphism $\Pic(\widetilde{X})\to \Pic(E):\cL\mapsto \cL|_{E}$ is injective.
\item
Write $H_{i}$ for the $k(x)$-linear space $H^{0}(\widetilde{X},O_{\widetilde{X}}(-E)|_{E_{i}})$ for each $1\leq i\leq m$.
We have an exact sequence of $k(x)$-linear spaces
\begin{align}
\label{exactPic}
0\to H\to\bigoplus_{1\leq i\leq m}H_{i}\to k(x)^{\oplus m-1}\to0.
\end{align}
\item
$\dim_{k(x)}\fm/\fm^{2}(=\dim_{k(x)} H)=3-\sum_{1\leq i\leq m}(a_{i}+2)$.
\end{enumerate}
\end{lem}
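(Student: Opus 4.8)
The plan is to establish the six ``general'' assertions in the order $1\to2\to3\to4\to5\to6$, treating the three hypotheses (Exc$_{x}'$), (Exc$_{\mathrm{o}}'$), (Exc$_{\mathrm{e}}'$) as uniformly as possible, and then to deduce $7$--$10$ in the (Exc$_{x}'$) case. For assertion 1, since the intersection matrix $M=(\deg_{E_{j}/k'}O_{\widetilde{X}}(E_{i})|_{E_{j}})_{ij}$ is negative definite with non-negative off-diagonal entries and $E$ is connected, $-M$ is an irreducible Stieltjes matrix, so $(-M)^{-1}$ has strictly positive entries; clearing denominators in $(-M)^{-1}\mathbf{1}$ produces an effective cycle $Z$ supported on $E$ with $Z\cdot E_{i}<0$ for all $i$, whence $O_{\widetilde{X}}(-Z)$ is $\pi$-ample and $\widetilde{X}$ is projective over $R$. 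For assertion 2, a direct computation from $a_{i}\leq-2$ and the chain structure gives $E\cdot E_{i}\leq0$ for all $i$ (interior: $a_{i}+2\leq0$; the two ends, and the case $m=1$: strictly negative); since by definition $E_{f}=\sum c_{i}E_{i}$ with every $c_{i}\geq1$ and $E_{f}$ is minimal among effective cycles with non-positive intersection against each $E_{i}$, we obtain $E\geq E_{f}\geq E$, i.e.\ $E_{f}=E$. For assertion 3, by Artin's rationality criterion a normal surface singularity is rational iff $p_{a}(E_{f})=0$; here $E_{f}=E$ is a chain of curves meeting transversally, each of arithmetic genus $0$ (the intermediate components being projective lines over a field, and the computation $\dim H^{1}(E_{m},O_{E_{m}})=0$ of \ref{badcurves} covering a possibly nodal final component), so iterating $p_{a}(D_{1}+D_{2})=p_{a}(D_{1})+p_{a}(D_{2})+D_{1}\cdot D_{2}-1$ along the chain yields $p_{a}(E)=0$ and $R$ has a rational singularity.

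For assertion 4, rationality together with $E_{f}=E$ gives $\pi^{\ast}\widetilde{\fm}=O_{\widetilde{X}}(-E_{f})=O_{\widetilde{X}}(-E)$ by \cite[Theorem 4]{A}, and $H^{1}(\widetilde{X},O_{\widetilde{X}}(-lE))=0$ for all $l$ is part of Lipman's description of rational singularities (\cite{Lip}); since $\pi$ has one-dimensional fibres and $X$ is affine, $H^{i}(\widetilde{X},\mathcal{F})=\Gamma(X,R^{i}\pi_{\ast}\mathcal{F})$ vanishes for $i\geq2$, so the sequence $0\to O_{\widetilde{X}}(-(l+1)E)\to O_{\widetilde{X}}(-lE)\to O_{\widetilde{X}}(-lE)|_{E}\to0$ forces $H^{1}(E,O_{\widetilde{X}}(-lE)|_{E})=0$ for every $l\geq0$ as well. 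For assertion 5, Lipman's results give that the Rees algebra $\bigoplus_{l}\fm^{l}$ is normal and $\fm^{l}=H^{0}(\widetilde{X},O_{\widetilde{X}}(-lE))$; feeding this and the $H^{1}$-vanishing into the same exact sequence yields $\fm^{l}/\fm^{l+1}\xrightarrow{\sim}H^{0}(\widetilde{X},O_{\widetilde{X}}(-lE)|_{E})$, and $l=0$ gives $k'=H^{0}(E,O_{E})\simeq R/\fm=k(x)$. For assertion 6, every $a_{i}\leq-2$ and, in the (Exc$_{\mathrm{e}}'$) case, $E_{m}$ is not isomorphic to any projective line, so $E$ contains no exceptional curve of the first kind; hence $\widetilde{X}\to X$ is the minimal desingularization by \cite[Theorem (4.1)]{Lip} and \cite[Corollary (27.3)]{Lip}. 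Since $k'=k(x)$, the primed and unprimed versions of (Exc$_{-}$) coincide.

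In the (Exc$_{x}'$) case each $E_{i}\simeq\bP^{1}_{k(x)}$ and $E$ has simple normal crossings. For assertion 7, Mayer--Vietoris along the chain (each overlap is $\Spec k(x)$ and $H^{0}(E_{i},O_{E_{i}}^{\ast})\to k(x)^{\ast}$ is onto) gives $\Pic(E)\xrightarrow{\sim}\bigoplus_{i}\Pic(E_{i})$, and $\Pic(\bP^{1}_{k(x)})\xrightarrow{\sim}\Z$ by degree. For assertion 8, the exponential-type sequences $0\to O_{\widetilde{X}}(-nE)|_{E}\to O^{\ast}_{(n+1)E}\to O^{\ast}_{nE}\to0$ together with $H^{1}(E,O_{\widetilde{X}}(-nE)|_{E})=0$ give $\Pic((n+1)E)\xrightarrow{\sim}\Pic(nE)$ for all $n$, hence $\varprojlim_{n}\Pic(nE)=\Pic(E)$; using $\fm O_{\widetilde{X}}=O_{\widetilde{X}}(-E)$ (assertion 4), formal GAGA over the completion $\widehat{R}$, and faithfully flat descent along $\widehat{R}/R$, one gets $\Pic(\widetilde{X})\hookrightarrow\Pic(\widetilde{X}_{\widehat{R}})\simeq\varprojlim_{n}\Pic(nE)=\Pic(E)$, which is the restriction map $\mathcal{L}\mapsto\mathcal{L}|_{E_{\red}}$. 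For assertion 9, the partial-normalization sequence $0\to O_{\widetilde{X}}(-E)|_{E}\to\bigoplus_{i}O_{\widetilde{X}}(-E)|_{E_{i}}\to\bigoplus_{i=1}^{m-1}k(x)\to0$ (the last term supported at the nodes $e_{i}$) is exact; taking $H^{0}$ and invoking $H^{1}(E,O_{\widetilde{X}}(-E)|_{E})=0$ yields $0\to H\to\bigoplus_{i}H_{i}\to k(x)^{\oplus m-1}\to0$. For assertion 10, $H_{i}=H^{0}(\bP^{1}_{k(x)},O(-E\cdot E_{i}))$ with $-E\cdot E_{i}\geq0$, so $\dim_{k(x)}H_{i}=1-E\cdot E_{i}$; summing the values ($-E\cdot E_{i}=-a_{i}-2$ for interior $i$, $=-a_{i}-1$ at the ends, $=-a_{1}$ when $m=1$) and subtracting $m-1$ from assertion 9 gives $\dim_{k(x)}\fm/\fm^{2}=\dim_{k(x)}H=3-\sum_{i}(a_{i}+2)$, consistent with (\ref{sumP(i)}).

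The main obstacle is the uniform treatment of (Exc$_{\mathrm{o}}'$) and (Exc$_{\mathrm{e}}'$) in assertions 3--5: one must track degrees over $k'$, $k''$ and $k(x)$ before $k'=k(x)$ is available, verify $p_{a}(E)=0$ and the $H^{1}$-vanishings on a chain that may terminate in a curve which is only geometrically a nodal rational curve (where \ref{badcurves} is the essential input), and confirm that the fundamental-cycle formalism and the theorems of Artin and Lipman apply without an excellence hypothesis (cf.\ Remark \ref{unnec}). Checking that the only integral curves contracted by $\pi$ are the $E_{i}$ (assertion 6) and pinning down the precise form of Lipman's normality statement ``$\fm^{l}=H^{0}(\widetilde{X},O_{\widetilde{X}}(-lE))$'' used in assertion 5 are the remaining points that require care; the arguments for assertions 7--10 are then essentially formal.
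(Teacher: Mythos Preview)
Your proof is correct and follows essentially the same route as the paper's: the paper also proceeds $1\to2\to3\to4\to5\to6$ and then $7$--$10$, using the same computations for $E_{f}=E$, the same $\chi(O_{E})>0$ (equivalently $p_{a}(E)=0$) input for rationality, Artin's result for $\pi^{\ast}\widetilde{\fm}=O_{\widetilde{X}}(-E)$, the same normalization sequence for assertion 9, and the same dimension count for assertion 10. The only visible differences are cosmetic: for assertion 1 the paper simply cites \cite[Corollary (27.2)]{Lip} rather than writing out the Stieltjes-matrix construction of a $\pi$-ample divisor, and for assertion 8 the paper cites \cite[Proposition (1.2)]{Lip} and \cite[Theorem (12.1)(i)]{Lip} directly instead of unpacking the formal-GAGA/inverse-limit argument you give (which is in effect what Lipman proves there).
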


\begin{proof}
Assertion 1 follows from \cite[Corollary (27.2)]{Lip}.
Since we have
$$\deg_{E_{i}/k'}O_{\widetilde{X}}(\sum_{|i-j|\leq 1}E_{j})|_{E_{i}}\leq [k(E_{i}):k'](a_{i}+2)\leq 0,$$
assertion 2 holds.
Since we have
$$\chi_{k'}(O_{E})=\dim_{k'}(H^{0}(E,O_{E})-H^{1}(E,O_{E}))=1>0,$$
assertion 3 follows from \cite[Theorem (27.1)]{Lip} and assertions 1 and 2.
Next, we show assertion 4.
The first assertion follows from the proof of \cite[Theorem 4]{A} (cf.\,Remark \ref{unnec}.1) and assertions 2 and 3.
The desired vanishing of the cohomology groups follows from the first assertion, \cite[Lemma (12.2)]{Lip}, and \cite[Theorem (12.1)(ii)]{Lip}.
Assertion 5 follows from \cite[Tag:0B4Y]{Stacks} (cf.\,Remark \ref{unnec}.2) and assertions 3 and 4.
Since we have $\deg_{E_{i}/k'}O_{\widetilde{X}}(E_{i})|_{E_{i}}\leq-2$, $E_{i}$ cannot be an exceptional curve of the first kind.
Therefore, by the discussion in \ref{firstkind}, assertion 6 holds.
Assertion 7 follows from the theory of the Picard groups of curves.
Assertion 8 follows from assertion 3, \cite[Proposition (1.2)]{Lip}, and \cite[Theorem (12.1)(i)]{Lip}.
Write $\varphi:\coprod_{1\leq i\leq m}E_{i}\to E$ for the normalization of $E$.
We have an injective homomorphism $O_{\widetilde{X}}(E)|_{E}\to\bigoplus_{1\leq i\leq m}\varphi_{\ast}O_{\widetilde{X}}(E)|_{E_{i}}$ whose quotient sheaf is isomorphic to $\bigoplus_{1\leq i\leq m-1}k(x)_{e_{i}}$, where $k(x)_{e_{i}}$ is the skyscraper sheaf at $e_{i}$ with value $k(x)$.
Considering the long exact sequence of the cohomology groups of these sheaves, we obtain the exact sequence (\ref{exactPic}) by assertion 5.
Thus, assertion 9 holds.
Since we have $O_{\widetilde{X}}(E)|_{E_{i}}\simeq O_{\bP^{1}_{k(x)}}(-a_{i}-2+\delta_{i,1}+\delta_{i,m})$, we have $\dim H_{i}=-a_{i}-1+\delta_{i,1}+\delta_{i,m}$.
Assertion 10 follows from assertion 9.
\end{proof}

\begin{rem}
\label{unnec}
\begin{enumerate}
\item
In \cite[Theorem 4]{A}, $k(x)$ is assumed to be algebraically closed.
Here, we give a brief proof without the assumption that $k(x)$ is algebraically closed.
By \cite[Theorem (12.1)(ii)]{Lip}, the fundamental curve $E_{f}$ is the maximal nontrivial effective divisor $F$ such that $O_{\widetilde{X}}(-F)$ is globally generated and $\Supp F\subset E$.
Then, by \cite[Tag:0B4Y]{Stacks} (cf.\,Remark \ref{unnec}.2) and \cite[Theorem (4.1)]{Lip}, we have $\pi^{-1}(x)=E_{f}$.
\item
In \cite[Tag:0B4Y]{Stacks}, $R$ is assumed to be Nagata (cf.\,\cite[Tag:0B4W]{Stacks}).
In the proof, this assumption is not used.
\end{enumerate}
\end{rem}

\begin{lem}
\label{notationnormalcrossing}
\begin{enumerate}
\item
Condition (Exc) is satisfied if and only if one of conditions (Exc$_{x}$), (Exc$_{\mathrm{o}}$), or (Exc$_{\mathrm{e}}$) is satisfied.
\item
Suppose that condition (Exc) is satisfied.
(We use the notation in Definition \ref{introdivisorconditions} (resp.\,Definition \ref{divisorconditions}) if condition (Exc$_{x}$) (resp.\,one of conditions (Exc$_{\mathrm{o}}$) or (Exc$_{\mathrm{e}}$)) is satisfied.)
Then condition (Str) is satisfied if and only if one of the following conditions is satisfied:
\begin{itemize}
\item[(Str$_{0}$)]
$D'$ is a prime divisor on $\widetilde{X}$, condition (Exc$_{x}$) is satisfied, and $D'+E$ is a normal crossing divisor on $\widetilde{X}$.
Moreover, $D'\cap E_{1}$ and $D'\cap E_{m}$ consist of exactly one $k(x)$-rational point and we have $D'\cap E_{i}=\emptyset\;(1<i<m)$.
\item[(Str$_{1,x}$)]
$D'$ is a prime divisor on $\widetilde{X}$, condition (Exc$_{x}$) is satisfied, and $D'+E$ is a normal crossing divisor on $\widetilde{X}$.
Moreover, we have $m=1$ and $E\cap D'=\{e_{0m}\}$ such that $k(e_{0m})/k(x)$ is a separable extension of degree $2$.
\item[(Str$_{1}$)]
$D'$ is a prime divisor on $\widetilde{X}$, one of conditions (Exc$_{\mathrm{o}}$) or (Exc$_{\mathrm{e}}$) is satisfied, and $D'+E$ is a normal crossing divisor on $\widetilde{X}$.
Moreover, we have $D'\cap E_{i}=\emptyset\;(1<i\leq m)$ and $E\cap D'=\{e_{0m}\}$ such that $k(e_{0m})/k(x)$ is a separable extension of degree $2$.
\item[(Str$_{2}$)]
$D'$ is the sum of distinct prime divisors $E_{0}$ and $E_{m+1}$, condition (Exc$_{x}$) is satisfied, and $D'+E$ is a normal crossing divisor on $\widetilde{X}$.
Moreover, we have $D'\cap E_{i}=\emptyset\;(1<i<m)$, $D'\cap E_{1}=E_{0}\cap E_{1}=\{e_{0}\}$, and $D'\cap E_{m}=E_{m+1}\cap E_{m}=\{e_{m}\}$ such that $e_{0}$ and $e_{m}$ are $k(x)$-rational points.
(In this case, we write $\zeta_{0}$ and $\zeta_{m+1}$ for the image of the generic point of $E_{0}$ and $E_{m+1}$ in $X$, respectively.)
\end{itemize}
\end{enumerate}
\end{lem}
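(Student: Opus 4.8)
The plan is to transport the conditions (Exc), (Str) --- which are defined on the strict henselization --- across the base change $\pi^{sh}\colon\widetilde X^{sh}\to X^{sh}$ and back, the essential point being the behaviour of the exceptional divisor under $k(x)\hookrightarrow k(x^{sh})=k(x)^{\sep}$. The first step is to invoke Lemma \ref{rationalsingularity}: as soon as one of (Exc$_{x}$)/(Exc$_{\mathrm{o}}$)/(Exc$_{\mathrm{e}}$) (resp.\ (Exc)) holds, $R$ (resp.\ $R^{sh}$) has a rational singularity, $\widetilde X$ (resp.\ $\widetilde X^{sh}$) is its minimal desingularization, and $\pi^{-1}(x)=E$. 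Since the minimal desingularization commutes with $\Spec R^{sh}\to\Spec R$, in that situation $\widetilde X^{sh}=\widetilde X\times_X X^{sh}$, $E^{sh}=E\times_{k(x)}k(x)^{\sep}$, and $D'^{sh}=D'\times_{k(x)}k(x)^{\sep}$.

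For assertion 1, ``$\Leftarrow$'' is a direct base-change computation: a separable quadratic extension of $k(x)$ splits over $k(x)^{\sep}$, so a component $E_i\simeq\bP^1_{k''}$ becomes $E_i'\sqcup E_i''$; the terminal component $E_m$ stays irreducible in case (Exc$_{\mathrm{o}}$) and, by the analysis of \ref{badcurves}, becomes two $\bP^1$'s meeting at one point in case (Exc$_{\mathrm{e}}$); the $k''$-rational joining points split as the points joining $E_i',E_{i+1}'$ and $E_i'',E_{i+1}''$, and the two half-chains are tied together through the terminal component, so $E^{sh}$ is again a chain of $\bP^1_{k(x^{sh})}$'s and (Exc) holds --- the only non-formal point being that in case (Exc$_{\mathrm{e}}$) one must check, via the normalization sequence of \ref{badcurves} and the extra ``$+1$'' coming from the node, that $\deg\cN_{E_m^{(j)}/\widetilde X^{sh}}=\tfrac{1}{2}a_m-1\le -2$ for each of the two components of $E_m^{sh}$. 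For ``$\Rightarrow$'', put $G:=\Gal(k(x)^{\sep}/k(x))$; it acts on $E^{sh}=E\times_{k(x)}k(x)^{\sep}$ over $E$, hence on the dual graph of $E^{sh}$, which is a path, whose automorphism group is trivial or $\Z/2$ (the reversal). If the $G$-action on the graph is trivial, each $E_j^{sh}$ descends to a prime divisor $E_j$ of $\widetilde X$; the (transversal, hence reduced, $G$-stable) joining points descend to $k(x)$-rational points --- a $G$-fixed reduced $k(x)^{\sep}$-point is $k(x)$-rational --- so each $E_j$ acquires a rational point and is $\simeq\bP^1_{k(x)}$, i.e.\ (Exc$_{x}$). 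If $G$ acts by reversal, let $k''$ be the fixed field of $G_0:=\ker(G\to\Z/2)$; the orbits $\{E_j^{sh},E_{n+1-j}^{sh}\}$ of non-central components descend to prime divisors $\simeq\bP^1_{k''}$, while for $n$ odd the central component descends to a conic over $k(x)$ that becomes $\bP^1$ over $k(x^{sh})$ (case (Exc$_{\mathrm{o}}$)) and for $n$ even the two central, intersecting components descend to a single rational curve with one node, which is $k(x)$-rational by the étale-degree-$2$ fact of \ref{node-like}/\ref{ncddef} (case (Exc$_{\mathrm{e}}$)); the degree inequalities are recovered by reversing the computation of the ``$\Leftarrow$'' direction.

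For assertion 2, by assertion 1 one may fix which of (Exc$_{x}$)/(Exc$_{\mathrm{o}}$)/(Exc$_{\mathrm{e}}$) holds, so the shapes of $E$ and of the $E^{sh}$-chain are known, the two extreme components of the latter being exactly the geometric components of $E_1$ (and, in case (Exc$_{x}$), of $E_m$) --- here the connectedness of $E$ pins down the splitting pattern. The implications ``(Str$_0$)/(Str$_{1,x}$)/(Str$_1$)/(Str$_2$) $\Rightarrow$ (Str)'' are base-change checks: ``normal crossing'' and the vanishing conditions $D'\cap E_i=\emptyset$ are preserved; every component of $D'$ meets $E$ (a component of $D'$ disjoint from $E$ would be a prime divisor of $X\setminus\{x\}$ dominating the curve $D\ni x$ yet missing $\pi^{-1}(x)=E$, impossible); and $D'\cap E$ base-changes to exactly two $k(x^{sh})$-points lying on the two ends of the chain --- the $k''$-rational point $e_{0m}$ of (Str$_{1,x}$)/(Str$_1$) splitting into those two, the $k(x)$-rational ones of (Str$_0$)/(Str$_2$) persisting. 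Conversely, assuming (Str): over the separably closed $k(x^{sh})$, $D'^{sh}\cap E^{sh}$ is two $k(x^{sh})$-rational points; the middle-vanishing plus non-emptiness at $E_1^{sh}$ and $E_{m'}^{sh}$ puts one of them on each end; ``normal crossing'' descends to $\widetilde X$, and since every component of $D'$ meets $E$, $D'$ has at most two components. Pulling the two points and the component(s) of $D'$ back through the now-known $G$-action separates the four cases: in (Exc$_{x}$) with $m\ge 2$ the points lie on the distinct $E_1,E_m$ and are individually $G$-fixed, hence $k(x)$-rational --- (Str$_0$) if $D'$ is a prime divisor, (Str$_2$) if it is a sum of two; in (Exc$_{x}$) with $m=1$ they lie on $E_1$, are $G$-conjugate, descend to one point of $E_1$ with residue field $k''$, and $D'$ is a prime divisor --- (Str$_{1,x}$); in (Exc$_{\mathrm{o}}$)/(Exc$_{\mathrm{e}}$) the two extreme components of $E^{sh}$ are the $G$-conjugate geometric components of $E_1$, so the points descend to one point $e_{0m}$ of $E_1$ with $k(e_{0m})=k''$ and $D'$ is a prime divisor with $D'\cap E_i=\emptyset$ $(1<i\le m)$ --- (Str$_1$).

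The hard part is precisely this descent bookkeeping: keeping careful track of which components of $E^{sh}$ (and of $D'^{sh}$) are $G$-conjugate versus individually $G$-fixed, arguing from connectedness of $E$ that the extreme components of the $E^{sh}$-chain are exactly the geometric components of $E_1$ (and $E_m$), and --- the most delicate point --- carrying out the normal-bundle degree computation for the nodal curve occurring in case (Exc$_{\mathrm{e}}$), which rests entirely on the degree formulas of \ref{badcurves}.
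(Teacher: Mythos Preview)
Your Galois-descent approach via the $\Gal(k(x)^{\sep}/k(x))$-action on the dual graph of $E^{sh}$ is sound and is essentially a reorganization of the paper's argument. The paper instead picks an irreducible component $F$ of $E$ lying under an end-component of the chain $E^{sh}$, analyzes whether the normalization $F_n^{sh}$ consists of one or two copies of $\bP^1$ (giving (Exc$_x$) versus (Exc$_{\mathrm{o}}$)/(Exc$_{\mathrm{e}}$)), and then inducts on the chain length; your dual-graph formulation makes the trichotomy more transparent.

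There are, however, two slips in your case bookkeeping. In assertion~1 ($\Rightarrow$), when $E^{sh}$ has a single component the dual graph has one vertex and no edges, so your step ``each $E_j$ acquires a rational point from a joining point'' yields nothing: the descended curve is a priori only a smooth conic over $k(x)$, and one lands in (Exc$_{\mathrm o}$) with $m=1$ rather than necessarily in (Exc$_x$). In assertion~2 ($\Rightarrow$), under (Exc$_x$) with $m=1$ both points of $D'^{sh}\cap E^{sh}$ lie on the single $G$-stable component $E_1^{sh}$, so there is no reason they must be $G$-conjugate; $G$ may equally well fix each, in which case $D'\cap E$ consists of two $k(x)$-rational points and one obtains (Str$_0$) or (Str$_2$) rather than (Str$_{1,x}$). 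Both gaps are repaired by inserting the missing sub-cases. For assertion~2 the paper avoids this Galois bookkeeping entirely: it observes (via Lemma~\ref{rationalsingularity}.4) that $D'\cap E$ is the fiber of the normalization $D'\to D$ over $x$, hence equals $\coprod_{x'\mapsto x}\Spec k(x')$, and then splits cases on $\sharp(D'\cap E)\in\{1,2\}$ and on whether $D'$ is irreducible.
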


\begin{proof}
First, we show assertion 1.
We suppose that condition (Exc$_{\mathrm{e}}$) in Definition \ref{divisorconditions} is satisfied and use the notation of condition (Exc$_{\mathrm{e}}$).
Write $\widetilde{E}_{m}\cup\widetilde{E}_{m+1}$ for the irreducible decomposition of $E_{m,k(x^{sh})}$.
By using the calculations in \ref{badcurves} and the equation $\deg_{\widetilde{E}_{m}/k(x)^{sh}}\cN_{E_{m}/\widetilde{X}}|_{\widetilde{E}_{m}}=\deg_{\widetilde{E}_{m+1}/k(x)^{sh}} \cN_{E_{m}/\widetilde{X}}|_{\widetilde{E}_{m+1}}$, we obtain
$$\deg_{\widetilde{E}_{i}/k(x)^{sh}}\cN_{\widetilde{E}_{i}/\widetilde{X}}|_{\widetilde{E}_{i}}=\frac{a_{m}-2}{2}\leq -2\quad (i\in\{m,m+1\}).$$
By using this observation and elementary algebraic geometry, we can show that (Exc) is satisfied if one of conditions (Exc$_{x}$), (Exc$_{\mathrm{o}}$), or (Exc$_{\mathrm{e}}$) is satisfied.

Next, we suppose condition (Exc) is satisfied.
In this paragraph, $``E_{i}"$ (resp.\,$``e_{1}"$; $``m"$) denotes the object $E_{i}$ (resp.\,$e_{1}$; $m$) in Definition \ref{introdivisorconditions} for $R^{sh}$.
Let $F$ be an integral closed subscheme of $E$ and $F_{n}$ for the normalization of $F$.
Suppose that the base change $F^{sh}$ contains $``E_{1}"$.
Then we have a natural immersion $``E_{1}"\hookrightarrow F^{sh}_{n}$.
If $``m"=1$, we have $F^{sh}=``E_{1}"(\simeq\bP^{1}_{k(x^{sh})})$ and condition (Exc$_{\mathrm{o}}$) is satisfied.
Suppose $``m"\geq2$.
Then the image $e_{1}$ of $``e_{1}"$ in $F_{n}$ is a $k(F_{n})$-rational point.
Moreover, since $``E_{1}"$ intersects other irreducible components of $E^{sh}$ at only $``e_{1}"$, $F_{n}^{sh}=``E_{1}"$ or $F_{n}^{sh}\simeq ``E_{1}"\coprod``E_{m}"$ holds.
If $F_{n}^{sh}=``E_{1}"$, we have $F(=F_{n})\simeq\bP^{1}_{k(x)}$ and we can show that condition (Exc$_{x}$) is satisfied.
Suppose $F_{n}^{sh}\simeq ``E_{1}"\coprod``E_{m}"$.
Then we have $F\simeq\bP^{1}_{k''}$ for some separable extension field $k''$ over $k$ of degree $2$.
If $``m"=2$, condition (Exc$_{\mathrm{e}}$) is satisfied.
If $``m"=3$, condition (Exc$_{\mathrm{o}}$) is satisfied.
If $``m"\geq 4$, then there exists an integral closed subscheme $F_{2}$ of $E$ such that the base change of $F_{2}$ contains $``E"_{2}$ and $``E"_{m-1}$.
In this case, by induction on $``m"$, it holds that condition (Exc$_{\mathrm{o}}$)(resp.\,condition (Exc$_{\mathrm{e}}$)) is satisfied if $m$ is odd (resp.\,if $m$ is even).

Finally, we suppose that condition (Exc) is satisfied and show assertion 2.
We may assume that $D'+E$ is a normal crossing divisor on $\widetilde{X}$, $D'$ is normal, and the morphism $D'\cap E\to\Spec k(x)$ is finite \'etale of rank $2$.
Then $D'\to D$ is the normalization morphism.
Since we have $D'\times_{D}\Spec k(x)=D'\cap E$ by Lemma \ref{rationalsingularity}.4, $D'\cap E=\coprod_{x'\in D',x'\mapsto x}\Spec k(x')$ holds.
Then it follows that condition (Str$_{0}$) (resp.\,one of conditions (Str$_{1,x}$) or (Str$_{1}$); condition (Str$_{2}$)) is satisfied if and only if $D'$ is irreducible and $\sharp(D'\cap E)=2$ (resp.\,$D'$ is irreducible and $\sharp(D'\cap E)=1$; $D'$ is not irreducible) and condition (Str) is satisfied.
\end{proof}

\begin{prop}
\label{definelogstructure}
Suppose that conditions (Exc$_{x}$) and (Str$_{2}$) are satisfied.
We use the notations in Definition \ref{divisorconditions} and Lemma \ref{notationnormalcrossing}.
\begin{enumerate}
\item
The restriction of the homomorphism
\begin{align*}
\Z^{\oplus m+2}\to\Pic(\widetilde{X}):(c_{i})_{0\leq i\leq m+1}\mapsto O_{\widetilde{X}}(-\sum_{0\leq i\leq m+1} c_{i}E_{i})
\end{align*}
to $0\times \Z^{\oplus m}\times 0$ is injective.
Moreover, the composite homomorphism of this homomorphism and the homomorphisms in Lemmas \ref{rationalsingularity}.7 and \ref{rationalsingularity}.8 sends $(c_{i})_{0\leq i\leq m+1}$ to $-(c_{i-1}+a_{i}c_{i}+c_{i+1})_{1\leq i\leq m}$.
\item
$(X,X\setminus D)$ is a toric pair and the log scheme $X^{\log}$ is Zariski log regular.
\end{enumerate}
\end{prop}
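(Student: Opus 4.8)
I would first prove assertion 1 by a direct intersection-number computation, then use it, together with Lemmas \ref{rationalsingularity} and \ref{fordualmonoid}, to identify $M/R^{\ast}$ — where $M:=\{f\in R\mid f\text{ is invertible outside }D\}$ — with the dual monoid of a $2$-dimensional cone, build a chart out of a splitting, and finally reduce log regularity at $x$ to a spanning statement in $\fm/\fm^{2}$. For assertion 1, compose $\Z^{\oplus m+2}\to\Pic(\widetilde{X})$ with the injections $\Pic(\widetilde{X})\hookrightarrow\Pic(E)\xrightarrow{\ \sim\ }\Z^{\oplus m}$ of Lemmas \ref{rationalsingularity}.8 and \ref{rationalsingularity}.7; since $\deg_{E_{j}}O_{\widetilde{X}}(E_{i})|_{E_{j}}$ is $a_{j}$ when $i=j$, is $1$ when $|i-j|=1$ (a transverse intersection at a $k(x)$-rational point — this applies to the end components $E_{0},E_{m+1}$ of $D'$ as well, by condition (Str$_{2}$)), and is $0$ otherwise, the composite sends $(c_{i})_{0\le i\le m+1}$ to $-(c_{i-1}+a_{i}c_{i}+c_{i+1})_{1\le i\le m}$; its restriction to $0\times\Z^{\oplus m}\times 0$ is minus the intersection matrix of $E_{1}+\dots+E_{m}$, which is negative definite, hence invertible, so the restriction of $\Z^{\oplus m+2}\to\Pic(\widetilde{X})$ to $0\times\Z^{\oplus m}\times 0$ is injective.

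For assertion 2, set $U:=X\setminus D$, $j:=j_{U}$, $M_{X}:=O_{X}\cap j_{\ast}O_{U}^{\ast}$ (a log structure, since $R$ is normal), $M:=\Gamma(X,M_{X})$, and let $D_{0},D_{m+1}$ be the images in $X$ of the generic points of $E_{0},E_{m+1}$, i.e.\ the irreducible components of $D$. As $\pi$ is an isomorphism over $X\setminus\{x\}$ and $\pi^{-1}(x)=E$ (Lemmas \ref{rationalsingularity}.2 and \ref{rationalsingularity}.4), one has $\Gamma(U,O_{U}^{\ast})=\{f\in\Frac(R)^{\ast}\mid\mathrm{div}_{\widetilde{X}}(f)\in\bigoplus_{0\le i\le m+1}\Z E_{i}\}$, and $\mathrm{div}_{\widetilde{X}}$ identifies $\Gamma(U,O_{U}^{\ast})/R^{\ast}$ with the kernel of $(c_{i})\mapsto O_{\widetilde{X}}(\sum c_{i}E_{i})$. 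By assertion 1 and the injectivity of $\Pic(\widetilde{X})\hookrightarrow\Z^{\oplus m}$, this kernel is the kernel $K$ of $(c_{i})\mapsto(c_{i-1}+a_{i}c_{i}+c_{i+1})$, which by Lemma \ref{fordualmonoid} (with $a_{i}$ playing the role of $c_{i}$) is free of rank $2$ and realizes, with $\sigma:=\R_{\ge 0}l_{0}+\R_{\ge 0}l_{m+1}$ and $L:=K^{\vee}$, the setup of \ref{conesetting}. Since such an $f$ lies in $R$ exactly when $\mathrm{div}_{X}(f)=c_{0}D_{0}+c_{m+1}D_{m+1}\ge 0$, i.e.\ $c_{0},c_{m+1}\ge 0$, one obtains $M/R^{\ast}\simeq K\cap(\N\times\Z^{\oplus m}\times\N)=P$, where $P:=\{a\in L^{\vee}\mid a(\sigma)\subset\N\}$ is an f.s.\ sharp monoid.

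As $K\simeq\Z^{2}$ is free, the extension $1\to R^{\ast}\to\Gamma(U,O_{U}^{\ast})\to K\to 1$ splits; restricting a splitting $s\colon K\to\Frac(R)^{\ast}$ to $P$, and noting that $s(p)\in R$ for $p\in P$ (as $\mathrm{div}_{X}(s(p))\ge 0$), one obtains a monoid section $\theta\colon P\to M$, i.e.\ a Zariski chart of $(X,M_{X})$ by the f.s.\ sharp monoid $P$. By \cite[Lemma II.4.6]{Ts} it remains to check that $(X,M_{X})$ is log regular (we may assume $R$ strictly henselian, since (Exc$_{x}$) and (Str$_{2}$) survive strict henselization); this is immediate at all points other than $x$ ($\Spec R$ has an isolated singularity, and around the generic point of $D_{0}$ or of $D_{m+1}$ the log structure is the standard one on a discrete valuation ring), while at $x$, since $M_{X,\overline{x}}/O_{X,\overline{x}}^{\ast}=M/R^{\ast}=P$ has $\rank P^{\gp}=\rank K=2=\dim R$, conditions (i) and (ii) in the definition of log regularity both reduce to $I(\overline{x},M_{X})=\fm$. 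Since $M=R^{\ast}\cdot\theta(P)$ and $\theta(p)$ is a non-unit for $p\neq 1$, this ideal is generated by $\{\theta(p)\mid p\in P\setminus\{1\}\}$, so by Nakayama it suffices to show that their images span $\fm/\fm^{2}$ over $k(x)$.

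The crux is this last step. Via the isomorphism $\fm/\fm^{2}\xrightarrow{\ \sim\ }H:=H^{0}(\widetilde{X},O_{\widetilde{X}}(-E)|_{E})$ of Lemma \ref{rationalsingularity}.5, the class of $\theta(p)$ corresponds to $(\pi^{\ast}\theta(p))|_{E}$ — legitimate because $\pi^{\ast}\widetilde{\fm}=O_{\widetilde{X}}(-E)$ (Lemma \ref{rationalsingularity}.4) forces $\mathrm{div}_{\widetilde{X}}(\theta(p))\ge E$ for $p\neq 1$ — and its restriction to $E_{i}$ is nonzero precisely when $v_{E_{i}}(\theta(p))=1$, i.e.\ $p(l_{i})=1$, i.e.\ $p\in P(i)$ in the notation of Lemma \ref{propertiesoffans}.1. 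Using the exact sequence $0\to H\to\bigoplus_{1\le i\le m}H_{i}\to k(x)^{\oplus m-1}\to 0$ of Lemma \ref{rationalsingularity}.9, linear independence in $H$ of $\{(\pi^{\ast}\theta(p))|_{E}\mid p\in\bigcup_{i}P(i)\}$ follows, by projecting to each $H_{i}$ (only the $p\in P(i)$ contribute there), from linear independence in $H_{i}$ of $\{(\pi^{\ast}\theta(p))|_{E_{i}}\mid p\in P(i)\}$. For fixed $i$, the ratio of any two members of this family is a power of $t:=s(l^{\ast}_{i,i+1})|_{E_{i}}$; since $l^{\ast}_{i,i+1}(l_{i})=0$ while $l^{\ast}_{i,i+1}(l_{i+1})=1$, the function $t$ is regular and nonzero at the generic point of $E_{i}$ but vanishes at the $k(x)$-rational point $E_{i}\cap E_{i+1}$, hence is a nonconstant — so transcendental over $k(x)$ — element of the function field of $E_{i}\simeq\bP^{1}_{k(x)}$; thus the family equals $\sigma_{0}\cdot\{1,t,\dots,t^{\dim H_{i}-1}\}$ for a fixed nonzero $\sigma_{0}$ and is linearly independent. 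Finally $\sharp\big(\bigcup_{i}P(i)\big)=3-\sum_{i}(a_{i}+2)=\dim_{k(x)}H$ by Lemma \ref{propertiesoffans}.1 and Lemma \ref{rationalsingularity}.10, so the linearly independent family above is a $k(x)$-basis of $H$; hence the images of $M\setminus R^{\ast}$ span $\fm/\fm^{2}$, $I(\overline{x},M_{X})=\fm$, and $(X,M_{X})$ is log regular. Together with the chart $\theta$ and \cite[Lemma II.4.6]{Ts}, this shows that $(X,X\setminus D)$ is a toric pair and that $X^{\log}$ is Zariski log regular. Apart from this $\fm/\fm^{2}$-spanning computation, the argument is a matter of assembling the lemmas already established.
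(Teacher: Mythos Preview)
Your proof is correct and follows essentially the same strategy as the paper's: compute the composite $\Z^{\oplus m+2}\to\Pic(\widetilde{X})\hookrightarrow\Z^{\oplus m}$ via intersection numbers, identify $M/R^{\ast}$ with the f.s.\ monoid $P$ of Lemma~\ref{fordualmonoid}, lift to a chart, and verify log regularity at $x$ by showing the images of $\theta(P\setminus\{1\})$ span $\fm/\fm^{2}\simeq H$ via the count $\sharp\bigcup_{i}P(i)=\dim_{k(x)}H$. The only notable differences are organizational: the paper derives the kernel description via the exact sequence~(\ref{picexact}), whereas you argue directly with $\mathrm{div}_{\widetilde{X}}$; and for linear independence in $H$ the paper proceeds by induction on $i$ using the divisor description $v_{i-1}(p)e_{i-1}+v_{i+1}(p)e_{i}$ of $(\theta(p))|_{E_{i}}$, while your projection argument (combined with the observation that $\{(\theta(p))|_{E_{i}}:p\in P(i)\}$ consists of distinct powers of a transcendental $t$ times a fixed section) is a clean alternative. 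Your detour through \cite[Lemma~II.4.6]{Ts} and the reduction to strictly henselian $R$ is unnecessary---since the chart is already Zariski and the spanning check in $\fm/\fm^{2}$ is insensitive to the base change $k(x)\to k(x)^{\mathrm{sep}}$, you could verify Zariski log regularity directly---but it is harmless.
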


\begin{proof}
First, we show assertion 1.
The desired injectivetiy follows from the discussion after \cite[Lemma (14.1)]{Lip}.
The second assertion follows from a calculation.

Next, we show assertion 2.
Write $M_{X}$ (resp.\,$M_{\widetilde{X}}$) for Zariski monoid sheaf $j_{X\setminus D,\ast}O_{X\setminus D}^{\ast}$ (resp.\,the monoid sheaf determined by the log structure on $\widetilde{X}$ defined by the normal crossing divisor $E+D'$).
Then we have $\pi_{\ast}M_{\widetilde{X}}=M_{X}$ since we have $\pi_{\ast}O_{\widetilde{X}}=O_{X}$.
We write $M:=\Gamma(\widetilde{X},M_{\widetilde{X}})=\Gamma(X,M_{X})$.
Since $R$ is normal, $(X,M_{X})$ is a Zariski log scheme.
We need to show that $(X,M_{X})$ is Zariski log regular.

To prove the desired Zariski log regularity, we start by showing that $M/R^{\ast}$ is an f.s.\,sharp monoid satisfying $\rank(M/R^{\ast})^{\gp}=2$.
Write $K(X)$ for the function field of $X$, $K(X)_{\widetilde{X}}$ for the constant sheaf on $\widetilde{X}$ whose stalks are isomorphic to $K(X)$, and $v_{i}$ for the valuation on $K(X)$ associated to the generic point $\xi_{i}$ of $E_{i}$ for $0\leq i\leq m+1$.
The sheaves $O_{\widetilde{X}}$, $O_{\widetilde{X}}^{\ast}$, $M_{\widetilde{X}}$, and $M_{\widetilde{X}}^{\gp}$ are subsheaves of $K(X)_{\widetilde{X}}$.
Note that we have an exact sequence of sheaves
$$0\to O_{\widetilde{X}}^{\ast}\to M_{\widetilde{X}}^{\gp}\to M_{\widetilde{X}}^{\gp}/O_{\widetilde{X}}^{\ast}\to0,$$
which induces the exact sequence
\begin{align}
\label{picexact}
0\to R^{\ast}\to \Gamma(\widetilde{X},M_{\widetilde{X}}^{\gp})
\to\Gamma(\widetilde{X},M_{\widetilde{X}}^{\gp}/O_{\widetilde{X}}^{\ast})\to \Pic(\widetilde{X}).
\end{align}
For each $0\leq i\leq m$, we write $\iota_{i}$ for the inclusion $\{\xi_{i}\}\hookrightarrow\widetilde{X}$ and  $\Z_{\xi_{i}}$ for the constant sheaf on $\{\xi_{i}\}$.
Then we have a homomorphism $M_{\widetilde{X}}^{\gp}/O_{\widetilde{X}}^{\ast}\to\bigoplus_{0\leq i\leq m+1}\iota_{i,\ast}\Z_{\xi_{i}}$ induced by using the isomorphism $v_{i}:(M_{\widetilde{X}}^{\gp}/O_{\widetilde{X}}^{\ast})_{\xi_{i}}\simeq \Z$.
Thus, we have an isomorphism $(v_{i})_{0\leq i\leq m+1}:\Gamma(\widetilde{X},M_{\widetilde{X}}^{\gp}/O_{\widetilde{X}}^{\ast})\simeq\Z^{\oplus (m+2)}$.
By using this identification, we can see that the last homomorphism in (\ref{picexact}) coincides with the homomorphism in assertion 1.
Hence, we have
\begin{align*}
\label{identification}
&\Gamma(\widetilde{X},M_{\widetilde{X}}^{\gp})/R^{\ast}\\
\simeq&\{(c_{i})_{0\leq i\leq m+1}\in\Z^{\oplus m+2}\mid 1\leq\forall i\leq m, c_{i-1}+a_{i}c_{i}+c_{i+1}=0\}.\tag{$\star$}
\end{align*}
Since we have $M_{\widetilde{X}}=M_{\widetilde{X}}^{\gp}\cap O_{\widetilde{X}}$ (which follows from $M_{\widetilde{X}}^{\gp}=j_{\widetilde{X}\setminus(D'\cup E),\ast}O_{\widetilde{X}\setminus(D'\cup E)}^{\ast}$ (cf.\,\cite[(11.6) Theorem]{Ka2})), we can identify
$$M/R^{\ast}=\Gamma(\widetilde{X},M_{\widetilde{X}})/R^{\ast}
(\subset\Gamma(\widetilde{X},M_{\widetilde{X}}^{\gp})/R^{\ast}\simeq\Z^{\oplus 2})$$
with the submonoid of the right hand side of ($\star$) consisting of the elements whose images by $v_{i}$ is contained in $\N$ for any $0\leq i\leq m+1$.
Then, by Lemma \ref{fordualmonoid} and the first paragraph of \ref{conesetting}, it holds that $M/R^{\ast}$ is an f.s.\,sharp monoid.

Since $M/R^{\ast}$ is an f.s.\,sharp monoid, we have a section $s$ of the group homomorphism $M^{\gp}\to M^{\gp}/R^{\ast}$.
Write $P$ for the image of $M/R^{\ast}$ in $M^{\gp}$, which is contained in $M$ and satisfies $PR^{\ast}=M$.
In the following, we show that the associated log structure $P^{a}(\to M_{X})\to O_{X}$ coincides with $M_{X}\to O_{X}$ and $(X,M_{X})$ is Zariski log regular.
Over $X\setminus \Supp D$, since $P^{a}\to O_{X}$ factors through $M_{X}\to O_{X}$, we have $P^{a}|_{X\setminus D}\simeq O_{X\setminus D}^{\ast}$.
At $\zeta_{0}$ (resp.\,$\zeta_{m+1}$), since there exists a (unique) element $p_{0}\in P$ (resp.\,$p_{m}\in P$) satisfying $v_{0}(p_{0})=v_{1}(p_{0})=1$ (resp.\,$v_{m}(p_{m})=v_{m+1}(p_{m})=1$) by ($\star$) and Lemma \ref{propertiesoffans}.1, it holds that $v_{0}(P)=\N$ (resp.\,$v_{m+1}(P)=\N$).
Since we have $\Supp D=\{\zeta_{0},\zeta_{m+1},x\}$, it suffices to show that $(P\setminus\{1\})R=\fm$, or equivalently, $\fm/\fm^{2}$ is generated by the image of $P\setminus\{1\}$ as an $R$-module.

We use the notation of Lemma \ref{rationalsingularity}.
By Lemma \ref{rationalsingularity}.5, it suffices to show that the image of $P\setminus\{1\}$ in $H$ is a system of generators of the $k(x)$-linear space.
Note that, for any $1\leq i\leq m$, the image of an element $p\in P\setminus\{1\}$ in $H_{i}$ is nontrivial if and only if $v_{i}(p)=1$.
Write $P(i):=\{p\in P\setminus\{1\}\mid v_{i}(p)=1\}$.
Note that $O_{\widetilde{X}}(-E)|_{E_{i}}$ is isomorphic to $O_{\bP^{1}_{k(x)}}(-a_{i}-1+\delta_{i,1}+\delta_{i,m})$.
For an element $p\in P(i)$, the image of $p$ in $H_{i}$ defines the effective Cartier divisor $v_{i-1}(p)e_{i-1}+v_{i+1}(p)e_{i}$ on $E_{i}$.
Therefore, by Lemmas \ref{propertiesoffans}.1 and \ref{fordualmonoid}, the image of $P(i)$ in $H_{i}$ is a $k(x)$-basis.
By using this fact, Lemmas \ref{propertiesoffans}.1, and induction on $i$, we can see that the image of $\bigcup_{1\leq j\leq i}P(j)$ in $H$ is linearly independent (cf.\,Lemma \ref{rationalsingularity}.9).
Thus, the image of $\bigcup_{1\leq i\leq s}P(i)$ in $H$ is linearly independent subset of cardinality $3-\sum_{1\leq i\leq m}(a_{i}+2)$ by Lemma \ref{propertiesoffans}.1, which shows this subset is a $k(x)$-basis of $H$ by Lemma \ref{rationalsingularity}.10.
\end{proof}

\subsection{Log blow-ups}
\label{logblowupsubsection}

In this subsection, we study log blow-ups of log regular schemes of dimension $2$.

\begin{nota-dfn}[cf.\,\cite{Niz}]
\label{fundnota}
Let $X^{\log}=(X,M_{X})$ be a connected Noetherian log regular log scheme of dimension $2$.
Fix a point $x\in X$ and take a geometric point $\overline{x}$ over $x$.
We shall write $P_{\overline{x}}$ (resp.\,$L_{\overline{x}}$; $L_{\overline{x},\R}$; $L^{\vee}_{\overline{x}}$; $\sigma_{\overline{x}}$) for the monoid $M_{X,\overline{x}}/O_{X,\overline{x}}^{\ast}$ (resp.\,the abelian group $\Hom_{\Z}(P_{\overline{x}}^{\gp},\Z)$; the $\R$-linear space $L_{\overline{x}}\otimes_{\Z}\R$; the abelian group $P_{\overline{x}}^{\gp}(\simeq \Hom_{\Z}(L_{\overline{x}},\Z))$; the strongly convex rational polyhedral cone $\{v\in L_{\overline{x},\R}\mid \forall u\in P_{\overline{x}},u(v)\geq0\}$) (cf.\,\cite[Section 1]{Oda}, \cite[Section 9]{Ka2}, \cite{Niz}).
Note that $P_{\overline{x}}$ is an f.s.\,sharp monoid.

For any coherent fractional ideal $J\subset M_{X,\overline{x}}^{\gp}$ (cf.\,\cite[(5.7) Definition]{Ka2}), write $f_{J}:\sigma_{\overline{x}}\to\R$ for the function satisfying $f_{J}(a)=\min\{\mu(a)\mid \mu\in J\}$.
This function $f_{J}$ satisfies property ($\ast$).
Conversely, for a function $f:\sigma_{\overline{x}}\to\R$ satisfying ($\ast$), we define a coherent fractional ideal $J_{f,\overline{x}}$ of $P_{\overline{x}}$ to be $\{a\in L_{\overline{x}}^{\vee}(=P_{\overline{x}}^{\gp})\mid a(z)\geq f(z),\forall z\in \sigma_{\overline{x}}\}$ (cf.\,the discussion before \cite[Theorem 9]{KKMS}).
(See also the discussion after \cite[Definition 3.11]{Niz}.)

Suppose $x\in F(X)^{(2)}$.
We write $U(x)$ for the open subset
$$(U(x):=)X\setminus\bigl((F(X)^{(2)}\setminus\{x\})\cup \bigcup_{\substack{z\in F(X)^{(1)}\\ x\notin\overline{\{z\}}}}\overline{\{z\}}\bigr)$$
of $X$.
Note that we have $F(X)^{(2)}\cap U(x)=F(U(x))^{(2)}=\{x\}$.
Let $y$ be a point of $F(X)\cap U(x)$.
Then we have $x\in\overline{\{y\}}$ and, for a suitbale geometric point $\overline{y}$ over $y$, we have a cospecialization homomorphism $P_{\overline{x}}\to P_{\overline{y}}$, which induces injection $\sigma_{\overline{y}}\to \sigma_{\overline{x}}$ by \cite[Lemma 2.12]{Niz}.
Let $z$ be a point of $U(x)$.
Then the prime ideal $I(\overline{z},M_{X})$ of $O_{X,\overline{z}}$ defines a point $y_{z}\in F(X)\cap U(x)$.
We have a natural isomorphism $\sigma_{\overline{z}}\simeq\sigma_{\overline{y_{z}}}$.
For a function $f:\sigma_{\overline{x}}\to\R$ satisfying ($\ast$) and $z\in U(x)$, we define a function
$$f_{\overline{z}}:\sigma_{\overline{z}}\to \sigma_{\overline{y_{z}}}\to \sigma_{\overline{x}}
\xrightarrow{f}\R.$$
As in the discussion before \cite[Lemma 3.12]{Niz}, we can define a coherent fractional ideal $J_{f}\subset M_{U(x)}^{\gp}$ (cf.\,\cite[Definition 3.1(2)]{Niz}) satisfying that, for any \'etale morphism $V\to U(x)$,
$$J_{f}(V)=\{a\in M_{U(x)}^{\gp}(V)\mid \forall\text{ geometric point }\overline{z}\to U(x), \forall s\in\sigma_{\overline{z}}, a_{\overline{z}}(s)\geq f_{\overline{z}}(s)\}$$
by \cite[Lemma 3.12]{Niz}.
Note that the pull-back of the fractional ideal $J_{f,\overline{x}}$ by canonical surjection $M_{X,\overline{x}}^{\gp}\to M_{X,\overline{x}}^{\gp}/O_{X,\overline{x}}^{\ast}$ coincides with the stalk $(J_{f})_{\overline{x}}$ by the definitions.
\end{nota-dfn}

In the next proposition, we use the notion of log blow-ups.
For a detailed explanation on log blow-ups, see \cite[2.2]{FK}, \cite[1.6]{I1}, \cite{I2}, and \cite[section 4]{Niz}.

\begin{prop}
\label{minlogdes}
Let $X^{\log}=(X,M_{X})$ be a connected Noetherian log regular log scheme of dimension $2$.
For any $x\in F(X)^{(2)}$, let $\Delta(x)$ be a proper subdivision of $\{\tau\subset\sigma_{\overline{x}}\mid\tau\text{ is a face of }\sigma_{\overline{x}}\}$.
\begin{enumerate}
\item
There exists a log regular scheme $\pi^{\log}:X_{\Delta}^{\log}\to X^{\log}$ over $X^{\log}$ satisfying the following:
\begin{itemize}
\item
\'Etale locally on $X$, $\pi^{\log}$ is obtained by the log blow-up of a fractional coherent ideal of $M_{X}$.
In particular, $\pi^{\log}$ is log \'etale.
\item
For any $x\in F(X)^{(2)}$ and any geometric point $\overline{x}\to X$ over $x$, the log scheme $X^{\log}_{\Delta}\times^{\log}_{X^{\log}}(\Spec O_{X,\overline{x}})^{\log}$ coincides with the ``base change" (cf.\,\cite[(9.10) Definition]{Ka2}) of $(\Spec O_{X,\overline{x}})^{\log}$ associated to $\Delta(x)$.
\end{itemize}
\item
Let $U\to X$ be an \'etale morphism from a connected Noetherian scheme.
Then we have
$$U_{\Delta_{U}}^{\log}\simeq U^{\log}\times^{\log}_{X^{\log}}X_{\Delta}^{\log},$$
where, for $x\in F(U)^{(2)}$, $\Delta_{U}(x)=\Delta(x)$.
Moreover, if $U^{\log}$ is Zariski log regular, $U_{\Delta_{U}}^{\log}\to U^{\log}$  coincides with the ``base change" (cf.\,\cite[(9.10) Definition]{Ka2}).
\end{enumerate}
\end{prop}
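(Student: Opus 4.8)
The plan is to construct $X_\Delta^{\log}$ by gluing log blow-ups defined on a convenient open cover of $X$. Since $X$ is connected and Noetherian, hence quasi-compact, $F(X)^{(2)}$ is finite (cf.\,\ref{lognotationa}); set $V:=(X\setminus F(X)^{(2)},M_X|_{X\setminus F(X)^{(2)}})$, an open log subscheme containing $F(X)^{(1)}$ and the trivial locus, and observe that $V$ together with the opens $U(x)$ ($x\in F(X)^{(2)}$, cf.\,Notation-Definition \ref{fundnota}) cover $X$, with $U(x)\cap U(x')$ disjoint from $F(X)^{(2)}$ when $x\neq x'$ and $U(x)\cap V=U(x)\setminus\{x\}$. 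On $V$ I would take the identity log blow-up. On $U(x)$ I would use Lemma \ref{propertiesoffans}.2 to choose a function $f_x\colon\sigma_{\overline{x}}\to\R$ satisfying $(\ast)$ whose associated proper subdivision is $\Delta(x)$, form the coherent fractional ideal $J_{f_x}\subset M_{U(x)}^{\gp}$ of Notation-Definition \ref{fundnota}, and take $\pi_x^{\log}\colon U(x)_\Delta^{\log}\to U(x)^{\log}$ to be the log blow-up along $J_{f_x}$ (cf.\,\cite[section 4]{Niz}, \cite{I2}). As the log blow-up of a coherent fractional ideal, $\pi_x^{\log}$ is log \'etale, and $U(x)_\Delta^{\log}$ is log regular because log blow-ups preserve log regularity (cf.\,\cite[section 4]{Niz}). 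The result depends only on the proper subdivision associated to $f_x$, namely $\Delta(x)$, so it is independent of the choice of $f_x$ (one may also just fix one $f_x$ per $x$).

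Next I would glue. For any $z\in U(x)\setminus\{x\}$, the point $y_z\in F(U(x))$ attached to $z$ lies in $F(U(x))^{(1)}$ since $F(U(x))^{(2)}=\{x\}$; hence $\sigma_{\overline{z}}\simeq\sigma_{\overline{y_z}}$ is a ray, which admits no nontrivial proper subdivision, so $f_x$ is linear near $z$, $(J_{f_x})_{\overline{z}}$ is invertible, and $\pi_x^{\log}$ is an isomorphism over a neighbourhood of $z$. Thus $U(x)_\Delta^{\log}\to U(x)^{\log}$ is an isomorphism over $U(x)\setminus\{x\}$, and via these isomorphisms the restrictions of the $U(x)_\Delta^{\log}$ and of $V$ to the pairwise intersections of the cover are all canonically identified with the given log schemes, so the cocycle condition is automatic. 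Gluing yields a log \'etale morphism $\pi^{\log}\colon X_\Delta^{\log}\to X^{\log}$ with $X_\Delta^{\log}$ log regular, restricting over each $U(x)$ to the log blow-up along $J_{f_x}$; this is the first bullet of assertion 1. For the second bullet, log blow-ups commute with the strict (flat) base change $(\Spec O_{X,\overline{x}})^{\log}\to X^{\log}$, so $X_\Delta^{\log}\times_{X^{\log}}^{\log}(\Spec O_{X,\overline{x}})^{\log}$ is the log blow-up along the pull-back of $J_{f_x}$, whose stalk at $\overline{x}$ is, by Notation-Definition \ref{fundnota}, the pull-back along $M_{X,\overline{x}}^{\gp}\to M_{X,\overline{x}}^{\gp}/O_{X,\overline{x}}^{\ast}$ of the coherent fractional ideal $J_{f_x,\overline{x}}$ of $P_{\overline{x}}$; since $f_x$ realizes $\Delta(x)$, this log blow-up is exactly the ``base change'' of $(\Spec O_{X,\overline{x}})^{\log}$ associated to $\Delta(x)$ in the sense of \cite[(9.10) Definition]{Ka2}.

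For assertion 2: given a strict \'etale $U\to X$ with $U$ connected Noetherian, $U^{\log}$ is log regular of dimension $2$, $F(U)^{(i)}$ is the preimage of $F(X)^{(i)}$ under $U\to X$, and for $u\in F(U)^{(2)}$ over $x$ the cospecialization isomorphism identifies $\sigma_{\overline{u}}$ with $\sigma_{\overline{x}}$ carrying $\Delta_U(u)$ to $\Delta(x)$. Pulling the cover $\{V\}\cup\{U(x)\}_x$ back to $U$ and using that log blow-ups commute with strict \'etale base change together with the \'etale-local description of $J_{f_x}$ in Notation-Definition \ref{fundnota} (cf.\,\cite[Lemma 3.12]{Niz}), one sees that $U^{\log}\times_{X^{\log}}^{\log}X_\Delta^{\log}$ is built from $U$ and the data $\Delta_U$ by the same recipe, so $U^{\log}\times_{X^{\log}}^{\log}X_\Delta^{\log}\simeq U_{\Delta_U}^{\log}$ over $U^{\log}$. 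If moreover $U^{\log}$ is Zariski log regular, the ``base change'' of $U^{\log}$ associated to the subdivision $\Delta_U$ of its fan is defined in \cite[(9.10) Definition]{Ka2}; it agrees \'etale-locally on charts with the log blow-ups along the ideals $J_{f_x}$, hence globally, so $U_{\Delta_U}^{\log}\to U^{\log}$ coincides with that base change.

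The main obstacle is the gluing step: one must verify that the locally constructed log blow-ups are canonically trivial on overlaps — which rests on the fact that the cones over codimension-$1$ points of $F$ are rays admitting no proper subdivision — and one must correctly invoke the several compatibilities of log blow-ups with flat and with strict \'etale base change (Kato, Fujiwara--Kato, Nizio\l). A minor bookkeeping point is the independence of $U(x)_\Delta^{\log}$ from the auxiliary function $f_x$ realizing $\Delta(x)$.
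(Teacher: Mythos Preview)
Your proposal is correct and follows essentially the same approach as the paper: choose, for each $x\in F(X)^{(2)}$, a function $f_x$ realizing $\Delta(x)$ via Lemma \ref{propertiesoffans}.2, take the log blow-up of $U(x)^{\log}$ along the associated coherent fractional ideal $J_{f_x}$, verify it is an isomorphism over $U(x)\setminus\{x\}$, and glue with the identity on $X\setminus F(X)^{(2)}$. The paper handles the independence from $f_x$ slightly differently---by first identifying the pull-back to $(\Spec O_{X,\overline{x}})^{\log}$ with Kato's base change via \cite[Proposition 4.5, Theorem 4.7]{Niz} and \cite[Lemma II.4.6]{Ts}, and then invoking descent---whereas you treat it as optional bookkeeping; also note that for $z$ in the trivial locus $\sigma_{\overline{z}}$ is $\{0\}$ rather than a ray, but your conclusion that $(J_{f_x})_{\overline{z}}$ is invertible is unaffected.
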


\begin{proof}
We use the notation in \ref{fundnota}.
Let $x$ be a point of $F(X)^{(2)}$.
By using Lemma \ref{propertiesoffans}.2, take a function $f:\sigma_{\overline{x}}\to\R$ satisfying ($\ast$) such that the proper subdivision associated to $f$ coincides with $\Delta(x)$.
As in Notation-Definition \ref{fundnota}, we have a coherent fractional ideal $J_{f}\subset M_{U(x)}$.
Let $(B_{J_{f}}U(x))^{\log}$ be the log blow-up of $U(x)^{\log}$ defined by $J_{f}$.
Then the morphism $(B_{J_{f}}U(x))^{\log}\to U(x)^{\log}$ is isomorphic over $(U(x)\setminus\{x\})^{\log}$.
Since $(B_{J_{f}}U(x))^{\log}$ is an f.s.\,log scheme log \'etale over the log regular scheme $U(x)^{\log}$, $(B_{J_{f}}U(x))^{\log}$ is also log regular by \cite[(8.2) THEOREM]{Ka2}.
Hence, $B_{J_{f}}U(x)$ is a normal scheme birational to $U(x)$.
Moreover, $(B_{J_{f}}U(x))^{\log}\times^{\log}_{U(x)^{\log}}(\Spec O_{X,\overline{x}})^{\log}$ is canonically isomorphic to the log blow-up of $(\Spec O_{X,\overline{x}})^{\log}$ with respect to $J_{f}|_{\Spec O_{X,\overline{x}}}$.
By \cite[Proposition 4.5]{Niz}, \cite[Theorem 4.7]{Niz}, and \cite[Lemma II.4.6]{Ts} (cf.\,\cite[Lemma 2.3]{Niz}), the log blow-up of $(\Spec O_{X,\overline{x}})^{\log}$ coincides with the ``base change" (cf.\,\cite[(9.10) Definition]{Ka2}) of $(\Spec O_{X,\overline{x}})^{\log}$ associated to the subdivision of $\sigma_{\overline{x}}$ defined by $f$.
Hence, the log blow-up of $\Spec O_{X,\overline{x}}$ does not depend on $f$.
By elementary descent theory, the normal scheme $B_{J_{f}}U(x)$ and also $(B_{J_{f}}U(x))^{\log}$ do not depend on $f$.
We write $U(x)_{\Delta_{U(x)}}^{\log}$ for the log scheme $(B_{J_{f}}U(x))^{\log}$ over $U(x)^{\log}$.

By gluing the log scheme $U(x)_{\Delta_{U(x)}}\to U(x)\;(x\in F(X)^{(2)})$ and $\id:X\setminus F(X)^{(2)}\to X\setminus F(X)^{(2)}$, we obtain a log scheme over $X^{\log}$.
Then the resulting log scheme works as the desired log scheme $X_{\Delta}^{\log}$.
Other assertions of Proposition \ref{minlogdes} follow from the construction of $X_{\Delta}^{\log}$.
\end{proof}

\begin{nota-dfn}
\label{minimaldef}
We use the setting of Proposition \ref{minlogdes}.
Suppose that $\Delta(x)$ is the coarsest subdivision of the fan $\{\tau\subset\sigma\mid\tau\text{ is a face of }\sigma_{\overline{x}}\}$ for any $x\in F(X)^{(2)}$.
In this case, we shall write $\widetilde{X}^{\log}$ for the log scheme $X_{\Delta}^{\log}$ and refer to $\widetilde{X}^{\log}$ as the \textit{minimal log desingularization} of $X^{\log}$.
\end{nota-dfn}

\begin{exam}
\label{maximallogblowup}
Let $X^{\log}=(X,M_{X})$ be a connected Noetherian log regular log scheme of dimension $2$, $x$ an element of $F(X)^{(2)}$, and $\overline{x}$ a geometric point of $X$ over $x$.
By considering the coherent fractional ideal $P_{\overline{x}}\setminus\{1\}$ in $P_{\overline{x}}$, the blow-up of $X$ with respect to the maximal ideal of $x$ can be regarded as (the morphism between the underlying schemes of) a log blow-up.
\end{exam}

\subsection{A characterization of log regular local schemes of dimension $2$}
\label{summarysubsection}

In this subsection, we give a characterization of log regular local schemes of dimension $2$.

\begin{prop}[cf.\,{\cite[Proposition 1.19]{Oda}}]
\label{2dimfan}
Let $X^{\log}$ be a log regular log scheme.
Suppose that $X$ is the spectrum of a 2-dimensional Noetherian local ring $R$ which is \textbf{not} regular.
Write $x$ for the (unique) closed point of $X$.
Suppose $x\in F(X)^{(2)}$.
Write $\pi^{\log}:\widetilde{X}^{\log}\to X^{\log}$ for the minimal log desingularization, $D$ for the reduced closed subscheme of $X$ where the log structure is nontrivial, and $D'$ (resp.\,$E$) for the strict transform of $D$ (resp.\,the exceptional divisor) of $\widetilde{X}\to X$.
Then conditions (Exc) and (Str) are satisfied in this situation.
Moreover, if $X^{\log}$ is Zariski log regular, conditions (Exc$_{x}$) and (Str$_{2}$) are satisfied.
In this case, in particular, $E$ satisfies the properties stated in Lemma \ref{rationalsingularity}.
\end{prop}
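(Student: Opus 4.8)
The plan is to identify $\widetilde{X}^{\log}\to X^{\log}$ with the classical toric resolution of a $2$-dimensional strongly convex rational polyhedral cone and then to invoke \cite[Proposition 1.19]{Oda}; the essential work is translating Kato's language into toric geometry and controlling a possibly imperfect residue field. First I would reduce the whole statement to the case where $X^{\log}$ is Zariski log regular. Writing $R^{sh}$ as a filtered colimit of \'etale $R$-algebras, Proposition \ref{minlogdes}.2 shows that forming the minimal log desingularization commutes with the base change $\Spec R^{sh}\to\Spec R$, and that $E$, $D'$, $D$ pull back to $E^{sh}$, $D'^{sh}$, $D^{sh}$. By definition, ``(Exc) (resp.\,(Str)) is satisfied'' means exactly that $E^{sh}$ (resp.\,$E^{sh}$, $D^{sh}$) satisfies (Exc) (resp.\,(Str)) of Definition \ref{introdivisorconditions}; since $k(x^{sh})$ is separably closed, condition (Exc$_{x}$) over $k(x^{sh})$ is literally that condition, and (Str$_{2}$) implies (Str) by Lemma \ref{notationnormalcrossing}.2. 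Thus it suffices to treat $R$ strictly henselian; but a strictly henselian log regular $X^{\log}=(\Spec R,M_{X})$ admits a chart $P_{\overline{x}}\to\Gamma(X,M_{X})$ (the chart of \cite{M} at $\Spec O_{X,\overline{x}}=X$), hence is Zariski log regular by \cite[Lemma II.4.6]{Ts}. So I only need to prove: \emph{if $X^{\log}$ is Zariski log regular, then $E$ satisfies (Exc$_{x}$) and $E$, $D$ satisfy (Str$_{2}$)} — which is simultaneously the ``moreover'' part of the Proposition, and which together with Lemma \ref{rationalsingularity} yields the final sentence.

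So assume $X^{\log}=(\Spec R,M_{X})$ is Zariski log regular. Then $X$ carries a chart $P\to\Gamma(X,M_{X})$ with $P=P_{\overline{x}}=M_{X,\overline{x}}/O_{X,\overline{x}}^{\ast}$ an f.s.\,sharp monoid (by \cite{M}; cf.\,\ref{lognotationa} and the proof of Proposition \ref{definelogstructure}). From $x\in F(X)^{(2)}$ and $\dim R=2$, axiom (ii) of \ref{lognotationa} gives $\rank P^{\gp}=2$; since $R$ is not regular, $P$ is not free (Kato's structure theorem, \cite{Ka2}), so the cone $\sigma:=\sigma_{\overline{x}}\subset L_{\overline{x},\R}$ is a $2$-dimensional strongly convex rational polyhedral cone — in particular $\sigma+(-\sigma)=L_{\overline{x},\R}$ — which is not nonsingular. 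By construction the minimal log desingularization is the ``base change'' (Proposition \ref{minlogdes}, \cite[(9.10)]{Ka2}) of $X^{\log}$ along the coarsest nonsingular proper subdivision of the fan of faces of $\sigma$; by \ref{conesetting} and \cite[Proposition 1.19]{Oda} this subdivision has rays $\R_{\geq0}l_{0},\dots,\R_{\geq0}l_{m+1}$, with $\R_{\geq0}l_{0}$, $\R_{\geq0}l_{m+1}$ the two extreme rays of $\sigma$ and $l_{1},\dots,l_{m}$ the remaining lattice points on the compact part of $\partial\Theta$, its maximal cones are $\tau_{i}=\R_{\geq0}l_{i}+\R_{\geq0}l_{i+1}$ for $0\leq i\leq m$, one has $l_{i-1}+a_{i}l_{i}+l_{i+1}=0$ with $a_{i}\leq-2$ for $1\leq i\leq m$, and $m\geq1$ because $\sigma$ is not nonsingular.

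Finally I would read off the geometry of $\widetilde{X}$ from this fan. Since the subdivision is nonsingular, every stalk monoid of $\widetilde{X}^{\log}$ is free, so $\widetilde{X}$ is regular and its log structure is that of the normal crossing divisor formed by the orbit closures of the rays; hence $E$ and $E+D'$ are normal crossing divisors, and $\pi:\widetilde{X}\to X$ is a proper birational morphism of regular schemes, i.e.\,a desingularization. The rays $\R_{\geq0}l_{0}$, $\R_{\geq0}l_{m+1}$ are faces of $\sigma$, so their divisors $E_{0}$, $E_{m+1}$ are the two (distinct) branches of the strict transform, giving $D'=E_{0}+E_{m+1}$, while the new rays $\R_{\geq0}l_{i}$ ($1\leq i\leq m$) give the exceptional components, so $E=\sum_{i=1}^{m}E_{i}$. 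Because the toric combinatorics is defined over $\Z$ and $X^{\log}$ is \emph{Zariski} log regular, the orbit closure of each interior ray is a split $\bP^{1}$, and restriction over the closed point $x$ yields $E_{i}\simeq\bP^{1}_{k(x)}$; the toric self-intersection formula gives $\deg_{E_{i}/k(x)}\cN_{E_{i}/\widetilde{X}}=(E_{i}^{2})=a_{i}\leq-2$, whence $\cN_{E_{i}/\widetilde{X}}\simeq O_{\bP^{1}_{k(x)}}(a_{i})$. Moreover $E_{i}\cap E_{i+1}$ is the torus-fixed point of $\tau_{i}$, a $k(x)$-rational point, $E_{i}\cap E_{j}=\emptyset$ for $|i-j|\geq2$ (the cones $\tau_{i}$, $\tau_{j}$ share no maximal cone), and $E_{0}$ (resp.\,$E_{m+1}$) meets $E$ only in the fixed point of $\tau_{0}$ (resp.\,$\tau_{m}$), which is $k(x)$-rational; this is precisely (Exc$_{x}$) together with (Str$_{2}$). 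Since (Exc$_{x}$) implies (Exc$_{x}'$), Lemma \ref{rationalsingularity} then applies, giving the last assertion.

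The main obstacle is this last step: making rigorous the passage from Kato's log-blow-up / ``base change'' description of $\widetilde{X}^{\log}$ to the explicit toric surface of \cite[Proposition 1.19]{Oda}, and in particular checking, over a possibly imperfect $k(x)$, that the exceptional curves are \emph{split} copies of $\bP^{1}_{k(x)}$ carrying normal bundle $O_{\bP^{1}_{k(x)}}(a_{i})$ — this is exactly where Zariski (rather than merely \'etale) log regularity enters, and it is what distinguishes (Exc$_{x}$) from the auxiliary conditions (Exc$_{\mathrm{o}}$) and (Exc$_{\mathrm{e}}$).
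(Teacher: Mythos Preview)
Your strategy is correct and matches the paper's: both reduce to the strictly local case via Proposition \ref{minlogdes} and then read off the exceptional configuration from the coarsest nonsingular subdivision of $\sigma_{\overline{x}}$. The paper orders the two assertions oppositely --- it proves (Exc) and (Str) first (in the strictly local case) and then deduces the Zariski ``moreover'' by observing that Zariski log regularity forces $\sharp F(X)^{(1)}=2$, so $D'$ has two components and Lemma \ref{notationnormalcrossing}.2 singles out (Str$_{2}$) --- but your direction works equally well.

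The paper resolves exactly the obstacle you flag, though not by transporting \cite[Proposition 1.19]{Oda} to log schemes. For (i), (iii), (iv) of (Exc) and for (Str) it cites \cite[\S2 and Theorem 2.1]{M}, which are stated for log regular schemes and give $E_{i}\simeq\bP^{1}_{k(x)}$ together with the incidence pattern directly, with no hypothesis on $k(x)$. For (ii) it computes the self-intersection in three lines: choose $p_{i}\in P(i)$ (so $l_{i}(p_{i})=1$); since $p_{i}\in R$, the sheaf $p_{i}O_{\widetilde{X}}$ is a trivial line bundle with divisor $\sum_{j}l_{j}(p_{i})E_{j}$; restricting to $E_{i}$ and using that $E_{i}$ meets only $E_{i\pm1}$, each in a single $k(x)$-point, gives
\[
0=l_{i-1}(p_{i})+l_{i}(p_{i})\deg_{E_{i}/k(x)}\cN_{E_{i}/\widetilde{X}}+l_{i+1}(p_{i}),
\]
whence $\deg_{E_{i}/k(x)}\cN_{E_{i}/\widetilde{X}}=a_{i}$ by the lattice relation $l_{i-1}+a_{i}l_{i}+l_{i+1}=0$. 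This is precisely the toric self-intersection formula you invoke, proved inside Kato's framework; once you insert it (and replace the appeal to \cite{Oda} by the reference to \cite{M} for the $\bP^{1}$ structure), your argument is complete.
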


\begin{proof}
If $X^{\log}$ is Zariski log regular and conditions (Exc) and (Str) are satisfied, we have $\sharp F(X)^{(2)}=2$ and condition (Str$_{2}$) is automatically satisfied by Lemma \ref{notationnormalcrossing}.
Hence, the second assertion follows from the first assertion.

To show the first assertion, we may assume that $X$ is strictly local by Proposition \ref{minlogdes}.1.
Again by Proposition \ref{minlogdes}.1, $\widetilde{X}^{\log}\to X^{\log}$ coincides with the desingularization defined in \cite{Ka2}.
Hence, $\widetilde{X}$ is a regular scheme and $D'\cup E$ has normal crossings by \cite[(10.4)]{Ka2} and \cite[(11.6)]{Ka2} (see also Lemma \cite[Lemma 5.2]{Niz}).
In particular, the setting of subsection \ref{exceptionsdivisorsubsection} is satisfied.
Moreover, it follows from  the second and third paragraph of \cite[\S2]{M} and \cite[Theorem 2.1]{M} that conditions (i), (iii), and (iv) in condition (Exc) and condition (Str) are satisfied.
It suffices to show that condition (ii) in condition (Exc) are satisfied.
In the following, we use the notation of Lemma \ref{conesetting}.1 and Notation-Definition \ref{fundnota}.
Then each $l_{i}$ corresponds to the valuation defined by the generic point of $E_{i}$.
For $1\leq i\leq m$, take an element $p_{i}\in P(i)$.
Then we have
\begin{align*}
&\deg_{E_{i}/k(x)}\cN_{E_{i}/\widetilde{X}}
=-\deg_{E_{i}/k(x)}(O_{\widetilde{X}}(-E_{i})|_{E_{i}})\\
=&-(\deg_{E_{i}/k(x)}(p_{i}O_{\widetilde{X}}|_{E_{i}})+l_{i-1}(p_{i})+l_{i+1}(p_{i}))\\
=&a_{i}l_{i}(p_{i})=a_{i}.
\end{align*}
This completes the proof of Proposition \ref{2dimfan}.
\end{proof}

\begin{thm}[cf.\,Theorem \ref{intromaincharacterization}]
\label{logsummary}
Let $(R,\fm)$ be a Noetherian normal local domain of dimension $2$.
Write $X=\Spec R$.
Let $D$ be a $1$-dimensional reduced closed subscheme of $X$.
\begin{enumerate}
\item
The following are equivalent:
\begin{itemize}
\item[(I)]
$(X,X\setminus D)$ is a toric pair and $X^{\log}$ is Zariski log regular.
\item[(II)]
One of the following holds:
\begin{itemize}
\item[(i)]
$X$ is regular and $D$ is a simple normal crossing divisor on $X$.
\item[(ii)]
$X$ is \textbf{not} regular and there exists a desingularization $\pi:\widetilde{X}\to X$ such that $\pi$ and $D$ satisfy conditions (Exc$_{x}$) and (Str$_{2}$).
\end{itemize}
\end{itemize}
\item
The following are equivalent:
\begin{itemize}
\item[(A)]
$(X,X\setminus D)$ is a toric pair.
\item[(B)]
One of the following holds:
\begin{itemize}
\item[(i)]
$X$ is regular and $D$ is a normal crossing divisor on $X$.
\item[(ii)]
$X$ is \textbf{not} regular and there exists a desingularization $\pi:\widetilde{X}\to X$ such that $\pi$ and $D$ satisfy conditions (Exc) and (Str).
\end{itemize}
\item[(C)]
The condition obtained by replacing ``conditions (Exc) and (Str)" in condition (B) with ``one of conditions (Exc$_{x}$), (Exc$_{\mathrm{o}}$), or (Exc$_{\mathrm{e}}$) and one of conditions (Str$_{0}$), (Str$_{1,x}$), (Str$_{1}$), or (Str$_{2}$)" is satisfied.
\end{itemize}
\item
Suppose that $(X,X\setminus D)$ is a toric pair and $x\in F(X)^{(2)}$.
Then $X^{\log}$ is Zariski log regular if and only if $\sharp F(X)^{(1)}=2$.
\item
Suppose that $(X,X\setminus D)$ is a toric pair and $x\in F(X)^{(2)}$.
Write $\widetilde{D}=\Spec A$ for the normalization of $D$.
Then $\widetilde{D}\times_{D}\Spec k(x)\to\Spec k(x)$ is finite \'etale of rank $2$ and $\mathrm{length}_{O_{D,x}}A/O_{D,x}=1$ (cf.\,\ref{node-like}).
Moreover, suppose that $X^{\log}$ is Zariski log regular (and use the notation of Lemma \ref{notationnormalcrossing}).
The integral closed subschemes $\overline{\{\zeta_{0}\}}$ and $\overline{\{\zeta_{m+1}\}}$ of $X$ are normal (cf.\,\cite[(7.2) PROPOSITION]{Ka2}).
\end{enumerate}
\end{thm}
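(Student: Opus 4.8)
The plan is to relate toric--pair--ness to the normal crossing geometry of a suitable desingularization and then to reuse the explicit computations of Subsection~\ref{exceptionsdivisorsubsection}; I will establish assertions 1 and 2 first, then 4, and finally 3, which depends on the case analysis produced by the others. For assertions 1 and 2 the equivalence (B)$\Leftrightarrow$(C) is immediate from Lemma~\ref{notationnormalcrossing}.1 and~\ref{notationnormalcrossing}.2, so the substance is the equivalence with ``$(X,X\setminus D)$ is a toric pair'' (together with Zariski log regularity, for assertion 1). In the direction ``toric pair $\Rightarrow$ geometric condition'' I would split on whether $R$ is regular. If it is, the structure theory of (Zariski) log regular schemes on a regular base (\cite{Ka2}, \cite[Lemma II.4.6]{Ts}) makes the local monoids $M_{X,\overline{y}}/O_{X,\overline{y}}^{\ast}$ free, hence $D$ a normal crossing divisor, and a simple normal crossing divisor when $X^{\log}$ is Zariski log regular. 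If $R$ is not regular, then log regularity at the closed point $x$ forces $x\in F(X)^{(2)}$ (if $\rank(M_{X,\overline{x}}/O_{X,\overline{x}}^{\ast})^{\gp}\le 1$, an elementary argument with $\fm/\fm^{2}$ shows $R$ regular), so by Proposition~\ref{minlogdes} the minimal log desingularization $\widetilde{X}^{\log}\to X^{\log}$ has regular underlying scheme, and by Proposition~\ref{2dimfan} it and $D$ satisfy (Exc) and (Str), refined to (Exc$_{x}$) and (Str$_2$) when $X^{\log}$ is Zariski log regular. Conversely, case (i) of (B)/(II) is covered by~\ref{ncddef}; for case (ii) I would put $M_{X}=j_{U,\ast}O_{U}^{\ast}\cap O_{X}$ and verify log regularity at $x$, which by Lemma~\ref{stalkwiselogregular}.2 may be done after passing to a strict henselization, where --- as $k(x^{sh})$ has no separable quadratic extension --- conditions (Exc) and (Str) reduce via Lemma~\ref{notationnormalcrossing} to (Exc$_{x}$) together with (Str$_2$) or (Str$_0$); the case (Exc$_{x}$)$+$(Str$_2$) is exactly Proposition~\ref{definelogstructure}, which also supplies the Zariski log regularity needed for assertion 1.

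I expect the only genuinely new point to be the case (Exc$_{x}$)$+$(Str$_0$) over a strictly henselian base --- the ``cyclic'' configuration $D'-E_{1}-\cdots-E_{m}-D'$, with $D$ irreducible and nodal at $x$. I would imitate Proposition~\ref{definelogstructure}: Lemma~\ref{rationalsingularity} still applies since $E$ satisfies (Exc$_{x}$); the exact sequence~(\ref{picexact}) for the log structure of $E+D'$ on $\widetilde{X}$ identifies $M/R^{\ast}$ with the submonoid of $\Z^{\oplus m+1}$ cut out by the relations $c_{i-1}+a_{i}c_{i}+c_{i+1}=0$ coming from $\Pic(\widetilde{X})$, now with $c_{0}$ playing the roles of both $c_{0}$ and $c_{m+1}$; Lemma~\ref{fordualmonoid} identifies this as an f.s.\ sharp monoid, and Lemmas~\ref{propertiesoffans}.1, \ref{propertiesoffans}.2 and~\ref{rationalsingularity}.5 show that the image of $(M/R^{\ast})\setminus\{1\}$ generates $\fm/\fm^{2}\simeq H^{0}(\widetilde{X},O_{\widetilde{X}}(-E)|_{E})$. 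The hard part will be the bookkeeping: unlike in Proposition~\ref{definelogstructure}, the global monoid $M=\Gamma(X,M_{X})$ may be strictly smaller than the stalk $M_{X,\overline{x}}$ --- exactly the failure of Zariski log regularity --- so one must use $x\in F(X)^{(2)}$ and the rank-$2$ count to see that a section of $M^{\gp}\to M^{\gp}/R^{\ast}$ produces a chart of the expected rank.

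For assertion 4 I may assume $R$ is not regular and $x\in F(X)^{(2)}$; the regular case is~\ref{ncddef}, where $D=V(f_{1}f_{2})$ \'etale locally with $f_{1},f_{2}$ part of a regular system of parameters. Taking $\pi\colon\widetilde{X}\to X$ the minimal log desingularization, Proposition~\ref{2dimfan} and Lemma~\ref{notationnormalcrossing} give that $D'+E$ has normal crossings with $D'$ regular, so $D'\to D$ is the normalization and $D'=\widetilde{D}$; by Lemma~\ref{rationalsingularity}.2 and~\ref{rationalsingularity}.4 the scheme-theoretic fibre $\pi^{-1}(x)$ is the reduced divisor $E$, whence $\widetilde{D}\times_{D}\Spec k(x)=D'\times_{X}\Spec k(x)=D'\cap E$, finite \'etale of rank $2$ over $\Spec k(x)$ by (Str). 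For the length statement --- which is unchanged by strict henselization, so I may take $R=R^{sh}$, making $E$ a chain of lines $\bP^{1}_{k(x^{sh})}$ meeting $D'$ in two rational points --- I would use that $R$ has a rational singularity (Lemma~\ref{rationalsingularity}.3), so $R^{1}\pi_{\ast}O_{\widetilde{X}}=0$; pushing forward $0\to O_{\widetilde{X}}(-D')\to O_{\widetilde{X}}\to O_{D'}\to 0$ then identifies the cokernel of $O_{X}\to\pi_{\ast}O_{D'}$, which at $x$ is $A/O_{D,x}$, with $R^{1}\pi_{\ast}O_{\widetilde{X}}(-D')$; by the theorem on formal functions and Lemma~\ref{rationalsingularity}.4 this is $H^{1}(E,O_{\widetilde{X}}(-D')|_{E})$ (the infinitesimal terms $H^{1}(E,O_{\widetilde{X}}(-D'-nE)|_{E})$ with $n\ge 1$ vanish, the relevant line bundles having nonnegative degree on every component of $E$), and Riemann--Roch on the chain $E$, with $\chi(O_{E})=1$, $D'\cap E$ of degree $2$, and $H^{0}(E,O_{\widetilde{X}}(-D')|_{E})=0$, gives $\dim_{k(x^{sh})}H^{1}=1$. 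When $X^{\log}$ is Zariski log regular, the closures $\overline{\{\zeta_{0}\}}$ and $\overline{\{\zeta_{m+1}\}}$, with their induced log structures, are log regular by \cite[(7.2) PROPOSITION]{Ka2}, hence normal.

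Finally, for assertion 3, $F(X)^{(1)}$ is exactly the set of generic points of the irreducible components of $D$ (at a codimension-$1$ point the defining condition reads ``the stalk of $M_{X}$ is nontrivial''), so $\sharp F(X)^{(1)}$ counts the components of $D$; by assertion 4 and~\ref{node-like}, $O_{D,x}$ is a ``node''-like local ring, so this count is $1$ or $2$. If it is $2$, then $D$ has two components through $x$, which in the dichotomy of assertion 2 (via Lemma~\ref{notationnormalcrossing}) means either $X$ is regular with $D$ simple normal crossing at $x$, or we are in the configuration (Exc$_{x}$)$+$(Str$_2$) --- each of which is condition (II) of assertion 1, so $X^{\log}$ is Zariski log regular. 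Conversely, if $X^{\log}$ is Zariski log regular, assertion 1 places us in (II)(i) with $D$ simple normal crossing through $x$, or in (ii) with (Str$_2$), and in both cases exactly two components of $D$ pass through $x$, so $\sharp F(X)^{(1)}=2$.
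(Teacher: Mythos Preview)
Your overall architecture for assertions 1 and 2 matches the paper's, but you have one oversight that creates unnecessary work. You correctly pass to the strict henselization to check (B)(ii)$\Rightarrow$(A) and correctly eliminate (Exc$_{\mathrm{o}}$), (Exc$_{\mathrm{e}}$), (Str$_{1,x}$), (Str$_{1}$) using that $k(x^{sh})$ is separably closed. However, (Str$_{0}$) also disappears over $R^{sh}$: the strict transform $D'^{sh}$ is regular and maps finitely and birationally to $D^{sh}$, hence is its normalization; since $O_{D^{sh},x^{sh}}$ is a quotient of the henselian local ring $R^{sh}$, it is itself henselian, so any finite algebra over it is a product of local rings. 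As $D'^{sh}\cap E^{sh}$ consists of two $k(x^{sh})$-points, $D'^{sh}$ has two closed points and therefore two irreducible components, i.e.\ we are in (Str$_{2}$). Thus Proposition~\ref{definelogstructure} alone suffices, exactly as in the paper, and your entire second paragraph (the ``cyclic'' case, the worry about $\Gamma(X,M_X)$ versus $M_{X,\overline{x}}$, the modified Lemma~\ref{fordualmonoid}) is not needed.

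Your arguments for assertions 3 and 4 are correct but take genuinely different routes from the paper. For assertion~4 the paper works with an explicit chart $P_{\overline{x}}\to R$: writing $l'_{0},\dots,l'_{m+1}$ for the minimal generators, it identifies $\overline{\{\zeta_0\}}$ and $\overline{\{\zeta_{m+1}\}}$ with $\Spec R/(l'_{1},\dots,l'_{m+1})$ and $\Spec R/(l'_{0},\dots,l'_{m})$, passes to the completion, and shows directly that $\fm/(l'_{1},\dots,l'_{m})\simeq\fm_{0}\times\fm_{m+1}$ by comparing generators degree by degree. Your cohomological computation via the pushforward of $0\to O_{\widetilde{X}}(-D')\to O_{\widetilde{X}}\to O_{D'}\to 0$, rational-singularity vanishing, and the theorem on formal functions is a clean alternative that avoids any chart; your degree check on $O_{\widetilde{X}}(-D'-nE)|_{E_i}$ and the Riemann--Roch count on the chain $E$ are correct. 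For assertion~3 the paper argues directly from the monoid side (\cite[(7.3),(10.2)]{Ka2} and the fact that $P_{\overline{x}}$ has exactly two height-$1$ primes), whereas you deduce it from the geometric dichotomy of assertions 1, 2, and 4; both work, and yours has the virtue of staying within the framework already set up.
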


\begin{proof}
First, we show assertion 1.
If $R$ is regular, (I)$\Leftrightarrow$(II)(i) follows from Lemma \cite[Lemma 5.2]{Niz}.
If $R$ is not regular, (I)$\Leftrightarrow$(II)(ii) follows from Proposition \ref{definelogstructure}.2 and Proposition \ref{2dimfan}.

Next, we show assertion 2.
(B) $\Leftrightarrow$ (C) follows from Lemma \ref{notationnormalcrossing}.
If $R$ is regular, (A) $\Leftrightarrow$ (B)(i) follows from Lemma \cite[Lemma 5.2]{Niz}.
If $R$ is not regular, (A) $\Leftrightarrow$ (B)(ii) follows from assertion 1, Proposition \ref{2dimfan}, and Lemma \ref{stalkwiselogregular}.2.

Assertion 3 follows from assertion 1, \cite[(7.3) COROLLARY]{Ka2}, \cite[(10.2)]{Ka2}, and the fact that the number of the prime ideal of height $1$ in $P_{\overline{x}}$ is $2$ (cf.\,\ref{conesetting}).

The second assertion of assertion 4 follows from \cite[(7.2) PROPOSITION]{Ka2}.
To show assertion 4, we may assume that $R$ is strictly henselian and hence $X^{\log}$ is Zariski log regular.
By these observation, we have $\widetilde{D}\simeq\overline{\{\zeta_{0}\}}\coprod\overline{\{\zeta_{m+1}\}}$.
Fix a chart $P_{\overline{x}}\to M_{X,\overline{x}}$ inducing a section of $M_{X,\overline{x}}\to P_{\overline{x}}$ and write $l'_{0},\cdots,l'_{m+1}$ for the minimal system of generators of $P_{\overline{x}}$ (as in \ref{conesetting}).
Then the ring $R_{0}:=R/(l'_{1},\cdots,l'_{m+1})$ (resp.\,$R_{m+1}:=R/(l'_{0},\cdots,l'_{m})$) is a discrete valuation ring whose maximal ideal $\fm_{0}:=\fm/(l'_{1},\cdots,l'_{m+1})$ (resp.\,$\fm_{m+1}:=\fm/(l'_{0},\cdots,l'_{m})$) is generated by the image of $l'_{0}$ (resp.\,$l'_{m+1}$) and we may assume that the integral closed subschemes $\overline{\{\zeta_{0}\}}$ (resp,\,$\overline{\{\zeta_{m+1}\}}$) is isomorphic to $\Spec R/(l'_{1},\cdots,l'_{m+1})$ (resp.\,$\Spec R/(l'_{0},\cdots,l'_{m})$) again by \cite[(7.2) PROPOSITION]{Ka2}.
To show the desired assertion, we may assume that $R$ is complete since the composite (monoid) homomorphism $P_{\overline{x}}\to R\to\widehat{R}$ defines a log regular log scheme $(\Spec\widehat{R})^{\log}$.
It suffices to show that (i) we have $D\simeq\Spec R/(l'_{1},\cdots,l'_{m})$ and (ii) the maximal ideal $\overline{\fm}:=\fm/(l'_{1},\cdots,l'_{m})\subset R/(l'_{1},\cdots,l'_{m})$ is canonically isomorphic to $\fm_{0}\times \fm_{m+1}(\subset R_{0}\times R_{m+1})$.

(i) follows from (ii).
Indeed, since $R/(l'_{1},\cdots,l'_{m})$ is a subring of $R_{0}\times R_{m+1}$ by (ii), $R/(l'_{1},\cdots,l'_{m})$ is also reduced.
Moreover, since $R/(l'_{1},\cdots,l'_{m})\hookrightarrow R_{0}\times R_{m+1}$ is finite, the underlying topological space of $R/(l'_{1},\cdots,l'_{m})$ is $D$.
To show (ii), since the quotient rings of $R$ are complete, it suffices to show the homomorphism
$$\theta_{r}:\overline{\fm}^{r}/\overline{\fm}^{r+1}\to\fm_{0}^{r}/\fm_{0}^{r+1}\times \fm_{0}^{r}/\fm_{0}^{r+1}$$
is an isomorphism for each $r\in\N_{\geq1}$.
The surjectivity of $\theta_{r}$ follows from the fact that $\fm_{0}^{r}/\fm_{0}^{r+1}$ (resp.\,$\fm_{0}^{r}/\fm_{0}^{r+1}$) is generated by the image of $l'^{r}_{0}$ (resp.\,$l'^{r}_{m+1}$).
Since $\fm^{r}/\fm^{r+1}$ is generated by $(P_{\overline{x}}\setminus\{1\})^{r}$ by \cite[(6.1) THEOREM]{Ka2}, $\overline{\fm}^{r}/\overline{\fm}^{r+1}$ is generated by the images of $l'^{r}_{0}$ and $l'^{r}_{m+1}$.
Hence, the injectivity of  $\theta_{r}$ follows from the surjectivity of $\theta_{r}$.
\end{proof}

\section{Models of curves}
\label{modelsection}
In this section, we study models of log regular curves and establish a theory of minimal log regular curves.

Throughout this section, we work under the following notations:
Let $K$ be a discrete valuation field, $O_{K}$ the valuation ring of $K$, $k$ the residue field of $O_{K}$, $\varpi$ a uniformizer of $O_{K}$, $p$ the characteristic of $k$, $O_{K}^{h}$ the henselization of $O_{K}$, $K^{h}$ the field of fractions of $O_{K}^{h}$, $O_{K}^{sh}$ the strict henselization of $O_{K}^{h}$, and $K^{sh}$ the field of fractions of $O_{K}^{sh}$.
$(\Spec O_{K})^{\log}$ always denotes the log regular log scheme defined by the toric pair $(\Spec O_{K},\Spec K)$.
Let $C^{\log}=(C,D)$ be a log regular curve over $K$ (cf.\,\ref{curvedfn}), $g$ the genus of $C$, and $r$ the rank of $D$ over $K$.

Sometimes, we assume that the following condition is satisfied:\\
(F): The normalization of $O_{K}$ in $D$ is finite over $O_{K}$ (cf.\,Remark \ref{why(F)}).\\
Note that condition (F) is satisfied, for example, if one of the following holds:
\begin{itemize}
\item
$D$ is finite \'etale over $K$.
(This is the case if $C^{\log}$ is a hyperbolic curve.)
\item
$O_{K}$ is excellent.
(See the following remark.)
\end{itemize}

Note that, for any discrete valuation ring $A$, the following are equivalent:
(i) $A$ is N-2 (i.e., Japanese).
(ii) $A$ is Nagata (i.e., universally Japanese) (cf.\,\cite[Tag:0334]{Stacks}).
(iii) $A$ is excellent.
Indeed, by \cite[Tag:09E1]{Stacks} and \cite[Tag:07QV]{Stacks}, it holds that (i)$\Leftrightarrow$(ii)$\Leftarrow$(iii).
Since discrete valuation rings are (Cohen-Macaulay and hence) universally catenary and J-2 by \cite[Corollaire 6.12.6]{EGA}, $A$ is excellent if and only if $A$ is a G-ring, or equivalently, the field extension $\Frac A\subset\Frac\widehat{A}$ is geometrically reduced.
If $A$ is N-2, for any finite field extension $\Frac A\subset F$, $\Frac \widehat{A}\otimes_{\Frac A}F$ is isomorphic to the total ring of fractions of the product of finite (complete) discrete valuation rings, which is reduced.
Hence, (i)$\Rightarrow$(iii) holds.

\subsection{Minimal models of curves}
\label{defsubsection}

In this subsection, we give the definitions of various models of $C^{\log}$.

\begin{nota-dfn}
\label{models}
Let $(\cC,\cD)$ be a pair of a scheme $\cC$ over $O_{K}$ and a reduced closed subscheme $\cD\subset\cC$.
Such a pair $(\cC,\cD)$ is said to be a \textit{model} of $C^{\log}=(C,D)$ if the following hold:
\begin{itemize}
\item
$\cC$ is integral and the structure morphism $\cC\to\Spec O_{K}$ is proper.
\item
$\cC_{K}$ is isomorphic to $C$ over $K$.
\item
The scheme theoretic image of $D$ in $\cC$ (via this isomorphism) coincides with $\cD$.
(Note that $\cD$ is uniquely determined by $\cC$.)
\end{itemize}
For any models $(\cC_{1},\cD_{1})$ and $(\cC_{2},\cD_{2})$ of $C^{\log}$, we define a {\em morphism of models} $(\cC_{1},\cD_{1})\to(\cC_{2},\cD_{2})$ to be a $O_{K}$-morphism $\cC_{1}\to\cC_{2}$ inducing the identity of $C$ on their generic fibers.
Note that such a morphism $\cC_{1}\to\cC_{2}$ induces a morphism $\cD_{1}\to\cD_{2}$.
Suppose that $(\cC,\cD)$ is a model of $C^{\log}$.
We shall refer to $(\cC,\cD)$ as a {\em regular n.c.d.\,model} (resp.\,a {\em regular s.n.c.d.\,model}) of $C$ if $\cC$ is regular and $\cC_{k}\cup\cD$ is the support of a normal crossing divisor (resp.\,a simple normal crossing divisor) on $\cC$ (cf.\,\ref{ncddef}).
In the case of $D=\emptyset$ (and hence $\cD=\emptyset$), we shall refer to $\cC$ as a {\em regular model} of $C^{\log}$ if $\cC$ is regular.
\begin{enumerate}
\item
A model $(\cC,\cD)$ of $(C,D)$ is said to be a {\em log regular model} (resp.\,{\em log smooth model}) if the pair $(\cC,\cC\setminus(\cC_{k}\cup\cD))$ is a toric pair (resp.\,the pair $(\cC,\cC\setminus(\cC_{k}\cup\cD))$ is a toric pair and the resulting log regular scheme $\cC^{\log}$ is log smooth over $(\Spec O_{K})^{\log}$) (cf.\,Remark \ref{why(F)}.3).
We shall say that $(C,D)$ has {\em log smooth reduction} if $(C,D)$ admits a log smooth model.
\item
We shall say that $(C,D)$ has {\em stable reduction} if there exists a stable curve $(\cC,\cD)\to\Spec O_{K}$ whose generic fiber is isomorphic to $(C,D)$ over $K$ (cf.\,\cite[Definition (2.2)]{DM}).
Such a model $(\cC,\cD)$ is referred to as a {\em stable model}.
Note that there exists a canonical log structure on a stable model which is log smooth over $(\Spec O_{K})^{\log}$.
(cf.\,\cite{K}, \cite[Lemma 4.2]{M}, and \cite[Lemma 1.12]{Sai2}.
See also the discussion in \cite[Curves in Section 0]{M2}.)
Thus, if $(C,D)$ has stable reduction, we also say that $C^{\log}$ has {\em stable reduction} and a log smooth model whose underlying scheme is a stable model is referred to as a {\em stable model} of $C^{\log}$.
\item
Let $\cP$ be an element of $\{\text{regular}, \text{regular n.c.d.}, \text{regular s.n.c.d.}\}$ and $(\cC,\cD)$ a $\cP$ model of $C^{\log}$.
We shall refer to $(\cC,\cD)$ as a {\em minimal $\cP$ model} of $C^{\log}$ if any morphism of models from $(\cC,\cD)$ to another $\cP$ model is an isomorphism and an {\em absolutely minimal $\cP$ model} if any other $\cP$ model admits a morphism of models to $(\cC,\cD)$.
\end{enumerate}
\end{nota-dfn}

\begin{rem}
\label{why(F)}
\begin{enumerate}
\item
In \cite[Proposition 2.3]{DM}, it is proved that $C$ has stable reduction if and only if the special fiber of the (unique) minimal regular model of $C$ is reduced and has only ordinary double points in the case where $D=\emptyset$, $g\geq 2$, and $k$ is algebraically closed.
This equivalence holds for general $k$ by \cite[Tag:0E8D, 0CDH, and 0CDG]{Stacks}.
(Note that, to show this equivalence, we may assume that $O_{K}$ is strictly henselian.
In this case, the proof of \cite[Proposition 2.3]{DM} works even if $k$ is not algebraically closed.)
\item
If $C^{\log}$ admits a regular n.c.d.\,model $(\cC,\cD)$, $\cD\to\Spec O_{K}$ is the normalization morphism in $D$ and of finite type.
Hence, by \ref{existenceNCD}, $C^{\log}$ admits a regular n.c.d.\,model if and only if condition (F) is satisfied.
If $C^{\log}$ admits a log regular model, $C^{\log}$ admits a regular n.c.d.\,model by Lemma \ref{fundlog} and hence condition (F) is satisfied.
In subsection \ref{minimalmodelconstructionsection}, we show the existence of minimal regular n.c.d.\,models and minimal log regular models of $C^{\log}$ in the case where condition (F) is satisfied.
\item
Suppose that $C^{\log}$ admits a log smooth model $(\cC,\cD)$.
$\cD$ is normal and at most tamely ramified over $\Spec O_{K}$ by Lemma \ref{fundlogsm}.2.
Hence, $D$ is \'etale over $K$ and hence $C^{\log}$ satisfies condition (F).
\end{enumerate}
\end{rem}

\begin{nota}
\label{contractibledivisor}
Let $\cC^{\log}=(\cC,\cD)$ be a log regular model of $C^{\log}$.
\begin{enumerate}
\item
We shall write $\cE_{\bP,2}(\cC)$ (resp.\,$\cE_{\bP,1}(\cC)$; $\cE_{\bP,1'}(\cC)$) for the set of $1$-dimensional closed subschemes $F$ of $\cC_{k}$ satisfying the following two properties:
(i) $F$ is isomorphic to $\bP^{1}_{k(F)}$ over $k(F)$.
(ii) $F$ intersects other irreducible components of $\cC_{k}\cup\cD$ at exactly two $k(F)$-rational points (resp.\,exactly one $k(F)$-rational point; exactly one closed point whose residue field is separable over $k(F)$ of degree $2$).
\item
We shall write $\cE_{0,1'}(\cC)$ for the set of $1$-dimensional closed subschemes $F$ of $\cC_{k}$ satisfying the following two properties:
(i) $F$ is a proper smooth curve over $k(F)$ with connected geometric fibers of genus $0$.
(ii) $F$ intersects other irreducible components of $\cC_{k}\cup\cD$ at exactly one closed point whose residue field is separable over $k(F)$ of degree $2$.
Note that we have $\cE_{\bP,1'}(\cC)\subset\cE_{0,1'}(\cC)$.
\item
We shall write $\cE_{\node}(\cC)$ for the set of $1$-dimensional closed subschemes $F$ of $\cC_{k}$ satisfying the following two properties:
(i) There exists a separable extension field $k''$ of $k(F)$ of degree $2$ such that the normalization of $F$ is isomorphic to $\bP^{1}_{k''}$ over $k(F)$.
Moreover, the geometric fiber of $F$ over $k(F)$ has exactly one ordinary double point (cf.\,\ref{badcurves}).
(ii) $F$ intersects other irreducible components of $\cC_{k}\cup\cD$ at exactly one $k''$-rational point.
\item
We define $\cE(\cC):=\cE_{\bP,2}(\cC)\cup\cE_{\bP,1}(\cC)\cup\cE_{0,1'}(\cC)\cup\cE_{\node}(\cC)$.
We also define $\cE_{=-1}(\cC)$ (resp.\,$\cE_{\leq -2}(\cC)$) to be the subset of $\cE_{\bP,2}(\cC)\cup\cE_{\bP,1}(\cC)\cup\cE_{\bP,1'}(\cC)$ (resp.\,$\cE_{\bP,2}(\cC)\cup\cE_{0,1'}(\cC)\cup\cE_{\node}(\cC)$) consisting of elements $F$ which satisfy the following two properties:
(i) $\cC_{k}\cup\cD$ is a normal crossing divisor around $F$.
(ii) $\deg_{k(F)}O_{\cC}(F)|_{F}\leq-1$ (resp.\,$\deg_{k(F)}O_{\cC}(F)|_{F}\leq-2$).
\end{enumerate}
\end{nota}

\begin{rem}
\label{sepcase}
Let $\cC^{\log}=(\cC,\cD)$ be a regular n.c.d.\,model of $C^{\log}$.
\begin{enumerate}
\item
If $k$ is separably closed, we have $\cE_{0,1'}(\cC)=\emptyset=\cE_{\node}(\cC)$.
In particular, in this case, every $F\in\cE_{\leq-2}(\cC)$ is isomorphic to $\bP^{1}_{k(F)}$ over $k(F)$.
\item
$\cC_{O_{K}^{h}}$ is also a regular n.c.d.\,model of $(C_{K^{h}},D_{K^{h}})$ whose special fiber is isomorphic to $\cC_{k}$.
Let $F$ be a prime divisor on $\cC$ whose support is contained in $\cC_{k}$ and $F^{h}$ the base change of $F$ to $\cC_{O_{K}^{h}}$.
Then $F\in\cE_{=-1}(\cC)$ (resp.\,$F\in\cE_{\leq-2}(\cC)$) if and only if $F\in\cE_{=-1}(\cC_{O_{K}^{h}})$ (resp.\,$F\in\cE_{\leq-2}(\cC_{O_{K}^{h}})$).
\item
For any $F\in\cE_{\node}(\cC)$, we have $F\in\cE_{\leq-2}(\cC)$ if $\Supp F\neq\cC_{k}$ by \ref{badcurves}.
\end{enumerate}
\end{rem}

We give a stronger statement in Lemma \ref{forbasechange} than the following lemma in the case where $2g+r-2>0$ holds.

\begin{lem}[cf.\,Lemma \ref{forbasechange}]
\label{sthencase}
Let $\cC^{\log}=(\cC,\cD)$ be a regular n.c.d.\,model of $C^{\log}$.
Note that $\cC_{O_{K}^{sh}}$ is also a regular n.c.d.\,model of $(C_{K^{sh}},D_{K^{sh}})$.
Let $F'$ be a prime divisor on $\cC_{O_{K}^{sh}}$ whose support is contained in the special fiber and $F$ the prime divisor on $\cC$ whose support is the image of $F'$ in $\cC$.
If $F\in\cE_{=-1}(\cC)$ (resp.\,$F\in\cE_{\leq-2}(\cC)$), then we have $F'\in\cE_{=-1}(\cC_{O_{K}^{sh}})$ (resp.\,$F'\in\cE_{\leq-2}(\cC_{O_{K}^{sh}})$).
\end{lem}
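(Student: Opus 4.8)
The plan is to track the configuration around $F$ under the flat base change $g\colon\cC_{O_K^{sh}}\to\cC$. First I would record the general facts used throughout: $g$ is ind-\'etale, so $\cC_{O_K^{sh}}$ is again regular, its special fiber is $\cC_k\times_k k^{\sep}$ (with $k^{\sep}$ the residue field of $O_K^{sh}$, a separable closure of $k$, possibly imperfect), and $g$ pulls normal crossing divisors back to normal crossing divisors; this is exactly the first assertion of the lemma. By Remark \ref{sepcase}.2 we may moreover assume $O_K$ is henselian, though this is not essential. I will also use that the degree of a line bundle on a proper curve over a field is unchanged by field base change (via Euler characteristics) and is additive over the irreducible components of a reducible reduced curve.

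Set $F^{\ast}:=F\times_\cC\cC_{O_K^{sh}}$, the pullback of the effective Cartier divisor $F$; it equals $F\times_k k^{\sep}$, which is \emph{reduced} since $k^{\sep}/k$ is separable, hence is the sum of its reduced irreducible components, one of which is $F'$. Writing $k(F)=H^0(F,O_F)$ (a finite extension of $k$ over which $F$ is proper and geometrically connected), one has $F^{\ast}=F\times_{k(F)}(k(F)\otimes_k k^{\sep})$, and $k(F)\otimes_k k^{\sep}$ is a finite reduced $k^{\sep}$-algebra, hence a finite product $\prod_i\ell_i$ of finite field extensions of $k^{\sep}$ --- each $\ell_i/k^{\sep}$ automatically purely inseparable (as $k^{\sep}$ is separably closed) and each $\ell_i/k(F)$ separable (as $k(F)\otimes_k k^{\sep}$ is geometrically reduced over $k(F)$). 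Thus $F^{\ast}=\coprod_i\bigl(F\times_{k(F)}\ell_i\bigr)$ is a \emph{disjoint} union, $F'$ is an irreducible component of exactly one block $F^{\ast}_{i_0}:=F\times_{k(F)}\ell_{i_0}$, and $O_{\cC_{O_K^{sh}}}(F^{\ast}_{i_0})|_{F^{\ast}_{i_0}}$ is the base change along $k(F)\to\ell_{i_0}$ of $O_\cC(F)|_F$, so has the same degree $\deg_{k(F)}O_\cC(F)|_F$.

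When $F\cong\bP^1_{k(F)}$ --- which covers $\cE_{\bP,2}(\cC)$, $\cE_{\bP,1}(\cC)$, $\cE_{\bP,1'}(\cC)$, hence all of $\cE_{=-1}(\cC)$, and the $\cE_{\bP,2}$-part of $\cE_{\leq-2}(\cC)$ --- each block $F^{\ast}_{i_0}\cong\bP^1_{\ell_{i_0}}$ is already integral, so $F'=F^{\ast}_{i_0}$ with $k(F')=\ell_{i_0}$; as $F'$ is disjoint from the other blocks, $\deg_{k(F')}O(F')|_{F'}=\deg_{k(F)}O_\cC(F)|_F$, and $F'$ meets the remaining components of $\cC_{k^{\sep}}\cup\cD_{O_K^{sh}}$ precisely along the base changes to $k(F')$ of the points of $F\cap(\text{others})$: a $k(F)$-rational point gives one $k(F')$-rational point, and a separable quadratic point $Q$ gives $\Spec\bigl(k(Q)\otimes_{k(F)}k(F')\bigr)$, which is either a separable quadratic point or two rational points. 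In each subcase $F'$ lies in the corresponding one of $\cE_{\bP,2}$, $\cE_{\bP,1}$, $\cE_{\bP,1'}$ for $\cC_{O_K^{sh}}$, with normal crossings around it and the inherited degree bound, so $F'\in\cE_{=-1}(\cC_{O_K^{sh}})$, resp.\,$\cE_{\leq-2}(\cC_{O_K^{sh}})$. The case $F\in\cE_{0,1'}(\cC)$ is handled the same way: $F^{\ast}_{i_0}$ is smooth, proper, of genus $0$ and connected, hence integral, so $F'=F^{\ast}_{i_0}$ with the degree preserved, and if the base change of its unique intersection point splits then $F'$ is a genus-$0$ curve with a rational point, so $F'\cong\bP^1_{k(F')}$ and $F'\in\cE_{\bP,2}(\cC_{O_K^{sh}})$; otherwise $F'\in\cE_{0,1'}(\cC_{O_K^{sh}})$. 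Either way $F'\in\cE_{\leq-2}(\cC_{O_K^{sh}})$.

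The delicate case, which I expect to be the main obstacle, is $F\in\cE_{\node}(\cC)$: here $F$ has normalization $\bP^1_{k''}$ with $k''/k(F)$ separable of degree $2$, a single $k(F)$-rational node $c$, and meets the other components at one $k''$-rational point $Q\neq c$. Since $\ell_{i_0}/k(F)$ is separable, normalization commutes with $-\times_{k(F)}\ell_{i_0}$, so $F^{\ast}_{i_0}$ has normalization $\bP^1_{k''\otimes_{k(F)}\ell_{i_0}}$, where $k''\otimes_{k(F)}\ell_{i_0}$ is a degree-$2$ \'etale $\ell_{i_0}$-algebra. If it is a field, $F^{\ast}_{i_0}$ is again an integral nodal rational curve of $\cE_{\node}$-type over $\ell_{i_0}$ (the two branches at $c$ being conjugate), $F'=F^{\ast}_{i_0}$, and one concludes as in the preceding cases that $F'\in\cE_{\node}(\cC_{O_K^{sh}})\cap\cE_{\leq-2}(\cC_{O_K^{sh}})$. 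If instead $k''\otimes_{k(F)}\ell_{i_0}\cong\ell_{i_0}\times\ell_{i_0}$, then $F^{\ast}_{i_0}=F'+F''$ is a chain of two copies of $\bP^1_{\ell_{i_0}}$ meeting transversally at $c':=c\times_{k(F)}\ell_{i_0}$; moreover $Q\times_{k(F)}\ell_{i_0}\cong\ell_{i_0}\times\ell_{i_0}$, so $F'$ meets the pullbacks of the other components of $\cC_k\cup\cD$ at exactly one $k(F')$-rational point, distinct from $c'$ since $Q\neq c$, whence $F'$ meets the other components of $\cC_{k^{\sep}}\cup\cD_{O_K^{sh}}$ (namely $F''$ and those pullbacks) at exactly two $k(F')$-rational points, so $F'\in\cE_{\bP,2}(\cC_{O_K^{sh}})$. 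For the self-intersection I would invoke \ref{badcurves} (comparison of degrees of line bundles on a nodal rational curve and its normalization) to get $\deg_{k(F')}O(F^{\ast}_{i_0})|_{F'}=\tfrac12\deg_{k(F)}O_\cC(F)|_F\le-1$, and then, since $F^{\ast}_{i_0}=F'+F''$ with $F'\cdot F''=1$, conclude $\deg_{k(F')}O(F')|_{F'}=\deg_{k(F')}O(F^{\ast}_{i_0})|_{F'}-1\le-2$; with the normal crossing property around $F'$ this gives $F'\in\cE_{\leq-2}(\cC_{O_K^{sh}})$. Apart from this last subcase, the proof is a routine translation of the stability of rationality, genus, degree, and normal crossings under flat, separable base change.
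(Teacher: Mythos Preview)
Your proof is correct and follows essentially the same approach as the paper's. The paper's proof is extremely terse---it simply observes that the only nontrivial case is $F\in\cE_{\node}(\cC)\cap\cE_{\leq-2}(\cC)$ and refers to the self-intersection computation in the first paragraph of the proof of Lemma~\ref{notationnormalcrossing} (yielding $(a_m-2)/2\leq-2$ for each component), which is exactly your computation $\tfrac12\deg_{k(F)}O_\cC(F)|_F-1\leq-2$ in the splitting subcase; your treatment of the remaining cases is what the paper dismisses as trivial, and since each $\ell_{i_0}$ is separably closed (being a finite, hence purely inseparable, extension of $k^{\sep}$), your ``if it is a field'' subcase in fact never occurs.
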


\begin{proof}
We only treat the case where $F\in\cE_{\node}(\cC)\cap\cE_{\leq-2}(\cC)$.
In this case, Lemma \ref{sthencase} follows from a similar argument to that given in the first paragraph of the proof of Lemma \ref{notationnormalcrossing}.
\end{proof}

\subsection{Regular n.c.d.\,models}
\label{minimalmodelconstructionsection}
In this subsection, we study fundamental properties of minimal regular n.c.d.\,models.

In this subsection, we suppose that the log regular curve $C^{\log}$ satisfies (F) (cf.\,Remark \ref{why(F)}.2).
First, we review the fundamental theory of regular models, for example, the existence of minimal regular models and minimal regular n.c.d.\,models.

\begin{sub}[The intersection theory on special fibers of models]
\label{intersectiontheory}
In this paper, we mainly use the terminology of \cite[Tag:0C5Y]{Stacks}.
Let $\cC$ be a regular model of $C$.
Note that $\cC$ is projective over $O_{K}$ by \cite[Tag:0C5P]{Stacks}.
For any divisor $E$ on $\cC$ whose support is contained in $\cC_{k}$, $E$ is an exceptional curve of the first kind if and only if there exist a regular model $\cC'$ of $C$ and a closed point $c'\in\cC'$ such that $\cC$ is isomorphic to the blow-up of $\cC'$ at $c'$ and  $E$ can be identified with the exceptional divisor of the blow-up via the isomorphism by \cite[Tag:0AGQ]{Stacks} and \cite[Tag:0C2M]{Stacks} (cf.\,\ref{firstkind}).
Note that this equivalence holds if $\cC$ is a (not necessarily proper) model and $E$ is contained in the regular locus of $\cC$.

For an integral effective divisor $E$ on $\cC$ whose support is contained in $\cC_{k}$ and an invertible sheaf $\cL$ on $\cC$, we define
$$(E\cdot\cL):=[k(E),k]\deg \cL|_{E}=\chi(\cL|_{E})-\chi(O_{E}),$$
where $\chi(-):=\dim_{k}H^{0}(X,-)-\dim_{k}H^{1}(X,-)$ (cf.\,\cite[Tag:0C64]{Stacks}).
Note that we can define a symmetric bilinear form on the subsgroup of the Weil divisor group of $\cC$ generated by divisors whose supports are contained in $\cC_{k}$ by using this $(-\cdot-)$ (cf.\,\cite[Tag:0C65]{Stacks}).
Let $E$ be an effective Cartier divisor on $\cC$ whose support is contained in $\cC_{k}$ satisfying condition (Fir$_{1}$).
Then $E$ satisfies condition (Fir$_{2}$) if and only if $(E\cdot E)=-[k(E):k]$ is satisfied.
Suppose that $E$ is an exceptional curve of the first kind on $\cC$ whose support is contained in $\cC_{k}$ and write $\pi:\cC\to\cC'$ for the contraction of $E$.
Let $F'_{1}$ and $F'_{2}$ be prime effective divisors on $\cC'$ one of whose support is contained in $\cC'_{k}$.
Write $F_{1}$ and $F_{2}$ for the strict transforms of $F'_{1}$ and $F'_{2}$, respectively.
By the proof of \cite[Tag:0C6C]{Stacks}, we have
\begin{align}
\label{contractionnumber}
(F'_{1}\cdot F'_{2})=(F_{1}\cdot F_{2})-(F_{1}\cdot E)(F_{2}\cdot E)/(E\cdot E).
\end{align}
(Note that, in \cite[Tag:0C6C]{Stacks}, both $F'_{1}$ and $F'_{2}$ are assumed to be contained in $\cC_{k}$.
Even if we drop the assumption that ``$C_{i}$" in \cite[Tag:0C6C]{Stacks} is contained in $\cC_{k}$, the same proof works.
Indeed, we have
\begin{align}
\label{onF}
(F'_{1}\cdot F'_{2})&=\deg_{F'_{1}/k}O_{\cC'}(F'_{2})|_{F'_{1}}\notag\\
&=\deg_{F_{1}/k}\pi^{\ast}O_{\cC'}(F'_{2})|_{F_{1}}\notag\\
&=\deg_{F_{1}/k}(O_{\cC}(F_{2})+\mu O_{\cC}(E))|_{F_{1}}\notag\\
&=(F_{1}\cdot F_{2})+\mu (F_{1}\cdot E),
\end{align}
where $\mu$ is the maximal natural number satisfying that $\fm_{e}^{\mu}$ contains the local equation of $F_{1}$ at $e:=\pi(E)$ (cf.\,\cite[Proposition 9.2.23]{Liu}).
On the other hand, $\mu$ can be calculated as follows:
\begin{align}
\label{onE}
0=\deg_{E/k}O_{\cC'}(F'_{2})|_{E}=(E\cdot F_{2})+\mu (E\cdot E).
\end{align}
By combining (\ref{onF}) and (\ref{onE}), we obtain (\ref{contractionnumber}).)
In particular, we have an inequality
\begin{align}
\label{contractionself}
(F'_{1}\cdot F'_{1})=(F_{1}\cdot F_{1})+[k(E):k]^{-1}(F_{1}\cdot E)^{2}\geq(F_{1}\cdot F_{1}).
\end{align}
Note that the following are equivalent:
(i)${}_{0}$ The inequality (\ref{contractionself}) is an equality (i.e., $(E\cdot F_{1})=0$).
(ii)${}_{0}$ $E\cap F_{1}=\emptyset$.
(iii)${}_{0}$ $e\notin F'_{1}$.
\end{sub}

\begin{sub}[Minimal regular n.c.d.\,models of curves over discrete valuation rings]
\label{existenceNCD}
We review the theory of regular models of curves over discrete valuation rings.
Fix a minimal regular model $\cC_{\reg}$ of $C$ (cf.\,\cite[Tag:0C2W]{Stacks}).
(Note that, if $g\geq1$, $C$ has unique minimal regular model by \cite[Tag:0C6B]{Stacks}.)
Write $\cD$ for the scheme theoretic closure of $D$ in $\cC_{\reg}$.
Since $C^{\log}$ satisfies (F), the normalization of $\Spec O_{K}$ in the fields of fractions of $D$ is finite over $\Spec O_{K}$.
By using this fact and the proofs of \cite[Theorem 9.2.26]{Liu} and \cite[Lemma 9.2.32]{Liu},  we obtain a regular n.c.d.\,model by iterating a blow-up at closed points of $\cC_{\reg}$.
Moreover, by iterating a contraction of an exceptional curve of the first kind such that the resulting regular model is a regular n.c.d.\,model, we obtain a minimal regular n.c.d.\,model (cf.\,\cite[Remark (3.2)]{Sai2}).

We also explain blow-ups and contractions of regular n.c.d.\,models.
Let $(\cC,\cD)$ be a model of $C^{\log}$ and $c\in\cC$ a closed point.
If $\cC_{k}\cup\cD$ has normal crossings in $\cC$ around $c$, the exceptional divisor of the blow-up $\cC'\to\cC$ at $c$ is contained in $\cE_{=-1}(\cC')$ by \ref{ncd} and \ref{intersectiontheory}.
On the other hand, if $E\in\cE_{=-1}(\cC)$ and $\cC$ is projective over $O_{K}$, we have a contraction $\cC\to\cC''$ of $E$ such that $\cC''_{k}\cup\cD''$ has normal crossings in $\cC''$ around the image of $E$ again by \ref{ncd} and \ref{intersectiontheory}.
Here, $\cD''$ is the scheme theoretic closure of $D$ in $\cC''$.
\end{sub}

\begin{lem}
\label{ramificationindex}
Let $(\cC,\cD)$ be a regular n.c.d.\,model of $C^{\log}$, $s\in\cD$ a closed point, and $e$ the ramification index of $O_{K}\subset O_{\cD,s}$.
Then $s$ is contained in exactly $1$ irreducible component $F$ of $\cC_{k}$ and the multiplicity $m$ of $F$ in $\cC_{k}$ is $e$.
\end{lem}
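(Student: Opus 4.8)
The plan is to localise at $s$ and read both assertions off the normal crossing structure of $\cC_{k}\cup\cD$ near $s$. By Remark \ref{why(F)}.2, since $(\cC,\cD)$ is a regular n.c.d.\,model, $\cD$ is the normalization of $\Spec O_{K}$ in $D$; in particular $\cD$ is normal of pure dimension $1$ and finite over $O_{K}$, so $O_{\cD,s}$ is a discrete valuation ring finite over $O_{K}$ and $e=v_{s}(\varpi)$, where $v_{s}$ is the normalized valuation of $O_{\cD,s}$. As $s$ is a closed point of $\cD$ and $\cD$ is finite over $O_{K}$, the point $s$ lies over the closed point of $\Spec O_{K}$, so $\varpi$ vanishes at $s$ and $s\in\cC_{k}$. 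Neither $v_{s}(\varpi)$ nor the order of vanishing of $\varpi$ along a component of $\cC_{k}$ through $s$ changes upon replacing $\cC$ by $\Spec B$ with $B:=O_{\cC,s}^{sh}$ (strict henselization commutes with the passage to $\cD$, and an ind-\'etale local extension of discrete valuation rings carries a uniformizer to a uniformizer), so I would work on $\Spec B$, a $2$-dimensional regular strictly henselian local scheme on which $\cC_{k}\cup\cD$ is a \emph{simple} normal crossing divisor.

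Next I would count the branches at the closed point of $\Spec B$. A simple normal crossing divisor on $\Spec B$ is the union of at most two regular prime divisors; here $\cC_{k}$ and $\cD$ have no common component, since $\cD$ dominates $\Spec O_{K}$ while $\cC_{k}=V(\varpi)$ is vertical, and $\cD$, being $1$-dimensional and normal, is already one of these two regular primes. Hence at most one component of $\cC_{k}$ contains $s$, and at least one does because $s\in\cC_{k}$; thus exactly one irreducible component $F$ of $\cC_{k}$ passes through $s$, which is the first assertion. Moreover $F$ is regular at $s$ and meets $\cD$ transversally: one can choose a regular system of parameters $x,y$ of $B$ with $F=V(x)$ and the ideal of $\cD$ in $B$ equal to $(y)$.

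For the multiplicity, I would note that the divisor of $\varpi$ on $\Spec B$ is supported on the unique height-one prime $(x)$ containing $\varpi$, so $\varpi=u\,x^{m}$ with $u\in B^{\times}$ and $m$ equal to the order of vanishing of $\varpi$ along $F$, i.e.\ to the multiplicity of $F$ in $\cC_{k}$. Reducing modulo $(y)$ gives $\varpi=\bar u\,\bar x^{m}$ in $B/(y)=O_{\cD,s}^{sh}$, which is a discrete valuation ring with uniformizer $\bar x$ and $\bar u$ a unit; therefore $e=v_{s}(\varpi)=m$, the multiplicity of $F$.

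I expect the only real work to be the bookkeeping in the reduction step: verifying that $e$ and the multiplicity of $F$ are insensitive to strict henselization, and that ``normal crossing'' becomes ``simple normal crossing'' on $\Spec O_{\cC,s}^{sh}$. Once one is there and uses that a simple normal crossing divisor on a regular surface has at most two branches at a point, both statements are essentially forced; there is no genuinely hard step.
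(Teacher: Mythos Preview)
Your proof is correct and follows essentially the same strategy as the paper's: use that a normal crossing divisor on a $2$-dimensional regular local scheme has at most two branches, so $\cD$ and $\cC_{k}$ each contribute exactly one; then write $\varpi$ as a unit times the $m$-th power of a local equation of $F$ and restrict to $\cD$. The paper's argument is more compact in that it stays at the Zariski stalk $O_{\cC,s}$ rather than passing to the strict henselization: once one knows there is a single component $F$ through $s$, its local equation $f$ lives in $O_{\cC,s}$, so $\varpi=uf^{m}$ with $u\in O_{\cC,s}^{\ast}$ already there, and the normal crossing condition forces the image of $f$ in $O_{\cD,s}$ to be a uniformizer. Your detour through $O_{\cC,s}^{sh}$ is harmless but unnecessary; it buys you ``simple'' normal crossings explicitly, whereas the paper extracts the same conclusion directly from $\dim O_{\cC,s}=2$.
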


\begin{proof}
The first assertion follows from the fact that $\dim O_{\cC,s}=2$.
Then we can write $\varpi=f^{m}u(\in O_{\cC,s})$ by using $u\in O_{\cC,s}^{\ast}$ and a local defining equation $f\in O_{\cC,s}$ of $F$.
Since $(\cC,\cD)$ is a regular n.c.d.\,model, the image of $f$ in $O_{\cD,s}$ is a uniformizer.
Thus, we have $m=e$.
\end{proof}

Here, we prove the uniqueness of minimal regular n.c.d.\,models under a certain condition of $(g,r)$, which is already known in the cases of $D=\emptyset$.

\begin{lem}
\label{uniqueNCD}
Suppose $2g+r-2>0$.
There exists an absolutely minimal regular n.c.d.\,model $(\cC_{\ncd},\cD_{\ncd})$ (resp.\,an absolutely minimal regular s.n.c.d.\,model $(\cC_{\sncd},\cD_{\sncd})$) of $C^{\log}$ unique up to canonical isomorphism.
Moreover, if $C$ has an absolutely minimal regular model $\cC_{\reg}$, the canonical birational morphisms $\cC_{\sncd}\to\cC_{\ncd}\to\cC_{\reg}$ can be obtained by the iteration of blow-up at a closed point where the complement of $C\setminus D$ in a model does not have (simple) normal crossings.
\end{lem}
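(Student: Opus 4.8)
The plan is to obtain existence, uniqueness and absolute minimality in one package, by organizing the regular n.c.d.\ (resp.\ s.n.c.d.) models of $C^{\log}$ into a terminating rewriting system whose elementary moves are ``admissible contractions'', and proving that this system is confluent.

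\textbf{Existence and the moves.} By \ref{existenceNCD} we already know that, starting from a regular model of $C$, finitely many blow-ups at points where $\cC_k\cup\cD$ fails to have normal crossings (finiteness using condition (F), i.e.\ $\cD$ of finite type) produce a regular n.c.d.\ model, and that a chain of contractions staying within regular n.c.d.\ models must stop. I would pin down the moves as the contractions of those $E\in\cE_{=-1}(\cC)$ which are exceptional curves of the first kind; using $(\cC_k\cdot E)=0$ together with $2g+r-2>0$ to exclude the degenerate case in which $\cC_k$ is irreducible of arithmetic genus $0$ with $r\le 1$, one gets $\deg_{k(E)}O_\cC(E)|_E=-[k(E):k]$, and then \ref{firstkind}, \ref{ncd} and \ref{intersectiontheory} show that contracting such an $E$ again yields a regular n.c.d.\ model. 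A model admitting no such move is a minimal regular n.c.d.\ model, by definition; termination is clear (each move removes an irreducible component of $\cC_k$). So everything reduces to confluence: any two regular n.c.d.\ models reduce to the same model.

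\textbf{Confluence.} By Newman's lemma it suffices to check local confluence. Given two admissible $(-1)$-curves $E,E'$ in a regular n.c.d.\ model $\cC$: if they are disjoint the two contractions commute and we are done; if they meet, I would contract $E$ and follow the strict transform of $E'$, using (\ref{contractionnumber}) and (\ref{contractionself}) to recompute its self-intersection and the relevant intersection numbers and \ref{ncd} to track the normal-crossing configuration, and verify that either that strict transform avoids the exceptional point (so nothing more is needed) or it is again an admissible $(-1)$-curve, so that the two paths can be completed to a common model. This case analysis of short chains and cycles of $(-1)$-curves sitting inside an n.c.d.\ configuration is the \emph{main obstacle}; the hypothesis $2g+r-2>0$ enters precisely to exclude the degenerate configurations in which such a chain would exhaust the special fibre. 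Granting local confluence, every regular n.c.d.\ model has a well-defined normal form, which I call $\cC_{\ncd}$; the s.n.c.d.\ system is handled verbatim and yields $\cC_{\sncd}$.

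\textbf{Absolute minimality, canonicity and the final assertion.} For two regular n.c.d.\ models $\cC_1,\cC_2$, elimination of indeterminacy for the identity map furnishes a regular model dominating both, which after blow-ups at non-normal-crossing points becomes a common regular n.c.d.\ model $\cC_3$ with morphisms to $\cC_1$ and $\cC_2$. Each morphism of regular n.c.d.\ models factors, by the structure theory of birational morphisms of regular arithmetic surfaces (cf.\ \cite{Liu}, \cite{Stacks}, \ref{firstkind}, \ref{intersectiontheory}), into contractions of exceptional curves of the first kind, and since source and target are both n.c.d.\ models one checks via \ref{ncd} that each such contraction is an admissible move; hence $\cC_3\to\cC_i$ is a reduction in our system. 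Therefore $\cC_1$ and $\cC_2$ have the same normal form (giving the canonical isomorphism), and reducing an arbitrary regular n.c.d.\ model $\cC$ produces a canonical morphism of models $\cC\to\cC_{\ncd}$; this makes $\cC_{\ncd}$ the absolute minimal regular n.c.d.\ model, and likewise $\cC_{\sncd}$, with $\cC_{\sncd}\to\cC_{\ncd}$ obtained by running the n.c.d.\ reduction on $\cC_{\sncd}$. For the ``moreover'': if $C$ has an absolute minimal regular model $\cC_{\reg}$, I would show the model obtained from $\cC_{\reg}$ by blowing up \emph{only} at non-normal-crossing points is already minimal. Indeed, strict transforms of components of $\cC_{\reg,k}$ never become exceptional curves of the first kind (self-intersections only drop under blow-up, and $\cC_{\reg}$ has no such curve); and if an exceptional curve $E$ introduced over a bad point $c$ of some intermediate model were still an admissible $(-1)$-curve at the end, then no subsequent blow-up could lie on $E$ (that would lower $\deg_{k(E)}O_\cC(E)|_E$), so the configuration around $E$ would already be normal crossing just after blowing up $c$, forcing $c$ to have been a normal-crossing point --- a contradiction. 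Hence this model equals $\cC_{\ncd}$, exhibiting $\cC_{\ncd}\to\cC_{\reg}$ as an iterated blow-up at non-normal-crossing points, and the same reasoning applies to $\cC_{\sncd}\to\cC_{\ncd}$.
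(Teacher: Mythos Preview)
Your confluence/Newman's-lemma framing is a genuinely different route from the paper's proof, and the overall architecture (termination $+$ local confluence $\Rightarrow$ unique normal form, then a common roof to deduce absolute minimality) is sound. The paper instead splits into two cases: if $C$ admits an absolute minimal regular model $\cC_{\reg}$ (equivalently $g\ge 1$), both minimal n.c.d.\ models map to $\cC_{\reg}$ and, whenever $\cC_{\reg,k}\cup\cD_{\reg}$ fails to be n.c.\ at some $c$, \cite[Tag:0C5R]{Stacks} forces both to factor through the blow-up at $c$, so iterating gives the result and the ``moreover'' simultaneously; if no absolute minimal regular model exists (so the special fibre of every minimal regular model is $\bP^1_k$), the paper builds a common roof $\cC_m=\cC'_n$ via \cite[Tag:0C5S]{Stacks} and tracks the image of the last exceptional divisor down to $\cC$, using (\ref{contractionself}) and \cite[Tag:0C6A]{Stacks} to force it to collapse to a point. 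Your approach trades this case split for abstract rewriting, which is conceptually clean; the price is that the hard content migrates into the local-confluence check.

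That check, however, is where your sketch has a real gap. You write that when two admissible $(-1)$-curves $E,E'$ meet, you would ``contract $E$ and follow the strict transform of $E'$'' and verify it ``is again an admissible $(-1)$-curve''. But by (\ref{contractionself}) the self-intersection of the image of $E'$ \emph{increases} after contracting $E$, so it is no longer an exceptional curve of the first kind; neither alternative in your dichotomy holds. What one must actually show is that, under $2g+r-2>0$, two elements of $\cE_{=-1}(\cC)$ that are exceptional curves of the first kind cannot meet at all (so local confluence reduces to the disjoint case). This is true, but proving it amounts to the same computation the paper performs in its second case: two meeting $(-1)$-curves force, via \cite[Tag:0C6A]{Stacks} and the genus formula, that after one contraction the special fibre is a single $\bP^1$, and then the constraint that $E,E'\in\cE_{\bP,1}\cup\cE_{\bP,2}\cup\cE_{\bP,1'}$ bounds $r\le 2$, contradicting $2g+r-2>0$. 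So your ``main obstacle'' is not a routine case analysis to be filled in later; it is essentially the paper's Case~2 argument in disguise, and your current description of how to resolve it is incorrect. Your argument for the ``moreover'' clause is basically right (once one notes that in a minimal regular model with $g\ge 1$ every $\bP^1$-component has self-intersection $\le -2$, so strict transforms cannot become $(-1)$-curves), though the paper obtains it more directly from the Case~1 iteration.
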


\begin{proof}
(cf.\,The proof of \cite[Tag:0CDA]{Stacks}.)
Since the assertions for regular s.n.c.d.\,models follow from those for regular n.c.d.\,models, we only treat the assertions for regular n.c.d.\,models.
The desired existence follows from the discussion in \ref{existenceNCD}.
Let $(\cC,\cD)$ and $(\cC',\cD')$ be minimal regular n.c.d.\,models of $C^{\log}$.

Suppose that $C$ has an absolutely minimal regular model $\cC_{\reg}$ and write $\cD_{\reg}$ for the scheme theoretic closure of $D$ in $\cC_{\reg}$.
Note that we have a canonical morphism of models $(\cC,\cD)\to(\cC_{\reg},\cD_{\reg})$ and $(\cC',\cD')\to(\cC_{\reg},\cD_{\reg})$.
If $(\cC_{\reg},\cD_{\reg})$ is a regular n.c.d.\,model, both of the morphism of models are isomorphic.
Suppose that $\cC_{\reg,k}\cup\cD_{\reg}$ does not have normal crossings at a closed point $c$.
Then, by the proof of \cite[Tag:0C5R ]{Stacks}, both of $\cC\to\cC_{\reg}$ and $\cC'\to\cC_{\reg}$ factor through the blow-up of $\cC_{\reg}$ at $c$.
By iterating this operation at most for the number of the irreducible components of $\cC_{1,k}$, we can see that the last assertion of Lemma \ref{uniqueNCD} holds.

Suppose that $C$ does \textbf{not} have a unique minimal regular model.
(In this case, by \cite[Tag:0CDA]{Stacks}, the special fiber of every minimal regular model of $C$ is isomorphic to $\bP^{1}_{k}$.)
Since $\cD$ and $\cD'$ is the scheme theoretic closure of $D$ in $\cC$ and $\cC'$, respectively, it suffices to show that we have a canonical isomorphism $\cC\simeq\cC'$.
By \cite[Tag:0C5S]{Stacks}, there exists a sequence of schemes
$$\cC=\cC_{0}\leftarrow\cdots\leftarrow\cC_{m}=\cC'_{n}\to\cdots\to\cC'_{0}=\cC'$$
such that each morphism is a blow-up at a closed point.
Write $\cD_{i}$ (resp.\,$\cD'_{j}$) for the scheme theoretic closure  of $D$ in $\cC_{i}$ (resp.\,$\cC'_{j}$) for each $1\leq i\leq m$ (resp.\,$1\leq j\leq n$).
By the argument in \ref{existenceNCD}, $(\cC_{i},\cD_{i})$ and $(\cC'_{j},\cD'_{j})$ are regular n.c.d.\,models of $(C,D)$.
We prove Lemma \ref{uniqueNCD} by induction on $n$.

Suppose $n=0$.
Since $(\cC',\cD')$ and $(\cC,\cD)$ are minimal regular n.c.d.\,models, the morphism $(\cC_{m},\cD_{m})\to (\cC_{0},\cD_{0})$ is an isomorphism.

Suppose $n>0$.
Write $E_{m}$ for the exceptional divisor of $\cC'_{n}\to\cC'_{n-1}$ and write $E_{i}$ for the image of $E_{m}$ in $\cC_{i}$ for $0\leq i< m$.
It suffices to show that $E_{0}$ is a point.
Indeed, if this holds, the morphism $(\cC'_{n}=)\cC_{m}\to\cC$ factors through $\cC'_{n}\to\cC'_{n-1}$ by \cite[Tag:0C5J]{Stacks} and the resulting morphism $\cC'_{n-1}\to\cC$ is an iterate of a blow-up at a closed point by \cite[Tag:0C5R]{Stacks}, from which the desired assertion follows by induction hypothesis.
Suppose that $E_{0}$ is not a point.
By (\ref{contractionself}), we have
$$-[k(E_{m}):k]=(E_{m}\cdot E_{m})\leq (E_{m-1}\cdot E_{m-1})\leq\cdots\leq (E_{0}\cdot E_{0})\leq 0.$$

Suppose that $(E_{i}\cdot E_{i})=(E_{i-1}\cdot E_{i-1})$ holds for each $1\leq i\leq m$.
Then the center of $\cC_{i}\to\cC_{i-1}$ is not in $E_{i-1}$ by the equivalence of (i)${}_{0}$ and (iii)${}_{0}$ in \ref{intersectiontheory}, which shows that $\cC_{i}\to\cC_{i-1}$ is isomorphic over some open neighborhood of $E_{i}$.
It follows from this and $E\in\cE_{-1}(\cC'_{n})$ that $E_{0}\in\cE_{=-1}(\cC)$, which contradicts the assumption that $(\cC,\cD)$ is a minimal regular n.c.d.\,model.
Let $i'$ be the maximal integer satisfying $(E_{i'}\cdot E_{i'})<(E_{i'-1}\cdot E_{i'-1})$.
Then $E_{i'}$ is an exceptional curve of the first kind and $E_{i'}$ intersects another exceptional curve of the first kind.
By \cite[Tag:0C6A]{Stacks}, it holds that $i'=1$, $E_{0}=\cC_{k}\simeq\bP^{1}_{k}$, and $\cC_{1}\to\cC_{0}$ is a blow-up at a $k$-rational point.
Since $i'=1$, the center of the blow-up $\cC_{i}\to \cC_{i-1}$ is not contained in $E_{i-1}$ for any $2\leq i\leq m$.
Since $\cC_{k}$ is reduced by the fact that $\cC_{k}\simeq\bP^{1}_{k}$, there are no closed points $s$ of $\cD$ such that the ramification index of the extension $O_{K}\subset O_{\cD,s}$ is more than $1$ by Lemma \ref{ramificationindex}.
By using this fact and the fact that $\cC_{1}\to\cC_{0}$ is a blow-up at a $k$-rational point, $(E_{0}\setminus\{e_{0}\})\cap \cD_{1,k}$ is finite (flat) over $k$ of rank $\geq2$.
From these observations, $E_{m}$ cannot be contained in $\cE_{=-1}(\cC_{m})$.
Therefore, the image of $E$ in $\cC$ is a point and this completes the proof of Lemma \ref{uniqueNCD}.
\end{proof}

\begin{rem}
We consider the case where $2g+r-2\leq 0$.
Note that a similar argument to that in the proof of Lemma \ref{uniqueNCD} also works for the following case:
\begin{itemize}
\item
$(g,r)=(1,0)$.
\item
$(g,r)=(0,2)$ and the normalization of $\Spec O_{K}$ in $D$ has exactly $1$ closed point.
\item
$C$ admits no regular models $\cC$ such that $\cC_{k}$ is isomorphic to $\bP^{1}_{k}$ over $k$.
(cf.\,Lemma \cite[Tag:0CDA]{Stacks}.)
\end{itemize}
On the other hand, in the remaining case, the uniqueness of Lemma \ref{uniqueNCD} always does \textbf{not} hold.
We see this in the following:
\begin{itemize}
\item
Suppose that $(g,r)=(0,0)$ and $C$ has a regular model $\cC$ such that $\cC_{k}$ is isomorphic to $\bP^{1}_{k}$ over $k$.
Let $\cC'\to\cC$ be a blow-up of $\cC$ at a $k$-rational point and $\cC'\to\cC''$ the contraction of the strict transform of $\cC_{k}$ in $\cC'$.
Then $\cC'$ and $\cC''$ are minimal regular n.c.d.\,models of $C$ which are not isomorphic.
\item
Suppose $C\setminus D\simeq \bG_{a,K}$ (resp.\,$C\setminus D\simeq \bG_{m,K}$).
Then the natural compactification of $\bG_{a,O_{K}}$ (resp.\,$\bG_{m,O_{K}}$) defines a minimal regular n.c.d.\,model.
By considering the automorphism of $C$ over $K$ determined by $\varpi\times$, we can see that the curve $(C,D)$ has more than two minimal regular n.c.d.\,models.
\item
Suppose that $(g,r)=(0,2)$ and the normalization of $\Spec O_{K}$ in $D$ has (exactly) two closed points.
By Galois descent and the discussion for the case of $C\setminus D\simeq\bG_{m,K}$, we can see that $\cC$ has more than two minimal regular n.c.d.\,models.
\end{itemize}
\end{rem}

\begin{nota}[cf.\,Remark \ref{notationamb}]
Suppose $2g+r-2>0$.
We shall write $\cC^{\log}_{\ncd}$ (resp.\,$\cC^{\log}_{\sncd}$) for the minimal regular n.c.d.\,model (resp.\,the minimal regular s.n.c.d.\,model) of $C^{\log}$ which exists and is unique up to canonical isomorphism by Lemma \ref{uniqueNCD}.
\end{nota}

\begin{rem}
\label{notationamb}
The models $\cC^{\log}_{\ncd}$, $\cC^{\log}_{\sncd}$, and $\cC^{\log}_{\lreg}$ (cf.\,Notation \ref{lregnota}) depend on $D$ (and is not determined only by $C$). 
\end{rem}

\subsection{Log regular models}
\label{lregsubsection}

In this subsection, we study fundamental properties of log regular models of $C^{\log}$.
We characterize effective divisors on log regular models contractible to points in log regular models.
Finally, we introduce the ``smallest" minimal log regular models $\cC^{\log}_{\lreg}$.

\begin{lem}
\label{fundlog}
Let $\cC^{\log}=(\cC,\cD)$ be a log regular model of $C^{\log}$ and $\widetilde{\cC}^{\log}$ the minimal desingularization of $\cC^{\log}$ (cf.\,Notation-Definition \ref{minimaldef}).
\begin{enumerate}
\item
$\widetilde{\cC}^{\log}$ is a regular n.c.d.\,model of $C^{\log}$.
\item
$\cC$ is projective over $O_{K}$.
\item
$\cD$ is normal.
\end{enumerate}
\end{lem}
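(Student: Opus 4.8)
The plan is to reduce all three assertions to the local classification of two-dimensional log regular log schemes proved in Section~\ref{logregularsecton} together with the description of the minimal log desingularization in Proposition~\ref{minlogdes} and Proposition~\ref{2dimfan}. For assertion~1, first note that by Proposition~\ref{minlogdes}.1 the morphism $\widetilde{\cC}\to\cC$ is an isomorphism over $\cC\setminus F(\cC)^{(2)}$; since $F(\cC)^{(2)}$ is a finite set of closed points contained in $\cC_{k}$, this shows that $\widetilde{\cC}$ is integral, that $\widetilde{\cC}\to\Spec O_{K}$ is proper (it factors as the proper $\widetilde{\cC}\to\cC$ followed by $\cC\to\Spec O_{K}$), and that $\widetilde{\cC}_{K}\cong\cC_{K}\cong C$, so $\widetilde{\cC}$ together with the closure of $D$ is a model of $C^{\log}$. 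Regularity and the normal crossing property may be checked étale-locally. At $x\notin F(\cC)^{(2)}$ one reads off directly from the definition of log regularity that $\operatorname{rank}(M_{\cC,\overline{x}}^{\gp}/O_{\cC,\overline{x}}^{\ast})\leq 1$, hence $O_{\cC,\overline{x}}$ is already regular and $\cC_{k}\cup\cD$ is a regular (so normal crossing) divisor near $x$, and there $\widetilde{\cC}\to\cC$ changes nothing. At $x\in F(\cC)^{(2)}$, if $\cC$ is regular at $x$ then $\cC_{k}\cup\cD$ is a normal crossing divisor near $x$ by the definition of a log regular model and the minimal log desingularization is the identity locally (the cone $\sigma_{\overline{x}}$ is already nonsingular); if $\cC$ is not regular at $x$ then Proposition~\ref{2dimfan} gives that conditions (Exc) and (Str) hold for $\widetilde{\cC}\to\cC$ over $\Spec O_{\cC,\overline{x}}$, which by Definition~\ref{introdivisorconditions} and Lemma~\ref{notationnormalcrossing} means precisely that $\widetilde{\cC}$ is regular near the exceptional locus and that $E+D'$, hence $\widetilde{\cC}_{k}\cup\widetilde{\cD}$, has normal crossings there. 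Thus $\widetilde{\cC}^{\log}$ is a regular n.c.d.\,model.

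For assertion~2, by assertion~1 the scheme $\widetilde{\cC}$ is a regular n.c.d.\,model of $C$, hence projective over $O_{K}$ (cf.\,\ref{intersectiontheory} and \cite[Tag:0C5P]{Stacks}). The morphism $\pi\colon\widetilde{\cC}\to\cC$ is proper birational, an isomorphism away from the finitely many singular points of $\cC$, which lie in $F(\cC)^{(2)}$, and over each of them the exceptional fibre is the chain $E$ of Definition~\ref{divisorconditions}, whose intersection matrix is negative definite. Since $\cC$ is normal and has only rational singularities (Lemma~\ref{rationalsingularity}.3), $\cC$ is obtained from the projective $O_{K}$-scheme $\widetilde{\cC}$ by contracting finitely many negative-definite curve configurations, and such contractions remain within the category of schemes projective over $O_{K}$; this is Lipman's contractibility theorem applied at the finitely many singular points (\cite[Theorem~(27.1), Corollary~(27.2)]{Lip}), and can alternatively be seen by twisting a relatively ample line bundle on $\widetilde{\cC}$ along the exceptional curves so that it becomes $\pi$-trivial, descending it to $\cC$, and verifying ampleness on $\cC$ by the Nakai--Moishezon criterion. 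Hence $\cC$ is projective over $O_{K}$.

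For assertion~3, let $\eta$ be a generic point of $\cD$; then $k(\eta)$ is a residue field of a closed point of $D\subset C$, so $O_{\cD,\eta}$ is a field, hence normal. Now let $x$ be a closed point of $\cD$. Since $\cC\to\Spec O_{K}$ is proper, $x\in\cC_{k}$, so through $x$ there pass at least one vertical component of the boundary (a component of $(\cC_{k})_{\red}$) and at least one horizontal component (a component of $\cD$); these are distinct irreducible components of $\cC_{k}\cup\cD$, so $x\in F(\cC)^{(2)}$, and after strict henselization $\cC^{\log}$ is Zariski log regular at $x$ with $\sharp F^{(1)}=2$ by Theorem~\ref{logsummary}.3. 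In particular the node-like cases (Str$_{0}$), (Str$_{1,x}$), (Str$_{1}$) do not occur, so exactly two branches pass through $x$, one vertical and one horizontal; thus exactly one component of $\cD$ passes through $x$. If $\cC$ is regular at $x$, the boundary is a normal crossing divisor whose two branches are regular, so $\cD$ is regular at $x$. If $\cC$ is singular at $x$, then conditions (Exc$_{x}$) and (Str$_{2}$) hold by Theorem~\ref{logsummary}.1, the boundary near $x$ is $\overline{\{\zeta_{0}\}}\cup\overline{\{\zeta_{m+1}\}}$ with exactly one of these two components horizontal (the other being the strict transform of $\cC_{k}$), and that horizontal component, which is $\cD$ near $x$, is normal by Theorem~\ref{logsummary}.4. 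Since its components are then pairwise disjoint and each normal, $\cD$ is normal.

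The main obstacle is assertion~3: the bookkeeping needed to show that every closed point of $\cD$ necessarily lands in the Zariski log regular, two-branch case (Str$_{2}$) — equivalently that the vertical and horizontal parts of the boundary are never fused into a single node-like component there — so that the normality statement of Theorem~\ref{logsummary}.4 is available. Assertion~2 is essentially a citation of Lipman's contractibility results, the only subtlety being that they are stated locally and must be applied at the finitely many singular points of $\cC$; assertion~1 is a routine check against the local classification of Section~\ref{logregularsecton}.
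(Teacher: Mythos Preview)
Your proof is correct and follows essentially the same route as the paper, which dispatches the three assertions by citing \cite[Lemma~5.2]{Niz}, then Lemma~\ref{rationalsingularity}.3 with Proposition~\ref{2dimfan} and \cite[Corollary~(27.2)]{Lip}, and finally Theorem~\ref{logsummary}.4; you have simply unpacked these citations into explicit local arguments via the classification of Section~\ref{logregularsecton}. The one place where your exposition could be tightened is the Zariski log regularity step in assertion~3: the clean logical order is first to observe that a closed point $x\in\cD$ lies on at least two irreducible boundary components (one horizontal in $\cD$, one vertical in $(\cC_{k})_{\red}$), then that the strict henselization has exactly two branches (automatic Zariski log regularity there, plus Theorem~\ref{logsummary}.3), hence exactly two components before henselization, hence Zariski log regularity at $x$ by Theorem~\ref{logsummary}.3, and only then to invoke Theorem~\ref{logsummary}.1 and the second part of Theorem~\ref{logsummary}.4.
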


\begin{proof}
Assertion 1 follows from \cite[Lemma 5.2]{Niz}.
Assertion 2 follows from Lemma \ref{rationalsingularity}.3, Proposition \ref{2dimfan}, and \cite[Corollary (27.2)]{Lip}.
Assertion 3 follows from Theorem \ref{logsummary}.4.
\end{proof}

\begin{dfn}
\label{conditiondivisorcurve}
Let $\cC^{\log}=(\cC,\cD)$ be a log regular model of $C^{\log}$ and $E$ an effective reduced Cartier divisor on $\cC$ whose support is contained in $\cC_{k}$.
We consider the following conditions:
\begin{itemize}
\item[(cExc)]
$\cC_{k}\cup \cD$ is a normal crossing divisor on $\cC$ around $\Supp E$ and $E$ satisfies the condition obtained by replacing $\widetilde{X}$ in one of conditions (Exc$_{x}'$), (Exc$_{\mathrm{o}}'$), or (Exc$_{\mathrm{e}}'$) with $\cC$ is satisfied.
\end{itemize}
Suppose that condition (cExc) is satisfied and write $k'$ for the field $H^{0}(E,O_{E})$.
Let $D'(E)$ be the sum of prime divisors $P$ such that $\Supp P\subset \cC_{k}\cup\cD$ and $\Supp P\not\subset\Supp E$.
We consider the following condition:
\begin{itemize}
\item[(cStr)]
There exists a regular open subset $\cU\subset\cC$ containing $\Supp E$ such that $D':=D'(E)\cap\cU$ and $E$ satisfies the condition obtained by replacing $k(x)$ and $\widetilde{X}$ in one of conditions (Str$_{0}$), (Str$_{1,x}$), (Str$_{1}$), and (Str$_{2}$) with $k'$ and $\cU$, respectively.
\end{itemize}
\end{dfn}

\begin{prop}
\label{logexcpetionaldivisor}
Let $\cC^{\log}=(\cC,\cD)$ be a log regular model of $C^{\log}$ and $E$ an effective reduced divisor whose support is contained in $\cC_{k}$.
The following are equivalent:
\begin{enumerate}
\item
$E$ satisfies conditions (cExc) and (cStr).
\item
There exist a log regular model $\cC'^{\log}$ of $C^{\log}$ and a non-regular point $c'\in F(\cC')^{(2)}$ satisfying the following:
For $c\in F(\cC)^{(2)}$, define $\Delta(c)$ to be the trivial proper subdivision (resp.\,the coarsest subdivision) of the fan $\{\tau\subset\sigma\mid\tau\text{ is a face of }\sigma_{\overline{c}}\}$ if $c\neq c'$ (resp.\,$c=c'$).
Then the log regular model $\cC'^{\log}_{\Delta}$ of $C^{\log}$ is isomorphic to $\cC^{\log}$ and $E$ corresponds to the exceptional divisor of the log blow-up $\cC'^{\log}_{\Delta}\to\cC'^{\log}$ via this isomorphism (cf.\,Proposition \ref{minlogdes}).
\end{enumerate}
\end{prop}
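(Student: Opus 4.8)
The plan is to prove both implications by passing to the local picture at a single non-regular point and back, the bridge being Proposition~\ref{2dimfan} (structure of the minimal log desingularization) together with Theorem~\ref{logsummary}.2 (the characterization of $2$-dimensional toric pairs) and Lemma~\ref{stalkwiselogregular}.2 (toric pairs are detected stalkwise). Throughout, I would use Lemma~\ref{rationalsingularity} to move freely between the primed and unprimed versions of conditions (Exc$_{-}$).

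First I would treat $(2)\Rightarrow(1)$. Given $\cC'^{\log}$ and a non-regular $c'\in F(\cC')^{(2)}$ as in (2), set $X:=\Spec O_{\cC',c'}$ and let $D\subset X$ be the reduced closed subscheme where the log structure of $\cC'^{\log}$ is nontrivial, i.e.\ the union of the branches of $\cC'_{k}\cup\cD'$ through $c'$; since $c'$ is non-regular, $X$ is not regular. By Proposition~\ref{minlogdes}, $\cC'^{\log}_{\Delta}\to\cC'^{\log}$ is an isomorphism over $\cC'\setminus\{c'\}$ and, restricted over $X$, is the minimal log desingularization $\widetilde{X}^{\log}\to X^{\log}$. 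Proposition~\ref{2dimfan} then gives that $\widetilde{X}$ is regular and that its exceptional divisor together with the strict transform $D'$ of $D$ satisfies conditions (Exc) and (Str); by Lemma~\ref{notationnormalcrossing} this unpacks into one of (Exc$_{x}$), (Exc$_{\mathrm o}$), (Exc$_{\mathrm e}$) and one of (Str$_{0}$), (Str$_{1,x}$), (Str$_{1}$), (Str$_{2}$). Transporting this along the isomorphism $\cC'^{\log}_{\Delta}\simeq\cC^{\log}$, the exceptional divisor becomes $E$ and these become, around $\Supp E$, conditions (cExc) and (cStr) for $E$ on $\cC$; in particular $\cC$ is regular along $\Supp E$, and for (cStr) one takes $\cU$ to be a regular open neighborhood of $\Supp E$ meeting only the components of $\cC_{k}\cup\cD$ through $\Supp E$, inside which $D'|_{E}\cap\cU$ is precisely the strict transform of $D$. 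This proves (1).

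Next I would treat $(1)\Rightarrow(2)$. Assume $E$ satisfies (cExc) and (cStr). Since $\cC$ is projective over $O_{K}$ (Lemma~\ref{fundlog}.2) and the intersection matrix of the components of $E$ is negative definite (Definition~\ref{divisorconditions}), Lipman's contractibility criterion (cf.\ \cite{Lip}) yields a proper birational morphism $\pi:\cC\to\cC'$ onto a normal integral scheme projective over $O_{K}$, contracting exactly $E$ to a single point $c'$ and an isomorphism elsewhere; thus $\cC'$ is a model of $C^{\log}$, and I write $\cD'$ for the closure of $D$ in $\cC'$. By (cExc), $\cC$ is regular along $\Supp E$, so $\pi$ restricts to a desingularization of $X:=\Spec O_{\cC',c'}$; since each component $E_{i}$ of $E$ has $\deg_{E_{i}/k'}\cN_{E_{i}/\cC}\leq-2$, no $E_{i}$ is an exceptional curve of the first kind, so $X$ is not regular and $\pi|_{X}$ is the minimal desingularization (cf.\ \ref{firstkind}). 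Let $D\subset X$ be the reduced subscheme whose branches are the components of $\cC_{k}\cup\cD$ meeting $\Supp E$ but not contained in it; near $\Supp E$ its strict transform is $D'|_{E}$. Then (cExc) and (cStr) say exactly that $\pi|_{X}$ and $D$ realize one of the cases in (B)(ii)/(C) of Theorem~\ref{logsummary}.2, so $(X,X\setminus D)$ is a toric pair. Since away from $c'$ the scheme $\cC'$ is isomorphic to the open subscheme $\cC\setminus\Supp E$ of the log regular scheme $\cC$, Lemma~\ref{stalkwiselogregular}.2 shows that $(\cC',\cC'\setminus(\cC'_{k}\cup\cD'))$ is a toric pair; and since $\cC'$ is non-regular at $c'$, we get $c'\in F(\cC')^{(2)}$. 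Finally, by Proposition~\ref{2dimfan} the minimal log desingularization of $\cC'^{\log}$ at $c'$ has underlying scheme the minimal desingularization $\pi|_{X}$ of $X$, equipped with the normal-crossing log structure of $E+D'|_{E}$, which agrees with $\cC^{\log}$ around $\Supp E$ because $\cC^{\log}$ is a log regular model and $\cC_{k}\cup\cD$ has normal crossings there; combined with Proposition~\ref{minlogdes} this identifies $\cC'^{\log}_{\Delta}$ with $\cC^{\log}$ and the exceptional divisor of $\cC'^{\log}_{\Delta}\to\cC'^{\log}$ with $E$, so (2) holds.

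The step I expect to be the main obstacle is the construction in $(1)\Rightarrow(2)$: invoking a contractibility criterion in the required generality (a negative-definite vertical configuration on a surface projective over a possibly non-excellent discrete valuation ring) and verifying that the contracted scheme is again a model of $C^{\log}$ with $c'$ a genuine non-regular point of $F(\cC')^{(2)}$; and, hand in hand with this, the bookkeeping needed to check that the data $(X,D,\pi|_{X})$ extracted from $\cC$ literally matches the hypotheses of Theorem~\ref{logsummary}.2 and that the canonical toric-pair log structure produced there agrees, around $\Supp E$, with the normal-crossing log structure carried by $\cC^{\log}$, so that the globalization via Lemma~\ref{stalkwiselogregular}.2 returns exactly $\cC^{\log}$ after the prescribed single log blow-up.
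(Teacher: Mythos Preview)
Your approach is essentially the same as the paper's, and the overall architecture is correct: $(2)\Rightarrow(1)$ via Proposition~\ref{2dimfan}/Theorem~\ref{logsummary}.2, and $(1)\Rightarrow(2)$ via Lipman's contractibility criterion followed by Lemma~\ref{stalkwiselogregular}.2 and Theorem~\ref{logsummary}.2. The paper's proof is in fact terser than yours on both directions.

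One point to tighten in $(1)\Rightarrow(2)$: the specific criterion the paper invokes is \cite[Theorem~(27.1)]{Lip}, and negative-definiteness of the intersection matrix is not enough by itself; one also needs $\chi_{k'}(O_{E_{f}})>0$ for the fundamental cycle $E_{f}$. The paper handles this by appealing to Lemma~\ref{rationalsingularity}.2 (which gives $E_{f}=E$ under (cExc)) together with the computation $\chi_{k'}(O_{E})=1$. You flagged the contractibility step as the main obstacle, so you were aware something extra is needed here, but you should name this hypothesis explicitly and point to Lemma~\ref{rationalsingularity}.2 rather than leaving it inside the generic citation to \cite{Lip}. With that addition your argument matches the paper's.
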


\begin{proof}
The implication 2$\Rightarrow$1 follows from Theorem \ref{logsummary}.2.
Suppose that $E$ satisfies conditions (cExc) and (cStr).
To construct the underlying scheme of the desired log regular model $\cC'^{\log}$, we need to check the following assumptions of the contractibility criterion \cite[Theorem (27.1)]{Lip}:
(i) $\cC$ is projective over $O_{K}$.
(ii) $E$ is an effective reduced Cartier divisor.
(iii) $E$ is connected.
(iv) The matrix $\bpm(E_{i}\cdot E_{j})\epm_{ij}$ is negative-definite.
(v) The fundamental curve satisfies $\chi(E_{f})>0$.
These conditions follow from condition (cExc), Definition \ref{divisorconditions}, Lemma \ref{rationalsingularity}.2, and Lemma \ref{fundlog}.2.
By applying \cite[Theorem (27.1)]{Lip} to $\cC$ and $E$, we obtain a normal model $\cC'$.
Write $\cD'$ for the scheme theoretic closure of $D$ in $\cC'$ and $c'$ for the image of $E$ in $\cC'$.
It suffices to show that $(\cC',\cC'\setminus(\cC'_{k}\cup\cD'))$ is a toric pair.
This follows from Lemma \ref{stalkwiselogregular}, Theorem \ref{logsummary}.2, and conditions (cExc) and (cStr).
\end{proof}

\begin{lem}
\label{genus01}
Let $\cC^{\log}=(\cC,\cD)$ be a regular n.c.d.\,model of $C^{\log}$.
Write $E$ for the reduced effective divisor $\cC_{k,\red}$ on $\cC$.
\begin{enumerate}
\item
If $E$ is an element of $\cE_{\bP,1}(\cC)$ (resp.\,$\cE_{\bP,2}(\cC)\cup\cE_{0,1'}(\cC)\cup\cE_{\node}(\cC)$), then $\cC_{k}$ is integral (and hence $\cC_{k}=E$) and $(g,r)$ is equal to $(0,1)$ (resp.\,$(0,2)$).
\item
If $E$ is the sum of an element $F$ of $\cE_{\bP,1}(\cC)$ and some elements of $\cE_{\leq-2}(\cC)$, then we have $F=\cC_{k}$ (in particular, we have $\cE_{\leq-2}(\cC)=\emptyset$).
\item
If $E$ is the sum of an element of $(\cE_{0,1'}(\cC)\cup\cE_{\bP,2}(\cC))\cap\cE_{=-1}(\cC)$ and some elements of $\cE_{\leq-2}(\cC)$, then we have $(g,r)=(0,2)$ or $(g,r)=(1,0)$.
\item
If $E$ is the sum of some elements of $\cE_{\leq -2}(\cC)$, then we have $(g,r)=(1,0)$.
\end{enumerate}
\end{lem}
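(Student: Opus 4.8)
The plan is to treat all four assertions by one mechanism. Write $\cC_k=\sum_i m_iF_i$ for the decomposition of the special fibre, as a Cartier divisor on $\cC$, into its prime divisors, so that $E=\sum_iF_i$. Three facts do most of the work. Because $\cC_k$ is the principal divisor cut out by $\varpi$, one has $(F_i\cdot\cC_k)=0$ for every $i$, whence the symmetric intersection matrix $\bigl((F_i\cdot F_j)\bigr)_{ij}$ on the vertical prime divisors is negative semi-definite, and — $C$ having geometrically connected fibres, so that $\cC_k$ is connected — its kernel is the line spanned by $(m_i)_i$ (cf.\,\ref{intersectiontheory}). By flatness of $\cC$ over $O_K$, $\chi_k(\cO_{\cC_k})=\chi_K(\cO_C)=1-g$. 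Finally, every prime divisor $F$ occurring in the hypotheses — of whichever type $\cE_{\bP,1}$, $\cE_{\bP,2}$, $\cE_{\bP,1'}$, $\cE_{0,1'}$ or $\cE_{\node}$ — satisfies $\chi_{k(F)}(\cO_F)=1$; for the $\bP^1$- and smooth genus-$0$-types this is immediate, and for the nodal type it is the computation in \ref{badcurves}.

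With this, Part 1 is quick. If $E$ itself is one of the listed curves, then $\cC_k=mE$ with $E$ a single prime divisor, so $0=(E\cdot\cC_k)=m(E\cdot E)$ gives $\deg_{k(E)}\cO_{\cC}(E)|_E=0$. Filtering $\cO_{\cC_k}$ by the short exact sequences $0\to\cO_{\cC}(-(l-1)E)|_E\to\cO_{lE}\to\cO_{(l-1)E}\to 0$ ($1\le l\le m$) and applying Riemann--Roch on the curve $E$ together with $\chi_{k(E)}(\cO_E)=1$, one obtains $1-g=\chi_k(\cO_{\cC_k})=m\,[k(E):k]\ge 1$, hence $g=0$, $m=1$, $k(E)=k$; in particular $\cC_k=E$ is integral. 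Since $\cD$ is finite over $O_K$ and $E$ meets $\cD$ in exactly the one or two closed points prescribed by membership in $\cE_{\bP,1}$ resp.\,$\cE_{\bP,2}\cup\cE_{0,1'}\cup\cE_{\node}$, each of ramification index $m=1$, Lemma \ref{ramificationindex} gives $r=1$ resp.\,$r=2$.

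For Parts 2--4 I would first reduce to a chain or a cycle. The membership conditions bound the number of neighbours of a component of $\cC_k$ in the dual graph of $\cC_k\cup\cD$: at most two for an $\cE_{\bP,2}$-component, at most one for an $\cE_{\bP,1}$-, $\cE_{\bP,1'}$-, $\cE_{0,1'}$- or $\cE_{\node}$-component; so, $\cC_k$ being connected, the components of $\cC_k$ form a chain or a cycle, and the only components that can meet $\cD$ are ends of the chain. Along a chain $F_0-F_1-\cdots-F_n$ the relations $(F_i\cdot\cC_k)=0$ form a tridiagonal system for the $m_i$ of exactly the form studied in \ref{fordualmonoid}; solving it subject to the constraint that each $\deg_{k(F_i)}\cO_{\cC}(F_i)|_{F_i}$ is an integer, and is $\le -2$ except at the distinguished component, forces the chain to be very short and determines all the $m_i$ and the residue fields $k(F_i)$. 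Recomputing $\chi_k(\cO_{\cC_k})$ — again from the filtration of the previous paragraph, now feeding in $\chi_k(\cO_E)$ computed from the normalisation sequence $0\to\cO_E\to\nu_*\cO_{\widetilde E}\to\mathcal{Q}\to 0$, with $\mathcal{Q}$ a direct sum of skyscrapers at the intersection points of the $F_i$ and at the nodes of any $\cE_{\node}$-component — exhibits $1-g$ as a visibly positive combination of the $[k(F_i):k]$ and the $m_i$: this forces $g=0$ in Parts 2 and 3 and, in the cyclic case (which the hypotheses pin down to an $I_n$-type cycle of $\bP^1$'s with $\cD=\emptyset$), $g=1$, the origin of the $(1,0)$ alternative in Parts 3 and 4. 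The value of $r$ is then read off from Lemma \ref{ramificationindex}, and in Part 2 the chain is forced down to $\cC_k=F$, so $F=\cC_k$ and $\cE_{\leq-2}(\cC)=\emptyset$. The main obstacle is exactly this chain-and-cycle bookkeeping: deciding which $\cE_\bullet$-type can sit at an interior vertex versus an end, tracking the residue-field extensions at each intersection point, and using the integrality of the self-intersection degrees to rule out all but the shortest configurations — in particular, the cases in which an $\cE_{\bP,2}$-end meets $\cD$ must be excluded by a direct numerical contradiction among the $m_i$ rather than by the $\chi$-count.
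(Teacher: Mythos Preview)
Your mechanism is right, and Part 1 is essentially the paper's argument. For Parts 2--4, however, two things deserve comment.

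First, you omit a reduction the paper uses to great effect: by Lemma \ref{sthencase} one may assume $O_K=O_K^{sh}$, whereupon $\cE_{\node}(\cC)=\cE_{0,1'}(\cC)=\emptyset$ (Remark \ref{sepcase}.1). Then every component is a $\bP^1$ over a common field $k'$ and every intersection point is $k'$-rational, so the tridiagonal system has uniform off-diagonal entries. Your ``residue-field tracking'' can be made to work without this, but it is substantially messier; in particular the final statement about ``$\cE_{\bP,2}$-ends meeting $\cD$'' becomes ambiguous across the four parts.

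Second, the claim that the tridiagonal constraints ``force the chain to be very short'' is simply false for Part 3. After the sh reduction, $(a_i)=(-3,-1,-2,-2)$ with $(m_i)=(1,3,2,1)$ satisfies all constraints, and chains of arbitrary length occur. The paper does \emph{not} classify chains: for Part 2 it derives the monotonicity $m_1\le m_2\le\cdots\le m_n$ from $a_i\le-2$ ($i\ge2$), which contradicts the terminal relation $m_{n-1}=-a_nm_n\ge 2m_n$; for Part 3 it locates the interior $(-1)$-curve, contracts it, and inducts on the number of components, finishing with \cite[Tag:0C6A]{Stacks} once two $(-1)$-curves meet. Your direct $\chi$ route \emph{does} work for Part 3, but it rests on an identity you do not state: summing the relations $m_ia_i=-(m_{i-1}+m_{i+1})$ along a chain with both ends meeting $\cD$ yields $\sum_i m_i(2+a_i)=m_1+m_n$, hence
\[
1-g=\tfrac{[k':k]}{2}(m_1+m_n)\ge 1,
\]
forcing $g=0$, $[k':k]=1$, $m_1=m_n=1$, and then $r=2$ via Lemma \ref{ramificationindex}. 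Once you replace ``short chain'' by this identity (and add the sh reduction), your argument is a genuine alternative to the paper's contraction induction.
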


\begin{proof}
We discuss all assertions simultaneously.
Write $k'$ for the field $H^{0}(E,O_{E})$ and $n$ for the number of irreducible components of $\cC_{k}$.
If $n=1$, we have $(E\cdot E)=0$ and hence $E\notin \cE_{=-1}(\cC)\cup\cE_{\leq -2}(\cC)$.
(This does not occur in the situation of assertion 3 or 4.)
In this case, it follows that
\begin{align}
\label{irrcase}
g=1+m\bigl([k(E):k](0-1)-\frac{1}{2}(E\cdot E)\bigr)=1-m[k(E):k]
\end{align}
from \cite[Tag:0CA3]{Stacks}, where $m$ is the multiplicity of $E$ in $\cC_{k}$.
Thus, we have $m=1$, $k(E)=k$, and $g=0$.
Assertion 1 follows from these observations.

In the rest of the proof, we assume $n\geq 2$ and show assertions 2,3, and 4.
By Lemma \ref{sthencase}, we may assume $O_{K}=O_{K}^{sh}$.
(Note that we have $\cE_{\node}(\cC)=\emptyset=\cE_{0,1'}(\cC)$ by Remark \ref{sepcase}.1.)
We discuss three cases (I), (II), and (III) separately.

(I): We suppose that there exists a prime divisor $F_{1}\in\cE_{\bP,1}(\cC)$.
Then we are in the situation of assertion 2, and thus we can write $E$ as $\sum_{i=1}^{n}F_{i}$ such that $F_{i}\in\cE_{\leq-2}(\cC)(\subset\cE_{\bP,2}(\cC))$ for $1< i\leq n$ and $F_{i}\cap F_{i+1}\neq\emptyset$.
Write $m_{i}$ for the multiplicity of $F_{i}$ in $\cC_{k}$.
By calculating $(F_{i}\cdot \varpi O_{\cC})$, we obtain
\begin{align}
\label{chainineq1}
m_{1}(F_{1}\cdot F_{1})+m_{2}(F_{1}\cdot F_{2})&=0,\\
\label{chainineq2}
m_{i-1}(F_{i}\cdot F_{i-1})+m_{i}(F_{i}\cdot F_{i})+m_{i+1}(F_{i}\cdot F_{i+1})&=0\quad \text{if }1<i<n,\text{ and}\\
\label{chainineq3}
m_{n-1}(F_{n}\cdot F_{n-1})+m_{n}(F_{n}\cdot F_{n})&=0.
\end{align}
By (\ref{chainineq1}) and (\ref{chainineq2}), we have $m_{1}\leq m_{2}\leq\cdots\leq m_{n}$, which contradicts (\ref{chainineq3}).
In (II) and (III), we assume that every prime divisor $F$ satisfying $F\leq E$ is contained in  $\cE_{\bP,2}(\cC)$.

(II): We suppose that there exists a prime divisor $F_{1}\in\cE_{\bP,2}(\cC)$ satisfying $F_{1}\cap\cD\neq\emptyset$.
Then we can write $E$ as $\sum_{i=1}^{n}F_{i}$ such that $F_{i}\in\cE_{\bP,2}(\cC)$ for $1< i\leq n$, $F_{i}\cap F_{i+1}\neq\emptyset$, and $F_{n}\cap\cD\neq\emptyset$.
Write $m_{i}$ for the multiplicity of $F_{i}$ in $\cC_{k}$.
By the same calculation as that in case (I), we have $F_{1},F_{n}\notin\cE_{=-1}(\cC)$ and $F_{i}\in\cE_{=-1}(\cC)$ for some $1<i<n$.
Write the regular n.c.d.\,model $\cC'^{\log}$ of $C^{\log}$ obtained by contracting $F_{i}$ and $F'_{j}$ for the prime divisor on $\cC'$ whose support is the image of $F_{j}$ in $\cC'$ for each $1\leq j(\neq i)\leq n$.
Again by the same calculation, at least one of $F'_{i-1}$ or $F'_{i+1}$ is contained in $\cE_{=-1}(\cC)$.
If both $F'_{i-1}$ and $F'_{i+1}$ are contained in $\cE_{=-1}(\cC)$, we have $(g,r)=(0,2)$ by \cite[Tag:0C6A]{Stacks}.
Also, if exactly one of $F'_{i-1}$ or $F'_{i+1}$ is contained in $\cE_{=-1}(\cC)$, we have $(g,r)=(0,2)$ by induction on $n$.

(III): We suppose that $E$ is the sum of some elements of $\cE_{\bP,2}(\cC)$ and $D\neq\emptyset$ (or equivalently, $r=0$).
Then we can write $E$ as $\sum_{i\in\Z/n\Z}F_{i}$ such that $F_{i}\in\cE_{\bP,2}(\cC)$ and $F_{i}\cap F_{i+1}\neq\emptyset$ for each $i\in\Z/n\Z$.
Write $m_{i}$ for the multiplicity of $F_{i}$ in $\cC_{k}$.
By \cite[Tag:0CA3]{Stacks}, we have
\begin{align*}
g
=&1+\sum_{i\in\Z/n\Z}m_{i}\bigl([k':k](0-1)-\frac{1}{2}(F_{i}\cdot F_{i})\bigr)=1.
\end{align*}
Here, the last equality follows from the equation
\begin{align*}
\label{chainintersection}
0=m_{i-1}(F_{i}\cdot F_{i-1})+m_{i}(F_{i}\cdot F_{i})+m_{i+1}(F_{i}\cdot F_{i+1})
\end{align*}
for $i\in\Z/n\Z$.
Thus, we have $(g,r)=(1,0)$.
\end{proof}

\begin{thm}
\label{logreglem}
Suppose $2g+r-2>0$.
Let $\cC^{\log}=(\cC,\cD)$ be a regular n.c.d.\,model of $C^{\log}$.
We have a log regular model $\cC'^{\log}$ whose minimal log desingularization $\widetilde{\cC}'^{\log}$ is isomorphic to  $\cC^{\log}$ and the set of exceptional divisors of $\cC\to\cC'$ coincides with $\cE_{\leq-2}(\cC)$.
Moreover, any log regular model $\cC''^{\log}$ whose minimal log desingularization $\widetilde{\cC}''^{\log}$ is isomorphic to  $\cC^{\log}$ dominates $\cC'^{\log}$.
\end{thm}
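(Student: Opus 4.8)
The plan is to produce $\cC'^{\log}$ by contracting the reduced divisor $Z:=\sum_{F\in\cE_{\leq-2}(\cC)}F$ on $\cC$ one connected component at a time, using Proposition \ref{logexcpetionaldivisor}, and to deduce the minimality assertion from Proposition \ref{2dimfan} together with a rigidity argument for contractions of normal surfaces. First I would analyze $Z$. Since $\cC$ is a regular n.c.d.\,model, condition (i) in the definition of $\cE_{\leq-2}(\cC)$ is automatic, so $\cE_{\leq-2}(\cC)$ consists of the $F\in\cE_{\bP,2}(\cC)\cup\cE_{0,1'}(\cC)\cup\cE_{\node}(\cC)$ with $\deg_{k(F)}O_{\cC}(F)|_{F}\leq-2$. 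By the definitions of these sets, a component $F$ of $Z$ meets the remaining components of $\cC_{k}\cup\cD$ in exactly two points if $F\in\cE_{\bP,2}(\cC)$ and in exactly one point if $F\in\cE_{0,1'}(\cC)\cup\cE_{\node}(\cC)$; hence each connected component $E^{(1)},\dots,E^{(t)}$ of $Z$ is a linear chain with no branch points, whose interior members lie in $\cE_{\bP,2}(\cC)$ and whose two endpoints either lie in $\cE_{\bP,2}(\cC)$ and meet one further component of $\cC_{k}\cup\cD$ outside the chain, or lie in $\cE_{0,1'}(\cC)\cup\cE_{\node}(\cC)$ and meet nothing outside. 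If some $E^{(s)}$ met nothing outside the chain at either end (in particular if it were a cycle), then, $\cC_{k}$ being connected, $E^{(s)}$ would be $\cC_{k,\red}$, forcing $(g,r)=(1,0)$ by Lemma \ref{genus01}.4 and contradicting $2g+r-2>0$. Thus every $E^{(s)}$ meets a component of $\cC_{k}\cup\cD$ outside the chain at one of its endpoints, and, reading off which local configuration occurs and invoking Lemma \ref{notationnormalcrossing} (keeping track of the residue fields of the members and of the intersection points, and of whether one or two distinct outer components are attached), one checks that $E^{(s)}$ satisfies condition (cExc) and one of (Str$_{0}$), (Str$_{1,x}$), (Str$_{1}$), (Str$_{2}$), i.e.\,condition (cStr). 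I expect this case analysis, matching each admissible chain to one of the conditions of Definitions \ref{divisorconditions} and \ref{conditiondivisorcurve}, to be the main technical point.

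Granting this, I would construct $\cC'$ as follows. A regular n.c.d.\,model is in particular a log regular model (cf.\,\ref{ncddef}), so Proposition \ref{logexcpetionaldivisor} applies to $E^{(1)}\subset\cC$ and yields a contraction $\cC\to\cC^{(1)}$ onto a log regular model $\cC^{(1),\log}$ of $C^{\log}$ whose only non-regular point is the image of $E^{(1)}$. By Lemma \ref{fundlog}.2, $\cC^{(1)}$ is again projective over $O_{K}$; and since $E^{(2)},\dots,E^{(t)}$ are disjoint from $E^{(1)}$, the morphism $\cC\to\cC^{(1)}$ is an isomorphism near each of them, so their images in $\cC^{(1)}$ still satisfy (cExc) and (cStr). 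Iterating over the finitely many chains yields a log regular model $\cC'^{\log}$ of $C^{\log}$ with a contraction $\cC\to\cC'$ whose set of contracted prime divisors is exactly $\cE_{\leq-2}(\cC)$. Moreover $\cC'$ is regular away from the images $c^{(1)},\dots,c^{(t)}$ of the chains, and near each $c^{(s)}$ the coarsest subdivision of the relevant cone recovers the chain $E^{(s)}$ (Proposition \ref{logexcpetionaldivisor}); since log blow-ups compose compatibly with combining subdivisions supported at the distinct points $c^{(s)}$ (cf.\,Proposition \ref{minlogdes}), the minimal log desingularization of $\cC'^{\log}$ is $\cC^{\log}$.

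For the last assertion, let $\cC''^{\log}$ be a log regular model with $\widetilde{\cC}''^{\log}\cong\cC^{\log}$, and write $f:\cC\to\cC''$ for the minimal log desingularization. Each non-regular point $c''$ of $\cC''$ lies in $F(\cC'')^{(2)}$, and by Proposition \ref{2dimfan} the reduced exceptional divisor of $f$ over each such point satisfies (Exc) and (Str); since $\cC$ is a regular n.c.d.\,model (so normal crossings hold along the whole special fiber), it follows from condition (Exc) (and the way the exceptional curves meet the rest of $\cC_{k}\cup\cD$) that every component of the exceptional divisor of $f$ belongs to $\cE_{\leq-2}(\cC)$. Hence every prime divisor contracted by $f$ is contracted by the proper birational morphism $g:\cC\to\cC'$. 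Since $\cC''$ is normal, the fiber $f^{-1}(c'')$ over each non-regular point $c''$ of $\cC''$ is connected and is mapped by $g$ to a single point; therefore the rational map $g\circ f^{-1}$, defined away from these finitely many points and sending the fiber over each of them to a point, extends to a morphism $h:\cC''\to\cC'$ (e.g.\,by taking the closure of its graph). As $h$ is the identity on generic fibers it is a morphism of models, so $\cC''^{\log}$ dominates $\cC'^{\log}$.
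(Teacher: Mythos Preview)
Your proposal is correct and follows essentially the same route as the paper: verify that each connected component of $\sum_{F\in\cE_{\leq-2}(\cC)}F$ satisfies (cExc) and (cStr) (ruling out the isolated case via Lemma \ref{genus01}.4), contract via Proposition \ref{logexcpetionaldivisor}, and deduce the domination statement from the fact that the exceptional components of any $\cC\to\cC''$ lie in $\cE_{\leq-2}(\cC)$ together with the universal property of contractions. Your write-up is more explicit than the paper's about the chain structure and the iteration over connected components, and you phrase the final factorization via a rigidity/Zariski's-main-theorem argument rather than simply citing the universal property, but the content is the same.
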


\begin{proof}
For any log regular model $\cC''^{\log}$ whose minimal log desingularization $\widetilde{\cC}''^{\log}$ is isomorphic to  $\cC^{\log}$, the exceptional divisor $E''$ of $\widetilde{\cC}''\to\cC''$ is the sum of some elements of $\cE_{\leq -2}(\cC'')$ by Proposition \ref{logexcpetionaldivisor}.
By the universal property of contractions, it suffices to show that every connected reduced effective Cartier divisor which can be written as the sum of some elements of $\cE_{\leq -2}(\cC'')$ satisfies conditions (cExc) and (cStr), i.e., condition 1 in Proposition \ref{logexcpetionaldivisor}.
Fix such a connected reduced effective Cartier divisor $E$.

Suppose that $E$ does not intersect any irreducible component of $\cC_{k}\cup\cD$ not contained in $E$.
Since $\cC_{k}$ is connected, we have $\cC_{k}=\Supp E$.
Then, by Lemma \ref{genus01}.4, we have $(g,r)=(0,1)$, which contradicts the inequality $2g+r-2>0$.
Thus, $E$ intersects some irreducible components of $\cC_{k}$ not contained in $E$.
Then, by the definition of $\cE_{\leq -2}(\cC'')$, $E$ satisfies conditions (cExc) and (cStr).
This completes the proof of Theorem \ref{logreglem}.
\end{proof}

\begin{nota-dfn}[cf.\,Remark \ref{notationamb}]
\label{lregnota}
Suppose $2g+r-2>0$.
We write $\cC^{\log}_{\lreg}$ for the minimal log regular model of $C^{\log}$ which can be constructed by applying Theorem \ref{logreglem} to $\cC^{\log}_{\ncd}$.
\end{nota-dfn}

The next proposition says that $\cC^{\log}_{\lreg}$ is the ``smallest" minimal log regular model in some sense.

\begin{prop}
\label{mlregstr}
Suppose $2g+r-2>0$.
For any log regular model $\cC^{\log}$ of $C^{\log}$, we have a canonical morphism $\widetilde{\cC}^{\log}\to\cC^{\log}_{\ncd}$ and hence $\widetilde{\cC}^{\log}\to\cC^{\log}_{\lreg}$.
\end{prop}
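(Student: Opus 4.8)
The plan is to build the map in two stages: first construct a canonical morphism $\widetilde{\cC}^{\log}\to\cC^{\log}_{\ncd}$, and then compose it with the canonical morphism $\cC^{\log}_{\ncd}\to\cC^{\log}_{\lreg}$ which is built into the definition of $\cC^{\log}_{\lreg}$ (Notation-Definition \ref{lregnota}). Before starting I would observe that, since the given $\cC^{\log}$ is a log regular model, Lemma \ref{fundlog}.1 furnishes a regular n.c.d.\,model of $C^{\log}$, so condition (F) holds and both $\cC^{\log}_{\ncd}$ and $\cC^{\log}_{\lreg}$ are defined (under $2g+r-2>0$).

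For the first stage: by Lemma \ref{fundlog}.1 the minimal desingularization $\widetilde{\cC}^{\log}$ of $\cC^{\log}$ is itself a regular n.c.d.\,model of $C^{\log}$. Since $(\cC_{\ncd},\cD_{\ncd})$ is an \emph{absolute} minimal regular n.c.d.\,model (Lemma \ref{uniqueNCD}), there is a morphism of models $\phi\colon\widetilde{\cC}\to\cC_{\ncd}$. This $\phi$ is unique — hence canonical — because $\widetilde{\cC}$ is integral with schematically dense generic fibre $C=\widetilde{\cC}_{K}$, $\cC_{\ncd}$ is separated over $O_{K}$, and $\phi$ restricts to the identity on $C$; two $O_{K}$-morphisms to $\cC_{\ncd}$ agreeing on a dense subset of the reduced scheme $\widetilde{\cC}$ must coincide.

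The next step is to lift $\phi$ to the log schemes. The log structures of the regular n.c.d.\,models $\widetilde{\cC}^{\log}$ and $\cC^{\log}_{\ncd}$ are the ones attached to their toric-pair structures, hence of the form $j_{U,\ast}\cO_{U}^{\ast}\cap\cO$, where $U$ is the trivial locus; in both cases the non-trivial locus is the boundary divisor $\cC_{k}\cup\cD$, which contains the whole special fibre, so the trivial locus is in both cases the open subscheme $C\setminus D$ of the generic fibre. Because $\phi$ is the identity on generic fibres, it carries the trivial locus of $\widetilde{\cC}^{\log}$ into that of $\cC^{\log}_{\ncd}$; therefore $\phi^{\#}$ sends a local section of $M_{\cC_{\ncd}}$ (a regular function invertible on the trivial locus) to a regular function on $\widetilde{\cC}$ invertible on $C\setminus D$, i.e.\ to a local section of $M_{\widetilde{\cC}}$. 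This gives a morphism of log schemes $\widetilde{\cC}^{\log}\to\cC^{\log}_{\ncd}$ over $\phi$, and it is the unique such lift since $\alpha\colon M\to\cO$ is injective on both sides.

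Finally, by Notation-Definition \ref{lregnota} together with Theorem \ref{logreglem}, $\cC^{\log}_{\lreg}$ is the log regular model whose minimal log desingularization is (canonically isomorphic to) $\cC^{\log}_{\ncd}$, and the structural morphism of this minimal log desingularization (Proposition \ref{minlogdes}) is a canonical morphism of log schemes $\cC^{\log}_{\ncd}\to\cC^{\log}_{\lreg}$. Composing it with $\widetilde{\cC}^{\log}\to\cC^{\log}_{\ncd}$ produces the asserted morphism $\widetilde{\cC}^{\log}\to\cC^{\log}_{\lreg}$. I expect the only mildly delicate point to be the verification in the third paragraph that the scheme-level morphism respects the trivial loci and thus lifts uniquely to log schemes; the substantive inputs — Lemma \ref{fundlog}.1 and the absolute minimality in Lemma \ref{uniqueNCD} — are already available, so the remainder is formal.
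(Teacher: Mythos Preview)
Your proof is correct and follows essentially the same approach as the paper, whose proof reads in its entirety ``This follows from Lemma \ref{uniqueNCD}.'' You have simply unpacked what that one line means: Lemma \ref{fundlog}.1 gives that $\widetilde{\cC}^{\log}$ is a regular n.c.d.\ model, the absolute minimality of $\cC^{\log}_{\ncd}$ in Lemma \ref{uniqueNCD} yields the morphism, and the composition with $\cC^{\log}_{\ncd}\to\cC^{\log}_{\lreg}$ is immediate from the construction in Theorem \ref{logreglem} and Notation-Definition \ref{lregnota}; the paper regards the log lifting as automatic since the log structures are determined by the toric pairs.
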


\begin{proof}
This follows from Lemma \ref{uniqueNCD}.
\end{proof}

Finally, we see properties of $\cC_{\lreg}^{\log}$.
We start with the study of ``$\cE_{-}(-)$".

\begin{lem}
\label{Eproperty}
Let $\cC^{\log}$ be a log regular model of $C^{\log}$.
The strict transform of any element $\cE_{\bP,1}(\cC)$ (resp.\,$\cE_{0,1'}(\cC)$; $\cE_{\bP,2}(\cC)$; $\cE_{\node}(\cC)$) in $\widetilde{\cC}$ is contained in $\cE_{\bP,1}(\widetilde{\cC})$ (resp.\,$\cE_{0,1'}(\widetilde{\cC})$; $\cE_{\bP,2}(\widetilde{\cC})$; $\cE_{\node}(\widetilde{\cC})\cup\cE_{\bP,2}(\widetilde{\cC})$).
If $2g+r-2>0$, the image of any element of $\cE_{\bP,1}(\widetilde{\cC})$ (resp.\,$\cE_{0,1'}(\widetilde{\cC})$; $\cE_{\bP,2}(\widetilde{\cC})$; $\cE_{\node}(\widetilde{\cC})$) in $\cC$ is a closed point or defines an element of $\cE_{\bP,1}(\cC)$ (resp.\,$\cE_{0,1'}(\cC)$; $\cE_{\bP,2}(\cC)\cup\cE_{\node}(\cC)$; $\cE_{\node}(\cC)$).
\end{lem}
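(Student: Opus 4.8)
The plan is to argue directly from the structure of the minimal log desingularization $\pi\colon\widetilde{\cC}^{\log}\to\cC^{\log}$. By Lemma \ref{fundlog}, $\widetilde{\cC}^{\log}$ is a regular n.c.d.\ model and $\cC$ is projective; by Proposition \ref{2dimfan} together with Lemma \ref{rationalsingularity} and Lemma \ref{notationnormalcrossing}, $\pi$ is an isomorphism over the regular locus of $\cC$, while over each non-regular point $c'$—necessarily a point of $F(\cC)^{(2)}$, i.e.\ an intersection point of two branches of $\cC_{k}\cup\cD$ by Lemma \ref{stalkwiselogregular}.1—it contracts a chain $E_{1},\dots,E_{m}$ of rational curves of self-intersection $\leq-2$, the chain being governed by one of (Exc$_{x}$), (Exc$_{\mathrm{o}}$), (Exc$_{\mathrm{e}}$) and the strict transform of the boundary attaching to the terminal components $E_{1},E_{m}$ as in one of (Str$_{0}$), (Str$_{1,x}$), (Str$_{1}$), (Str$_{2}$). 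This local dictionary, together with the genus computations of Lemma \ref{genus01} and Remark \ref{sepcase}.3, is all I shall use.

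For the first assertion, take $F$ in one of the four sets, with strict transform $\widetilde{F}$. The only points of $F$ that can be non-regular in $\cC$ are the one or two points where $F$ meets the other boundary components and, when $F\in\cE_{\node}(\cC)$, the node of $F$; outside these $\pi$ is a local isomorphism, so nothing changes there. If $F$ meets a boundary component $G\neq F$ at a non-regular point $e$, the local picture at $e$ is (Str$_{2}$): $\widetilde{F}$ is one terminal attachment, it meets the exceptional chain at the single point $E_{0}\cap E_{1}$, whose residue field is $k(c'_{e})=k(e)$, and $\widetilde{F}\to F$ is a proper birational morphism onto the regular curve $F$, hence an isomorphism; thus ``$F$ meets one (resp.\ two) point(s) of residue degree $1$ (resp.\ $2$) over $k(F)$'' passes verbatim to $\widetilde{F}$, placing $\widetilde{F}$ in $\cE_{\bP,1}(\widetilde{\cC})$, $\cE_{0,1'}(\widetilde{\cC})$ or $\cE_{\bP,2}(\widetilde{\cC})$ as needed. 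When $F\in\cE_{\node}(\cC)$: at the node $D'$ is the single prime divisor $\widetilde{F}$, so (Str$_{2}$) is excluded; if $\cC$ is regular there the node survives and $\widetilde{F}\in\cE_{\node}(\widetilde{\cC})$; if not, (Str$_{0}$) is incompatible with the residue constraint (its terminal attachments are $k(c')$-rational, forcing $k(F)\supseteq k''$ against $[k'':k(F)]=2$), so we are in (Str$_{1,x}$) or (Str$_{1}$), the node is resolved, $\widetilde{F}\simeq\bP^{1}_{k''}$, and $\widetilde{F}$ meets exactly two $k''$-rational points (the resolved node and the original intersection), i.e.\ $\widetilde{F}\in\cE_{\bP,2}(\widetilde{\cC})$.

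For the second assertion I use $2g+r-2>0$. Given $\widetilde{F}$ in one of the four sets: if $\widetilde{F}$ is a component of the exceptional locus of $\pi$, its image is a closed point; otherwise $\widetilde{F}$ is the strict transform of a prime divisor $F$ on $\cC$ and I read the dictionary backwards. Contracting a chain $\Gamma$ identifies exactly those branches of $\widetilde{F}$ attached to $\Gamma$ in the strict henselization: a (Str$_{2}$) attachment is a single degree-$1$ point, so $\pi$ is a local isomorphism along $\widetilde{F}$ and $F\simeq\widetilde{F}$ inherits the type and residue data; a (Str$_{1,x}$)/(Str$_{1}$) attachment is a single point of residue degree $2$ over $k(c')$, which splits in the strict henselization, so $F$ acquires an ordinary double point with $[k'':k(F)]=2$—this is precisely how an element of $\cE_{\bP,2}(\widetilde{\cC})$ can land in $\cE_{\node}(\cC)$ rather than $\cE_{\bP,2}(\cC)$. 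The configurations that would otherwise spoil the claim all make $\widetilde{\cC}_{k}$ a single chain or loop of the curves at hand: a (Str$_{0}$) attachment of an element of $\cE_{\bP,2}(\widetilde{\cC})$ to both ends of one chain makes $\widetilde{F}\cup\Gamma$ a cyclic connected component of $\widetilde{\cC}_{k}$, hence all of it; a (Str$_{1}$) attachment of an element of $\cE_{\bP,1}(\widetilde{\cC})$, $\cE_{0,1'}(\widetilde{\cC})$ or $\cE_{\node}(\widetilde{\cC})$ to a chain makes $\widetilde{\cC}_{k}=\widetilde{F}\cup\Gamma$ (using Remark \ref{sepcase}.3 in the nodal case, and a one-line intersection-number computation to put $\widetilde{F}$ into $\cE_{=-1}$ or $\cE_{\leq-2}$ in the genus-$0$ cases). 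In every such case Lemma \ref{genus01} forces $(g,r)\in\{(0,1),(0,2),(1,0)\}$, contradicting $2g+r-2>0$. What remains is exactly the list of images asserted.

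The main obstacle I foresee is bookkeeping on two fronts: tracking residue fields faithfully through each of the (Exc$_{\ast}$)/(Str$_{\ast}$) configurations—the precise shape of the conclusion, in particular why ``$\cup\,\cE_{\node}$'' appears in some clauses and not others, is dictated entirely by which residue degrees can occur—and making the ``degenerate special fiber'' exclusions rigorous, i.e.\ verifying that the chain- or loop-shaped fibers produced in the bad cases really do fall under Lemma \ref{genus01}. Neither step is conceptually deep, but both demand a patient exhaustion of cases.
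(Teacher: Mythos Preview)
Your proposal is correct and follows essentially the same route as the paper: both arguments rest on the local structure of the minimal log desingularization (Proposition~\ref{2dimfan}, Lemma~\ref{notationnormalcrossing}, Theorem~\ref{logsummary}) and exclude the degenerate configurations via Lemma~\ref{genus01}.

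The paper's presentation is considerably more economical, and it is worth seeing how. For the first assertion the paper says nothing at all---it is left implicit, since once one knows the chain-plus-attachment picture at each non-regular point, the verification is routine (exactly the case-check you wrote out). For the second assertion the paper avoids your forward case analysis entirely and argues by contradiction in one stroke: if $F'$ fails the conclusion, then its image $F$ is one-dimensional and the map $F'\to F$ cannot be an isomorphism; hence by Theorem~\ref{logsummary}.4 there is a point $c\in F(\cC)^{(2)}$ at which $F$ is not normal, and since the boundary at $c$ has exactly two branches both of which are branches of $F$, no other component of $\cC_{k}\cup\cD$ passes through $c$. Because the failure of the conclusion also rules out $F$ landing in $\cE_{\node}(\cC)$ in the admissible way, one is forced to $\cC_{k,\red}=F$ and $r=0$. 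Then every other prime divisor of $\widetilde{\cC}_{k}$ is an exceptional component and hence lies in $\cE_{\leq-2}(\widetilde{\cC})$, while $(F'\cdot F')\leq -1$ since $\widetilde{\cC}\to\cC$ is not an isomorphism; this contradicts Remark~\ref{sepcase}.3 together with Lemma~\ref{genus01}.2--4. Your enumeration of the $(\mathrm{Str}_{\ast})$ attachments and the corresponding ``bad'' global shapes is doing the same work, just unpacked configuration by configuration rather than compressed into the single observation that a non-isomorphism $F'\to F$ forces a self-node and hence $F=\cC_{k,\red}$.
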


\begin{proof}
We only show the latter assertions.
Suppose $2g+r-2>0$.
Let $F'$ be an element of $\cE_{\bP,1}(\widetilde{\cC})$ (resp.\,$\cE_{0,1'}(\widetilde{\cC})$; $\cE_{\bP,2}(\widetilde{\cC})$; $\cE_{\node}(\widetilde{\cC})$) which does not satisfy the desired assertion.
Write $F$ for the scheme theoretic image of the morphism $F'\to\cC$.
Then the natural morphism $F'\to F$ is not isomorphic.
It follows  from this and Theorem \ref{logsummary}.4 that there exists an element $c'\in F'\cap F(\widetilde{\cC})^{(2)}$ such that $F'$ is normal at $c'$, the image $c$ of $c'$ in $F$ is contained $F(\cC)^{(2)}$, $F$ is not normal at $c$, and $c$ is not contained in any other irreducible component of $\cC_{k}\cup\cD$.
Therefore, since $F'$ does not satisfy the desired assertion, we have $\cC_{k,\red}=F$ and hence  $r=0$.
Then all prime divisors on $\widetilde{\cC}$ except $F'$ are contained in $\cE_{\leq-2}(\widetilde{\cC})$.
Since $\widetilde{\cC}\to\cC$ is not isomorphic, we have $(F'\cdot F')\leq -1$.
These contradict Remark \ref{sepcase}.3 and Lemmas \ref{genus01}.2, \ref{genus01}.3, and \ref{genus01}.4.
\end{proof}

\begin{prop}
\label{absoluetness}
Suppose $2g+r-2>0$.
\begin{enumerate}
\item
$\cE_{\bP,2}(\cC_{\ncd})\cup\cE_{0,1'}(\cC_{\ncd})\cup\cE_{\node}(\cC_{\ncd})=\cE_{\leq-2}(\cC_{\ncd})$.
\item
$\cE_{\bP,2}(\cC_{\lreg})=\cE_{0,1'}(\cC_{\lreg})=\cE_{\node}(\cC_{\lreg})=\emptyset$.
\item
Let $\cC^{\log}$ be a log regular model satisfying $\cE(\cC)=\emptyset$.
Then $\cC^{\log}\simeq\cC_{\lreg}^{\log}$ and $\cC^{\log}$ is an absolutely minimal log regular model.
\item
The following are equivalent:
(i) $\cC_{\lreg}^{\log}$ is an absolutely minimal log regular model.
(ii) $\cE_{\bP,1}(\cC_{\ncd})=\emptyset$.
(iii) $\cE_{\bP,1}(\cC_{\lreg})=\emptyset$.
\end{enumerate}
\end{prop}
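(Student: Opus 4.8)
We prove the four assertions in turn, using throughout that by Theorem \ref{logreglem} and Notation-Definition \ref{lregnota} the model $\cC_{\lreg}^{\log}$ is obtained from $\cC_{\ncd}^{\log}=\widetilde{\cC_{\lreg}}^{\log}$ by contracting exactly the divisors in $\cE_{\leq-2}(\cC_{\ncd})$.

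For assertion 1, since $\cE_{\leq-2}(\cC_{\ncd})\subseteq\cE_{\bP,2}(\cC_{\ncd})\cup\cE_{0,1'}(\cC_{\ncd})\cup\cE_{\node}(\cC_{\ncd})$ by definition, it suffices to show that any $F$ in the right-hand set satisfies the two conditions defining $\cE_{\leq-2}(\cC_{\ncd})$. The normal crossing condition is automatic. For the degree condition, note first $\Supp F\neq\cC_{\ncd,k}$, since otherwise Lemma \ref{genus01}.1 gives $(g,r)=(0,2)$, against $2g+r-2>0$; hence $F$ meets another component and $\deg_{k(F)}O_{\cC_{\ncd}}(F)|_{F}=(F\cdot F)/[k(F):k]<0$ by negative-definiteness. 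Were this degree $-1$: in the $\cE_{\node}$ case Remark \ref{sepcase}.3 already gives $F\in\cE_{\leq-2}(\cC_{\ncd})$; in the $\cE_{0,1'}$ case with $F$ a conic without rational point the degree is even and cannot be $-1$; in all remaining cases $F\simeq\bP^{1}_{k(F)}$, so $F$ is an exceptional curve of the first kind and $F\in\cE_{=-1}(\cC_{\ncd})$, whence by \ref{ncd} and \ref{existenceNCD} its contraction is a strictly smaller regular n.c.d.\,model, contradicting minimality of $\cC_{\ncd}$. So the degree is $\leq-2$ and $F\in\cE_{\leq-2}(\cC_{\ncd})$. Assertion 2 follows: if $F\in\cE_{\bP,2}(\cC_{\lreg})\cup\cE_{0,1'}(\cC_{\lreg})\cup\cE_{\node}(\cC_{\lreg})$, its strict transform in $\widetilde{\cC_{\lreg}}=\cC_{\ncd}$ lies, by Lemma \ref{Eproperty}, in $\cE_{\bP,2}(\cC_{\ncd})\cup\cE_{0,1'}(\cC_{\ncd})\cup\cE_{\node}(\cC_{\ncd})=\cE_{\leq-2}(\cC_{\ncd})$ (assertion 1), hence is contracted by $\cC_{\ncd}\to\cC_{\lreg}$, contradicting that it dominates the divisor $F$.

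For assertion 3, let $\pi\colon\widetilde\cC\to\cC$ be the minimal log desingularization of a log regular model with $\cE(\cC)=\emptyset$; $\widetilde\cC$ is a regular n.c.d.\,model by Lemma \ref{fundlog}.1. By Proposition \ref{logexcpetionaldivisor} the exceptional divisor $E$ of $\pi$ is a sum of elements of $\cE_{\leq-2}(\widetilde\cC)$, and conversely Lemma \ref{Eproperty} with $\cE(\cC)=\emptyset$ shows every element of $\cE_{\bP,2}(\widetilde\cC)\cup\cE_{0,1'}(\widetilde\cC)\cup\cE_{\node}(\widetilde\cC)$, hence of $\cE_{\leq-2}(\widetilde\cC)$, maps to a point under $\pi$; so $\Supp E=\bigcup\cE_{\leq-2}(\widetilde\cC)$. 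By Lemma \ref{uniqueNCD} there is a morphism of models $\widetilde\cC\to\cC_{\ncd}$; if it were not an isomorphism it would factor through the contraction of an exceptional curve of the first kind $G'$ on $\widetilde\cC$, and by \ref{ncd} one would have $G'\in\cE_{=-1}(\widetilde\cC)\subseteq\cE_{\bP,1}(\widetilde\cC)\cup\cE_{\bP,2}(\widetilde\cC)\cup\cE_{0,1'}(\widetilde\cC)$; then Lemma \ref{Eproperty} and $\cE(\cC)=\emptyset$ force the image of $G'$ in $\cC$ to be a point, so $G'\subseteq\Supp E=\bigcup\cE_{\leq-2}(\widetilde\cC)$, contradicting $\deg_{k(G')}O_{\widetilde\cC}(G')|_{G'}=-1$. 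Hence $\widetilde\cC\simeq\cC_{\ncd}$, and as $\cC$ is the contraction of $\bigcup\cE_{\leq-2}(\cC_{\ncd})$ in $\cC_{\ncd}$, it is canonically isomorphic to $\cC_{\lreg}^{\log}$. For absolute minimality, given any log regular model $\cC^{*\log}$, Proposition \ref{mlregstr} yields a morphism $\psi\colon\widetilde{\cC^{*}}^{\log}\to\cC_{\lreg}^{\log}$; it suffices that $\psi$ contract the exceptional divisor $E^{*}$ of $\widetilde{\cC^{*}}\to\cC^{*}$, for then $\psi$ descends to a morphism of models $\cC^{*}\to\cC_{\lreg}$. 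If some component $F^{*}\in\cE_{\leq-2}(\widetilde{\cC^{*}})$ of $E^{*}$ were not contracted by $\psi$, one tracks $F^{*}$ through the blow-downs $\widetilde{\cC^{*}}\to\cC_{\ncd}$ (along which its self-intersection does not decrease and it can only gain neighbours): its image in $\cC_{\ncd}$ is then a $\bP^{1}$ of negative self-intersection meeting at most two other components, hence lies in $\cE_{\bP,1}(\cC_{\ncd})\cup\cE_{\bP,2}(\cC_{\ncd})$ and, being not contracted by $\cC_{\ncd}\to\cC_{\lreg}$, produces via assertion 1 and Lemma \ref{Eproperty} an element of $\cE_{\bP,1}(\cC_{\lreg})\subseteq\cE(\cC_{\lreg})$, contradicting $\cE(\cC)=\emptyset$ (stable under $\cC\simeq\cC_{\lreg}$).

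For assertion 4, by assertion 2 we have $\cE(\cC_{\lreg})=\cE_{\bP,1}(\cC_{\lreg})$, so (iii) is the same as $\cE(\cC_{\lreg})=\emptyset$, and (iii)$\Rightarrow$(i) is assertion 3 applied to $\cC_{\lreg}$. For (ii)$\Leftrightarrow$(iii): Lemma \ref{Eproperty} sends the strict transform of any element of $\cE_{\bP,1}(\cC_{\lreg})$ into $\cE_{\bP,1}(\cC_{\ncd})$, giving (ii)$\Rightarrow$(iii); conversely any $\widetilde F\in\cE_{\bP,1}(\cC_{\ncd})$ is not in $\cE_{\leq-2}(\cC_{\ncd})$ (the intersection patterns of $\cE_{\bP,1}$ and of $\cE_{\bP,2}\cup\cE_{0,1'}\cup\cE_{\node}$ are incompatible), so it is not contracted by $\cC_{\ncd}\to\cC_{\lreg}$ and its image lies in $\cE_{\bP,1}(\cC_{\lreg})$ by Lemma \ref{Eproperty}, giving (iii)$\Rightarrow$(ii). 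For (i)$\Rightarrow$(iii), argue contrapositively: assume $\cE_{\bP,1}(\cC_{\lreg})\neq\emptyset$, so by (ii)$\Leftrightarrow$(iii) there is $\widetilde F\in\cE_{\bP,1}(\cC_{\ncd})$, meeting exactly one other component at one $k(F)$-rational point $p$, which (by normal crossings, connectedness of $\cC_{\ncd,k}$ and $2g+r-2>0$) is a regular point of $\cC_{\ncd}$ not on $\cD$. Choose a $k(F)$-rational point $q\neq p$ of $\widetilde F\simeq\bP^{1}_{k(F)}$ lying on no other component, blow up $\cC_{\ncd}$ at $q$ to get a regular n.c.d.\,model $\widetilde{\cC}_{1}$ (by \ref{ncd}), and let $\cC_{1}^{*\log}$ be the log regular model obtained from $\widetilde{\cC}_{1}$ via Theorem \ref{logreglem}. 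The strict transform $\widetilde F_{1}$ of $\widetilde F$ then has $\deg_{k(F)}O_{\widetilde{\cC}_{1}}(\widetilde F_{1})|_{\widetilde F_{1}}\leq-3$ and meets two other components at two $k(F)$-rational points, so $\widetilde F_{1}\in\cE_{\leq-2}(\widetilde{\cC}_{1})$ and is contracted in forming $\cC_{1}^{*}$. Hence the divisorial valuation $v_{\widetilde F}=v_{\widetilde F_{1}}$ has a zero-dimensional centre on $\cC_{1}^{*}$, whereas on $\cC_{\lreg}$ it is the valuation of the prime divisor $F$ (the image of $\widetilde F$, not contracted by $\cC_{\ncd}\to\cC_{\lreg}$). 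A morphism of models $\cC_{1}^{*}\to\cC_{\lreg}$ would force a prime divisor of $\cC_{1}^{*}$ to dominate $F$, i.e.\ $v_{\widetilde F}$ to have a one-dimensional centre on $\cC_{1}^{*}$ — impossible. So no such morphism exists and $\cC_{\lreg}^{\log}$ is not absolute minimal, completing assertion 4.

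The main obstacle is the absolute-minimality half of assertion 3 together with (i)$\Rightarrow$(iii): both require controlling how the $(\leq-2)$-divisors contracted on the minimal log desingularization of an arbitrary log regular model are positioned relative to $\cC_{\ncd}$ and $\cC_{\lreg}$, which is delicate since minimal (log) desingularizations are not functorial. The argument above resolves this by reducing, via the absolute minimality of $\cC_{\ncd}$ among regular n.c.d.\,models (Lemma \ref{uniqueNCD}) and via valuation-theoretic bookkeeping of centres, to the combinatorics of the sets $\cE_{\bullet}$ already established in Lemma \ref{Eproperty} and assertion 1.
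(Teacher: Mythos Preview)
Your proofs of assertions 1, 2, and 4 are correct and follow the paper's line (with assertion 1 spelled out in more detail than the paper's one-sentence appeal to minimality and Lemma~\ref{genus01}.1, and with the construction for (i)$\Rightarrow$(iii) in assertion 4 matching the paper's exactly).

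The gap is in the absolute-minimality half of assertion 3. Your parenthetical ``along which its self-intersection does not decrease and it can only gain neighbours'' is internally inconsistent: if $F^{*}$ \emph{gained} neighbours under the blow-downs $\widetilde{\cC^{*}}\to\cC_{\ncd}$, you could not conclude that its image meets ``at most two'' other components. What is actually true is the opposite: contracting a curve $G\in\cE_{=-1}$ adjacent to the running image of $F^{*}$ replaces the neighbour $G$ by at most the single other curve meeting $G$ (if any), so the neighbour count is non-increasing---this is what yields ``at most two''. Second, you assert that the image in $\cC_{\ncd}$ ``is then a $\bP^{1}$'', but $F^{*}\in\cE_{\leq-2}(\widetilde{\cC^{*}})$ need not be a $\bP^{1}$: it may lie in $\cE_{0,1'}(\widetilde{\cC^{*}})\setminus\cE_{\bP,1'}(\widetilde{\cC^{*}})$ (a pointless conic) or in $\cE_{\node}(\widetilde{\cC^{*}})$, and then your dichotomy $\cE_{\bP,1}(\cC_{\ncd})\cup\cE_{\bP,2}(\cC_{\ncd})$ is not available. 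In those cases one must argue that the image lies in $\cE_{0,1'}(\cC_{\ncd})$ or $\cE_{\node}(\cC_{\ncd})$ (or is the whole reduced special fibre, excluded by Lemma~\ref{genus01}.1 and $2g+r-2>0$), and then invoke assertion 1 to see it is contracted by $\cC_{\ncd}\to\cC_{\lreg}$, a contradiction.

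The paper's route for this step is different and shorter: rather than tracking $F^{*}$ through $\cC_{\ncd}$, it looks directly at the image $F$ in $\cC_{\lreg}$. From $\cE(\cC_{\lreg})=\emptyset$ and the argument in the proof of Lemma~\ref{Eproperty} one gets $F=\cC_{\lreg,k,\red}$ and $r=0$; then log-desingularizing (or blowing up suitable points of $F(\cC_{\lreg})^{(2)}$) produces a regular n.c.d.\ model as in one of Lemmas~\ref{genus01}.2--4, contradicting $2g+r-2>0$. Your tracking-through-$\cC_{\ncd}$ approach can be made correct once the two issues above are repaired, and it has the advantage of not re-entering the proof of Lemma~\ref{Eproperty}; but as written it does not go through.
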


\begin{proof}
Assertion 1 follows from the assumption $2g+r-2>0$, the definition of $\cC_{\ncd}$, and Lemma \ref{genus01}.1.
Assertion 2 follows from assertion 1, Lemma \ref{Eproperty}, and the definition of $\cC_{\lreg}^{\log}$.

Next, we show assertion 3.
Since the image of each element of $\cE_{=-1}(\widetilde{\cC})$ in $\cC$ is a point by Lemma \ref{Eproperty}, it follows that $\widetilde{\cC}^{\log}\simeq\cC_{\ncd}^{\log}$ by the universal property of contractions.
By a similar argument to that for $\cE_{\leq -2}(\widetilde{\cC})$ and $\cC_{\lreg}$, we have $\cC=\cC_{\lreg}$.
Let $\cC'^{\log}$ be another log regular model of $C^{\log}$.
To show the desired absoluteness, by the universal property of contractions, it suffices to show that the image of of each element $F'$ of $\cE_{\leq -2}(\widetilde{\cC'})$ in $\cC_{\lreg}$ (cf.\,Proposition \ref{mlregstr}) is a point.
Suppose that the image $F$ of $F'$ in $\cC(=\cC_{\lreg})$ is not a point and regard $F$ as an integral closed subscheme of $\cC_{k}$.
Then $F'\to F$ is not isomorphic by $\cE(\cC)=\emptyset$.
By the discussion in the proof of Lemma \ref{Eproperty}, we have $\cC_{k,\red}=F$.
In any case, by taking the minimal log desingularization of $\cC^{\log}$, or by blowing up $\cC$ at suitable points of $F(\cC)^{(2)}$, we obtain a regular n.c.d.\,model of $C^{\log}$ satisfying the assumption of one of Lemmas \ref{genus01}.2, \ref{genus01}.3, and \ref{genus01}.4, which contradicts the assumption that $2g+r-2>0$.

Finally, we show assertion 4.
The equivalence of (ii) and (iii) follows from the definition of $\cC_{\ncd}^{\log}$ and $\cC_{\lreg}^{\log}$.
Suppose $\cE_{\bP,1}(\cC_{\ncd})=\emptyset$.
Then, by assertion 1, we have $\cE(\cC_{\ncd})=\cE_{\bP,2}(\cC)\cup\cE_{0,1'}(\cC)\subset\cE_{\leq-2}(\cC_{\ncd})$.
Hence, by assertion 3, $\cC_{\lreg}^{\log}$ is an absolutely minimal log regular model.

Suppose that we have an element $F\in\cE_{\bP,1}(\cC_{\ncd})$.
Fix a $k(F)$-rational point $c\in F\setminus F(\cC)^{(2)}$.
Write $\cC'$ for the blow-up of $\cC_{\ncd}$ at $c$ and $F'$ for the strict transform of $F$ in $\cC'$.
Then we have a regular n.c.d.\,model $\cC'^{\log}$ of $C^{\log}$ and $F\in\cE_{\leq-2}(\cC')$.
The log regular model obtained by contracting the elements of $\cE_{\leq-2}(\cC')$ (and applying Theorem \ref{logreglem}) cannot dominate $\cC_{\lreg}^{\log}$ since the image of $F'$ in $\cC_{\lreg}$ is not a point.
\end{proof}

\section{Log smooth models}
\label{thirdsection}
In this section, we study properties of $\cC_{\lreg}^{\log}$.
In particular, we show that, if $C^{\log}$ has log smooth reduction (resp.\,stable reduction), $\cC_{\lreg}^{\log}$ is a log smooth model (resp.\,a stable model) of $C^{\log}$ in the case of $2g+r-2>0$.

In this section, we keep the notations of Section \ref{modelsection}.
We fix some more notations.
Let $J$ be the Jacobian variety of $C$, $\cJ$ the N\'eron model of $J$ over $O_{K}$, and $\cJ^{0}$ the identity component of $\cJ$.
Recall that $J$ is said to have {\em stable reduction} if the unipotent radical of $\cJ_{k}$ is trivial (cf.\,\cite[Definition 2.1]{DM}).
Let $K^{\sep}$ be a separable closure of $K^{sh}$, $G_{K}$ the absolute Galois group $\Gal(K^{\sep}/K)$, $I_{K}$ the inertia subgroup $\Gal(K^{sh}/K)$, $P_{K}$ the wild inertia subgroup of $I_{K}$, and $l$ a prime number \textbf{not} divisible by $p$.
Then we have the following exact sequence of \'etale cohomology groups:
\begin{align}
0&\to H^{1}_{\text{\'et}}(C_{K^{\sep}},\Z_{l})
\to H^{1}_{\text{\'et}}((C\setminus D)_{K^{\sep}},\Z_{l})\notag\\
&\to\Z_{l}\otimes_{\Z}\Z^{D(K^{\sep})}
\to H^{2}_{\text{\'et}}(C_{K^{\sep}},\Z_{l})\to 0.\tag{H}
\end{align}
Here, $I_{K}$ acts on $H^{2}_{\text{\'et}}(C_{K^{\sep}},\Z_{l})\simeq\Z_{l}(-1)$ trivially.

In this section, we only treat ``good" charts of morphisms of log schemes as follows:
\begin{sub}[cf.\,{\cite[Definition (2.9)]{Ka1}}]
\label{chardefinition}
Let $\cC^{\log}$ be a log regular model of $C^{\log}$, $c$ a point of $\cC_{k}$, and $\overline{c}$ a geometric point of $\cC$ over $c$.
In this section, we define a {\em chart of the morphism} $\cC^{\log}\to(\Spec O_{K})^{\log}$, for which we write $(\cU,\N\to Q\to\Gamma(\cU,O_{\cC}))$, to be a collection of an \'etale neighborhood $\cU\to\cC$ of $\overline{c}$, a chart $Q\to \Gamma(\cU,O_{\cC})$ (note that $Q$ is a fine monoid), and a monoid homomorphism $\N \to Q$ compatible with the ring homomorphism $O_{K}\to\Gamma(\cU,O_{\cC})$ satisfying the following:
$Q/Q^{\ast}\simeq M_{\cC,c}/O_{\cC,c}^{\ast}$, $Q$ is fine saturated, and $Q\to \Gamma(\cU,O_{\cC})$ is injective.
By elementary monoid theory, a chart of the morphism always exists.

Suppose that $\cC^{\log}$ is a log smooth model of $C^{\log}$.
By \cite[THEOREM (3.5)]{Ka1}, there exists a chart of the morphism $\cC^{\log}\to(\Spec O_{K})^{\log}$ satisfying the following:
\begin{itemize}
\item[(I)]
$\cU\to \Spec O_{K}\times_{\Spec \Z[\N]}\Spec \Z[Q]$ is \'etale at the image of $\overline{c}$.
\item[(II)]
The kernel and the torsion subgroup of the cokernel of the group homomorphism $\N^{\mathrm{gp}}\to Q^{\mathrm{gp}}$ are finite groups of order invertible on $\cU$.
\end{itemize}
\end{sub}

\subsection{Fundamental properties of log smooth morphisms}
\label{lsmsubsection}

In this subsection, we discuss the fundamental structures of log smooth models of log regular curves.
Some statements are generalizations and revisions of statements in \cite{Stix} (cf.\,Lemmas \ref{logsmoothchart} and \ref{logblowupet}.2).
Since we do \textbf{not} suppose $k$ is perfect, we need more delicate handling of log structures.

First, we see properties of $\cC_{k}$ of a log regular model $\cC^{\log}$.

\begin{lem}
\label{F1case}
Let $\cC^{\log}$ be a log regular model of $C^{\log}$.
\begin{enumerate}
\item
If $\cC^{\log}$ is a log smooth model, $(\cC_{k}\setminus F(\cC)^{(2)})_{\red}$ is smooth over $k$.
\item
Let $Z$ be an integral closed subscheme of $\cC_{k}$ of dimension $1$ whose multiplicity in $\cC_{k}$ is \textbf{not} divisible by $p$.
Suppose that $Z\setminus F(\cC)^{(2)}$ is smooth over $k$.
Then $\cC^{\log}\to(\Spec O_{K})^{\log}$ is log smooth at points of $Z\setminus F(\cC)^{(2)}$.
\end{enumerate}
\end{lem}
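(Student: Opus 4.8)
The plan is to prove both assertions by a local analysis at points $c\in\cC_k$ with $c\notin F(\cC)^{(2)}$, using the charts of \ref{chardefinition}. The basic observation, valid at any such point, is that $M_{\cC,\overline c}/O_{\cC,\overline c}^{\ast}$ has rank $1$: since $c\in\cC_k$ we have $\varpi\in M_{\cC,\overline c}\setminus O_{\cC,\overline c}^{\ast}$, so $I(\overline c,M_{\cC})\ne 0$ and the rank is $\ge 1$, whereas $c\notin F(\cC)^{(2)}$ together with $\dim O_{\cC,c}\le 2$ forces it to be $\le 1$; being f.s.\ and sharp of rank $1$, this monoid is isomorphic to $\N$, generated by the class of a local equation $t$ of the unique component $Z$ of $\cC_k\cup\cD$ through $c$ (which is a component of $\cC_k$). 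Moreover, by Lemma \ref{fundlog}.1 and Proposition \ref{minlogdes} the minimal log desingularization $\widetilde{\cC}^{\log}\to\cC^{\log}$ is a regular n.c.d.\ model and an isomorphism over $\cC\setminus F(\cC)^{(2)}$, so $O_{\cC,c}$ is regular; I would then write $\varpi=t^{m}u$ with $u\in O_{\cC,c}^{\ast}$, where $m$ is the multiplicity of $Z$ in $\cC_k$.

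For assertion 1, I would fix $c\in\cC_k\setminus F(\cC)^{(2)}$ and apply \ref{chardefinition} to the log smooth model $\cC^{\log}$: there are an étale neighbourhood $\cU$ of $\overline c$ and a chart $\N\to Q\to\Gamma(\cU,O_{\cC})$ of the morphism satisfying (I) and (II), with $Q/Q^{\ast}\cong M_{\cC,\overline c}/O_{\cC,\overline c}^{\ast}\cong\N$. Splitting off the units gives $Q\cong Q^{\ast}\oplus\N$, and the structural map $\N\to Q$ sends $1$ to $(a^{\ast},m)$ with $m\ge 1$. By (I), $\cU\to\Spec O_{K}\times_{\Spec\Z[\N]}\Spec\Z[Q]$ is étale near $\overline c$; the special fibre of the target is $\Spec\bigl(k[Q]/(x^{(a^{\ast},m)})\bigr)=\Spec\bigl(k[Q^{\ast}][s]/(s^{m})\bigr)$, whose reduction is $\Spec k[Q^{\ast}]$, a diagonalizable group scheme over $k$ that is smooth over $k$ because (II) makes the torsion of $Q^{\ast}$ of order invertible on $\cC$. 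Since forming reductions commutes with étale base change and étale morphisms induce étale morphisms of reductions, $(\cU_{k})_{\red}$ is étale over $\Spec k[Q^{\ast}]$ near $\overline c$, hence smooth over $k$ there. These $\cU$ cover $\cC_k\setminus F(\cC)^{(2)}$, and smoothness is étale-local on the source, so $(\cC_k\setminus F(\cC)^{(2)})_{\red}$ is smooth over $k$.

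For assertion 2, I would first reduce to closed points. The locus of $\cC$ over which $\cC^{\log}\to(\Spec O_{K})^{\log}$ is log smooth is open; let $\eta$ be the codimension-$1$ point of $\cC$ which is the generic point of $Z\setminus F(\cC)^{(2)}$. Every point of the smooth $k$-curve $Z\setminus F(\cC)^{(2)}$ is smooth over $k$, and since this curve is Jacobson and irreducible, an open subset of $\cC$ containing all its closed points contains $\eta$; hence it suffices to prove log smoothness at each closed point $c\in Z\setminus F(\cC)^{(2)}$. Fixing such a $c$: smoothness of $Z\setminus F(\cC)^{(2)}$ over $k$ at $c$ means $O_{Z,c}=O_{\cC,c}/(t)$ is a discrete valuation ring and $k(c)/k$ is finite separable, and picking $s\in O_{\cC,c}$ lifting a uniformizer of $O_{Z,c}$, the pair $(t,s)$ is a regular system of parameters of $O_{\cC,c}$. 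As $p\nmid m$, the algebra $O_{\cC,c}[S]/(S^{m}-u)$ is étale over $O_{\cC,c}$, so after replacing $\cU$ by an étale neighbourhood of $\overline c$ (whose residue field over $c$ is then a finite separable extension of $k(c)$, since $S^{m}-\overline u$ is separable) one may write $u=v^{m}$ with $v$ a unit and put $t':=tv$, so that $\varpi=(t')^{m}$. Then I would take the chart $Q:=\N\oplus\Z$ with $\N\to Q$ the map $1\mapsto(m,0)$ and $Q\to\Gamma(\cU,O_{\cC})$ given by $(1,0)\mapsto t'$ and $(0,1)\mapsto 1+s$: here $Q/Q^{\ast}=\N$ is the correct monoid, $Q\to\Gamma(\cU,O_{\cC})$ is injective and recovers the log structure $M_{\cC}|_{\cU}$, the square is compatible because $\varpi\mapsto(t')^{m}$, and (II) holds since $\N^{\gp}\to Q^{\gp}$, $1\mapsto(m,0)$, is injective with cokernel $\Z/m\oplus\Z$ whose torsion $\Z/m$ has order invertible on $\cU$. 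Finally, $\Spec O_{K}\times_{\Spec\Z[\N]}\Spec\Z[Q]=\Spec O_{K}[T',W^{\pm 1}]/((T')^{m}-\varpi)$ is regular at the image of $\overline c$ — there $T'=0$, $W=1$, and $(T',W-1)$ is a regular system of parameters with residue field $k$ — while the morphism $\cU\to\Spec O_{K}[T',W^{\pm 1}]/((T')^{m}-\varpi)$ carries $(T',W-1)$ to the regular system of parameters $(t',s)$ of $O_{\cU,\overline c}$ and induces a finite separable residue field extension, hence is étale at $\overline c$; this is condition (I). By \cite[THEOREM (3.5)]{Ka1}, $\cC^{\log}\to(\Spec O_{K})^{\log}$ is log smooth at $c$, which finishes the argument.

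The step I expect to be the real obstacle is the construction of the chart in assertion 2. The naive choice $Q=\N$ fails for dimension reasons, and one must realize that the correct chart is $Q=\N\oplus\Z$ with the base generator sent to $(m,0)$ — so that the torsion $\Z/m$ of the cokernel appears, which is exactly where $p\nmid m$ (and not a weaker divisibility condition) is used — while the free direction is sent to the unit $1+s$, chosen by means of the $k$-smoothness of $Z$ precisely so that $1+s$ completes $t'$ to a regular system of parameters with separable residue extension, which is what makes the scheme map of condition (I) étale; absorbing the unit $u$ into an $m$-th root (again using $p\nmid m$) is what turns $\varpi$ into an exact $m$-th power. Everything else amounts to bookkeeping with f.s.\ monoids and the definitions.
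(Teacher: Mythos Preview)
Your argument is correct. For assertion~1 you follow essentially the same route as the paper: both proofs reduce to showing that $(\Spec k[Q]/(Q\setminus Q^{\ast}))_{\red}\simeq\Spec k[Q^{\ast}]$ is smooth over $k$, using condition~(II) to control the torsion of $Q^{\ast}$.

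For assertion~2, however, your approach is genuinely different. The paper dispatches this in a single sentence, invoking the relative Abhyankar lemma and Zariski--Nagata purity: the point is that near $c$ the morphism $\cC\to\Spec O_{K}$ is smooth away from the regular divisor $Z$, and the hypotheses (multiplicity prime to $p$, smooth residue curve) say exactly that the ramification along $Z$ is tame, whence log smoothness. You instead construct an explicit chart satisfying conditions~(I) and~(II) of \ref{chardefinition} by hand, extracting an $m$-th root of the unit $u$ (this is where $p\nmid m$ enters), and introducing the auxiliary $\Z$-factor via $1+s$ to make the chart morphism \'etale rather than merely flat. Your approach is longer but more self-contained and makes the role of each hypothesis transparent; it also dovetails nicely with the chart computations used later in the paper (e.g.\ in \ref{genMS}). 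The paper's approach is terser but leans on background theorems whose precise form in this setting is left to the reader. One small remark: the injectivity of your chart map $Q\to\Gamma(\cU,O_{\cC})$ deserves a word of justification (e.g.\ $1+s$ has infinite multiplicative order since its restriction to $Z\cap\cU$ is $1+\bar s$ with $\bar s$ a local coordinate), though strictly speaking Kato's criterion \cite[(3.5)]{Ka1} does not require it.
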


\begin{proof}
Suppose that $\cC^{\log}$ is a log smooth model.
Take a closed point $c\in\cC_{k}\setminus F(\cC)^{(2)}$ and geometric point $\overline{c}$ over $c$.
By considering a chart of the morphism $(\cU,\N\to Q\to\Gamma(\cU,O_{\cC}))$ satisfying conditions (I) and (II) in \ref{chardefinition} at $c$, it suffices to show that $(\Spec k[Q]/(Q\setminus Q^{\ast}))_{\red}$ is smooth over $k$.
Choose a lift of the generator of $Q/Q^{\ast}(\simeq M_{\cC,\overline{c}}/O^{\ast}_{\cC,\overline{c}}\simeq\N)$ in $Q$ and an isomorphism $Q^{\ast}\simeq (Q^{\ast})_{\tor}\times\Z^{\oplus r}$ for some $r\in\N_{\geq1}$.
Then $Q$ is naturally isomorphic to $\N\times (Q^{\ast})_{\tor}\times\Z^{\oplus r}$.
By the structure theorem of finitely generated abelian groups, $(\Spec k[Q]/(Q\setminus Q^{\ast}))_{\red}$ is isomorphic to the disjoint union of tori over finite separable extension fields over $k$.
Hence, assertion 1 holds.

Assertion 2 follows from the (relative) Abhyankar lemma and the Zariski-Nagata purity theorem.
\end{proof}

Next, we see properties of $F(\cC)^{(2)}$ of a log regular model $\cC^{\log}$.

\begin{lem}
\label{goodchart}
Let $\cC^{\log}$ be a log regular model, $c$ an element of $F(\cC)^{(2)}$, $\overline{c}$ a geometric point of $\cC$ over $c$, and $(\cU,\N\to Q\to\Gamma(\cU,O_{\cC}))$ a chart of the morphism $\cC^{\log}\to(\Spec O_{K})^{\log}$
\begin{enumerate}
\item
Suppose $Q^{\ast}=\{1\}$ (or equivalently, $Q\simeq M_{\cC,\overline{c}}/O_{\cC,\overline{c}}^{\ast}$).
Then the condition obtained by replacing ``\'etale" in condition (I) in \ref{chardefinition} with ``flat" is satisfied.
Moreover, if $k(c)$ is separable over $k$, condition (I) in \ref{chardefinition} is satisfied.
\item
If condition (I) in \ref{chardefinition} is satisfied, then we have $\sharp Q^{\ast}<\infty$ and $p\!\not{|}\,\sharp Q^{\ast}$.
\end{enumerate}
\end{lem}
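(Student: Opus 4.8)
The plan is to realise the target of condition (I) as an explicit log regular log scheme and to compare it with $\cC^{\log}$ near $\overline c$. Write $Y:=\Spec O_{K}\times_{\Spec\Z[\N]}\Spec\Z[Q]=\Spec\bigl(O_{K}[Q]/(x^{q_0}-\varpi)\bigr)$, where $q_{0}\in Q$ is the image of the generator of $\N$, equipped with the log structure associated to $Q$, and let $y\in Y$ be the image of $c$. I would first record: $q_{0}\notin Q^{\ast}$ (since $\varpi$ is a nonunit of $O_{\cC,\overline c}$ and the chart $Q\to\Gamma(\cU,O_{\cC})$ is injective), so the image of $q_{0}$ in $Q^{\gp}/(Q^{\gp})_{\tor}$ is nonzero; $Y^{\log}$ is log regular (check the definition directly: at the $k$-point cut out by $(x^{q}:q\in Q\setminus Q^{\ast})$ one has $I=\fm$, regular quotient $k$, and $\dim=\rank Q^{\gp}$); and $\fm_{y}O_{\cC,\overline c}=\fm_{\overline c}$, because $I(\overline c,M)=\fm_{\overline c}$ is generated by the images of $Q\setminus Q^{\ast}$, which all lie in $\fm_{y}$. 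In particular the fibre of $\cU\to Y$ over $y$ at $\overline c$ is $\Spec k(\overline c)$, and when $Q^{\ast}=\{1\}$ one moreover has $k(y)=k$.

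For assertion 1, $Q^{\ast}=\{1\}$, so $Q$ is f.s.\ sharp of rank $2$ and $\cU^{\log}\to Y^{\log}$ is strict. I would compare the complete local rings by Kato's structure theorem for log regular local rings (\cite[Theorem 3.2]{Ka2}), applied both to $O_{\cC,\overline c}$ (regular quotient $k(\overline c)$, since $I(\overline c,M)=\fm_{\overline c}$) and to $O_{Y,y}$ (regular quotient $k$): choosing the isomorphisms compatibly so that $\varpi$ corresponds to $x^{q_{0}}$ on both sides, one obtains $\widehat O_{\cC,\overline c}\cong\widehat O_{Y,y}\,\widehat\otimes_{\widehat O_{K}}\cO$, where $\cO$ is the complete discrete valuation ring flat over $\widehat O_{K}$ with uniformizer $\varpi$ and residue field $k(\overline c)$ (namely $\cO=k(\overline c)[[\varpi]]$ in equal characteristic, and the gonflement of $\widehat O_{K}$ along $k\to k(\overline c)$ in mixed characteristic). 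Since $\widehat O_{K}\to\cO$ is faithfully flat, $\widehat O_{Y,y}\to\widehat O_{\cC,\overline c}$ is faithfully flat, hence $\cU\to Y$ is flat at $\overline c$. If moreover $k(c)/k$, hence $k(\overline c)/k$, is separable, then $\widehat O_{K}\to\cO$ is ind-\'etale, so $\widehat O_{Y,y}\to\widehat O_{\cC,\overline c}$ is formally unramified; combined with flatness, finite type, and the fibre $\Spec k(\overline c)$ (unramified over $k(y)=k$ precisely because $k(\overline c)/k$ is separable), this gives that $\cU\to Y$ is \'etale at $\overline c$, i.e.\ condition (I). I expect the main technical point to be the \emph{compatible} bookkeeping in Kato's theorem --- absorbing the unit relating $\varpi$ and $x^{q_{0}}$, and handling the mixed characteristic Cohen/gonflement ring --- rather than anything conceptual.

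For assertion 2, assume condition (I). Flatness at $\overline c$ together with the $0$-dimensional fibre $\Spec k(\overline c)$ forces $2=\dim O_{\cC,\overline c}=\dim O_{Y,y}$. On the other hand $O_{K}[Q]$ is Cohen--Macaulay ($Q$ being f.s.) and $x^{q_{0}}-\varpi$ is a non-zero-divisor (modulo each minimal prime of $O_{K}[Q]$ its image is a nonconstant monomial minus a constant, as the image of $q_{0}$ in $Q^{\gp}/(Q^{\gp})_{\tor}$ is nonzero), so $Y$ is Cohen--Macaulay and $\dim O_{Y,y}=\dim O_{K}[Q]-1=\rank Q^{\gp}=\rank Q^{\ast}+2$; hence $\rank Q^{\ast}=0$ and $\sharp Q^{\ast}<\infty$. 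Suppose now $p\mid\sharp Q^{\ast}$ and pick $f\in Q^{\ast}$ of order $p$. Then $x^{f}\in\Gamma(\cU,O_{\cC})^{\ast}$ satisfies $(x^{f})^{p}=1$, $x^{f}\neq1$ (injectivity of the chart), and --- its image in the characteristic $p$ residue fields being a $p$-th root of unity --- $x^{f}\equiv1\ (\mathrm{mod}\ \fm_{\overline c})$ and $x^{f}\equiv1\ (\mathrm{mod}\ \fm_{y})$. Considering the subring $O_{K}[\langle f\rangle]\cong O_{K}[s]/(s^{p}-1)$ of $O_{K}[Q]$, the point $y$ lies over the point $s=1$ of its special fibre, which is non-reduced when $O_{K}$ has equal characteristic $p$, and is a node --- the branches $\Spec\widehat O_{K}$ and $\Spec\widehat O_{K}[\zeta_{p}]$ meeting along their common residue field --- when $O_{K}$ has mixed characteristic $(0,p)$. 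In the first case $x^{f}-1$ is a nonzero nilpotent of $O_{Y,y}$ (it survives modulo $x^{q_{0}}-\varpi$, since $x^{q_{0}}$ does not divide $x^{f}-1$ in $k[Q]$), so $O_{Y,y}$ is non-reduced; in the second case $\widehat O_{Y,y}$ has at least two analytic branches, so $O_{Y,y}$ is not normal. Either way $O_{Y,y}$ fails to be normal; but $\cU\to Y$ is flat at $\overline c$ and $\cC$ is normal at $\overline c$ (log regular schemes being normal), so normality of $O_{\cC,\overline c}$ descends to $O_{Y,y}$ along the faithfully flat local homomorphism $O_{Y,y}\to O_{\cC,\overline c}$ --- a contradiction, whence $p\nmid\sharp Q^{\ast}$. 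The delicate point here is to make sure $y$ genuinely meets the non-normal (or non-reduced) locus produced by the $Q^{\ast}$-torsion even when $0\to Q^{\ast}\to Q\to Q/Q^{\ast}\to0$ does not split; arguing with the explicit element $x^{f}-1$ (respectively the two branches) is what avoids needing such a splitting.
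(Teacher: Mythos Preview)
For assertion 1, your approach via Kato's structure theorem for complete log regular local rings is essentially the paper's approach (the paper cites \cite[(6.1) THEOREM]{Ka2} together with \cite[Th\'eor\`eme 5.6]{SGA1}), and is correct.

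For assertion 2, your route is considerably more involved than the paper's, and there is a genuine gap in the equal-characteristic case. The paper's argument is one line: since the chart map $O_K\otimes_{\Z[\N]}\Z[Q]\to O_{\cU,u}$ is \'etale (hence flat), passing to the quotient by the ideal generated by $Q\setminus Q^{\ast}$ yields a flat homomorphism $k[Q^{\ast}]\to k(u)$ (the target is $k(u)$ precisely because $c\in F(\cC)^{(2)}$, so the image of $Q\setminus Q^{\ast}$ generates $\fm_{u}$). A faithfully flat local map into a field forces the source localisation to be a field, whence $Q^{\ast}$ is finite of order prime to $p$ in one stroke. Your dimension-counting for $\sharp Q^{\ast}<\infty$ is correct but unnecessary given this.

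The gap in your $p$-torsion argument is in equal characteristic $p$: you assert that $x^{f}-1$ is a \emph{nonzero} nilpotent in $O_{Y,y}$, but in fact it vanishes there. Indeed, condition (I) gives an \'etale (hence faithfully flat local, hence injective) map $O_{Y,y}\to O_{\cU,u}$ into a normal local domain of characteristic $p$, where $(x^{f})^{p}=1$ forces $x^{f}=1$; thus $x^{f}-1=0$ already in $O_{Y,y}$. Your parenthetical only shows $x^{f}-1\neq 0$ in the global ring $O_{K}[Q]/(x^{q_{0}}-\varpi)$, not in the localisation at $y$. The contradiction you are after is not with normality of $O_{Y,y}$ but directly with the injectivity of the chart $Q\to\Gamma(\cU,O_{\cC})\hookrightarrow O_{\cU,u}$ (assuming $\cU$ connected): distinct elements $f,0\in Q$ would have equal images. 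In mixed characteristic your branch argument is plausible, but you do not verify that the branches of $\Spec O_{K}[\langle f\rangle]$ survive to produce distinct minimal primes of $O_{Y,y}$ after tensoring up to $O_{K}[Q]$, modding out by $x^{q_{0}}-\varpi$, and localising; this step is more delicate than you indicate. The paper's quotient argument avoids all of these issues entirely.
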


\begin{proof}
Assertion 1 follows from \cite[Th\'eor\`eme 5.6]{SGA1}, \cite[(6.1) THEOREM]{Ka2}, and the fact that $(Q\setminus \{1\})O_{\cC,\overline{c}}$ is the maximal ideal of the local ring (, which follows from the log regularity).

Next, we show assertion 2.
Write $u$ for the image of $\overline{c}$ in $\cU$.
Then the homomorphism $O_{K}\otimes_{\Z[\N]}\Z[Q]\to O_{\cU,u}$ is flat.
By considering the quotient rings by the ideals generated by $Q\setminus Q^{\ast}$, we obtain a flat homomorphism $k[Q^{\ast}]\to k(u)$.
Hence, assertion 2 holds.
\end{proof}

\begin{lem}[cf.\,{\cite[Proposition 5.2]{Stix}}]
\label{logsmoothchart}
Let $\cC^{\log}$ be a log regular model, $c$ an element of $F(\cC)^{(2)}$, and $\overline{c}$ a geometric point of $\cC$ over $c$.
Consider the following conditions:
\begin{itemize}
\item[(a)]
$\cC^{\log}\to(\Spec O_{K})^{\log}$ is log smooth at $c$.
\item[(b)]
$\sharp((M^{\gp}_{\cC,\overline{c}}/O_{\cC,\overline{c}}^{\ast})/\varpi^{\Z})_{\tor}$ is not divisible by $p$.
\item[(b')]
Write $\{\xi_{1},\xi_{2}\}=\{\xi\in F(\cC)^{(1)}\mid c\in \overline{\{\xi\}}\}$.
(Note that $\xi_{1}=\xi_{2}$ can occur.)
Write $m_{i}$ for the valuation of $\varpi$ at $\xi_{i}$ for $i\in\{1,2\}$.
Then $\gcd (m_{1},m_{2})$ is \textbf{not} divisible by $p$.
\item[(c)]
The residue field $k(c)$ is separable over $k$.
\end{itemize}
Then condition (a) is satisfied if and only if conditions (b) and (c) are satisfied.
Moreover, (b')$\Rightarrow$(b) holds.
Furthermore, if $\cC$ is a regular at $c$, we have (b)$\Rightarrow$(b').
\end{lem}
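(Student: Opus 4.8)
The plan is to reduce the whole statement to Kato's chart criterion for log smoothness \cite[THEOREM (3.5)]{Ka1} together with the structural facts already recorded in Lemma \ref{goodchart}. Write $P:=M_{\cC,\overline c}/\cO_{\cC,\overline c}^{\ast}$, an f.s.\ sharp monoid of rank $2$ (since $c\in F(\cC)^{(2)}$), so $P^{\gp}\simeq\Z^{\oplus2}$ is torsion-free, and let $\overline\varpi\in P\setminus\{0\}$ be the image of $\varpi$. Then $(M_{\cC,\overline c}^{\gp}/\cO_{\cC,\overline c}^{\ast})/\varpi^{\Z}=\Coker(\Z\xrightarrow{\overline\varpi}P^{\gp})=:N$ is a finitely generated abelian group of rank $1$, and condition (b) says exactly that $\sharp N_{\tor}$ is prime to $p$. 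With this dictionary in place the three assertions become statements about $N$.

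For (b)$\wedge$(c)$\Rightarrow$(a): take a chart $(\cU,\N\to Q\to\Gamma(\cU,\cO_\cC))$ of the morphism with $Q=P$, so $Q^{\ast}=\{1\}$. By Lemma \ref{goodchart}.1 and hypothesis (c), condition (I) of \ref{chardefinition} holds for this chart; and since $\overline\varpi\neq0$, the homomorphism $\N^{\gp}\to Q^{\gp}=P^{\gp}$ has trivial kernel and cokernel $N$ with torsion of order prime to $p$ by (b), so condition (II) holds as well. Kato's criterion \cite[THEOREM (3.5)]{Ka1} then shows $\cC^{\log}\to(\Spec O_K)^{\log}$ is log smooth at $c$.

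For the converse (a)$\Rightarrow$(b)$\wedge$(c): applying \cite[THEOREM (3.5)]{Ka1} \'etale-locally at $c$, exactly as in \ref{chardefinition}, choose a chart satisfying conditions (I) and (II). By Lemma \ref{goodchart}.2 the group $Q^{\ast}$ is finite of order prime to $p$; since $P^{\gp}$ is torsion-free, $0\to Q^{\ast}\to Q^{\gp}\to P^{\gp}\to0$ is exact with $(Q^{\gp})_{\tor}=Q^{\ast}$. As $\overline\varpi\neq0$ has infinite order and $Q^{\ast}$ is torsion, quotienting by a lift of $\Z\overline\varpi$ gives a short exact sequence $0\to Q^{\ast}\to N'\to N\to0$ with $N':=\Coker(\N^{\gp}\to Q^{\gp})$; taking torsion subgroups in this sequence yields $\sharp N_{\tor}\mid\sharp N'_{\tor}$, and $\sharp N'_{\tor}$ is prime to $p$ by (II), which is (b). For (c): since $c\in F(\cC)^{(2)}$, the image of $c$ in $T:=\Spec O_K\times_{\Spec\Z[\N]}\Spec\Z[Q]$ lies in the closed subscheme cut out by the ideal generated by $Q\setminus Q^{\ast}$, which equals $\Spec k[Q^{\ast}]$; as $\sharp Q^{\ast}$ is prime to $p$ this scheme is \'etale over $k$, so its residue fields are finite separable over $k$, and the \'etaleness of $\cU\to T$ at $c$ (condition (I)) then forces $k(c)/k$ finite separable.

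Finally, for (b')$\Rightarrow$(b): the two height-one primes of $P$ define valuations $v_1,v_2$, and $(v_1,v_2):P^{\gp}\hookrightarrow\Z^{\oplus2}$ is injective (the two rays of $\sigma_{\overline c}$ are distinct, $\sigma_{\overline c}$ being strongly convex of dimension $2$) with $\overline\varpi\mapsto(m_1,m_2)$; since $\overline\varpi\in P^{\gp}$ we obtain $N=P^{\gp}/\Z\overline\varpi\hookrightarrow\Z^{\oplus2}/\Z(m_1,m_2)\simeq\Z\oplus\Z/\gcd(m_1,m_2)$, so $\sharp N_{\tor}\mid\gcd(m_1,m_2)$, giving (b). If moreover $\cC$ is regular at $c$, then $P\simeq\N^{\oplus2}$ (at a regular point of $F(\cC)^{(2)}$ the boundary is a normal crossing divisor, cf.\,\ref{ncddef}), so $(v_1,v_2)$ is an isomorphism and $N\simeq\Z\oplus\Z/\gcd(m_1,m_2)$ with $\sharp N_{\tor}=\gcd(m_1,m_2)$, whence (b)$\Rightarrow$(b'). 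The main obstacle is the converse (a)$\Rightarrow$(b)$\wedge$(c): the chart supplied by Kato's theorem need not have $Q^{\ast}=\{1\}$, so one must control how the torsion of the cokernel of $\N^{\gp}\to(-)^{\gp}$ and the residue field at $c$ behave when passing from the auxiliary monoid $Q$ down to $P$; the key leverage is Lemma \ref{goodchart}.2, which pins down $Q^{\ast}$ as finite and prime to $p$.
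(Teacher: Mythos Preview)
Your argument for $(a)\Rightarrow(b)$ and for the relation between (b) and (b$'$) matches the paper's. Your argument for $(a)\Rightarrow(c)$ is different and more direct than the paper's: the paper instead passes to a log blow-up $\cC'^{\log}\to\cC^{\log}$ so that some prime divisor on $\cC'$ maps to $\{c\}$, and then invokes Lemma~\ref{F1case}.1 on $\cC'^{\log}$. Your route, reading off separability of $k(c)$ from the \'etaleness of $\cU\to\Spec\cO_K\times_{\Spec\Z[\N]}\Spec\Z[Q]$ together with the \'etaleness of $\Spec k[Q^{\ast}]$ over $k$, is perfectly valid and avoids that detour.

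There is, however, a genuine gap in your proof of $(b)\wedge(c)\Rightarrow(a)$. You write ``take a chart $(\cU,\N\to Q\to\Gamma(\cU,\cO_\cC))$ of the morphism with $Q=P$,'' but the existence of such a chart \emph{of the morphism} is exactly the nontrivial point, and it is here---not only in verifying condition~(II)---that hypothesis~(b) is actually consumed. A chart of the log scheme $P\to M_{\cC,\overline c}$ always exists (any section of $M_{\cC,\overline c}\to P$), but a chart of the morphism requires that this section send $\overline\varpi\in P$ to $\varpi$ itself, not merely to $u\varpi$ for some unit $u$. Adjusting a given section $s$ so that $s(\overline\varpi)=\varpi$ amounts to extending the map $\Z\overline\varpi\to\cO_{\cC,\overline c}^{\ast}$, $\overline\varpi\mapsto u^{-1}$, to all of $P^{\gp}$; since $\overline\varpi$ has content $e=\sharp N_{\tor}$ in $P^{\gp}\simeq\Z^{\oplus2}$, this forces you to extract an $e$-th root of $u^{-1}$ in $\cO_{\cC,\overline c}^{\ast}$. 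That root exists precisely because $p\nmid e$ (this is~(b)) and $\cO_{\cC,\overline c}$ is strictly henselian. The paper carries this out explicitly: it observes that $\overline\varpi$ has an $e$-th root $v\in P$ (using that $P$ is saturated), lifts $v$ to $\varpi^{1/e}\in M_{\cC,\overline c}$, and completes $\varpi^{1/e}$ to a $\Z$-basis of $P^{\gp}$ to produce the required section. Without this step, your appeal to Lemma~\ref{goodchart}.1 is circular---that lemma presupposes a chart of the morphism with $Q^{\ast}=\{1\}$, whose existence is what you have yet to establish.
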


\begin{proof}
First, suppose conditions (b) and (c) are satisfied.
It suffices to show that there exists a chart $(\cU, \N\to Q\to\Gamma(\cU,O_{\cC}))$ satisfying $Q^{\ast}=\{1\}$ by \cite[THEOREM (3.5)]{Ka1} and Lemma \ref{goodchart}.1.
Write $e$ for $\sharp((M^{\gp}_{\cC,\overline{c}}/O_{\cC,\overline{c}}^{\ast})/\varpi^{\Z})_{\tor}$.
Since there exists the $e$-th root of the image of $\varpi$ in the multiplicative monoid $P:=M_{\cC,\overline{c}}/O_{\cC,\overline{c}}^{\ast}$ by condition (b) and $O_{\cC,\overline{c}}$ is strictly henselian, we have $\varpi^{1/e}\in M_{\cC,\overline{c}}$.
Let $q$ be an element of $M_{\cC,\overline{c}}$ such that the image of $q$ and $\varpi^{1/e}$ in $P^{\gp}$ form a $\Z$-basis.
Then we obtain a section of the homomorphism $M_{\cC,\overline{c}}^{\gp}\to P^{\gp}$ by using $q$ and $\varpi^{1/e}$.
By using this section, we obtain a desired chart of the morphism \'etale locally on $\cC$.

Next, suppose that condition (a) is satisfied.
Take a chart $(\cU,\N\to Q\to\Gamma(\cU,O_{\cC}))$ satisfying conditions (I) and (II) in \ref{chardefinition}.
By using the isomorphism $(Q^{\gp}/Q^{\ast})\simeq M_{\cC,\overline{c}}/O_{\cC,\overline{c}}^{\ast}$, Lemma \ref{goodchart}.2, and condition (II) in \ref{chardefinition}, we can see that condition (b) is satisfied.

Next, we see the relation between (b) and (b').
By Example \ref{maximallogblowup}, we may assume $\xi_{1}\neq\xi_{2}$.
Then the valuations at $\xi_{1}$ and $\xi_{2}$ define an injection $M_{\cC,c}/O_{\cC,c}^{\ast}\hookrightarrow \N^{\oplus2}$ whose groupfication is of finite index.
Note that this injection is an isomorphism if $O_{\cC,c}$ is regular.
The composite of this homomorphism and $(O_{K}\setminus\{0\})/O_{K}^{\ast}\to M_{\cC,c}/O_{\cC,c}^{\ast}$ sends the image of $\varpi$ to $(m_{1},m_{2})$.
From these observations, the asserted relation between (b) and (b') holds.

Finally, we show (a)$\Rightarrow$(c).
Take a log blow-up $\cC'^{\log}\to\cC^{\log}$ such that there exists a prime divisor $\overline{\{\eta\}}$ on $\cC'$ whose image in $\cC$ is $\{c\}$.
Since $\cC'^{\log}$ is log \'etale over $\cC^{\log}$, $\cC'^{\log}$ is a log smooth model of $C^{\log}$.
Moreover, we have a field extension $k\subset k(c)\subset k(\eta)$, which shows that $k(c)$ is separable over $k$ if $k(\eta)$ is separable over $k$.
Hence, (a)$\Rightarrow$(c) follows from Lemma \ref{F1case}.1.
\end{proof}

Next, we see properties of $\cD$ of a log regular model.

\begin{lem}
\label{fundlogsm}
Let $\pi^{\log}:\cC^{\log}\to(\Spec O_{K})^{\log}$ be a log smooth model of $C^{\log}$.
$\cD$ is a normal scheme and each irreducible component of $\cD$ is at most tamely ramified over $\Spec O_{K}$.
\end{lem}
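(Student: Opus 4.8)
The plan is as follows. The normality of $\cD$ is already contained in Lemma \ref{fundlog}.3, since a log smooth model is in particular a log regular model; so the new content is the tameness statement. First I would record the basic shape of $\cD$: because $D$ is a reduced effective divisor on the curve $C$ it is finite over $K$, and $\cD$, being the scheme-theoretic closure of $D$ in the proper $O_K$-scheme $\cC$, is proper over $\Spec O_K$ with $0$-dimensional fibres, hence finite over $O_K$. Thus each irreducible component of $\cD$ is $\Spec$ of a normal domain finite over $O_K$, and its localization at any maximal ideal is a discrete valuation ring dominating $O_K$. It therefore suffices to show: for every closed point $s\in\cD$ lying over the closed point of $\Spec O_K$, the local extension $O_K\subseteq O_{\cD,s}$ is tamely ramified, i.e.\ the ramification index $e$ is prime to $p$ and $k(s)/k$ is separable.

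Next I would reduce to the case where $\cC$ is a regular n.c.d.\ model. Let $\widetilde{\cC}^{\log}$ be the minimal log desingularization of $\cC^{\log}$; by Lemma \ref{fundlog}.1 it is a regular n.c.d.\ model of $C^{\log}$, and by Proposition \ref{minlogdes}.1 the morphism $\widetilde{\cC}^{\log}\to\cC^{\log}$ is log \'etale, so $\widetilde{\cC}^{\log}$ is again a log smooth model of $C^{\log}$. Its horizontal boundary $\widetilde{\cD}$ (the strict transform of $\cD$, equivalently the scheme-theoretic closure of $D$ in $\widetilde{\cC}$) is normal by Lemma \ref{fundlog}.3, and the induced morphism $\widetilde{\cD}\to\cD$ is proper, birational and quasi-finite (both sides are finite over $O_K$), hence finite, hence an isomorphism because $\cD$ is normal. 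So we may replace $\cC^{\log}$ by $\widetilde{\cC}^{\log}$ and assume $\cC$ is a regular n.c.d.\ model.

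Finally, fix a closed point $s\in\cD$ over the closed point of $\Spec O_K$. Since $s$ lies both on a (horizontal) component of $\cD$ and on a component of $\cC_k$, and $\cC_k\cup\cD$ has normal crossings with $\dim O_{\cC,s}=2$, we get $s\in F(\cC)^{(2)}$. By Lemma \ref{ramificationindex}, $s$ lies on exactly one irreducible component $F$ of $\cC_k$ and $e$ equals the multiplicity $m$ of $F$ in $\cC_k$. Now $\cC^{\log}\to(\Spec O_K)^{\log}$ is log smooth at $s$, so Lemma \ref{logsmoothchart} yields that $k(s)/k$ is separable (condition (c)) and that condition (b$'$) holds (as $\cC$ is regular at $s$). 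In (b$'$) the two elements $\xi_1,\xi_2$ of $F(\cC)^{(1)}$ with $s\in\overline{\{\xi_i\}}$ are the generic point of the component of $\cD$ through $s$, which lies in the generic fibre $C$ so that $\varpi$ has valuation $0$ there, and the generic point of $F$, at which $\varpi$ has valuation $m$; hence $\gcd(0,m)=m$ is prime to $p$, i.e.\ $p\nmid e$. Together with the separability of $k(s)/k$ this shows $O_K\subseteq O_{\cD,s}$ is tamely ramified, completing the argument. The point requiring the most care is the reduction in the second paragraph: one must verify that passing to the minimal log desingularization preserves log smoothness and leaves $\cD$ unchanged, so that the regular-model tools (Lemmas \ref{ramificationindex} and \ref{logsmoothchart} with $\cC$ regular at $s$) become available.
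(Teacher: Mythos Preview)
Your proof is correct and follows essentially the same approach as the paper's: normality from Lemma \ref{fundlog}.3, reduction to a regular n.c.d.\ model via the minimal log desingularization, then Lemma \ref{ramificationindex} combined with Lemma \ref{logsmoothchart} at a point $s\in F(\cC)^{(2)}$ to get both $p\nmid e$ and separability of $k(s)/k$. Your version is more explicit in two places---you justify carefully why passing to $\widetilde{\cC}^{\log}$ leaves $\cD$ unchanged, and you spell out condition (b$'$) as $\gcd(0,m)=m$ prime to $p$---but these are expansions of exactly the steps the paper compresses into a single sentence.
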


\begin{proof}
The desired normality follows from Lemma \ref{fundlog}.3.
By replacing $\cC^{\log}$ with its minimal log desingularization, we may assume that $\cC^{\log}$ is a regular n.c.d.\,model.
Then the ramification index (resp.\,the residue field extension) of each irreducible component of $\cD$ over $O_{K}$ is prime to $p$ by Lemma \ref{ramificationindex} and Lemma \ref{logsmoothchart} (resp.\,is separable by Lemma \ref{logsmoothchart}).
Thus, Lemma \ref{fundlogsm} holds.
\end{proof}

Finally, we see the properties of prime divisors on a log smooth model.

\begin{prop}[cf.\,{\cite[Theorem 1.3]{K}}]
\label{genK}
Let $\cC^{\log}$ be a log smooth model of $C^{\log}$.
Then $\cC_{k,\red}$ (resp.\,each irreducible components of $\cC_{k,\red}$) is a proper geometrically connected curve with only ordinary double points over $k$, whose singular locus coincides with $F(\cC)^{(2)}$ (resp.\,consists of the closed points of the curve where $\cC^{\log}$ is \textbf{not} Zariski log regular (cf.\,Theorem \ref{logsummary}.3)).
\end{prop}

\begin{proof}
This follows from Lemmas \ref{F1case}.1 and \ref{logsmoothchart}, Theorems \ref{logsummary}.3 and \ref{logsummary}.4, and the fact that the regular local ring essentially of finite finite type over a field is geometrically regular if its residue field is separable over the base field.
\end{proof}

\begin{lem}
\label{logblowupet}
Let $f^{\log}:\cC_{0}^{\log}\to\cC_{1}^{\log}$ be a morphism of log regular models of $C^{\log}$.
\begin{enumerate}
\item
Suppose that $f^{\log}$ is a composite morphism of log blow-ups of coherent ideals.
Then $\cC_{0}^{\log}$ is a log smooth model if and only if $\cC_{1}^{\log}$ is so.
\item
Suppose that $\cC_{0}^{\log}$ and $\cC_{1}^{\log}$ are regular n.c.d.\,models of $C^{\log}$ and $\cC_{0}^{\log}$ is a log smooth model.
Then $\cC_{1}^{\log}$ is a log smooth model.
\end{enumerate}
\end{lem}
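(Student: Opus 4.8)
The plan is to reduce both assertions to the pointwise criteria for log smoothness proved above: Lemma \ref{F1case} at points outside $F(\cC)^{(2)}$, and Lemma \ref{logsmoothchart} at points of $F(\cC)^{(2)}$.

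For assertion 1, recall first that a composite of log blow-ups of coherent ideals is log \'etale (the basic property of log blow-ups; cf.\,Proposition \ref{minlogdes} and \cite{Niz}, \cite{FK}, \cite{I1}, \cite{I2}), and that such a morphism is an isomorphism over $\cC_{1}\setminus F(\cC_{1})^{(2)}$, since at each point of $\cC_{1}$ outside $F(\cC_{1})^{(2)}$ the sharp monoid $M_{\cC_{1},\overline{c}}/O_{\cC_{1},\overline{c}}^{\ast}$ is trivial or $\simeq\N$, so that every coherent fractional ideal there is principal. If $\cC_{1}^{\log}\to(\Spec O_{K})^{\log}$ is log smooth, then the composite $\cC_{0}^{\log}\to\cC_{1}^{\log}\to(\Spec O_{K})^{\log}$ is log smooth, $f^{\log}$ being log \'etale, hence log smooth. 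Conversely, assume $\cC_{0}^{\log}$ is a log smooth model. Since $f^{\log}$ is an isomorphism outside the finite set $F(\cC_{1})^{(2)}$, log smoothness of $\cC_{1}^{\log}$ there follows from that of $\cC_{0}^{\log}$, and it remains to treat a point $c\in F(\cC_{1})^{(2)}$. By induction we may assume $f^{\log}$ is a single log blow-up, so that $\cC_{0}^{\log}$ is obtained from $\cC_{1}^{\log}$ by a proper subdivision $\Delta(c)$ of $\sigma_{\overline{c}}$; pick a two-dimensional cone $\tau\in\Delta(c)$ and let $c_{0}\in F(\cC_{0})^{(2)}$ be the corresponding point, so that $c_{0}\mapsto c$. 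Then $M_{\cC_{0},\overline{c_{0}}}^{\gp}/O_{\cC_{0},\overline{c_{0}}}^{\ast}$ and $M_{\cC_{1},\overline{c}}^{\gp}/O_{\cC_{1},\overline{c}}^{\ast}$ are canonically identified with $L_{\overline{c}}^{\vee}$, compatibly with the image of $\varpi$, and $k(c)\subseteq k(c_{0})$. Log smoothness of $\cC_{0}^{\log}$ at $c_{0}$ and Lemma \ref{logsmoothchart} give conditions (b) and (c) at $c_{0}$; since (b) depends only on $(M^{\gp}/O^{\ast})/\varpi^{\Z}$ and a subfield of a separable extension of $k$ is separable over $k$, conditions (b) and (c) hold at $c$, and Lemma \ref{logsmoothchart} yields log smoothness of $\cC_{1}^{\log}$ at $c$.

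For assertion 2, rather than factoring $f$ I would check log smoothness of $\cC_{1}^{\log}\to(\Spec O_{K})^{\log}$ directly at each point of $\cC_{1}$. Over the generic point it is log smooth since $C$ is smooth over $K$ and $D$ is \'etale over $K$ (Remark \ref{why(F)}.3). Let $c$ be a closed point of $\cC_{1,k}$. If $c\notin F(\cC_{1})^{(2)}$, exactly one irreducible component $\widetilde{W}$ of $\cC_{1,k}$ passes through $c$ and $c\notin\cD_{1}$; moreover $\widetilde{W}$ is regular at $c$ (log regularity of $\cC_{1}^{\log}$), and, the finite birational morphism to $\widetilde{W}$ from the corresponding (smooth, proper) component of $\cC_{0,k,\red}$ being an isomorphism near the normal point $c$, $\widetilde{W}$ is smooth over $k$ near $c$; its multiplicity equals that of the component of $\cC_{0,k}$ of which it is the strict transform, hence is prime to $p$ (part of $\cC_{0}^{\log}$ being log smooth, cf.\,Lemma \ref{F1case}), so Lemma \ref{F1case}.2 gives log smoothness of $\cC_{1}^{\log}$ at $c$. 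If $c\in F(\cC_{1})^{(2)}$, then $\cC_{1}$ is regular at $c$, and by Lemma \ref{logsmoothchart} it suffices that $k(c)/k$ be separable and that the greatest common divisor of the $\varpi$-valuations along the two branches through $c$ be prime to $p$. The latter holds because those branches are strict transforms of components of $\cC_{0,k}$ (whose multiplicities are prime to $p$), or components of $\cD_{1}$ (along which $\varpi$ has valuation $0$). For the former: if $c$ lies outside the exceptional locus of $f$ it corresponds to a point of $F(\cC_{0})^{(2)}$, where separability follows from log smoothness of $\cC_{0}^{\log}$ and Lemma \ref{logsmoothchart}; otherwise $c$ is the image of a contracted tree of rational curves, and $k(c)$ is the field of constants of the first-created exceptional component, which is a component of $\cC_{0,k,\red}$, hence smooth over $k$ (Proposition \ref{genK}), hence has field of constants separable over $k$. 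In every case Lemma \ref{logsmoothchart} (or Lemma \ref{F1case}) gives log smoothness of $\cC_{1}^{\log}$ at $c$, so $\cC_{1}^{\log}$ is a log smooth model.

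The steps I expect to require the most care are: verifying that $f^{\log}$ in assertion 1 is an isomorphism precisely outside $F(\cC_{1})^{(2)}$ and that a maximal cone of the subdivision produces a point $c_{0}$ over $c$ with unchanged characteristic modulo $\varpi$; and, in assertion 2, identifying the residue field of an image point with the field of constants of a component of $\cC_{0,k,\red}$ (so as to get separability over $k$), together with the input that all multiplicities in the special fiber of a log smooth model are prime to $p$ (a standard property of log smooth morphisms: at the generic point of a component of multiplicity $m$ the relative characteristic group is $\Z/m$, necessarily without $p$-torsion).
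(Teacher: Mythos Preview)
Your proof of assertion~1 is correct and follows the same line as the paper's: log \'etaleness of log blow-ups gives one direction, and for the other you pick $c_{0}\in F(\cC_{0})^{(2)}$ over $c\in F(\cC_{1})^{(2)}$, use the canonical identification $M_{\cC_{0},\overline{c_{0}}}^{\gp}/O^{\ast}\simeq M_{\cC_{1},\overline{c}}^{\gp}/O^{\ast}$ and the inclusion $k(c)\subset k(c_{0})$, and invoke Lemma~\ref{logsmoothchart}.

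Your proof of assertion~2 has a genuine gap. You repeatedly use that every irreducible component of the special fiber of a log smooth regular n.c.d.\ model has multiplicity prime to $p$, arguing that ``at the generic point of a component of multiplicity $m$ the relative characteristic group is $\Z/m$, necessarily without $p$-torsion''. This is false. At the generic point $\xi$ of such a component one indeed has $M_{\cC,\overline{\xi}}^{\gp}/O^{\ast}\simeq\Z$ with $\varpi\mapsto m$, but Kato's criterion concerns the cokernel of $\Z\to Q^{\gp}$ for a suitable chart $Q$, and $Q^{\ast}$ need not be trivial at a codimension~$1$ point (Lemma~\ref{goodchart}.2 only forces $Q^{\ast}$ finite at points of $F(\cC)^{(2)}$). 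Concretely, $\N\to\Z\oplus\N$, $1\mapsto(1,p)$, gives a log smooth chart whose special fiber has a single component of multiplicity $p$. The whole of \ref{genMS} and Proposition~\ref{pstr} is devoted to analysing precisely such components, so they do occur. Consequently your verification of (b$'$) at points of $F(\cC_{1})^{(2)}$, and your appeal to Lemma~\ref{F1case}.2 at points outside $F(\cC_{1})^{(2)}$, both break down.

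The paper avoids this by \emph{not} arguing directly: it factors $f$ into single blow-ups (\cite[Tag 0C5R]{Stacks}) and runs a downward induction. For a single blow-up at $c\in F(\cC_{1})^{(2)}$ this is a log blow-up (Example~\ref{maximallogblowup}), so assertion~1 applies. For a single blow-up at $c\notin F(\cC_{1})^{(2)}$, the exceptional divisor and the strict transform \emph{both} have multiplicity $m$ and meet at a point of $F(\cC_{0})^{(2)}$; log smoothness of $\cC_{0}^{\log}$ there forces $\gcd(m,m)=m$ prime to $p$ via Lemma~\ref{logsmoothchart}, and only then does Lemma~\ref{F1case}.2 yield log smoothness at $c$. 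The factorisation is what manufactures the node with equal multiplicities; without it you cannot extract ``$m$ prime to $p$'' from the hypotheses.
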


\begin{proof}
First, we show assertion 1.
Since a log \'etale morphism is log smooth, $\cC_{0}^{\log}$ is a log smooth model if $\cC_{1}^{\log}$ is so.
Suppose $\cC_{0}^{\log}$ is a log smooth model.
It suffices to show that $\cC_{1}^{\log}\to(\Spec O_{K})^{\log}$ is log smooth at any point $c_{0}$ of $F(\cC_{1})^{(2)}$.
By using the surjectivity of $F(\cC_{0})^{(2)}\to F(\cC_{1})^{(2)}$, we take $c_{0}\in F(\cC_{0})^{(2)}$ over $c_{1}$ and a geometric point $\overline{c}$ over $c_{0}$.
Then we have extensions of fields $k\subset k(c_{1})\subset k(c_{0})$ and an isomorphism $M^{\gp}_{\cC_{1},\overline{c}}/O_{\cC_{1},\overline{c}}^{\ast}\simeq M^{\gp}_{\cC_{0},\overline{c}}/O_{\cC_{0},\overline{c}}^{\ast}$.
Therefore, assertion 1 follows from the first assertion of Lemma \ref{logsmoothchart}.

Next, we show assertion 2.
By \cite[Tag:0C5R]{Stacks}, we may assume that $\cC_{1}\to\cC_{0}$ is the blow-up at a closed point $c_{0}$ and it suffices to show that $\cC_{0}^{\log}$ is log smooth at $c_{0}$ over $(\Spec O_{K})^{\log}$.
If $c_{0}\in F(\cC_{0})^{(2)}$, assertion 2 follows from assertion 1 and Example \ref{maximallogblowup}.
We may assume $c_{0}\notin F(\cC_{1})^{(2)}$ and write $\xi$ (resp.\,$m$) for the point of $F(\cC_{0})^{(1)}$ satisfying $c_{0}\in\overline{\{\xi\}}$.
Since the multiplicity of $f^{-1}(c_{0})$ coincides with $m$, $m$ is not divisible by $p$ by Lemma \ref{logsmoothchart}.
From this and Lemma \ref{F1case}.2, it suffices show that the $(\cC_{1,k})_{\red}$ is smooth at $c_{0}$.
Since $(\cC_{1,k})_{\red}$ is regular at $c$ by the assumption that $\cC_{0}$ is a regular n.c.d.\,model and $k(c_{0})$ is separable over $k$ by Lemma \ref{logsmoothchart}, the local ring of $c\in (\cC_{1,k})_{\red}$ is geometrically regular over $k$.
This completes the proof of assertion 2.
\end{proof}

\begin{prop}
\label{logsmoothcase}
Suppose that $C^{\log}$ has log smooth reduction.
Then there exists a minimal log regular model of $C^{\log}$ which is a log smooth model.
In particular, if $2g+r-2>0$, $\cC^{\log}_{\lreg}$ and $\cC_{\ncd}^{\log}$ are log smooth models.
\end{prop}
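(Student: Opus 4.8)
The plan is to start from an arbitrary log smooth model, replace it by a regular n.c.d.\ model without losing log smoothness, and then run the very contractions that produce $\cC^{\log}_{\ncd}$ and $\cC^{\log}_{\lreg}$, checking at each step that log smoothness persists. Fix a log smooth model $\cC_0^{\log}=(\cC_0,\cD_0)$ of $C^{\log}$, which exists by hypothesis, and let $\widetilde{\cC_0}^{\log}$ be its minimal log desingularization. By Lemma \ref{fundlog}.1 this is a regular n.c.d.\ model, and by Proposition \ref{minlogdes} the morphism $\widetilde{\cC_0}^{\log}\to\cC_0^{\log}$ is log \'etale and \'etale-locally a single log blow-up, hence a composite of log blow-ups; since log \'etale morphisms are log smooth (or directly by Lemma \ref{logblowupet}.1), $\widetilde{\cC_0}^{\log}$ is again a log smooth model. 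Thus from now on we may assume $C^{\log}$ admits a regular n.c.d.\ model which is a log smooth model.

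Suppose first that $2g+r-2>0$; this handles the ``in particular'' clause, together with the first statement in this range. By Lemma \ref{uniqueNCD} the model $\cC^{\log}_{\ncd}$ is the absolute minimal regular n.c.d.\ model, so there is a morphism of models $\widetilde{\cC_0}^{\log}\to\cC^{\log}_{\ncd}$; both sides are regular n.c.d.\ models and the source is log smooth, so Lemma \ref{logblowupet}.2 shows that $\cC^{\log}_{\ncd}$ is a log smooth model. Next, by Theorem \ref{logreglem} and Notation-Definition \ref{lregnota}, the canonical morphism $\cC^{\log}_{\ncd}\to\cC^{\log}_{\lreg}$ is the minimal log desingularization of $\cC^{\log}_{\lreg}$, hence a composite of log blow-ups (Proposition \ref{minlogdes}); applying Lemma \ref{logblowupet}.1 to it gives that $\cC^{\log}_{\lreg}$ is a log smooth model. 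Since $\cC^{\log}_{\lreg}$ is a minimal log regular model by Notation-Definition \ref{lregnota}, it is the required minimal log regular model which is a log smooth model.

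For the general statement I drop the hypothesis $2g+r-2>0$ and work from the regular n.c.d.\ log smooth model $\widetilde{\cC_0}^{\log}$. First I contract exceptional curves of the first kind (equivalently, the elements of $\cE_{=-1}$) one at a time; by \ref{ncd}, \ref{intersectiontheory}, and \ref{existenceNCD} each such contraction stays within regular n.c.d.\ models, and by Lemma \ref{logblowupet}.2 each intermediate model, as well as the resulting model $\cC_1^{\log}$ with $\cE_{=-1}(\cC_1)=\emptyset$, is a log smooth model. Then I contract maximal connected reduced effective divisors satisfying conditions (cExc) and (cStr) (cf.\ Definition \ref{conditiondivisorcurve}) via Proposition \ref{logexcpetionaldivisor}; each such contraction is inverse to a log blow-up, so Lemma \ref{logblowupet}.1 preserves log smoothness, and since the number of irreducible components of the special fiber strictly decreases at each step the process terminates at a log regular model $\cC^{\log}$ which is a log smooth model and admits no contraction of either type. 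One then checks that $\cC^{\log}$ is a minimal log regular model: a non-isomorphic morphism of models $\cC^{\log}\to\cC'^{\log}$ to a log regular model must contract a curve in the special fiber, and decomposing this birational morphism of $2$-dimensional log regular schemes into blow-downs of exceptional curves of the first kind and into log blow-downs of the form described in Proposition \ref{logexcpetionaldivisor}, one produces on $\cC^{\log}$ a curve of one of the two forbidden types (using \ref{ncd} and \ref{intersectiontheory}) --- a contradiction.

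The main obstacle is precisely this last step: showing that after exhausting both kinds of contraction one has reached a minimal log regular model. The difficulty is to control how an arbitrary morphism of models between $2$-dimensional log regular schemes factors through elementary contractions --- classical blow-downs of exceptional curves of the first kind on the one hand, and log blow-downs corresponding to Proposition \ref{logexcpetionaldivisor} on the other. When $2g+r-2>0$ this bookkeeping is already packaged into the uniqueness statements of Lemma \ref{uniqueNCD}, Theorem \ref{logreglem}, and Proposition \ref{mlregstr}, which is why that case is the clean one; for the remaining low-type cases one must re-run the argument of Lemma \ref{uniqueNCD} directly, as in the remark following it.
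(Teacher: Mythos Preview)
Your argument in the case $2g+r-2>0$ is correct and is exactly the paper's proof unpacked: pass from a given log smooth model to its minimal log desingularization (regular n.c.d.\ and still log smooth by Lemma~\ref{fundlog}.1 and Lemma~\ref{logblowupet}.1), descend to $\cC^{\log}_{\ncd}$ via Lemma~\ref{uniqueNCD}/Proposition~\ref{mlregstr} and Lemma~\ref{logblowupet}.2, and then to $\cC^{\log}_{\lreg}$ via Theorem~\ref{logreglem} and Lemma~\ref{logblowupet}.1. The paper's one-line proof cites precisely Theorem~\ref{logreglem}, Proposition~\ref{mlregstr}, and Lemma~\ref{logblowupet}.

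Your third paragraph attempts to treat the first sentence as a separate statement valid for arbitrary $(g,r)$, but the paper does not do this: its proof invokes only results carrying the hypothesis $2g+r-2>0$, and in the paper's usage (Notation-Definition~\ref{lregnota}) the phrase ``minimal log regular model'' is simply the name given to $\cC^{\log}_{\lreg}$, which is defined only in that range. So the first sentence is not meant as an independent low-type claim, and the ``In particular'' just singles out the two specific models. Your sketch for the remaining cases is candid about its gap --- factoring an arbitrary morphism of two-dimensional log regular models into classical blow-downs and the log blow-downs of Proposition~\ref{logexcpetionaldivisor} --- but you do not need to fill it to match the paper.
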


\begin{proof}
This follows from Theorem \ref{logreglem}, Proposition \ref{mlregstr}, and Lemma \ref{logblowupet}.
\end{proof}

\subsection{Base change of models and the structure theorem}
\label{basechangesubsection}

In this subsection, we discuss base changes of $\cC^{\log}_{\ncd}$ and $\cC^{\log}_{\lreg}$.

Let $O_{K'}\supset O_{K}$ be an extension of discrete valuation rings, $K'$ the field of fractions of $O_{K'}$, and $k'$ the residue field of $O_{K'}$.
Suppose that the ramification index of the extension $O_{K'}\supset O_{K}$ is $1$.
We consider the log regular curve $C'^{\log}:=(C_{K'},D_{K'})$ and its models $\cC'^{\log}_{\lreg}$ and $\cC'^{\log}_{\ncd}$.

\begin{lem}
\label{forbasechange}
Let $\cC^{\log}$ be a regular n.c.d.\,model of $C^{\log}$.
Suppose that $((\cC_{k,\red}\setminus F(\cC)^{(2)})_{k'}:=)(\cC_{k,\red}\setminus F(\cC)^{(2)})\times_{\Spec k}\Spec k'$ is normal and $k(c)\otimes_{k}k'$ is reduced for any $c\in F(\cC)^{(2)}$.
\begin{enumerate}
\item
$(\cC',\cD'):=(\cC_{O_{K'}},\cD_{O_{K'}})$ is a regular n.c.d.\,model of $C'^{\log}$ and $(\cC_{k}\cup\cD)_{\red,k'}\simeq(\cC'_{k'}\cup\cD')_{\red}$.
\end{enumerate}
Let $F'$ be a prime divisor on $\cC'$ whose support is contained in $\cC'_{k'}$, $F$ the prime divisor on $\cC$ whose support is the image of $F'$, and $\widetilde{F}\to F$ the normalization.
\begin{enumerate}
\setcounter{enumi}{1}
\item
$(F_{k'}:=)F\times_{\Spec k}\Spec k'$ is reduced and $(\widetilde{F}_{k'}:=)\widetilde{F}\times_{\Spec k}\Spec k'\to F_{k'}$ is the normalization morphism.
\item
Suppose $(g,r)\neq(0,0), (0,2)$.
$F'\in\cE_{= -1}(\cC')$ if and only if $F\in\cE_{= -1}(\cC)$.
\item
Suppose $(g,r)\neq(1,0)$.
$F'\in\cE_{\leq -2}(\cC')$ if and only if $F\in\cE_{\leq -2}(\cC)$.
\end{enumerate}
\end{lem}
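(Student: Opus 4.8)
The plan is to prove the four assertions in order, reducing everything to local statements on $\cC$ after the faithfully flat base change along $\Spec O_{K'}\to\Spec O_{K}$; this map is flat because $O_{K'}$ is a torsion-free $O_{K}$-module, and faithfully flat because the ramification index is $1$, so $\varpi$ remains a uniformizer of $O_{K'}$. For assertion 1, first note that $\cC':=\cC_{O_{K'}}$ is proper and flat over $O_{K'}$, and since its generic fibre $C_{K'}$ is integral ($C$ being smooth and geometrically connected over $K$) and $\cC'$ is flat over a discrete valuation ring, every associated point of $\cC'$ lies in $C_{K'}$, so $\cC'$ is integral and $(\cC',\cD')$ is a model of $C'^{\log}$. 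The substantive point is that $\cC'$ is regular and $\cC'_{k'}\cup\cD'$ has normal crossings; this is \'etale-local on $\cC$, so I would pass to strict henselizations and complete and use the Cohen structure theorem. At a point $c\notin F(\cC)^{(2)}$ the boundary divisor is a single regular branch, and the claim reduces to regularity of $(\cC_{k,\red})_{k'}$ there, which is hypothesized. At $c\in F(\cC)^{(2)}$ the completed local ring has the shape $\kappa[[x,y]]$ (or the evident mixed-characteristic analogue) with $\kappa=k(c)$ and boundary divisor $V(xy)$; base changing $k\to k'$ replaces $\kappa$ by $\kappa\otimes_{k}k'$, which is a finite product of fields because $k(c)\otimes_{k}k'$ is reduced, so the base change is a finite product of regular local rings in which $V(xy)$ is still a normal crossing divisor and $(\cC_{k,\red}\cup\cD)_{k'}$ is reduced. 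This gives the regularity and normal-crossing statement as well as the isomorphism $(\cC_{k}\cup\cD)_{\red,k'}\simeq(\cC'_{k'}\cup\cD')_{\red}$.

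For assertion 2, recall $F$ is a reduced irreducible component of $\cC_{k,\red}$, regular away from a finite set $S\subseteq F\cap F(\cC)^{(2)}$ of self-crossing points at which it has the ``node''-like structure of \ref{node-like}; its normalization $\widetilde{F}$ is a regular proper curve over $k$. I would show $\widetilde{F}_{k'}$ is normal: away from the preimage of $F(\cC)^{(2)}$ it coincides with $(F\setminus F(\cC)^{(2)})_{k'}$, which is normal by hypothesis (it is an open of $(\cC_{k,\red}\setminus F(\cC)^{(2)})_{k'}$), and over a point $c\in F\cap F(\cC)^{(2)}$ the closed points of $\widetilde{F}$ have residue field either $k(c)$ or a separable quadratic extension $k''$ of $k(c)$; in both cases the residue field tensored with $k'$ over $k$ is reduced --- for $k''$ one uses that $k''/k(c)$ is separable and $k(c)\otimes_{k}k'$ is reduced --- so the completed local rings of $\widetilde{F}_{k'}$ there are finite products of discrete valuation rings. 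Since $\widetilde{F}_{k'}\to F_{k'}$ is finite, is an isomorphism over the dense open $(F\setminus S)_{k'}$, and has normal source, it exhibits $F_{k'}$ as reduced with normalization $\widetilde{F}_{k'}$.

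For assertions 3 and 4, the numerical input is that Euler characteristics are invariant under flat base change: writing $F_{k'}=\sum_{j}F'_{j}$ for the decomposition into prime divisors (reduced by assertion 2), for any invertible sheaf $\cL$ on $\cC$ one has $(F\cdot\cL)_{\cC}=\chi_{k}(\cL|_{F})-\chi_{k}(O_{F})=\chi_{k'}(\cL_{k'}|_{F_{k'}})-\chi_{k'}(O_{F_{k'}})=\sum_{j}(F'_{j}\cdot\cL_{k'})_{\cC'}$, by the additivity of $\chi$ along the components of the reduced curve $F_{k'}$. For $F$ of one of the relevant types one then checks, using assertions 1 and 2, that the configuration of $F_{k'}$ is completely controlled: either the $F'_{j}$ are pairwise disjoint (the case $F\cong\bP^{1}_{k(F)}$, where $F_{k'}=\bigsqcup_{j}\bP^{1}_{L_{j}}$ for $k(F)\otimes_{k}k'=\prod_{j}L_{j}$, and the $\cE_{0,1'}$-case with its conics), or else one is in the $\cE_{\node}$-case where $k''\otimes_{k}k'$ is a separable quadratic extension of $k'$ or $k'\times k'$ and $F_{k'}$ is, respectively, a nodal conic or two lines meeting at a rational point. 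In all cases one deduces $(F\cdot F)_{\cC}=\sum_{j}(F'_{j}\cdot F'_{j})_{\cC'}$ and in fact $\deg_{k(F'_{j})}O_{\cC'}(F'_{j})|_{F'_{j}}=\deg_{k(F)}O_{\cC}(F)|_{F}$ for every $j$ (computing $\deg$ on normalizations when convenient), so the self-intersection bound in the definitions of $\cE_{=-1}$ and $\cE_{\leq-2}$ is literally preserved in both directions. It remains to match the combinatorial data (rational type of $F$; number and residue fields of intersection points with the rest of $\cC_{k}\cup\cD$; the normal-crossing condition around $F$): for ``$F\Rightarrow F'$'' one transports these through the explicit description of $F_{k'}$ and assertion 1, and for ``$F'\Rightarrow F$'' one uses that $F'$ is a component of the reduced curve $F_{k'}$ with negative self-intersection together with $(g,r)\neq(0,0),(0,2)$ (resp. $(1,0)$), which via Lemma \ref{genus01} excludes the degenerate shapes of $\cC_{k,\red}$, then Remark \ref{sepcase} and Theorem \ref{logsummary}.4 to descend the rational type and the normal-crossing condition along the faithfully flat $\cC'\to\cC$.

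The step I expect to be the main obstacle is the ``$F'\Rightarrow F$'' direction of assertions 3 and 4: one has much less a priori control on $F$ than on $F'$, so one genuinely must exploit Lemma \ref{genus01} and the running inequalities on $(g,r)$ to rule out the few degenerate configurations in which $F$ would be essentially all of $\cC_{k,\red}$, and to propagate the rationality and intersection-pattern constraints back from a single base-changed component to $F$; the $\cE_{\node}$-case, where $F_{k'}$ need not be a disjoint union of its components, requires the most care. Everything else is careful but routine bookkeeping with local models over $k$ and $k'$ and with Euler characteristics.
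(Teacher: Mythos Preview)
Your treatment of assertions 1 and 2 is essentially the paper's: local analysis at $c\notin F(\cC)^{(2)}$ using the normality hypothesis, and at $c\in F(\cC)^{(2)}$ using that $k(c)\otimes_{k}k'$ is reduced so the maximal ideal generates after base change. This is fine.

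For assertions 3 and 4 there are two issues. First, your numerical claim that in all cases $\deg_{k(F'_{j})}O_{\cC'}(F'_{j})|_{F'_{j}}=\deg_{k(F)}O_{\cC}(F)|_{F}$ and $(F\cdot F)=\sum_{j}(F'_{j}\cdot F'_{j})$ is false whenever distinct components of $F_{k'}$ meet (precisely the split $\cE_{\node}$ case): the cross terms $(F'_{i}\cdot F'_{j})$ do not vanish, and the paper's formula $(\sharp)$, namely $\deg_{k(F)}O_{\cC}(F)|_{F}=i_{1}[k(F'):k'_{1}](d+\deg_{k(F')}O_{\cC'}(F')|_{F'})$, shows the individual degrees differ as soon as $d\geq 1$ or $i_{1}\geq 2$. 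The inequality $\deg\leq -2$ happens to survive, but your equalities do not, and you need the correct relation to carry out the converse.

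Second, and more seriously, the backward direction $F'\Rightarrow F$ is where the paper does real work that your sketch does not supply. The paper's key move is a reduction along the tower $O_{K}\to O_{K}^{h}\to O_{K}^{sh}\to O_{K'}^{sh}\leftarrow O_{K'}$, so that it suffices to treat (i) $O_{K'}=O_{K}^{h}$, (ii) $O_{K}=O_{K}^{h}$ and $O_{K'}=O_{K}^{sh}$, and (iii) both strictly henselian. Case (iii) is easy because $k$ is separably closed, so $F_{k'}$ is irreducible and $F'=F_{k'}$; case (i) is Remark \ref{sepcase}.2. The substantive case (ii) exploits the Galois symmetry: all components $F'_{j}$ of $F_{k'}$ are conjugate, hence share the same numerical data, and the paper runs a case analysis on $d=\deg_{k(F')}(F'\cap F'')$ with $F'+F''=F'_{1}$, invoking \cite[Tag:0C6A]{Stacks} when $d\geq 1$ to force $(g,r)\in\{(0,0),(0,2)\}$ (for assertion 3) or Lemma \ref{genus01}.4 to force $(g,r)=(1,0)$ when $d=2$ (for assertion 4), contradicting the hypotheses. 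When $d=1$ in assertion 4 the paper reconstructs $F\in\cE_{\node}(\cC)$ from the two conjugate lines in $F_{k'}$. Without the reduction you lose both the irreducibility in (iii) and the Galois symmetry in (ii); your appeal to Lemma \ref{genus01}, Remark \ref{sepcase}, and Theorem \ref{logsummary}.4 does not by itself recover this structure, and in particular does not explain why $F$ must be $\bP^{1}_{k(F)}$ or of node type rather than some other genus-$0$ curve.
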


Because the proof of Lemma \ref{forbasechange} is long, we divide the proof into four parts.

\begin{proof}[Proofs of Lemmas \ref{forbasechange}.1 and \ref{forbasechange}.2]
First, note that $\cD'$ coincides with the scheme theoretic closure of $D_{K'}$ in $\cC'$ since $O_{K'}$ is flat over $O_{K}$.

We show assertion 1.
Write $F(\cC')^{(2)}$ for the set of points over $F(\cC)^{(2)}$.
(After we prove assertion 1, this notation is compatible with that defined in \ref{lognotationa}.)
Since $(\cC_{k,\red}\setminus F(\cC)^{(2)})_{k'}$ is a $1$-dimensional normal closed subscheme of $\cC'\setminus F(\cC')^{(2)}$ whose defining ideal is locally principal, $\cC'\setminus F(\cC')^{(2)}$ is regular.
Take a point $c\in F(\cC)^{(2)}$.
Since $k(c)\otimes_{k}k'$ is the product of the residue fields of the points of $\cC'$ over $c$, the maximal ideal of $O_{\cC,c}$ generates the maximal ideal of each such a point.
Hence, $\cC'$ is regular at points over $c$.
Moreover, if $c\in F(\cC)^{(2)}\cap\cD$, $\cD'$ is regular at points over $c$ by a similar argument.
Let $\Spec R\to\Spec O_{\cC,c}$ be a strict localization, $E$ for the normal crossing divisor on $\Spec R$ supported on the inverse image of $\cC_{k}\cup\cD$, and $\varpi_{1}$ and $\varpi_{2}$ prime elements of $R$ such that $\varpi_{1}\varpi_{2}$ defines $E$.
Let $c'$ be a point of $F(\cC')^{(2)}$ over $c$ and $\Spec R'\to (\Spec R\times_{\Spec O_{\cC,c}}\Spec O_{\cC',c'}\to)\Spec O_{\cC',c'}$ a strict localization.
Since the maximal ideal of $R$ generates the maximal ideals of $O_{\cC,c}$ and $R'$, $\varpi_{1}$ and $\varpi_{2}$ generate the maximal ideal of $R'$.
Hence, the base change of $E$ to $\Spec R'$ is a normal crossing divisor.
This completes the proof of assertion 1.

Next, we show assertion 2.
The first assertion follows from assertion 1.
To show the second assertion, by the assumption that $(\cC_{k,\red}\setminus F(\cC)^{(2)})_{k'}$ is normal, it suffices to show that $\widetilde{F}_{k'}$ is normal at points over $F(\cC')^{(2)}(\subset F_{k'})$.
Let $c$ be a point of $F(\cC)^{(2)}\cap F$, $\widetilde{c}'$ a point of $\widetilde{F}_{k'}$ over $c$, and $\widetilde{c}$ the image of $\widetilde{c}'$ in $\widetilde{F}$.
If we show $\fm_{\widetilde{c}}O_{\cC',\widetilde{c}'}=\fm_{\widetilde{c}'}$, the desired assertion holds.
Since $\widetilde{F}\times_{F}\Spec k(c)$ is reduced, it suffices to show that $\widetilde{F}_{k'}\times_{F}\Spec k(c)$ is reduced.
Since we have $\widetilde{F}_{k'}\times_{F}\Spec k(c)\simeq(\widetilde{F}\times_{F}\Spec k(c))\times_{\Spec k(c)}\Spec(k(c)\otimes_{k}k')$, the desired reducedness follows from the assumption that $k(c)\otimes_{k}k'$ is reduced and the fact that $\widetilde{F}\times_{F}\Spec k(c)\to\Spec k(c)$ is a finite \'etale morphism of rank $\leq 2$.
\end{proof}

\begin{proof}[Reduction of Lemmas \ref{forbasechange}.3 and \ref{forbasechange}.4 to a special case]
Here, we show that we may assume $O_{K}=O_{K}^{h}$ and $O_{K'}=O_{K}^{sh}$ to show Lemmas \ref{forbasechange}.3 and \ref{forbasechange}.4.

Let $O_{K'}^{h}$ be the henselization of $O_{K'}$ dominating $O_{K}^{h}$ over $O_{K}$ and $O_{K'}^{sh}$ the strict henselization of $O_{K'}^{h}$ dominating $O_{K}^{sh}$ over $O_{K}^{h}$.
Note that the base changes of $\cC$ to these discrete valuation rings are regular n.c.d.\,models by Lemma \ref{forbasechange}.1.
Take a prime divisor $F'^{sh}$ on $\cC_{O_{K'}^{sh}}$ over $F'$ and consider the prime divisor on each model whose support coincides with the image of $F'$ in the model.
By considering these models and divisors, we can reduce Lemmas \ref{forbasechange}.3 and \ref{forbasechange}.4 to the following three cases:
(i) $O_{K'}=O_{K}^{h}$.
(ii) $O_{K}=O_{K}^{sh}$ and $O_{K'}=O_{K}^{sh}$.
(iii) $O_{K}=O_{K}^{sh}$ and $O_{K'}=O_{K'}^{sh}$.
In the case of (i), Lemmas \ref{forbasechange}.3 and \ref{forbasechange}.4 follow from Remark \ref{sepcase}.2.
Here, we give the proofs of Lemmas \ref{forbasechange}.3 and \ref{forbasechange}.4 in the case of (iii).

Suppose that $k$ and $k'$ are separably closed.
Then $F_{k'}$ is irreducible and hence we have $F'=F_{k'}$ by Lemma \ref{forbasechange}.2.
From this and Lemma \ref{forbasechange}.2, it follows that $F$ is normal if and only if $F'$ is normal.
Moreover, we have $O_{\cC'}(F')=O_{\cC}(F)|_{\cC'}$ and $\deg_{k(F)}O_{\cC}(F)=\deg_{k(F')}O_{\cC'}(F')$.
By using these observations and Remark \ref{sepcase}.1, it suffices to show that $F\in\cE_{\bP,1}(\cC)$ (resp.\,$F\in\cE_{\bP,2}(\cC)$) if and only if $F'\in\cE_{\bP,1}(\cC')$ (resp.\,$F'\in\cE_{\bP,2}(\cC')$) to prove Lemmas \ref{forbasechange}.3 and \ref{forbasechange}.4.
These follow from the assumption that $k$ is separably closed and Lemma \ref{forbasechange}.1.
\end{proof}

In this paragraph, we assume $O_{K}=O_{K}^{h}$ and $O_{K'}=O_{K}^{sh}$.
Before we prove Lemmas \ref{forbasechange}.3 and \ref{forbasechange}.4, we make some observations.
Write $H^{0}(F_{k'},O_{F_{k'}})\simeq k(F)\otimes_{k}k'$ as the product of fields $\prod_{i=1}^{m}k'_{i}$.
There exists a canonical bijection between the set of connected component of $F_{k'}$ and that of $\Spec k(F)\times_{\Spec k}\Spec k'$.
Write $F'_{i}$ for the connected component of $F_{k'}$ satisfying $k(F'_{i})=k'_{i}$.
We may suppose $F'\leq F'_{1}$.
Then we have $F'_{i}=F\times_{\Spec k(F)}\Spec k'_{i}$ and $H^{1}(F_{i},\cL|_{F_{i}})\simeq H^{1}(F,\cL)\otimes_{k(F)}k'_{i}$ for each $1\leq i\leq m$ and every line bundle $\cL$ on $F$.
By using this observation, we have $\deg_{k(F)}O_{\cC}(F)|_{F}=\deg_{k'_{i}}O_{\cC}(F_{i})|_{F'_{i}}$.
Let $F''$ be the reduced effective Cartier divisor on $\cC'$ satisfying $F'+F''=F'_{1}$, $d$ the rank of the natural flat morphism from scheme theoretic intersection $F'\cap F''$ to $\Spec k(F')$, and $i_{1}$ the number of irreducible components of $F'_{1}$.
Then we have
\begin{align}
\deg_{k(F)}O_{\cC}(F)|_{F}
&=\deg_{k'_{1}}O_{\cC'}(F'_{1})|_{F'_{1}}
=i_{1}\deg_{k'_{1}}(O_{\cC'}(F'_{1})|_{F'})\tag{$\sharp$}\\
&=i_{1}[k(F'):k'_{1}](d+\deg_{k(F')}O_{\cC'}(F')|_{F'}).\notag
\end{align}
Here, the second equality follows from \cite[Proposition 9.1/5]{BLR}.

\begin{proof}[Proof of Lemma \ref{forbasechange}.3]
We may assume $O_{K}=O_{K}^{h}$ and $O_{K'}=O_{K}^{sh}$.
If $F\in\cE_{= -1}(\cC)$, it follows that $F'=F'_{1}$, $i_{1}=1$, $d=0$, and $F'_{1}\in\cE_{=-1}(\cC')$ from ($\sharp$).

Suppose $F'\in\cE_{=-1}(\cC')$.
(Note that, by Remark \ref{sepcase}.1, we have $F'\in\cE_{\bP,1}(\cC')\cup\cE_{\bP,2}(\cC')$.)
Then, for any prime divisor $F'''$ satisfying $F'''\leq F''$, $F'''\in\cE_{=-1}(\cC')$ holds.
Suppose $d\geq1$.
Then $F'$ intersects another element of $\cE_{=-1}(\cC')$.
By \cite[Tag:0C6A]{Stacks}, it holds that $g=0$, $F''$ is irreducible, and $\cC'_{k',\red}=F'+F''$.
If $F'$ (and hence $F''$) is an element of $\cE_{\bP,1}(\cC')$ (resp.\,$\cE_{\bP,2}(\cC')$), we have $(g,r)=(0,0)$ (resp.\,$(0,2)$) by Lemma \ref{ramificationindex}, which contradicts the assumption of Lemma \ref{forbasechange}.3.
Thus, we have $d=0$ and hence $F'=F'_{1}$, $i_{1}=1$, $k'_{1}=k(F')$.
By using these, ($\sharp$), and $\deg_{k(F')}O_{\cC'}(F')|_{F'}=-1$, we obtain $\deg_{k(F)}O_{\cC}(F)|_{F}=-1$.
Moreover, since we have $F'\in\cE_{\bP,1}(\cC')\cup\cE_{\bP,2}(\cC')$ and $F'=F'_{1}\simeq F\times_{\Spec k(F)}k'_{1}$, we have $F\in\cE_{\bP,1}(\cC')\cup\cE_{0,1'}(\cC')\cup\cE_{\bP,2}(\cC')$.
Thus, we have $F\in\cE_{=-1}(\cC)$.
\end{proof}

\begin{proof}[Proof of Lemma \ref{forbasechange}.4]
We may assume $O_{K}=O_{K}^{h}$ and $O_{K'}=O_{K}^{sh}$.
If we have $F\in(\cE_{\bP,2}(\cC)\cup\cE_{0,1'}(\cC))\cap\cE_{\leq -2}(\cC)$, it holds that $F'=F'_{1}$ and hence $d=0$, $i=1$, and $k(F')=k'_{1}$.
Hence, in this case, $F'_{1}\in\cE_{\leq-2}(\cC')$ follows from ($\sharp$).

Next, suppose $F\in\cE_{\node}(\cC)\cap\cE_{\leq -2}(\cC)$.
Then we have $F', F''\in\cE_{\bP,2}(\cC')$ and $d\geq 1$ (cf.\,\ref{badcurves}).
By using thes and ($\sharp$), we obtain $\deg_{k(F')}O_{\cC'}(F')|_{F'}\leq-2$.
Hence, we have $F'\in\cE_{\leq -2}(\cC')$.

In the rest of the proof, we suppose $F'\in\cE_{\leq -2}(\cC')$.
Note that any prime divisor $F'''$ satisfying $F'''\leq F'_{1}$ is also an element of $\cE_{\leq -2}(\cC')$.
Since $k'$ is separably closed, we have $F'\in\cE_{\bP,2}(\cC')$ by Remark \ref{sepcase}.1.

Suppose $d=2$.
Then $F'$ and hence also any prime divisor $F'''$ satisfying $F'''\leq F'_{1}$ do not intersect other irreducible components of $\cC'_{k'}$ not contained in $F'_{1}$.
It follows from this observation and the fact that $\cC_{k}$ is connected that $\cC_{k}=\Supp F$.
Then, by  Lemma \ref{genus01}.4, it holds that $(g,r)=(1,0)$, which contradicts the assumption of Lemma \ref{forbasechange}.4.

Next, suppose $d=1$.
Then since any prime divisor $F'''$ satisfying $F'''\leq F'_{1}$ intersects exactly one such other prime divisor, it holds that $F''$ is a prime divisor and $i_{1}=2$.
It follows from this and ($\sharp$) that $\deg_{k(F)}O_{\cC}(F)|_{F}\leq-2$.
Write $c'_{1}$ (resp.\,$c'_{2}$) for the unique point of $F'\cap F''$ (resp.\,$(F'\cap F(\cC')^{(2)})\setminus F''$) and $c_{1}$ (resp.\,$c_{2}$) for the image of $c'_{1}$ (resp.\,$c'_{2}$) in $\cC$.
Since $F(\cC')^{(2)}$ is the inverse image of $F(\cC)^{(2)}$ in $\cC'$, we have $\sharp(F\cap F(\cC')^{(2)})=2$.
Since $F$ intersects (resp.\,does not intersect) another irreducible component at $c_{2}$ (resp.\,$c_{1}$), $F$ is normal (resp.\,not normal) at $c_{2}$ (resp.\,$c_{1}$).
Since we have $F'+F''=F'_{1}\simeq F\times_{\Spec k(F)}\Spec k'_{1}$, $\widetilde{F}\to F$ is isomorphic over $F\setminus\{c_{1}\}$ and $c_{2}$ is a $k(\widetilde{F})$-rational point.
From these, the fact that $F'\simeq\bP^{1}_{k(F')}$, and Lemma \ref{forbasechange}.2, we have $\widetilde{F}\simeq\bP^{1}_{k(\widetilde{F})}$.
It follows from these observations and \ref{badcurves} that $F\in\cE_{\node}(\cC)$.
Hence, we have $F\in\cE_{\leq -2}(\cC)$ by Lemma \ref{genus01}.1.

Finally, we consider the case of $d=0$.
In this case, by using a similar argument to those given in the last of the proof of Lemma \ref{forbasechange}.3, we can show that $F\in(\cE_{0,1'}(\cC)\cup\cE_{\bP,2}(\cC))\cap\cE_{\leq -2}(\cC)$.
\end{proof}

\begin{rem}
If $(g,r)=(0,0)$, Lemma \ref{forbasechange}.3 does not hold in general.
For example, the regular n.c.d.\,model
$$\cC:=V_{+}(X^{2}+Y^{2}+tZ^{2})(\subset \Proj \R[[t]][X,Y,Z])\to \Spec \R[[t]]$$
of its generic fiber is minimal since the special fiber is irreducible.
Since $\cC_{\C((t))}$ is isomorphic to $\bP^{1}_{\C((t))}$ over $\C((t))$, any minimal regular model $\cC_{\C((t))}$ is isomorphic to $\bP^{1}_{\C[[t]]}$ (cf.\,\cite[Tag:0CDA]{Stacks}).
Hence, each prime divisor on $\cC_{\C[[t]]}$ whose support is contained in $\cC_{\C}$ is an element of $\cE_{\leq -1}(\cC_{\C[[t]]})$.
\end{rem}

\begin{prop}
\label{basechange}
Suppose $2g+r-2>0$.
\begin{enumerate}
\item
The underlying scheme of the f.s.\,log scheme
$$\cC^{\log}_{\lreg}\times^{\log}_{(\Spec O_{K})^{\log}}(\Spec O_{K'})^{\log}\quad (\text{resp.}\,\cC^{\log}_{\ncd}\times^{\log}_{(\Spec O_{K})^{\log}}(\Spec O_{K'})^{\log})$$
coincides with $\cC_{\lreg,O_{K'}}$ (resp.\,$\cC_{\ncd,O_{K'}}$).
\item
If $k'$ is separable over $k$, we have $\cC_{\ncd, O_{K'}}=\cC'_{\ncd}$ and $\cC_{\lreg,O_{K'}}=\cC'_{\lreg}$.
\item
If $C$ has log smooth reduction, we have $\cC_{\ncd, O_{K'}}=\cC'_{\ncd}$ and $\cC_{\lreg,O_{K'}}=\cC'_{\lreg}$.
\end{enumerate}
\end{prop}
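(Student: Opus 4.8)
The plan is to reduce each assertion to Lemma~\ref{forbasechange}, the uniqueness in Lemma~\ref{uniqueNCD}, and the construction of $\cC^{\log}_{\lreg}$ in Theorem~\ref{logreglem}, with assertion~1 handled by a strictness observation. For assertion~1, I would first note that the morphism $(\Spec O_{K'})^{\log}\to(\Spec O_{K})^{\log}$ is \emph{strict}: since the ramification index of $O_{K'}/O_{K}$ is $1$, the image of $\varpi$ remains a uniformizer of $O_{K'}$, so $\varpi$ is a chart for both log structures and the morphism identifies $M_{O_{K'}}$ with the pullback of $M_{O_{K}}$. For a strict morphism of f.s.\ log schemes $S'^{\log}\to S^{\log}$ and any f.s.\ log scheme $X^{\log}\to S^{\log}$, the pullback of $M_{X}$ to $X\times_{S}S'$ is again fine and saturated, so no saturation intervenes and the underlying scheme of the f.s.\ fibre product $X^{\log}\times^{\log}_{S^{\log}}S'^{\log}$ is $X\times_{S}S'$; taking $X^{\log}=\cC^{\log}_{\lreg}$ (resp.\ $\cC^{\log}_{\ncd}$) gives assertion~1.

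For assertions~2 and~3 the first step is to check the hypotheses of Lemma~\ref{forbasechange} for $\cC^{\log}=\cC^{\log}_{\ncd}$. In case~2 ($k'/k$ separable), base change by a separable field extension preserves regularity, so $(\cC_{\ncd,k,\red}\setminus F(\cC_{\ncd})^{(2)})_{k'}$ is regular, hence normal, and $k(c)\otimes_{k}k'$ is reduced for each $c\in F(\cC_{\ncd})^{(2)}$. In case~3 ($C^{\log}$ has log smooth reduction), $\cC^{\log}_{\ncd}$ is a log smooth model by Proposition~\ref{logsmoothcase}, so $(\cC_{\ncd,k}\setminus F(\cC_{\ncd})^{(2)})_{\red}$ is smooth over $k$ by Lemma~\ref{F1case}.1 (hence stays regular after base change to $k'$) and $k(c)/k$ is separable for each $c\in F(\cC_{\ncd})^{(2)}$ by Lemma~\ref{logsmoothchart}, so again $k(c)\otimes_{k}k'$ is reduced. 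Thus Lemma~\ref{forbasechange}.1 shows $\cC_{\ncd,O_{K'}}$ is a regular n.c.d.\ model of $C'^{\log}$ (in particular $C'^{\log}$ satisfies~(F) by Remark~\ref{why(F)}.2, so $\cC'^{\log}_{\ncd}$ and $\cC'^{\log}_{\lreg}$ are defined). Since $2g+r-2>0$ excludes $(g,r)=(0,0),(0,2)$, Lemma~\ref{forbasechange}.3 shows that a prime divisor of $\cC_{\ncd,O_{K'}}$ supported in the special fibre lies in $\cE_{=-1}(\cC_{\ncd,O_{K'}})$ iff its image in $\cC_{\ncd}$ lies in $\cE_{=-1}(\cC_{\ncd})$, which is empty by minimality of $\cC^{\log}_{\ncd}$. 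Hence $\cC_{\ncd,O_{K'}}$ is a minimal regular n.c.d.\ model of $C'^{\log}$, so $\cC_{\ncd,O_{K'}}\simeq\cC'_{\ncd}$ by Lemma~\ref{uniqueNCD}.

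It then remains to identify $\cC_{\lreg,O_{K'}}$ with $\cC'_{\lreg}$. By Theorem~\ref{logreglem} and Notation-Definition~\ref{lregnota}, $\cC'_{\lreg}$ is the contraction inside $\cC'_{\ncd}=\cC_{\ncd,O_{K'}}$ of $\bigcup\cE_{\leq-2}(\cC_{\ncd,O_{K'}})$, which by Lemma~\ref{forbasechange}.4 (valid since $(g,r)\neq(1,0)$) is exactly the preimage of $\bigcup\cE_{\leq-2}(\cC_{\ncd})$. On the other hand, base-changing the contraction $\cC_{\ncd}\to\cC_{\lreg}$, whose exceptional divisor is $\bigcup\cE_{\leq-2}(\cC_{\ncd})$ by Theorem~\ref{logreglem}, gives a proper birational morphism $\cC_{\ncd,O_{K'}}\to\cC_{\lreg,O_{K'}}$ contracting precisely the same divisors and an isomorphism elsewhere. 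Once one knows $\cC_{\lreg,O_{K'}}$ is normal, it therefore satisfies the characterizing property of the contraction produced in Theorem~\ref{logreglem}, so $\cC_{\lreg,O_{K'}}\simeq\cC'_{\lreg}$; and normality I would deduce from assertion~1: $\cC_{\lreg,O_{K'}}$ is the underlying scheme of $\cC^{\log}_{\lreg}\times^{\log}_{(\Spec O_{K})^{\log}}(\Spec O_{K'})^{\log}$, which in case~3 is log smooth over the log regular log scheme $(\Spec O_{K'})^{\log}$ (as $\cC^{\log}_{\lreg}$ is log smooth over $(\Spec O_{K})^{\log}$ by Proposition~\ref{logsmoothcase} and log smoothness is stable under base change), hence is log regular and in particular has normal underlying scheme; in case~2, normality can instead be read off directly, since $k'/k$ separable makes $\cC_{\lreg}\times_{O_{K}}O_{K'}\to\cC_{\lreg}$ flat with regular fibres.

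The step I expect to be the real obstacle is this last one — showing the Lipman contraction defining $\cC_{\lreg}$ commutes with the base change $O_{K}\to O_{K'}$ — since it forces one to control the normality of $\cC_{\lreg,O_{K'}}$ and, if one argues via direct images, the compatibility of $\pi_{\ast}\mathcal{O}_{\cC_{\ncd}}$ (and the vanishing of $R^{q}\pi_{\ast}$ for $q>0$, coming from the rational singularities of $\cC_{\lreg}$) with flat base change. Routing the normality through log regularity of the f.s.\ fibre product, as above, is what makes assertion~1 and the strictness of $(\Spec O_{K'})^{\log}\to(\Spec O_{K})^{\log}$ genuinely useful rather than merely formal.
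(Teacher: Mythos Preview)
Your proposal is correct and follows exactly the paper's (very terse) approach: assertion~1 via strictness of $(\Spec O_{K'})^{\log}\to(\Spec O_{K})^{\log}$, and assertions~2 and~3 by verifying the hypotheses of Lemma~\ref{forbasechange} (using separability of $k'/k$ in case~2, and Proposition~\ref{logsmoothcase} together with Lemmas~\ref{F1case}.1 and~\ref{logsmoothchart} in case~3) and then invoking Lemma~\ref{uniqueNCD} and Theorem~\ref{logreglem}. Your explicit treatment of why the Lipman contraction commutes with base change---via normality of $\cC_{\lreg,O_{K'}}$, routed through log regularity in case~3 and through separability of $k'/k$ (plus smoothness of the generic fibre $C_{K'}$ over $K'$, which handles the generic points regardless of whether $K'/K$ is separable) in case~2---fills in a step the paper's one-line proof leaves implicit.
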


\begin{proof}
Assertion 1 follows from the fact that $(\Spec O_{K'})^{\log}\to(\Spec O_{K})^{\log}$ is a strict morphism of log schemes. 
Assertions 2 follows from Lemmas \ref{forbasechange}.
Assertion 3 follows from Lemmas \ref{F1case}.1, \ref{fundlogsm}.3, and \ref{forbasechange} and Proposition \ref{logsmoothcase}.
\end{proof}

\subsection{Stable models of curves}
\label{stablesubsection}

In this subsection, we discuss stable models and give a generalization of \cite[Theorem (2.4)]{DM}.
Recall that, as explained in Notation-Definition \ref{models}.2, stable models can be regarded as log smooth models.

\begin{prop}
\label{stablecoin}
Suppose that $2g+r-2>0$ and $C^{\log}$ has stable reduction.
Then $\cC_{\lreg}^{\log}$ is a stable model of $C^{\log}$.
Moreover, $\cC_{\lreg}^{\log}$ is an absolutely minimal log regular model.
\end{prop}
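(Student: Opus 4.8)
The plan is to show that the stable model, equipped with its canonical log structure, is isomorphic to $\cC_{\lreg}^{\log}$, after which both assertions follow from results already in hand. First I would note that, by Notation-Definition \ref{models}.2, stable reduction implies log smooth reduction, so Proposition \ref{logsmoothcase} already gives that $\cC_{\lreg}^{\log}$ (and $\cC_{\ncd}^{\log}$) is a log smooth model of $C^{\log}$; thus it will suffice to identify its underlying scheme with a stable curve. To do this, fix a stable curve $(\cC_{\st},\cD_{\st})$ over $O_{K}$ with generic fibre $(C,D)$. By Notation-Definition \ref{models}.2 there is a canonical log structure on $\cC_{\st}$, log smooth over $(\Spec O_{K})^{\log}$, whose associated toric pair is $(\cC_{\st},\cC_{\st}\setminus(\cC_{\st,k}\cup\cD_{\st}))$; hence $\cC_{\st}^{\log}$ is a log smooth, and in particular a log regular, model of $C^{\log}$.

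The heart of the argument is the claim that $\cE(\cC_{\st})=\emptyset$. Suppose, for contradiction, that $F$ lies in $\cE_{\bP,2}(\cC_{\st})\cup\cE_{\bP,1}(\cC_{\st})\cup\cE_{0,1'}(\cC_{\st})\cup\cE_{\node}(\cC_{\st})$, and pick a geometric point $\overline{t}\to\Spec k$. Since $(\cC_{\st},\cD_{\st})$ is a stable curve of type $(g,r)$, the geometric fibre $\cC_{\st,\overline{t}}$ is reduced with only ordinary double points and every nonsingular rational component $E$ of it satisfies $\sharp(E\cap\cD_{\st,\overline{t}})+\sharp(E\setminus\cC_{\st,\overline{t}}^{\mathrm{sm}})\geq 3$. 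As $\cC_{\st,k}$ is therefore geometrically reduced, the residue field of $F$ is separable over $k$, and one analyses $F\times_{k}\overline{k}$: in the $\cE_{\bP,2}$, $\cE_{\bP,1}$, $\cE_{0,1'}$ cases it is a disjoint union of copies of $\bP^{1}_{\overline{k}}$ (using the description in \ref{badcurves} for a conic), while in the $\cE_{\node}$ case it is a disjoint union of pairs of copies of $\bP^{1}_{\overline{k}}$, each pair meeting transversally at one point (again via the structure in \ref{badcurves}, the node of $F$ having conjugate branches). In every case each of these $\bP^{1}_{\overline{k}}$'s is a nonsingular rational component of $\cC_{\st,\overline{t}}$ which meets the rest of $\cC_{\st,\overline{t}}\cup\cD_{\st,\overline{t}}$ in at most two points — namely the geometric points lying over the single closed point at which $F$ meets the rest of $\cC_{\st,k}\cup\cD_{\st}$, together with at most one further point internal to $F\times_{k}\overline{k}$ in the $\cE_{\node}$ case — contradicting the stability inequality. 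Hence $\cE(\cC_{\st})=\emptyset$.

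Finally, since $\cC_{\st}^{\log}$ is a log regular model with $\cE(\cC_{\st})=\emptyset$ and $2g+r-2>0$, Proposition \ref{absoluetness}.3 gives a canonical isomorphism $\cC_{\st}^{\log}\simeq\cC_{\lreg}^{\log}$ and shows that $\cC_{\lreg}^{\log}$ is an absolute minimal log regular model. Passing to underlying schemes, $\cC_{\lreg}\simeq\cC_{\st}$ is a stable curve over $O_{K}$, and as $\cC_{\lreg}^{\log}$ is a log smooth model this is precisely the statement that $\cC_{\lreg}^{\log}$ is a stable model of $C^{\log}$. I expect the main obstacle to be purely bookkeeping: carrying out the four-case analysis of $\cE(\cC_{\st})=\emptyset$ cleanly, in particular the $\cE_{\node}$ case and the degenerate configurations where $F$ exhausts the special fibre (where one additionally checks the conclusion against the constraint $2g+r-2>0$ and the rank of $\cD_{\st}$), rather than any conceptual difficulty.
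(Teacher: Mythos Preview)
Your proposal is correct and follows essentially the same route as the paper: take the stable model $\cC_{\st}^{\log}$, observe that it is log regular (indeed log smooth), check $\cE(\cC_{\st})=\emptyset$, and apply Proposition \ref{absoluetness}.3. The paper's proof is simply the one-line version of this, leaving the verification of $\cE(\cC_{\st})=\emptyset$ implicit; your case analysis makes this explicit and is fine, though the reference to \ref{badcurves} in the $\cE_{0,1'}$ case is misplaced (that section concerns nodal curves, whereas here you only need that a smooth geometrically connected genus $0$ curve becomes $\bP^{1}$ over $\overline{k}$).
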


\begin{proof}
As explained in Notation-Definition \ref{models}, the stable model is a log smooth model.
Hence, Proposition \ref{stablecoin} follows from Proposition \ref{absoluetness}.3.
\end{proof}

\begin{cor}
\label{stablebasechange}
Suppose $2g+r-2>0$.
\begin{enumerate}
\item
Let $K'$ be a discrete valuation field over $K$ satisfying the assumption of Proposition \ref{basechange}.2.
Then $C^{\log}$ has stable reduction if and only if $(C_{K'})^{\log}$ does.
\item
Suppose that $C^{\log}$ has log smooth reduction.
Let $K'$ be a discrete valuation field over $K$ satisfying the assumption of Proposition \ref{basechange}.3.
Then $C^{\log}$ has stable reduction if and only if $(C_{K'})^{\log}$ does.
\end{enumerate}
\end{cor}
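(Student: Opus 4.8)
The plan is to reduce both assertions to Proposition \ref{basechange} together with the following characterisation, valid for every log regular curve $X^{\log}=(X,D_X)$ over a discrete valuation ring $O$ with $2g+r-2>0$: \emph{$X^{\log}$ has stable reduction if and only if $\cX_{\lreg}$ is a stable curve over $O$}. The ``if'' direction is immediate, since the generic fibre of $\cX_{\lreg}$ is $(X,D_X)$ by the definition of a model, so a stable curve structure on $\cX_{\lreg}$ exhibits stable reduction; the ``only if'' direction is exactly Proposition \ref{stablecoin}. Applying this to $C^{\log}$ over $O_K$ and to $C_{K'}^{\log}$ over $O_{K'}$, and using $\cC_{\lreg,O_{K'}}=\cC'_{\lreg}$ (Proposition \ref{basechange}.2 in case 1, Proposition \ref{basechange}.3 in case 2), the corollary becomes the statement that $\cC_{\lreg}$ is a stable curve over $O_K$ if and only if $\cC_{\lreg,O_{K'}}=\cC_{\lreg}\times_{\Spec O_K}\Spec O_{K'}$ is a stable curve over $O_{K'}$.

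For the forward half of this equivalence I would invoke that the base change of a stable curve along $\Spec O_{K'}\to\Spec O_K$ is again a stable curve: every condition in Definition \ref{curvedfn} --- properness, flatness, the finite \'etale condition on $D$, the inclusion $D\subset C^{\mathrm{sm}}$, the inequality $\sharp(E\cap D_t)+\sharp(E\setminus C^{\mathrm{sm}}_t)\geq 3$ on nonsingular rational components of geometric fibres, and $2g+r-2>0$ --- is preserved under base change, the fibre conditions being imposed only on geometric fibres, which do not change. For the reverse half one uses faithfully flat descent. Here $O_{K'}$ is a flat, hence faithfully flat, local $O_K$-algebra; $\cC_{\lreg}$ is projective over $O_K$ by Lemma \ref{fundlog}.2 and flat over $O_K$ (being integral and dominant over the discrete valuation ring $O_K$). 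Properness, flatness, finiteness and \'etaleness are fpqc-local on the base, so it remains to control the geometric fibres of $\cC_{\lreg}/\Spec O_K$: any geometric point $\Spec\Omega\to\Spec O_K$ admits an algebraically closed extension $\Omega\hookrightarrow\Omega'$ through which it factors over $\Spec O_{K'}$ (pick a point of $\Spec O_{K'}$ over the image point, using surjectivity, and enlarge $\Omega$), and over $\Omega'$ the fibre of $\cC_{\lreg}$ is, via $\cC_{\lreg,O_{K'}}=\cC_{\lreg}\times_{O_K}O_{K'}$ and the hypothesis, the corresponding geometric fibre of a stable curve; since ``connected nodal of arithmetic genus $g$ with $r$ disjoint smooth marked points satisfying the stability inequality'' is insensitive to extension of algebraically closed fields, the same holds over $\Omega$. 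Together with descent of finiteness and \'etaleness of $\cD_{\lreg}$ and of the inclusion $\cD_{\lreg}\subset\cC_{\lreg}^{\mathrm{sm}}$ along the faithfully flat cover, this shows $\cC_{\lreg}\to\Spec O_K$ is a stable curve.

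This proves assertion 1 (Proposition \ref{basechange}.2 applies because $K'$ satisfies exactly its hypotheses) and assertion 2 (Proposition \ref{basechange}.3 applies because $C^{\log}$ is assumed to have log smooth reduction), the two arguments being otherwise identical. The only substantial inputs are Propositions \ref{stablecoin} and \ref{basechange}, already established; I do not expect a real obstacle, the most delicate point being to ensure that \emph{every} geometric fibre of $\cC_{\lreg}/\Spec O_K$, not merely those lying below a pre-chosen geometric point of $\Spec O_{K'}$, is controlled, which is exactly what the extension-of-algebraically-closed-fields step above takes care of.
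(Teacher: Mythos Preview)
Your proof is correct and follows essentially the same route as the paper: reduce to the characterisation ``$C^{\log}$ has stable reduction $\Leftrightarrow$ $\cC_{\lreg}$ is a stable curve'' via Proposition \ref{stablecoin}, then use Proposition \ref{basechange}.2 (resp.\ \ref{basechange}.3) to identify $\cC_{\lreg,O_{K'}}$ with $\cC'_{\lreg}$, and finally observe that the stable curve property transfers in both directions along the faithfully flat extension $O_K\to O_{K'}$. The paper's proof is a one-line citation of Propositions \ref{basechange}.1--3 and \ref{stablecoin}; you have simply unpacked the base-change/descent step for the stable curve condition, which the paper leaves implicit.
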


\begin{proof}
Assertions follow from Propositions \ref{basechange}.1, \ref{basechange}.2, \ref{basechange}.3, and \ref{stablecoin}.
\end{proof}

\begin{prop}
\label{prime-to-p}
Suppose that $2g+r-2>0$.
If there exists a log smooth model $\cC^{\log}=(\cC,\cD)$ of $C^{\log}$ satisfying that $\cC_{k}$ is reduced, then $C^{\log}$ has stable reduction.
In particular, if $C^{\log}$ has log smooth reduction, $\cC_{\lreg,k}$ is reduced if and only if $C^{\log}$ has stable reduction.
\end{prop}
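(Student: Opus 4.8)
\emph{Strategy.}
I will prove the first assertion by showing that $\cC^{\log}_{\lreg}$ is a stable model of $C^{\log}$; the ``in particular'' clause then follows formally. Indeed, if $C^{\log}$ has log smooth reduction and $\cC_{\lreg,k}$ is reduced, then $\cC^{\log}_{\lreg}$ is a log smooth model by Proposition \ref{logsmoothcase}, so the first assertion applies; conversely, if $C^{\log}$ has stable reduction then $\cC^{\log}_{\lreg}$ is a stable model by Proposition \ref{stablecoin}, whence $\cC_{\lreg,k}$ is reduced. Since the given $\cC^{\log}$ witnesses that $C^{\log}$ has log smooth reduction, Corollary \ref{stablebasechange}.2 lets me assume $k$ algebraically closed, after replacing $K$ by a discrete valuation field $K'$ over $K$ with algebraically closed residue field and ramification index $1$: the base change of $\cC^{\log}$ along the strict morphism $(\Spec O_{K'})^{\log}\to(\Spec O_{K})^{\log}$ is again a log smooth model of $(C_{K'})^{\log}$ (as in the proof of Proposition \ref{basechange}.1), and its special fibre $\cC_k\times_k k'$ is reduced because $\cC_k=\cC_{k,\red}$, being a nodal curve over $k$ by Proposition \ref{genK}, is geometrically reduced. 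By Proposition \ref{logsmoothcase}, $\cC^{\log}_{\ncd}$ and $\cC^{\log}_{\lreg}$ are then log smooth models.

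\emph{Reducedness of $\cC_{\lreg,k}$.}
This is the heart of the argument, and I would carry it out in three stages, each of which preserves multiplicities of components of the special fibre. (i) The minimal log desingularisation $\widetilde\cC^{\log}$ is a regular n.c.d.\,model (Lemma \ref{fundlog}.1), and $\widetilde\cC_k$ is reduced: away from the non-regular points of $\cC$ one has $\widetilde\cC=\cC$, while at a non-regular point $c\in F(\cC)^{(2)}$ the exceptional divisors $E_1,\dots,E_m$ form a chain with defining vectors $l_1,\dots,l_m$ satisfying $l_{i-1}+a_il_i+l_{i+1}=0$ and $a_i\le-2$ (notation of \ref{conesetting}), and the multiplicity of $E_i$ in $\widetilde\cC_k=\mathrm{div}(\varpi)$ equals $l_i(\overline\varpi)$, where $\overline\varpi\in P_{\overline c}$ is the image of $\varpi$. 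The relations force the sequence $\bigl(l_i(\overline\varpi)\bigr)_{0\le i\le m+1}$ to be convex; the inner terms are $\ge1$ since $\overline\varpi\ne0$ and $P_{\overline c}$ is sharp, and the two end terms $l_0(\overline\varpi),l_{m+1}(\overline\varpi)$, being the multiplicities in $\cC_k$ of the branches of $\cC_k\cup\cD$ through $c$ (or $0$ for a horizontal branch), are $\le1$ because $\cC_k$ is reduced; hence the whole sequence is $\equiv1$. (ii) The canonical morphism $\widetilde\cC\to\cC_{\ncd}$ (Proposition \ref{mlregstr}) is a composition of blow-downs of exceptional curves of the first kind; pulling back $\mathrm{div}(\varpi)$ shows each such blow-down leaves the multiplicity of the strict transform of every component of the special fibre unchanged, so $\cC_{\ncd,k}$ is reduced. (iii) Likewise $\cC_{\ncd}\to\cC_{\lreg}$ is Lipman's contraction of $\cE_{\leq-2}(\cC_{\ncd})$ (Theorem \ref{logreglem}), and the same computation gives that $\cC_{\lreg,k}$ is reduced.

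\emph{$\cE(\cC_{\lreg})=\emptyset$ and the stable-curve axioms.}
Proposition \ref{absoluetness}.2 gives $\cE_{\bP,2}(\cC_{\lreg})=\cE_{0,1'}(\cC_{\lreg})=\cE_{\node}(\cC_{\lreg})=\emptyset$, and by Proposition \ref{absoluetness}.4 it remains to see $\cE_{\bP,1}(\cC_{\ncd})=\emptyset$. If $F\in\cE_{\bP,1}(\cC_{\ncd})$, so $F\simeq\bP^1_k$ meets the other components of $\cC_{\ncd,k}\cup\cD_{\ncd}$ at a single $k$-rational point $c$: since $\cC_{\ncd,k}$ is reduced and $\cC_{\ncd,k}=\mathrm{div}(\varpi)$, if the second branch through $c$ lies in $\cC_{\ncd,k}$ then $(F\cdot F)=-1$, so $F\in\cE_{=-1}(\cC_{\ncd})$ can be contracted (\ref{intersectiontheory}, \ref{existenceNCD}), contradicting minimality; if it lies in $\cD_{\ncd}$ then $\cC_{\ncd,k}=F\simeq\bP^1_k$, and since $\cD_{\ncd}$ is finite étale over $O_K$ (Lemmas \ref{fundlogsm} and \ref{ramificationindex}, using that $\cC_{\ncd,k}$ is reduced) meeting $F$ only at $c$, necessarily $(g,r)=(0,1)$, contradicting $2g+r-2>0$. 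Hence $\cE(\cC_{\lreg})=\emptyset$. Now I verify that $(\cC_{\lreg},\cD_{\lreg})$ is a stable curve of type $(g,r)$: properness, flatness and $2g+r-2>0$ are immediate; the generic fibre is $C$ with $D$ étale of rank $r$ inside $C^{\mathrm{sm}}$ (Remark \ref{why(F)}.3); the special fibre $\cC_{\lreg,k}$ is reduced (above) and nodal (Proposition \ref{genK}), and is connected of arithmetic genus $g$ because $\cC_{\lreg}$ is integral, normal and proper over $O_K$ (so $H^0(\cC_{\lreg},\cO)=O_K$, forcing the fibres geometrically connected) and $\chi(\cO_{\cC_{\lreg,k}})=\chi(\cO_{C})=1-g$; the scheme $\cD_{\lreg}$ is normal (Lemma \ref{fundlog}.3), hence isomorphic to $\cD_{\ncd}$ (the natural morphism is finite and birational onto a normal scheme), hence finite étale over $O_K$ of rank $r$ with trivial ramification by Lemmas \ref{fundlogsm} and \ref{ramificationindex}, and it avoids every node of $\cC_{\lreg,k}$ since a node is a point of $F(\cC_{\lreg})^{(2)}$ with exactly two branches, both in $\cC_{\lreg,k}$; finally the stability condition holds over the geometric generic point (as $C$ is smooth of genus $g$, with $r\ge3$ when $g=0$) and over the geometric special point, where a nonsingular rational component $E$ with at most two special points would lie in $\cE_{\bP,1}(\cC_{\lreg})\cup\cE_{\bP,2}(\cC_{\lreg})$, or else satisfy $E=\cC_{\lreg,k}$ and $(g,r)=(0,0)$ — both impossible. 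Thus $\cC^{\log}_{\lreg}$ is a stable model and $C^{\log}$ has stable reduction.

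\emph{Main obstacle.}
The delicate point is the reducedness step, and within it the multiplicity computation at the non-regular points of $\cC$: the reason a reduced special fibre survives the minimal (log) desingularisation is precisely the convexity of $\bigl(l_i(\overline\varpi)\bigr)_i$ forced by the chain relations with $a_i\le-2$. Everything else is bookkeeping with the results of Sections \ref{logregularsecton} and \ref{modelsection}.
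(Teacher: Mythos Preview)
Your proof is correct, but it takes a genuinely different route from the paper's.

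The paper works directly on the given model $\cC^{\log}$: after reducing to $k$ algebraically closed, it successively contracts elements of $\cE_{\bP,1}(\cC)$ and $\cE_{\bP,2}(\cC)$, checking at each step (via the chain relations (\ref{chainineq1})--(\ref{chainineq3})) that the resulting model is still log smooth with reduced special fibre, and arriving at a stable model after finitely many such contractions. Your approach instead pushes the reducedness through the fixed chain $\cC\to\widetilde{\cC}\to\cC_{\ncd}\to\cC_{\lreg}$ and then verifies the stable-curve axioms for $\cC^{\log}_{\lreg}$ directly. The core computation is the same --- the recursion $m_{i-1}+a_im_i+m_{i+1}=0$ with $a_i\le-2$ forces a convex sequence, so the interior multiplicities are sandwiched between the endpoint multiplicities --- but you apply it once globally to $\widetilde{\cC}\to\cC$, whereas the paper applies it locally and repeatedly as it contracts each unwanted component. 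Your route is arguably more structural given the machinery of Sections~\ref{modelsection} and~\ref{thirdsection} already in hand: it identifies the stable model as $\cC_{\lreg}$ rather than producing an unnamed one, and it avoids the iterative ``contract and recheck'' loop. The paper's route, on the other hand, is more self-contained within this proposition and makes the passage from an arbitrary log smooth reduced-fibre model to a stable one completely explicit.

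One minor remark: in your reducedness step~(i), when one branch at a non-regular point $c$ is horizontal (so $m_0=0$), your convexity argument actually yields a contradiction (since then $m_2=-a_1m_1\ge2$ while the convex bound forces $m_i\le1$), showing that $\cD$ cannot pass through non-regular points of $\cC$ --- you might make this explicit, as it is also what underlies your later claim that $\cD_{\lreg}$ lands in $\cC_{\lreg}^{\mathrm{sm}}$. Similarly, your argument for $\cE_{\bP,1}(\cC_{\ncd})=\emptyset$ in the horizontal-branch case is really just Lemma~\ref{genus01}.1, which you could invoke directly.
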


\begin{proof}
Note that the second assertion follows from the first assertion and Propositions \ref{logsmoothcase} and \ref{stablecoin}.
To show the first assertion, we may assume that $k$ is algebraically closed by Corollary \ref{stablebasechange}.2.
Since $\cC^{\log}$ is a log smooth model and $\cC_{k}$ is reduced, $\cC_{k}$ has only ordinary double points  by Proposition \ref{genK}.

Let $F$ be a prime divisor such that $F\in\cE_{\bP,1}(\cC)$ or $\Supp F\subset \cD$ is satisfied.
Write $F'$ for the (unique) prime divisor on $\cC$ which intersects $F$ whose support is contained in $\cC_{k}\cup\cD$.
Note that $\Supp F'\subset\cC_{k}$ holds by Lemma \ref{genus01}.1.
Write $c$ for the unique point of $F\cap F'$ and $E$ for the fiber of the morphism $\widetilde{\cC}\to\cC$ over $c$.
Since the multiplicity in $\widetilde{\cC}_{k}$ of the strict transform of $F'$ in $\widetilde{\cC}$ is $1$, we can show that the multiplicity of every irreducible component of $E$ is $1$ by using the same calculation as in (\ref{chainineq1}) and (\ref{chainineq2}).
It follows from this and by Lemmas \ref{ramificationindex} and \ref{fundlogsm} that $\cD$ is \'etale over $\Spec O_{K}$.
Suppose $F\in\cE_{\bP,1}(\cC)$ and write $\widetilde{F}$ for the strict transform of $F$.
Then we have $\widetilde{F}\in\cE_{\bP,1}(\cC)$ by Lemma \ref{Eproperty} and the multiplicity of $\widetilde{F}$ in $\widetilde{\cC}_{k}$ is also $1$.
Again by using the same calculation as in (\ref{chainineq1}), we obtain $(\widetilde{F}\cdot\widetilde{F})=-1$.
Then we can contract $\widetilde{\cC}$ along $\widetilde{F}\cup E$ to a regular n.c.d.\,model.
By using this contraction, we can contract $\cC$ along $F$ to a log smooth model whose special fiber is reduced.
Hence, we may assume $\cE_{\bP,1}(\cC)=\emptyset$.

Suppose that we have an element $F\in\cE_{\bP,2}(\cC)$.
Write $c_{1},c_{2}$ for the element of $F\cap F(\cC)^{(2)}$, $F_{1}$ and $F_{2}$ for the prime divisor on $\cC$ whose supports are contained in $\cC_{k}\cup\cD$ and intersecting $F$ at $c_{1}$ and $c_{2}$, respectively, and  $E_{1}$ and $E_{2}$ for the fiber of the morphism $\widetilde{\cC}\to\cC$ at $c_{1}$ and $c_{2}$, respectively.
Since we have $2g+r-2>0$, we may assume $\Supp F_{1}\subset\cC_{k}$ by Lemma \ref{genus01}.1.
Write $\widetilde{F}$ for the strict transform of $F$ in $\widetilde{\cC}$ (, which is contained in $\cE_{\bP,2}(\widetilde{\cC})$ by Lemma \ref{Eproperty}).
Note that the multiplicity of $\widetilde{F}$ in $\widetilde{\cC}_{k}$ is also $1$.
By using the same calculation as in (\ref{chainineq2}), we obtain $(\widetilde{F}\cdot\widetilde{F})\leq-2$.
Then we can contract $\widetilde{\cC}$ along $\widetilde{F}\cup E_{1}\cup E_{2}$ to a log regular model by Proposition \ref{logexcpetionaldivisor}.
By using this contraction, we can contract $\cC$ along $F$ to a log smooth model whose special fiber is reduced by Lemma \ref{logblowupet}.
Hence, we may assume $\cE_{\bP,2}(\cC)=\emptyset$.
It follows from these observations and Proposition \ref{genK} that $C^{\log}$ has a stable model.
\end{proof}

We give the proof of ``only if" part and a generalization of Corollary \ref{intromain1}.

\begin{thm}[cf.\,{\cite[Theorem (2.4)]{DM}}, Corollary \ref{intromain1}]
\label{generalmain}
Suppose that $2g+r-2>0$ and $D$ is \'etale over $\Spec K$.
$C^{\log}$ has stable reduction if and only if the following hold:
$J$ has stable reduction and the normalization of $\Spec O_{K}$ in each irreducible component of $D$ is unramified over $\Spec O_{K}$.
\end{thm}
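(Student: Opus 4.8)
The plan is to isolate the contribution of $D$ through the exact sequence (H), to use Saito's criterion to produce a log smooth model, and then to pass to an algebraically closed residue field where \cite[Theorem (2.4)]{DM} applies. For the \emph{``only if''} direction, let $(\cC,\cD)$ be a stable model of $(C,D)$. Since $\cD\to\Spec O_{K}$ is finite \'etale, the normalization of $\Spec O_{K}$ in each irreducible component of $D$ is finite \'etale, in particular unramified. To see that $J$ has stable reduction, one uses that $\Pic^{0}_{\cC/O_{K}}$ is a semi-abelian scheme over $O_{K}$ (the identity component of the Picard scheme of a proper flat family of nodal curves) which, by the N\'eron mapping property, is the identity component $\cJ^{0}$ of the N\'eron model, so $\cJ^{0}_{k}$ is semi-abelian. (Alternatively: stable reduction is log smooth reduction, so by Corollary \ref{stablebasechange}.2 one may pass to an extension $O_{K'}/O_{K}$ of ramification index $1$ with algebraically closed residue field, apply the easy implication of \cite[Theorem (2.4)]{DM} there, and descend via Grothendieck's criterion.)

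For the \emph{``if''} direction, \emph{Stage 1} is to show that $C^{\log}$ has log smooth reduction. By \cite[Exp.IX, Proposition 3.9]{SGA7} the hypothesis that $J$ has stable reduction means that $I_{K}$ acts unipotently on $H^{1}_{\text{\'et}}(C_{K^{\sep}},\Z_{l})$; as $l\neq p$, the image of $P_{K}$ there is a finite $p$-group and a unipotent element of finite order in characteristic zero is trivial, so $P_{K}$ acts trivially. The hypothesis that the normalization of $O_{K}$ in each component $\Spec L_{i}$ of $D$ is unramified means each $L_{i}/K$ is unramified, so every $K^{\sep}$-point of $D$ is $K^{sh}$-rational and $I_{K}$, hence $P_{K}$, acts trivially on $\Z_{l}\otimes_{\Z}\Z^{D(K^{\sep})}$ and on $H^{2}_{\text{\'et}}(C_{K^{\sep}},\Z_{l})$. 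Feeding this into (H), and using that an extension of $P_{K}$-modules on which $P_{K}$ acts through a finite $p$-group and whose sub and quotient are $P_{K}$-trivial is itself $P_{K}$-trivial (the relevant group of block-unipotent matrices embeds into a torsion-free $\Hom$-group), one obtains that $P_{K}$ acts trivially on $H^{1}_{\text{\'et}}((C\setminus D)_{K^{\sep}},\Z_{l})$. By \cite[Theorem 4.2 (3)$\Rightarrow$(1)]{Sai2}, $C^{\log}$ then has a log smooth model, exactly as in the proof of the ``if'' of Corollary \ref{intromain1}.

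\emph{Stage 2} is to show $\cC_{\lreg,k}$ is reduced. By Proposition \ref{logsmoothcase}, $\cC^{\log}_{\lreg}$ is a log smooth model, so by Proposition \ref{prime-to-p} it suffices to prove reducedness of $\cC_{\lreg,k}$. Proposition \ref{basechange}.3 shows the multiplicities of $\cC_{\lreg,k}$ are unchanged under extensions $O_{K'}/O_{K}$ of ramification index $1$, and $J_{K'}$ still has stable reduction (since $I_{K'}\hookrightarrow I_{K}$), so we may assume $k$ algebraically closed. Over an algebraically closed residue field one concludes case by case: for $g\geq2$ and $D=\emptyset$ by \cite[Theorem (2.4)]{DM} with \cite[Proposition 2.3]{DM}; for $g\geq2$ and general \'etale $D$ by reducing to $D=\emptyset$, since then $(C,D)$ has stable reduction if and only if $C$ does (separate the finitely many distinct sections coming from $D$ by blow-ups, the bound $g\geq2$ guaranteeing the result is again stable and that contraction recovers the stable model of $C$); for $g=0$ because $C\simeq\bP^{1}$ and $\overline{M}_{0,r}$ is proper; and for $g=1$ from the Kodaira--N\'eron classification, the minimal regular model of a genus-one curve having reduced special fiber precisely when its Jacobian has good or multiplicative reduction. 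Hence $\cC_{\lreg,k}$ is reduced and $C^{\log}$ has stable reduction by Proposition \ref{prime-to-p}.

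The main obstacle will be Stage 2 in the pointed and low-genus cases: the implication ``$J$ has stable reduction $\Rightarrow$ $\cC_{\lreg,k}$ reduced'' over an algebraically closed residue field is not literally \cite[Theorem (2.4)]{DM}, so one must either carry out the blow-up/contraction reductions above and verify they preserve stability, or argue directly from the structure of $\cC^{\log}_{\lreg}$ obtained in Section \ref{lregsubsection} --- in particular $\cE_{\bP,2}(\cC_{\lreg})=\cE_{0,1'}(\cC_{\lreg})=\cE_{\node}(\cC_{\lreg})=\emptyset$ from Proposition \ref{absoluetness}.2 --- combined with the classical relation between the multiplicities of a log smooth model and the unipotent part of the special fiber of the N\'eron model of the Jacobian. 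A secondary, bookkeeping-heavy point is Stage 1: checking that ``the normalization of $O_{K}$ in each component of $D$ is unramified'' is exactly ``$I_{K}$ fixes $D(K^{\sep})$'', the extension argument applied to (H), and the precise form in which \cite[Theorem 4.2]{Sai2} is invoked in the pointed setting.
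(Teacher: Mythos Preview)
Your proposal is correct and follows essentially the same route as the paper: use \cite[Exp.IX, Proposition 3.9]{SGA7} and \cite[Theorem 4.2]{Sai2} to obtain log smooth reduction, base change to algebraically closed residue field via Proposition \ref{basechange}.3, split on $g$, and blow up at smooth points to adjoin the remaining sections of $D$. Two minor divergences are worth flagging. First, the paper treats the cases $g=0,1,\geq 2$ uniformly by choosing a minimal subdivisor $D_{0}\leq D$ (of degree $3$, $1$, $0$ respectively) so that $(C,D_{0})$ is already hyperbolic, shows its minimal regular n.c.d.\ model has reduced special fiber, and then blows up to incorporate $D\setminus D_{0}$; this avoids your separate appeals to properness of $\overline{M}_{0,r}$ and the Kodaira--N\'eron classification, and then verifies directly that $\cC_{\lreg}$ is stable rather than going through Proposition \ref{prime-to-p}. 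Second, for the ``only if'' direction the paper (see Remark \ref{betterBLR}) does \emph{not} apply $\Pic^{0}$ directly to the pointed stable model $(\cC,\cD)$, because the available references (\cite[Theorem 9.4/1]{BLR}, \cite[Proposition 4.3]{D}) ultimately rely on \cite[Corollary (1.7)]{DM}, which requires an unpointed stable curve; instead it first uses Proposition \ref{prime-to-p} to produce a stable model of $(C,\emptyset)$ and applies $\Pic^{0}$ there. Your alternative descent argument via Corollary \ref{stablebasechange}.2 sidesteps this issue cleanly.
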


\begin{proof}
First, we suppose that $J$ has stable reduction and the normalization of $\Spec O_{K}$ in each irreducible component of $D$ is unramified over $\Spec O_{K}$.
We show $C^{\log}$ has stable reduction.
By considering the discussion in the proof of ``if" of Corollary \ref{intromain1} (even in the case where $D\neq\emptyset$), we may assume that $k$ is algebraically closed, $O_{K}$ is strictly henselian, and $\cC_{\lreg}^{\log}$ is a log smooth model.
Then $D$ is the disjoint union of finite copies of $\Spec K$.
If $g=0$ (resp.\,$g=1$; $g\geq 2$), let $D_{0}$ be a reduced effective divisor of degree $3$ satisfying $D_{0}\leq D$ (resp.\,a prime divisor satisfying $D_{0}\leq D$; the divisor $0$) and $(\cC_{0},\cD_{0})$ the minimal regular n.c.d.\,model of $(C,D_{0})$.
Note that $\cC_{0,k}$ is reduced since $(\cC_{0},\cD_{0})$ is isomorphic to $(\bP^{1}_{O_{K}},[0]+[1]+[\infty])$ (resp.\,by, for example, \cite[THEOREM (3.8)]{Sai1} and Lemma \ref{ramificationindex}; by \cite[Proposition (2.3)]{DM} and \cite[Theorem (2.4)]{DM}).
Since sections of the structure morphisms of regular models over $O_{K}$ factor through the smooth locus of the models (cf.\,the proof of \cite[Tag:0CE8]{Stacks} or \cite[Proposition 3.1.2]{BLR}), we obtain a regular n.c.d.\,model of $C^{\log}$ as an iterate of a blow-up at a smooth closed point.
Therefore, there exists a regular n.c.d.\,model whose special fiber is reduced, which shows $\cC_{\ncd,k}$ is also reduced.
Since $k$ is algebraically closed, a prime divisor $F$ on $\cC_{\ncd}$ isomorphic to $\bP^{1}_{k}$ whose support is contained in $\cC_{\ncd,k}$ satisfies $\sharp(F\cap F(\cC_{\ncd})^{(2)})=-(F\cdot F)$.
From this observation and the construction of $\cC_{\lreg}$, $\cC_{\lreg}$ satisfies the third condition in the definition of stable curves in \ref{curvedfn}.
Moreover, since the structure morphism $\cC_{\lreg}^{\log}\to(\Spec O_{K})^{\log}$ is log smooth by Proposition \ref{logsmoothcase} and $\cC_{\lreg,k}$ is reduced, $\cC_{\lreg,k}$ has only ordinary double points by Proposition \ref{genK}.
Therefore, $\cC_{\lreg}^{\log}$ is a stable curve over $(\Spec O_{K})^{\log}$.

Next, suppose that $C^{\log}$ has stable reduction (and hence $(\cC_{\lreg})^{\log}$ is a stable model by Proposition \ref{stablecoin}).
The desired assertion for $D$ follows from the definition of stable curves.
To show that $J$ has stable reduction, we may assume that $O_{K}$ is strictly henselian.
If $g=0$, we have $J\simeq\Pic^{0}_{C/K}\simeq \Spec K$.
If $g=1$, by fixing an arbitrary $K$-rational point of $C$ (which exists since $D\neq\emptyset$ and $O_{K}$ is strictly henselian), we can show that $(C\simeq)J$ has stable reduction.
Thus, we may assume $g\geq 2$.

Note that $\cC_{\lreg,k}$ is reduced and smooth around $\cD\cap F(\cC_{\lreg})^{(2)}$.
Therefore, $(\cC_{\lreg},\emptyset)$ is a log smooth model of $(C,\emptyset)$.
By these observation, $C$ has stable reduction by Proposition \ref{prime-to-p}.
Write $\cC'^{\log}$ for the stable model of $(C,\emptyset)$.
By \cite[Proposition 4.3]{D} (cf.\,\cite[Theorem 9.4.1]{BLR} and Remark \ref{betterBLR}), $\Pic^{0}_{\cC'/S}$ is a semi-abelian scheme over $S$.
Since the generic fiber of $\Pic^{0}_{\cC'/S}$ is isomorphic to $J$, we have $\cJ^{0}\simeq \Pic^{0}_{\cC'/S}$ by \cite[Proposition 7.4.3]{BLR}, where $\cJ^{0}$ is the open subgroup scheme whose special fiber is the identity component of $\cJ_{k}$.
Therefore, $J$ has stable reduction.
\end{proof}

\begin{rem}
\label{betterBLR}
In the second paragraph of the proof of Theorem \ref{generalmain}, one might think applying \cite[Theorem 9.4/1]{BLR} would make the discussion simpler.
In \cite[Theorem 9.4/1]{BLR} and \cite[Proposition 9.4/4]{BLR}, (proper) semi-stable curves are treated.
(Note that, in \cite[Proposition 4.3]{D}, only proper stable curves are treated.)
Moreover, in \cite[Definition 9.4/1]{BLR}, the definition of a (proper) semi-stable curve does not require the third and fourth conditions in the definition of stable curves in \ref{curvedfn}.
However, in the proof of \cite[Theorem 9.4/1]{BLR} and \cite[Proposition 9.4/4]{BLR}, \cite[Corollary (1.7)]{DM} is used.
Because \cite[Corollary (1.7)]{DM} can be applied to only proper stable curves, the author of the present paper considers the discussion in our proof is needed.
\end{rem}

\begin{cor}[cf.\,{\cite[Proposition 2.3 and Theorem (2.4)]{DM}}, {\cite[Exp.IX, Proposition 3.9]{SGA7}}, {\cite[Theorem 4.8]{Sai1}}, {\cite[Tag:0E8D, 0CDH, and 0CDG]{Stacks}}, and Remark \ref{why(F)}.1]
\label{monodromy}
The following are equivalent:
\begin{enumerate}
\item
$C^{\log}$ has stable reduction.
\item
$\cC_{\lreg}^{\log}$ is a stable model of $C^{\log}$.
\item
$C^{\log}$ has a semi-stable model, i.e., a regular n.c.d.\,model whose special fiber has only ordinary double points.
(This condition is equivalent to the condition that $C^{\log}$ has semi-stable reduction in the usual sense (cf.\,\cite[Definition 1.6]{Sai2}.)
\item
$\cC_{\ncd}^{\log}$ is a semi-stable model.
\item
$C$ has a semi-stable model and the normalization of $\Spec O_{K}$ in each irreducible component of $D$ is unramified over $\Spec O_{K}$.
\item
The action of $I_{K}$ on $H^{1}_{\text{\'et}}((C\setminus D)_{K^{\sep}},\Z_{l})$ is unipotent and $D$ is unramified over $\Spec K$.
\item
$J$ has stable reduction and  the normalization of $\Spec O_{K}$ in each irreducible component of $D$ is unramified over $\Spec O_{K}$.
\end{enumerate}
\end{cor}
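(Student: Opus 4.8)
The plan is to prove the chain of equivalences by organizing them around the ``central'' equivalent conditions (1)$\Leftrightarrow$(2), which we have already essentially established, and then routing every other condition through these. Concretely, I would show (1)$\Leftrightarrow$(2) first, then the cluster (3)$\Leftrightarrow$(4) on the n.c.d.\ side, then link the semi-stable conditions (3),(4),(5) to the stable conditions (1),(2), then handle the N\'eron model condition (7) via Theorem \ref{generalmain}, and finally the $\ell$-adic monodromy condition (6) by comparison with (5) or (7). Throughout I may freely reduce to the case where $O_{K}$ is strictly henselian (hence $k$ separably closed) whenever the statement to be checked is insensitive to base change along $O_{K}\to O_{K}^{sh}$; this is legitimate for (1) and (7) by Corollary \ref{stablebasechange}.1 and faithfully flat descent, for (2) by Proposition \ref{basechange}.1, for the n.c.d.\ and semi-stability conditions by Remark \ref{sepcase}.2, and for (6) since the $I_{K}$-action on $\ell$-adic cohomology is unchanged after replacing $O_{K}$ by $O_{K}^{sh}$.

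Here is the routing. The equivalence (1)$\Leftrightarrow$(2) is Proposition \ref{stablecoin} together with the trivial observation that a stable model, being a log smooth model whose underlying scheme is a stable curve, realizes $C^{\log}$ as having stable reduction; note that by definition $\cC_{\lreg}^{\log}$ is a model of $C^{\log}$ for which we must know $D$ is \'etale over $K$, which is automatic here since any of the conditions forces this (for (1), by the definition of stable curve; for (6),(7) it is stated explicitly). For (3)$\Leftrightarrow$(4): a semi-stable model is in particular a regular n.c.d.\ model whose special fiber has only ordinary double points; by Lemma \ref{uniqueNCD}, $\cC_{\ncd}^{\log}$ is dominated by no such model properly, and conversely, running the contraction procedure of \ref{existenceNCD} on a semi-stable model only contracts exceptional curves of the first kind, which preserves the ``only ordinary double points'' property — so $\cC_{\ncd}$ itself is semi-stable. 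For (2)$\Leftrightarrow$(4): if $\cC_{\lreg}^{\log}$ is a stable model then $C^{\log}$ has stable reduction, hence log smooth reduction, hence $\cC_{\ncd}^{\log}$ is a log smooth model by Proposition \ref{logsmoothcase}, and then by Proposition \ref{prime-to-p} (applied after noting $\cC_{\lreg,k}$ reduced) plus Proposition \ref{genK} its special fiber has only ordinary double points; conversely if $\cC_{\ncd}$ is semi-stable then it is a log smooth model with reduced special fiber by Lemma \ref{logblowupet}.2 and \ref{ncddef}, so $C^{\log}$ has stable reduction by Proposition \ref{prime-to-p}, hence (2) by Proposition \ref{stablecoin}. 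For (3)$\Leftrightarrow$(5): $(3)\Rightarrow(5)$ is immediate since a semi-stable model forces $D$ \'etale over $K$ and $\cD$ unramified (the components of $\cD$ meet the reduced special fiber transversally, so Lemma \ref{ramificationindex} gives ramification index $1$); conversely, given a semi-stable model of the unpointed $C$ and unramifiedness of $\cD$, one builds a semi-stable model of $C^{\log}$ by the section-lands-in-smooth-locus argument used in the proof of Theorem \ref{generalmain} (sections of regular models factor through the smooth locus by \cite[Tag:0CE8]{Stacks} or \cite[Proposition 3.1.2]{BLR}, so the successive blow-ups at the marked points keep the special fiber reduced). The equivalence with (7) is exactly Theorem \ref{generalmain} combined with (1). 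Finally (6)$\Leftrightarrow$(7): using the exact sequence (H) and the fact that $I_{K}$ acts trivially on $H^{2}_{\text{\'et}}(C_{K^{\sep}},\Z_{l})$, the $I_{K}$-action on $H^{1}_{\text{\'et}}((C\setminus D)_{K^{\sep}},\Z_{l})$ is unipotent iff the action on both $H^{1}_{\text{\'et}}(C_{K^{\sep}},\Z_{l})$ and $\Z_{l}\otimes_{\Z}\Z^{D(K^{\sep})}$ is unipotent; the first is the classical criterion (\cite[Exp.\,IX, Proposition 3.9]{SGA7}) for $J$ to have stable reduction, and unipotence on the permutation module $\Z_{l}\otimes\Z^{D(K^{\sep})}$ for a (pro-)profinite group acting through a finite quotient is equivalent to triviality of that action, i.e.\ to $D$ being split over $K^{sh}$, i.e.\ to each component of $D$ having unramified normalization over $O_{K}$ — and $D$ \'etale over $K$ is part of both (6) and (7).

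The main obstacle I expect is the careful bookkeeping in the semi-stable $\Leftrightarrow$ stable passage when $k$ is not separably closed: one must be sure that ``only ordinary double points'' in the scheme-theoretic (non-geometric) sense, the log smoothness criterion of Lemma \ref{logsmoothchart}, and the separability-of-residue-fields conditions all line up, and in particular that the non-split rational curves in $\cE_{0,1'}$ and $\cE_{\node}$ do not cause the contraction $\cC_{\ncd}\to\cC_{\lreg}$ to destroy or create bad points; but by the reduction to $O_{K}$ strictly henselian (Remark \ref{sepcase}.1, which kills $\cE_{0,1'}$ and $\cE_{\node}$) this difficulty evaporates, and the remaining work is the already-assembled machinery of Propositions \ref{logsmoothcase}, \ref{stablecoin}, \ref{prime-to-p}, \ref{genK} and Theorem \ref{generalmain}. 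A secondary point requiring a line of care is checking that each of (6) and (7) indeed forces $D$ to be \'etale over $K$ so that $\cC_{\lreg}^{\log}$ is defined and Theorem \ref{generalmain} applies — but this is explicitly built into the statements of (6) and (7), so it is a non-issue. Hence the proof is essentially a diagram-chase through results proven earlier in the paper, with the one genuinely new input being the elementary representation-theoretic remark identifying unipotence on a finite permutation module with triviality.
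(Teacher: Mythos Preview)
Your overall architecture is sound and close to the paper's: both proofs are diagram-chases through Proposition \ref{stablecoin}, Theorem \ref{generalmain}, the sequence (H), and \cite[Exp.\,IX, Proposition 3.9]{SGA7}. Two routings differ. First, the paper gets (1)$\Leftrightarrow$(4) in one line from Remark \ref{why(F)}.1 (i.e., the Stacks tags generalizing \cite[Proposition 2.3]{DM}), whereas you route through (2)$\Leftrightarrow$(4) with a hands-on argument. Second, to close the loop at (5) the paper proves (5)$\Rightarrow$(7) via the N\'eron-model machinery of \cite[9.2/8, 9.5/4, 9.5/5]{BLR}, while you prove (5)$\Rightarrow$(3) by the section-factors-through-smooth-locus trick (the same trick the paper already used inside the proof of Theorem \ref{generalmain}). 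Your route for (5) is arguably more self-contained; the paper's route avoids repeating that argument.

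There is one genuine gap in your (2)$\Rightarrow$(4). You invoke Proposition \ref{prime-to-p} ``after noting $\cC_{\lreg,k}$ reduced'' together with Proposition \ref{genK} to conclude that $\cC_{\ncd,k}$ has only ordinary double points. But Proposition \ref{genK} only tells you that $\cC_{\ncd,k,\red}$ is nodal, and Proposition \ref{prime-to-p} says nothing about the reducedness of $\cC_{\ncd,k}$ --- it concerns $\cC_{\lreg,k}$. What is missing is the statement that when $\cC_{\lreg}$ is stable, the minimal log desingularization $\cC_{\ncd}\to\cC_{\lreg}$ inserts only chains of multiplicity-$1$ rational curves (because the singularities of a stable model are \'etale-locally $xy=\varpi^{n}$, whose minimal resolution has reduced exceptional locus). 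You can either supply that local computation, or simply do what the paper does and invoke Remark \ref{why(F)}.1 (i.e., \cite[Tag:0CDG]{Stacks}) for (1)$\Leftrightarrow$(4) directly; the latter is cleaner and avoids the detour. The rest of your argument --- in particular (3)$\Leftrightarrow$(4) via contracting $\cE_{=-1}$-curves, and (6)$\Leftrightarrow$(7) via the permutation-module observation --- is correct and matches the paper's intent.
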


\begin{proof}
We have 4$\Rightarrow$3$\Rightarrow$5.
1$\Leftrightarrow$2 follows from Proposition \ref{stablecoin}.
6$\Leftrightarrow$7 follows from \cite[Exp.IX, Proposition 3.9]{SGA7} and the exact sequence (H).
1$\Leftrightarrow$7 follows from Theorem \ref{generalmain}.
1$\Leftrightarrow$4 follows from Remark \ref{why(F)}.1.
5$\Rightarrow$ 7 follows from \cite[Example 9.2/8]{BLR}, \cite[Theorem 9.5/4]{BLR}, and \cite[Remark 9.5/5]{BLR}.
(This also follows from the discussions in the second and third paragraphs of the proof of Theorem \ref{generalmain}.)
\end{proof}

\subsection{Multiplicities divisible by $p$}
\label{psubsection}

In this subsection, we discuss certain prime divisors whose supports are contained in the special fibers of log smooth models, that is, prime divisors with multiplicities divisible by $p$.
We also give a generalization of \cite[THEOREM (3.11)]{Sai1}, \cite[Theorem 4.2]{Sai2}.

First, we start by generalizing the discussions in the proof of \cite[Proposition 2.4]{MS} to our case.
(In \cite{MS}, treated log schemes are Zariski log regular, $k$ is assumed to be perfect, and $D$ is assumed to be $\emptyset$.
Note that, in \cite{MS}, higher dimensional varieties are treated.)

\begin{sub}[``$\omega_{\fp}$" in \cite{MS}]
\label{genMS}
Let $\cC^{\log}=(\cC,\cD)$ be a log regular model of $C^{\log}$ and $F$ an integral subscheme of $\cC_{k}$ of dimension $1$ whose multiplicity $m$ in $\cC_{k}$ is divisible by $p$.
Suppose that, around $F$, $\cC_{k}\cup\cD$ is a normal crossing divisor on $\cC$ and $\cC^{\log}$ is log smooth over $(\Spec O_{K})^{\log}$.

First, we see that $\cC^{\log}$ is Zariski log regular around $F$, or equivalently, $\cC_{k}\cup\cD$ is a simple normal crossing divisor on $\cC$ around $F$ (cf.\,Theorems \ref{logsummary}.3).
Let $c$ be a point of $F\cap F(\cC)^{(2)}$ such that $c$ is not contained in other irreducible components of $\cC_{k}\cup\cD$.
Write $\cC'^{\log}$ the log blow-up of $\cC^{\log}$ at $c$ (cf.\,Example \ref{maximallogblowup}), $E$ for the exceptional divisor, and $F'$ for the strict transform of $F$ in $\cC'$.
Then $\cC'^{\log}$ is log smooth over $(\Spec O_{K})^{\log}$ around $E$.
On the other hand, the multiplicity of $E$ in $\cC_{k}$ coincides with $2m$, which contradicts that $\cC'$ is log smooth over $(\Spec O_{K})^{\log}$ at $E\cap F'$ by Lemma \ref{logsmoothchart}.

Furthermore, it follows that $\cD\cap F=\emptyset$ from Lemmas \ref{ramificationindex} and \ref{fundlogsm}.
By Theorems \ref{logsummary}.3 and \ref{logsummary}.4, $F$ is normal at any point of $F\cap F(\cC)^{(2)}$.
Moreover, from this, we may assume that $F$ is smooth over $k$ by Lemma \ref{F1case}.1 and Lemma \ref{logsmoothchart}.
Then $(F,F\setminus F(\cC)^{(2)})$ is a toric pair and, again by Lemma \ref{logsmoothchart}, $F^{\log}:=(F,F\setminus F(\cC)^{(2)})$ is log smooth over $\Spec k$ (with the trivial log structure).
Therefore, we have $\Omega^{1}_{F^{\log}/k}=\Omega^{1}_{F/k(F)}(F\cap F(\cC)^{(2)})$.

By using the discussion of the proof of \cite[Proposition 2.4]{MS}, we define
$$\omega_{F}:=``\text{the image of }d\log\varpi(=\frac{d\varpi}{\varpi})"\in H^{0}(F,\Omega^{1}_{F^{\log}/k}(F\cap F(\cC)^{(2)})),$$
for which is written $\omega_{\fp}$ in \cite[Proposition 2.4]{MS}, in the following.
Let $x$ be a closed point of $F\setminus F(\cC)^{(2)}$ (resp.\,$F\cap F(\cC)^{(2)}$).
Take an affine open neighborhood $x\in U=\Spec A\subset \cC$ and elements $u,\varpi_{1}\in A$ (resp.\,elements $u,\varpi_{1},\varpi_{2}\in A$) satisfying the following:
(i) $U$ does not intersect any irreducible component of $\cC_{k}$ not containing $x$.
(ii) $F\simeq\Spec A/\varpi_{1}$.
(iii) If $x\in F\cap F(\cC)^{(2)}$, we have $F'\simeq A/\varpi_{2}$, where $F'(\neq F)$ is a prime divisor containing $x$.
(iv) $\varpi=u\varpi_{1}^{e_{1}}$ (resp.\,$\varpi=u\varpi_{1}^{e_{1}}\varpi_{2}^{e_{2}}$), where $e_{1}$ and $e_{2}$ are the multiplicities of $F$ and $F'$ in $\cC_{k}$, respectively.
(v) $u\in A^{\ast}$.
Then we have an equation obtained by eliminating the denominators of both sides of the equation
\begin{align}
\label{logcalcu}
\frac{d\varpi}{\varpi}=\frac{du}{u}+e_{1}\frac{d\varpi_{1}}{\varpi_{1}}\quad(\text{resp.\,}\frac{d\varpi}{\varpi}=\frac{du}{u}+e_{1}\frac{d\varpi_{1}}{\varpi_{1}}+e_{2}\frac{d\varpi_{2}}{\varpi_{2}})
\end{align}
by multiplying $\varpi(=u\varpi_{1}^{e_{1}})$ (resp.\,$\varpi(=u\varpi_{1}^{e_{1}}\varpi_{2}^{e_{2}})$).
We define
$$\omega_{F}|_{F\cap U}:=\frac{d\overline{u}}{\overline{u}}
\quad(\text{resp.\,}\omega_{F}|_{F\cap U}:=\frac{d\overline{u}}{\overline{u}}+e_{2}\frac{d\overline{\varpi_{2}}}{\overline{\varpi_{2}}})\in\Gamma(F\cap U,\Omega^{1}_{F^{\log}/k}(F\cap F(\cC)^{(2)})).$$
The well-definedness of the definition of $\omega_{F}|_{F\cap U}$ follows from (\ref{logcalcu}) and the fact that $e_{1}$ (resp.\,$e_{2}$) is divisible (resp.\,\textbf{not} divisible) by $p$ (which follows from Lemma \ref{logsmoothchart}).
By using the same discussion, we can glue each $\omega_{F}|_{F\cap U}$ and obtain the desired $\omega_{F}$.

Finally, we see that $\omega_{F}$ generates $\Omega^{1}_{F^{\log}/k}$.
By construction, $\omega_{F}$ generates $\Omega^{1}_{F^{\log}/k}$ at points in $F\cap F(\cC)^{(2)}$.
We show that $\omega_{F}$ generates $\Omega^{1}_{F^{\log}/k}$ at an arbitrary closed point $x\in F\setminus F(\cC)^{(2)}$ \'etale locally over $\cC$.
Let $(\cU,\N\to Q\to \Gamma(\cU,O_{\cU}))$ be a chart of the morphism $\cC^{\log}\to(\Spec O_{K})^{\log}$ satisfying (I) and (II) in \ref{chardefinition} and $y$ a point of $\cU$ over $x$.
Fix a lift $\varpi'_{1}$ of a generator of $Q/Q^{\ast}\simeq (M_{\cU}/O_{\cU}^{\ast})_{y}(\simeq \N)$ in $Q$ and shrink $\cU$ so that $\cU(=:\Spec A')$ is affine and $A'/\varpi'_{1}\simeq F|_{\cU}$ is satisfied.
Then we have a decomposition $Q=Q^{\ast}\langle\varpi'_{1}\rangle$.
Write $u'$ for the element of $Q^{\ast}$ satisfying $\varpi=u'(\varpi'_{1})^{e_{1}}$.
Since $\cU_{k}\to \Spec k[Q\setminus Q\varpi](=\Spec k[Q\setminus Q\varpi'_{1}])$ is \'etale by (I) in \ref{chardefinition}, the induced morphism
$$(\cU_{k,\red}\simeq)F|_{\cU}\to \Spec k[Q^{\ast}/Q^{\ast}_{\tor}](\simeq (\Spec k[Q\setminus Q\varpi'_{1}])_{\red})$$
is also \'etale.
From this, we have $Q^{\ast}/Q^{\ast}_{\tor}\simeq\Z$.
Write $\overline{u'}$ for the image of $u'$ in $Q^{\ast}/Q^{\ast}_{\tor}$.
Since $\sharp(Q^{\gp}/\langle u'(\varpi'_{1})^{e_{1}}\rangle)_{\tor}$ is not divisible by $p$ by (II) in \ref{chardefinition} and we have an identification $Q=Q^{\ast}_{\tor}\times(Q^{\ast}/Q^{\ast}_{\tor})\times\langle\varpi'_{1}\rangle$ and $p|e_{1}$, it holds that $\sharp(Q^{\ast}/Q^{\ast}_{\tor})/\langle \overline{u'}\rangle$ is not divisible by $p$.
Hence,
$$\frac{d\overline{u'}}{\overline{u'}}\in H^{0}(\Spec k[Q^{\ast}/Q^{\ast}_{\tor}],\Omega^{1}_{\Spec k[Q^{\ast}/Q^{\ast}_{\tor}]/k})$$
generates $\Omega^{1}_{\Spec k[Q^{\ast}/Q^{\ast}_{\tor}]/k}$.
Since $F|_{\cU}\to \Spec k[Q^{\ast}/Q^{\ast}_{\tor}]$ is \'etale, $\frac{d\overline{u'}}{\overline{u'}}$ also generates $\Omega^{1}_{F|_{\cU}/k}$.
By using this and the last discussion in the previous paragraph, we can show that $\omega_{F}$ generates $\Omega^{1}_{F^{\log}/k}$.
\end{sub}

\begin{prop}
\label{pstr}
Let $\cC^{\log}$ and $F$ be those in \ref{genMS} without the assumption that $\cC_{k}\cup\cD$ is a normal crossing divisor on $\cC$ around $F$.
\begin{enumerate}
\item
$\cD$ does not intersect $F$ and one of the following conditions is satisfied:
\begin{itemize}
\item[($(0,2)$)]
$F\in\cE_{\bP,2}(\cC)\cup\cE_{0,1'}(\cC)$ and $k(F)$ is separable over $k$.
\item[($\node$)]
$F=\cC_{k,\red}$, $\sharp F(\cC)^{(2)}=1$, the normalization of $F$ is a smooth curve over $k$ of type $(0,0)$, $F$ is smooth over $F\setminus F(\cC)^{(2)}$, and $F$ has an ordinary double point at the point of $F(\cC)^{(2)}$.
\item[($(1,0)$)]
$F=\cC_{k,\red}$, $k=k(F)$, and $F$ is a smooth curve of type $(1,0)$ over $k$ (in particular, we have $\sharp F(\cC)^{(2)}=0$).
\end{itemize}
If one of conditions (node) or ((1,0)) is satisfied, we have $(g,r)=(1,0)$.
\item
Suppose that $\cC_{k}\cup\cD$ is a normal crossing divisor on $\cC$ around $F$.
Then $\cD\cap F=\emptyset$ and one of conditions ($(0,2)$) or ($(1,0)$) is satisfied.
In particular, $F$ is smooth over $k$ and, at each point of $F\cap F(\cC)^{(2)}$, $F$ intersects another irreducible component of $\cC_{k}$ whose multiplicity is \textbf{not} divisible by $p$.
\end{enumerate}
\end{prop}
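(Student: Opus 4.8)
The plan is to establish assertion $2$ (the normal crossing case) first and then deduce assertion $1$ from it by passing to the minimal log desingularization.

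Under the hypotheses of assertion $2$ we are exactly in the situation of \ref{genMS}, so $\cD\cap F=\emptyset$, $F$ is smooth over $k$ (in particular $k(F)/k$ is separable), and $\omega_{F}$ globally generates $\Omega^{1}_{F^{\log}/k}=\Omega^{1}_{F/k(F)}(F\cap F(\cC)^{(2)})$, which is therefore a trivial line bundle. Hence $2g_{F}-2+\deg_{k(F)}(F\cap F(\cC)^{(2)})=0$ and $g_{F}\in\{0,1\}$. If $g_{F}=1$, then $F$ meets no other component of $\cC_{k}\cup\cD$, so $F=\cC_{k,\red}$ by connectedness, forcing $\cD=\emptyset$ and $r=0$; the genus formula \cite[Tag:0CA3]{Stacks} (as in (\ref{irrcase})) with $(F\cdot F)=0$ gives $g=1$, and as $\cC_{k,\red}=F$ is geometrically connected over $k$ ($\cC$ being a normal proper $O_{K}$-model of a geometrically connected curve) while $F$ is smooth and integral, we get $k(F)=k$; this is condition ($(1,0)$). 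If $g_{F}=0$, then $\deg_{k(F)}(F\cap F(\cC)^{(2)})=2$, so $F$ meets other components either at two $k(F)$-rational points --- forcing $F\cong\bP^{1}_{k(F)}$ and $F\in\cE_{\bP,2}(\cC)$ --- or at one closed point whose residue field is separable of degree $2$ over $k(F)$ (separability by Lemma \ref{logsmoothchart}), whence $F\in\cE_{0,1'}(\cC)$; this is condition ($(0,2)$). The final clause of assertion $2$ is immediate from Lemma \ref{logsmoothchart} and $p\mid m$.

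For assertion $1$, I would pass to the minimal log desingularization $\widetilde{\cC}^{\log}$, which is a regular n.c.d.\,model by Lemma \ref{fundlog}.1 and again a log smooth model by Lemma \ref{logblowupet}.1; the strict transform $\widetilde{F}$ of $F$ has the same generic point, hence the same multiplicity $m$, so assertion $2$ applies to $(\widetilde{\cC}^{\log},\widetilde{F})$. If $\widetilde{F}$ falls in case ($(1,0)$), then $\widetilde{\cC}_{k,\red}=\widetilde{F}$ is irreducible, so $\widetilde{\cC}\to\cC$ contracts nothing, $\cC=\widetilde{\cC}$ is already a regular n.c.d.\,model, and assertion $2$ applies to $\cC$ itself. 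Otherwise $\widetilde{F}$ is a smooth genus-$0$ curve in $\cE_{\bP,2}(\widetilde{\cC})\cup\cE_{0,1'}(\widetilde{\cC})$ with $\widetilde{\cD}\cap\widetilde{F}=\emptyset$, and the decisive dichotomy is whether the birational morphism $\widetilde{F}\to F$ induced by $\widetilde{\cC}\to\cC$ is an isomorphism; by Proposition \ref{genK} and the smoothness of $\widetilde{F}$, it fails to be one precisely when an exceptional chain of $\widetilde{\cC}\to\cC$ joins $\widetilde{F}$ to itself. If it is an isomorphism, $F$ is smooth over $k$; over any non-regular point $c'\in F$ the local structure is condition (Str$_{2}$), and by condition (Exc$_{x}$) (Theorem \ref{logsummary}.2) the exceptional chain there has all its components and intersection points rational over $k(c')$, so the residue fields of the points where $F$ meets other components coincide with those on $\widetilde{F}$; tracing these through the contraction places $F$ in $\cE_{\bP,2}(\cC)\cup\cE_{0,1'}(\cC)$, i.e.\,gives condition ($(0,2)$). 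If it is not an isomorphism, then necessarily $\widetilde{F}\in\cE_{\bP,2}(\widetilde{\cC})$ (the option $\cE_{0,1'}$ is excluded, its single intersection point lying on a chain joining $\widetilde{F}$ to a distinct component), the two $k(\widetilde{F})$-rational intersection points of $\widetilde{F}$ are joined by one contracted chain and are glued to a single ordinary double point of $F$, so $F$ meets no other component: thus $\cC_{k,\red}=F$, $\cD=\emptyset$, $\sharp F(\cC)^{(2)}=1$, the normalization of $F$ is $\widetilde{F}$ (smooth of type $(0,0)$ over $k$), and $F$ has an ordinary double point at that one point of $F(\cC)^{(2)}$ and is smooth elsewhere; that $(g,r)=(1,0)$ follows from $(\widetilde{F}\cdot\widetilde{F})\le-1$ and Lemma \ref{genus01}.3 or Lemma \ref{genus01}.4 applied on $\widetilde{\cC}$. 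This is condition ($(\node)$). Finally, to get $\cD\cap F=\emptyset$ in general: assertion $2$ gives $\widetilde{\cD}\cap\widetilde{F}=\emptyset$ on $\widetilde{\cC}$, and since $\widetilde{\cD}\to\cD$ is finite and birational onto the normal scheme $\cD$, hence an isomorphism, a point of $\cD\cap F$ would lie under an exceptional chain of $\widetilde{\cC}\to\cC$ joining $\widetilde{\cD}$ to $\widetilde{F}$, which I would rule out using Lemmas \ref{ramificationindex} and \ref{fundlogsm} on $\widetilde{\cC}$.

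The inputs that do most of the work --- the structure of $2$-dimensional log regular schemes from Section 1 (the conditions (Exc$_{-}$), (Str$_{-}$)), the genus bookkeeping of Lemma \ref{genus01}, Proposition \ref{genK}, the relations between the sets $\cE_{-}$ under desingularization and contraction (Lemma \ref{Eproperty}), and above all the global generator $\omega_{F}$ produced in \ref{genMS} --- are all available. The step I expect to be the main obstacle is the bookkeeping inside the contraction $\widetilde{\cC}\to\cC$ in the second half of assertion $1$: correctly distinguishing condition ($(0,2)$) from condition ($(\node)$) through whether $\widetilde{F}\to F$ is an isomorphism, and propagating the residue fields of intersection points and the non-divisibility of multiplicities by $p$ through the contraction, together with the descent of $\cD\cap F=\emptyset$.
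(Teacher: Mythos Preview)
Your approach is essentially the paper's: prove assertion 2 from \ref{genMS} and the degree count $2g_F-2+\deg_{k(F)}(F\cap F(\cC)^{(2)})=0$, then deduce assertion 1 by passing to the minimal log desingularization $\widetilde{\cC}^{\log}$ and splitting on whether $\widetilde{F}\to F$ is an isomorphism. The paper argues this more tersely---it simply notes that $\widetilde{F}\to F$ is the normalization (so if it is an isomorphism, the $((0,2))$ or $((1,0))$ structure descends), and in the non-isomorphism case uses Theorem \ref{logsummary}.4 and Proposition \ref{genK} to read off condition $(\node)$ directly, finishing $(g,r)=(1,0)$ via Lemma \ref{genus01}.3.

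One small slip: in the non-isomorphism case you exclude $\widetilde{F}\in\cE_{0,1'}(\widetilde{\cC})$ by claiming its single intersection point must lie on a chain joining $\widetilde{F}$ to a \emph{distinct} component. That is not right: the configurations (Str$_{1,x}$) and (Str$_{1}$) of Lemma \ref{notationnormalcrossing}.2 describe exactly a chain whose only boundary branch is $\widetilde{F}$, meeting it at a single degree-$2$ point, and in that situation $\widetilde{F}\to F$ is \emph{not} an isomorphism (the fiber over the image point has rank $2$ by Theorem \ref{logsummary}.4). Fortunately this does not damage your conclusion---condition $(\node)$ still follows verbatim---so the fix is just to drop the exclusion and argue uniformly, as the paper does. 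Your identification of the contraction bookkeeping (residue fields, $\cD\cap F=\emptyset$) as the delicate step is apt; the paper is equally brief there.
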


\begin{proof}
First, we assume that $\cC_{k}\cup\cD$ is a normal crossing divisor on $\cC$ around $F$ and show assertion 2.
Note that the second assertion of assertion 2 follows from the first assertion of assertion 2 and Lemma \ref{logsmoothchart}.
By the discussions in \ref{genMS}, $F$ is smooth over $k$, the residue fields of points of $F\cap F(\cC)^{(2)}$ are separable over $k$, $\cD\cap F=\emptyset$, and $\Omega^{1}_{F/k}(F\cap F(\cC)^{(2)})$ is isomorphic to $O_{F}$.
Therefore, the log regular curve $F^{\log}$ over $k(F)$ is of type (0,2) or (1,0).
Assertion 2 follows from these observations and the fact that $\cC_{k}$ is geometrically connected over $k$.

Next, we show assertion 1.
(We do not assume that $\cC^{\log}$ is a regular n.c.d.\,model).
Let $\widetilde{\cC}^{\log}$ be the minimal desingularization  of $\cC^{\log}$ and write $\widetilde{F}$ for the strict transform of $F$ in $\widetilde{\cC}$.
Then $\widetilde{\cC}^{\log}$ is log smooth over $(\Spec O_{K})^{\log}$ around $\widetilde{F}$ by Lemma \ref{logblowupet}.1.
By assertion 2, $\widetilde{F}$ satisfies one of conditions ((0,2)) or ((1,0)).
In particular, $\widetilde{F}\to F$ is the normalization morphism.
Therefore, if $\widetilde{F}$ is isomorphic to $F$, assertion 1 holds.
Suppose that the morphism $\widetilde{F}\to F$ is not isomorphic.
Then we have $\sharp(\widetilde{F}\cap F(\widetilde{\cC})^{(2)})\geq1$, which shows that $\widetilde{F}$ satisfies condition ((0,2)).
Since $\widetilde{F}\to F$ is not isomorphic, $F\cap F(\cC)^{(2)}$ consists of a unique point $c$ where $\widetilde{F}\to F$ is not isomorphic.
From this, the geometrical connectedness of $\cC_{k}$ over $k$, and condition ((0,2)), it follows that $F=\cC_{k,\red}$, $\sharp F(\cC)^{(2)}=1$, and $k(F)=k$.
From these observations, \ref{node-like}, and Proposition \ref{genK}, it follows that condition (node) is satisfied.
Moreover, in this case, we have $(g,r)=(1,0)$ by Lemma \ref{genus01}.3.
This completes the proof of assertion 1.
\end{proof}

\begin{cor}[cf.\,{\cite[THEOREM (3.11)]{Sai1}}, {\cite[Theorem 4.2]{Sai2}}, and {\cite[Theorem 1]{Lo}}]
\label{wildmonodromy}
Suppose $2g+r-2>0$.
The following are equivalent:
\begin{enumerate}
\item
$C^{\log}$ has log smooth reduction.
\item
$\cC_{\lreg}^{\log}$ is a log smooth model.
\item
$\cC_{\ncd}^{\log}$ is a log smooth model.
\item
Let $F$ be a prime divisor on $\cC_{\ncd}$ whose support is contained in $\cC_{\ncd,k}\cup\cD_{\ncd}$.
If $\Supp F\subset \cD_{\ncd}$, $F$ is at most tamely ramified over $\Spec O_{K}$.
If $\Supp F\subset \cC_{\ncd,k}$ and the multiplicity of $F$ is \textbf{not} divisible by $p$, $F\setminus F(\cC_{\ncd})^{(2)}$ is smooth over $k$ and the residue field of any point of $F\cap F(\cC_{\ncd})^{(2)}$ is separable over $k$.
If $\Supp F\subset \cC_{\ncd,k}$ and the multiplicity of $F$ is divisible by $p$, the following are satisfied:
$F$ is an element of $\cE_{0,1'}(\cC_{\ncd})\cup\cE_{\bP,2}(\cC_{\ncd})$.
(Note that, from these conditions, any elements of $F\cap F(\cC_{\ncd})^{(2)}$ is an intersection point of $F$ and another irreducible component of $\cC_{k}$.)
Moreover, $k(F)$ is separable over $k$.
Furthermore, the multiplicity of any irreducible component of $\cC_{k}$ intersecting such $F$ is \textbf{not} divisible by $p$.
\item
The curve $(C,\emptyset)$ (of type $(g,0)$) has a log smooth model and each connected component of the normalization of $\Spec O_{K}$ in $D$ is at most tamely ramified over $\Spec O_{K}$.
\item
The action of $P_{K}$ on $H^{1}_{\text{\'et}}((C\setminus D)_{K^{\sep}},\Z_{l})$ is trivial and $D$ is \'etale over $\Spec K$.
\item
The action of $P_{K}$ on $H^{1}_{\text{\'et}}(J,\Z_{l})$ is trivial and each connected component of the normalization of $\Spec O_{K}$ in $D$ is at most tamely ramified over $\Spec O_{K}$.
\item
There exists a finite (at most) tamely ramified extension $K'/K$ such that $(C_{K'},D_{K'})$ has stable reduction.
\end{enumerate}
\end{cor}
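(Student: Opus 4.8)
The plan is to split the eight conditions into a ``geometric'' block $\{1,2,3,4,5\}$, an ``arithmetic'' block $\{6,7\}$, and the potential-stable-reduction condition~$8$, and to prove them equivalent through the links $1\Leftrightarrow2\Leftrightarrow3\Leftrightarrow4$, $\;1\Leftrightarrow5$, and $1\Rightarrow8\Rightarrow6\Leftrightarrow7\Rightarrow5$. The implications $2\Rightarrow1$, $3\Rightarrow1$ are immediate from the definition of log smooth reduction, and $1\Rightarrow2$, $1\Rightarrow3$ are Proposition~\ref{logsmoothcase}. For $3\Leftrightarrow4$ I would work component by component on $\cC_{\ncd}$: assuming~$3$, the horizontal components are at most tamely ramified by Lemma~\ref{fundlogsm}; a vertical component of multiplicity prime to $p$ is smooth away from $F(\cC_{\ncd})^{(2)}$ and has separable residue fields at the points of $F(\cC_{\ncd})^{(2)}$ by Lemma~\ref{F1case}.1 and the ``(a)$\Rightarrow$(c)'' part of Lemma~\ref{logsmoothchart}; a vertical component of multiplicity divisible by $p$ is described exactly by Proposition~\ref{pstr}.2. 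Conversely, given~$4$, I would check log smoothness of $\cC_{\ncd}^{\log}\to(\Spec O_{K})^{\log}$ at each point: at a point of $F(\cC_{\ncd})^{(2)}$ by the ``(b')$\Rightarrow$(b)'' and ``(c)'' criteria of Lemma~\ref{logsmoothchart} (the two adjacent components have prime-to-$p$ multiplicity and the node has separable residue field), at the remaining points of $\cC_{\ncd,k}$ by Lemma~\ref{F1case}.2, and at points of $\cD_{\ncd}$ by Lemma~\ref{ramificationindex} and tameness.

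For $1\Leftrightarrow5$: if $1$ holds, then $\cD_{\lreg}$ is normal and each irreducible component is at most tamely ramified over $\Spec O_{K}$ (Lemma~\ref{fundlogsm}), so the normalization of $\Spec O_{K}$ in each component of $D$ is at most tamely ramified; moreover, forgetting the horizontal part of the log structure of $\cC_{\lreg}^{\log}$ leaves a log smooth model of $(C,\emptyset)$, because by Propositions~\ref{genK}, \ref{pstr} and the chart analysis of Section~\ref{lsmsubsection} (Lemmas~\ref{F1case}, \ref{ramificationindex}, \ref{logsmoothchart}) this deletion only drops a free generator from the relevant charts --- this gives $1\Rightarrow5$. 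Conversely, starting from a log smooth model of $(C,\emptyset)$ and the tameness hypothesis (which forces $D$ to be \'etale over $K$), I would pass to the minimal log desingularization, which stays log smooth by Lemma~\ref{logblowupet}.1, blow up along the necessarily smooth closed locus through which the sections cutting out $\cD$ factor --- sections of regular models land in the smooth locus, as in the proof of Theorem~\ref{generalmain} --- and conclude by Lemma~\ref{logblowupet}.2 that the resulting regular n.c.d.\,model of $(C,D)$ is log smooth.

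On the arithmetic side, $6\Leftrightarrow7$ comes from the exact sequence~(H). Since $l\neq p$, the wild inertia $P_{K}$ is pro-$p$ and $H^{1}(P_{K},N)=0$ for any $\Z_{l}$-module $N$ with trivial $P_{K}$-action, so a $P_{K}$-module admitting a finite filtration with trivial subquotients is itself trivial; applying this to~(H) and using that $P_{K}$ acts trivially on $H^{2}_{\text{\'et}}(C_{\overline{K}},\Z_{l})\simeq\Z_{l}(-1)$, the action of $P_{K}$ on $H^{1}_{\text{\'et}}((C\setminus D)_{\overline{K}},\Z_{l})$ is trivial if and only if it is trivial both on $H^{1}_{\text{\'et}}(C_{\overline{K}},\Z_{l})$ and on $\Z_{l}\otimes_{\Z}\Z^{D(K^{\sep})}$; the former coincides with $H^{1}_{\text{\'et}}(J,\Z_{l})$ since $J$ is the Jacobian of $C$, and triviality on the latter is equivalent to $P_{K}$ fixing $D(K^{\sep})$ pointwise, i.e.\ to at most tame ramification of each component of the normalization of $\Spec O_{K}$ in $D$ (granting $D$ \'etale over $K$). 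Next, $1\Rightarrow8$ is \cite[Theorem 4.2]{Sai2} (which also covers $D\neq\emptyset$): $\cC_{\lreg}^{\log}$ is log smooth by Proposition~\ref{logsmoothcase}, so $(C,D)$ acquires stable reduction over a tamely ramified extension. For $8\Rightarrow6$, apply Corollary~\ref{monodromy} over $K'$: stable reduction of $(C_{K'},D_{K'})$ gives a unipotent $I_{K'}$-action on $H^{1}_{\text{\'et}}((C\setminus D)_{\overline{K}},\Z_{l})$, and since $P_{K}=P_{K'}$ is pro-$p$ and $l\neq p$, a unipotent action over $\Z_{l}$ is trivial, while $D_{K'}$ \'etale over $K'$ forces $D$ \'etale over $K$. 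Finally $6\Rightarrow5$: by~(H), $P_{K}$ acts trivially on $H^{1}_{\text{\'et}}(C_{\overline{K}},\Z_{l})=H^{1}_{\text{\'et}}(J,\Z_{l})$ and on $\Z_{l}\otimes_{\Z}\Z^{D(K^{\sep})}$; hence by \cite[Theorem 4.2 (3)$\Rightarrow$(1)]{Sai2} together with \cite[Exp.IX, Proposition 3.9]{SGA7} the proper curve $(C,\emptyset)$ has a log smooth model, and the $\Z_{l}\otimes_{\Z}\Z^{D(K^{\sep})}$ factor yields at most tame ramification of the components of $D$, which is exactly condition~$5$.

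The main obstacle lies in the converse geometric implications $4\Rightarrow3$ and $5\Rightarrow1$ in the imperfect-residue-field case, together with the components of multiplicity divisible by $p$: here ``smooth over $k$'' is unavailable --- only ``regular with separable residue field'' --- so all the checks must be pushed through the explicit charts of Section~\ref{lsmsubsection} (Lemmas~\ref{goodchart} and~\ref{logsmoothchart}) rather than by geometric arguments, with careful tracking of which adjacent multiplicities are prime to $p$, and the exceptional behaviour isolated in Proposition~\ref{pstr} has to be woven in. On the arithmetic side the real content of $6\Rightarrow5$ --- producing a log smooth model from a tame $P_{K}$-action in the pointed, imperfect-residue-field setting --- is precisely what \cite{Sai2} furnishes, the exact sequence~(H) serving only to separate the contribution of $D$ from that of $J$.
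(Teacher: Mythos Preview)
Your logical scaffolding is essentially the paper's, and most implications are handled with the same tools; the one genuine gap is in your $4\Rightarrow3$. You propose to check log smoothness of $\cC_{\ncd}^{\log}$ at ``the remaining points of $\cC_{\ncd,k}$'' via Lemma~\ref{F1case}.2, but that lemma has the explicit hypothesis that the multiplicity of the component is \emph{not} divisible by~$p$. Condition~4 allows components $F$ of multiplicity divisible by~$p$ (it only forces $F\in\cE_{0,1'}(\cC_{\ncd})\cup\cE_{\bP,2}(\cC_{\ncd})$, $k(F)/k$ separable, and all neighbours of $F$ to be prime-to-$p$); at a closed point $c\in F\setminus F(\cC_{\ncd})^{(2)}$ on such a component the stalk $M_{\cC_{\ncd},\overline c}/\cO^{*}_{\cC_{\ncd},\overline c}$ is~$\N$, the image of $\varpi$ is the $m$-th power of the generator with $p\mid m$, and neither Lemma~\ref{F1case}.2 nor Lemma~\ref{logsmoothchart} (which is stated only for $c\in F(\cC)^{(2)}$) says anything. (A smaller slip in the same paragraph: at a point of $F(\cC_{\ncd})^{(2)}$ condition~4 guarantees only that \emph{at least one} of the two adjacent multiplicities is prime to~$p$, not both; this is still enough for (b$'$).)

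The paper avoids this obstacle by proving $4\Rightarrow2$ rather than $4\Rightarrow3$. Under condition~4 every $p$-divisible component lies in $\cE_{0,1'}(\cC_{\ncd})\cup\cE_{\bP,2}(\cC_{\ncd})\subset\cE_{\leq-2}(\cC_{\ncd})$ (Proposition~\ref{absoluetness}.1), hence is contracted in $\cC_{\lreg}$. Your argument (Lemmas~\ref{F1case}.2 and~\ref{logsmoothchart}) then shows that $\cC_{\ncd}^{\log}$ is log smooth at every point lying on a prime-to-$p$ component, and the proof of Lemma~\ref{logblowupet}.1 transports this down to all of $\cC_{\lreg}^{\log}$, since over each $c\in F(\cC_{\lreg})^{(2)}$ one can find $c_{0}\in F(\cC_{\ncd})^{(2)}$ on a prime-to-$p$ component. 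Once~2 holds, $2\Rightarrow1\Rightarrow3$ (Proposition~\ref{logsmoothcase}) recovers~3; in particular the missing log smoothness of $\cC_{\ncd}^{\log}$ along the $p$-divisible components comes \emph{a posteriori} from the log-\'etaleness of $\cC_{\ncd}^{\log}\to\cC_{\lreg}^{\log}$, not from a direct chart computation on $\cC_{\ncd}$. The rest of your plan (the arithmetic block via~(H) and \cite[Theorem~4.2]{Sai2}, and the link through condition~5 by blowing up a log smooth model of $(C,\emptyset)$ along the smooth locus hit by the marked points after a tame extension) matches the paper's $3\Rightarrow5$, $5\Rightarrow8$, $1\Leftrightarrow7\Leftrightarrow8$, $6\Leftrightarrow7$.
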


\begin{proof}
3$\Rightarrow$5 follows from Lemmas \ref{F1case}.2 and \ref{fundlogsm} and Proposition \ref{genK}.
1$\Leftrightarrow$7$\Leftrightarrow$8 follows from \cite[Theorem 4.2]{Sai2} (which we can use because \cite[Proposition 1.13]{Sai2} is valid by Corollary \ref{monodromy}).
We can obtain 6$\Leftrightarrow$7 by using (H).
1$\Leftrightarrow$2$\Leftrightarrow$3 follows from Proposition \ref{logsmoothcase}.
3$\Rightarrow$4 follows from Lemmas \ref{F1case}.2, \ref{logsmoothchart}, \ref{fundlogsm} and Proposition \ref{pstr}.2.

Next, we show 4$\Rightarrow$2.
Suppose that condition 4 is satisfied.
Then, by Lemmas \ref{F1case}.2 and \ref{logsmoothchart}, $\cC^{\log}_{\ncd}$ is log smooth over $(\Spec O_{K})^{\log}$ at points on irreducible divisors whose multiplicity in $\cC_{k}$ is not divisible by $p$.
By the proof of Lemma \ref{logblowupet}.1, it holds that condition 2 is satisfied.

Finally, we show 5$\Rightarrow$8.
By replacing $K$ by a suitable finite tamely ramified extension field $K'$, we may assume that $D$ is isomorphic to the disjoint union of finite copies of $\Spec K$.
By Lemma \ref{logblowupet}.1, we obtain a regular n.c.d.\,model $\cC^{\log}$ of the log regular curve $(C,O_{C}^{\ast})$ which is log smooth over $(\Spec O_{K})^{\log}$.
Since sections of the structure morphisms of regular models over $O_{K}$ factor through the smooth locus of the models (cf.\,the proof of \cite[Tag:0CE8]{Stacks} or \cite[Proposition 3.1.2]{BLR}), we obtain a regular n.c.d.\,model of $C^{\log}$ as an iterate of a blow-up at a smooth closed point.
Since the resulting model is log smooth over $(\Spec O_{K})^{\log}$ by Lemma \ref{logblowupet}.2, condition 8 is satisfied.
\end{proof}

\begin{rem}
In the proof of Corollary \ref{wildmonodromy} , one can prove 5$\Rightarrow$6 by applying the proper base change theorem in the theory of log \'etale cohomology (\cite{Nak}).
\end{rem}

\begin{prop}[cf.\,{\cite[Theorem 1.8]{Sai2}}, {\cite[Theorem 4.2]{Sai2}}, and {\cite[Section 7]{H}}]
\label{genH}
Suppose that $2g+r-2>0$ and $C^{\log}$ has log smooth reduction.
Then the least common multiple $e$ of the multiplicities of irreducible components of $\cC_{\lreg,k}$ is prime to $p$.
Moreover, for any extension of discrete valuation fields $L/K$ whose ramification index is $e'$, $e'$ is divisible by $e$ if and only if $C_{L}$ has stable reduction.
\end{prop}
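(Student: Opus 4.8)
The plan is to prove the two assertions in turn, both resting on the fact that $\cC_{\lreg}^{\log}$ is a log smooth model of $C^{\log}$ (Proposition \ref{logsmoothcase}).

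For the first assertion I would argue by contradiction. Suppose some irreducible component $F$ of $\cC_{\lreg,k}$ has multiplicity divisible by $p$. Since $\cC_{\lreg}^{\log}$ is a log smooth model, the hypotheses of \ref{genMS} hold around $F$, so Proposition \ref{pstr}.1 applies and $F$ satisfies one of the conditions ($(0,2)$), ($\node$), ($(1,0)$). Conditions ($\node$) and ($(1,0)$) force $(g,r)=(1,0)$, contradicting $2g+r-2>0$; condition ($(0,2)$) requires $F\in\cE_{\bP,2}(\cC_{\lreg})\cup\cE_{0,1'}(\cC_{\lreg})$, which is empty by Proposition \ref{absoluetness}.2. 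Hence every multiplicity is prime to $p$, so $e$ is.

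For the second assertion, the first step is to realize the base change. Since $\cC_{\lreg}^{\log}\to(\Spec O_{K})^{\log}$ is log smooth, the f.s.\ fibre product $\cC_{L}^{\log}:=\cC_{\lreg}^{\log}\times^{\log}_{(\Spec O_{K})^{\log}}(\Spec O_{L})^{\log}$ is again log smooth over $(\Spec O_{L})^{\log}$, hence log regular (\cite[(8.2) THEOREM]{Ka2}) and in particular normal; as $(\Spec O_{L})^{\log}\to(\Spec O_{K})^{\log}$ is strict on generic fibres and log smooth morphisms are flat, the underlying scheme $\cC_{L}$ is flat and proper over $O_{L}$ with generic fibre $C_{L}$ and is connected, hence a log smooth model of $C_{L}^{\log}$. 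Next I would compute the multiplicities of $\cC_{L,k_{L}}$: working étale-locally at the generic point $\xi_{i}$ of a component $F_{i}$ of $\cC_{\lreg,k}$ of multiplicity $m_{i}$, the relevant sharp monoid is the sharp quotient of the saturation of $\N\oplus_{\N}\N$, where the two structure maps are multiplication by $m_{i}$ and by $e'$; since $m_{i}$ is prime to $p$ by the first assertion, one checks this sharp quotient is $\N$ with $\varpi_{L}$ mapping to $m_{i}/\gcd(m_{i},e')$, so every component of $\cC_{L,k_{L}}$ lying over $F_{i}$ has multiplicity $m_{i}/\gcd(m_{i},e')$. Consequently $\cC_{L,k_{L}}$ is reduced if and only if $m_{i}\mid e'$ for all $i$, i.e.\ if and only if $e=\mathrm{lcm}_{i}(m_{i})$ divides $e'$. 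In particular, if $e\mid e'$ then $\cC_{L}^{\log}$ is a log smooth model of $C_{L}^{\log}$ with reduced special fibre, so $C_{L}^{\log}$ has stable reduction by Proposition \ref{prime-to-p}.

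It remains to prove that, conversely, if $C_{L}^{\log}$ has stable reduction then $e\mid e'$. Here I would first reduce to the case $O_{K}=O_{K}^{sh}$, using that stable reduction is insensitive to strict henselization and that the $m_{i}$ (hence $e$) are unchanged by the unramified base change $O_{K}\to O_{K}^{sh}$ (Proposition \ref{basechange}.2). Assume $e\nmid e'$; by the multiplicity computation $\cC_{L,k_{L}}$ then has a component $F$ of multiplicity $m\ge 2$. Since $C_{L}^{\log}$ has stable reduction, its minimal log regular model $\cC_{\lreg}'^{\log}$ is a stable model and an absolute minimal log regular model (Propositions \ref{stablecoin}), so there is a morphism of models $\cC_{L}^{\log}\to\cC_{\lreg}'^{\log}$; a component of $\cC_{L,k_{L}}$ not contracted by it maps birationally onto a component of $\cC_{\lreg,k_{L}}'$ of the same multiplicity. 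By Proposition \ref{prime-to-p} ($\cC_{\lreg,k_{L}}'$ reduced $\Leftrightarrow$ stable reduction), $\cC_{\lreg,k_{L}}'$ is reduced, so $F$ must be contracted; passing to $\widetilde{\cC}_{L}^{\log}$ (its minimal log desingularization, a regular n.c.d.\ model dominating $\cC_{\ncd}'^{\log}$ by Proposition \ref{mlregstr}, Lemma \ref{uniqueNCD}) and factoring $\widetilde{\cC}_{L}^{\log}\to\cC_{\lreg}'^{\log}$ into contractions of exceptional curves of the first kind followed by log blow-downs, I would track the strict transform of $F$ (still of multiplicity $m\ge 2$) and use the intersection-number balancing $\sum_{G}m_{G}(\,\cdot\,G)=0$ of \ref{intersectiontheory}: the only way a multiplicity-$\ge 2$ component can vanish leaving its neighbours of multiplicity $1$ is when it is of $\cE_{\bP,2}$-type met by exactly two multiplicity-$1$ components. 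One then rules this out: such a component of $\cC_{L}$ would lie over a genus-$0$ component of $\cC_{\lreg,k}$ meeting the rest of $\cC_{\lreg,k}\cup\cD_{\lreg}$ at exactly two points, which over $O_{K}^{sh}$ are forced to be $k$-rational by the separability part of Lemma \ref{logsmoothchart}, contradicting $\cE_{\bP,2}(\cC_{\lreg})=\emptyset$. Hence the contraction always leaves behind a multiplicity-$\ge 2$ component, which must then survive into $\cC_{\lreg}'$, contradicting that $\cC_{\lreg,k_{L}}'$ is reduced.

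The main obstacle is exactly this last step — controlling which components of the base-changed model $\cC_{L}$ can be absorbed into nodes under the contraction $\cC_{L}\to\cC_{\lreg}'$, equivalently controlling the $\cE$-types of the components of $\cC_{L}$. The reduction to $O_{K}=O_{K}^{sh}$, the vanishing of $\cE_{\bP,2}(\cC_{\lreg})$, $\cE_{0,1'}(\cC_{\lreg})$, $\cE_{\node}(\cC_{\lreg})$ from Proposition \ref{absoluetness}.2, and the separability constraints of Lemma \ref{logsmoothchart} are what make it tractable; the remaining ingredients are the toric base-change computation of multiplicities and routine bookkeeping with the intersection form.
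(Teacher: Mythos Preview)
Your argument for the first assertion (via Proposition~\ref{pstr}.1 and Proposition~\ref{absoluetness}.2) and for the ``if'' direction (toric multiplicity computation plus Proposition~\ref{prime-to-p}) is correct and matches the paper's proof in substance; the paper phrases the first part via Corollary~\ref{wildmonodromy} (condition~4) applied to $\cC_{\ncd}$ together with Theorem~\ref{logreglem}, and the ``if'' direction simply as ``Abhyankar's lemma'', but the content is the same.

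The divergence is in the ``only if'' direction. The paper does \emph{not} attempt a direct argument: after observing that $e$ and the question of stable reduction of $C_{L}^{\log}$ are insensitive to replacing $O_{K}$ by an extension of ramification index~$1$ (Proposition~\ref{basechange}.3, Corollary~\ref{stablebasechange}), it reduces to the case where $O_{K}$ is complete with algebraically closed residue field and then simply invokes \cite[Theorem~7.5]{H}. This is a one-line reduction to a known result.

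Your direct approach is more ambitious but the final paragraph is a sketch with real gaps. Two points in particular are not justified. First, the claim that ``the only way a multiplicity-$\ge 2$ component can vanish leaving its neighbours of multiplicity~$1$'' forces a specific $\cE_{\bP,2}$-configuration: when you factor $\widetilde{\cC}_{L}\to\cC'_{\ncd}\to\cC'_{\lreg}$, the intermediate models may well have neighbouring components of multiplicity $>1$, and a component of multiplicity $m\ge 2$ can certainly become a $(-1)$-curve and be contracted at an intermediate stage without the configuration you describe. Second, even granting that your component of $\cC_{L}$ is of $\cE_{\bP,2}$-type at the moment of contraction, the inference back to a forbidden $\cE_{\bP,2}$-component of $\cC_{\lreg,k}$ is not clear: the f.s.\ base change $\cC_{L}^{\log}$ typically inserts new chains of rational curves over the points of $F(\cC_{\lreg})^{(2)}$, so the combinatorics of $\cC_{L,k_{L}}$ is not simply that of $\cC_{\lreg,k}$ read off componentwise. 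Filling these gaps amounts to redoing a fair portion of the analysis in \cite[\S7]{H}; absent that, the cleanest fix is exactly the paper's: reduce to the classical case and cite \cite[Theorem~7.5]{H}.
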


\begin{proof}
The fact that $e$ is not divisible by $p$ follows from Theorem \ref{logreglem}, Notation-Definition \ref{lregnota}, and the equivalence 1$\Leftrightarrow$4 in Corollary \ref{wildmonodromy}.
If $e$ divides $e'$, the special fiber of $\cC_{\lreg}^{\log}\times^{\log}_{(\Spec O_{K})^{\log}}(\Spec O_{L})^{\log}$ is reduced by Abhyankars's lemma, which shows that $(C_{L})^{\log}$ has stable reduction by Proposition \ref{prime-to-p}.

Suppose that $e$ does not divide $e'$ and $(C_{L})^{\log}$ has stable reduction.
We show a contradiction.
We may assume that $K$ is complete and $k$ is algebraically closed by Proposition \ref{basechange}.3.
In this case, Proposition \ref{genH} follows from \cite[Theorem 7.5]{H}.
\end{proof}

{\em Email address}: \texttt{nagachi@kurims.kyoto-u.ac.jp}

\end{document}